\newtheorem{theorem}{Theorem}[section]
\newtheorem{lemma}[theorem]{Lemma}
\newtheorem{proposition}[theorem]{Proposition}
\newtheorem{corollary}[theorem]{Corollary}
\newtheorem{definition}[theorem]{Definition}
\newtheorem{conjecture}[theorem]{Conjecture}
\theoremstyle{definition}
\newtheorem{remark}[theorem]{Remark}
\def\fp{\mathfrak{p}}
\def\cO{\mathcal{O}}
\def\ff{\mathfrak{f}}
\def\Z{\mathbb{Z}}
\def\ord{\textup{\textrm{ord}}}
\def\N{\textup{\textrm{N}}}
\def\rec{\textup{\textrm{rec}}}
\def\sign{\textup{\textrm{sign}}}
\def\Eis{\textup{\textrm{Eis}}}
\def\Supp{\textup{\textrm{Supp}}}
\def\Gal{\textup{\textrm{Gal}}}
\def\Hom{\textup{\textrm{Hom}}}
\def\Log{\textup{\textrm{Log}}}
\def\Meas{\textup{\textrm{Meas}}}
\def\Err{\textup{\textrm{Err}}}
\def\id{\textup{\textrm{id}}}
\def\Xint#1{\mathchoice
{\XXint\displaystyle\textstyle{#1}}{\XXint\textstyle\scriptstyle{#1}}%
{\XXint\scriptstyle\scriptscriptstyle{#1}}{\XXint\scriptscriptstyle\scriptscriptstyle{#1}}\!\int}
\def\XXint#1#2#3{{\setbox0=\hbox{$#1{#2#3}{\int}$}\vcenter{\hbox{$#2#3$}}\kern-.5\wd0}}
\def\multint{\Xint\times}
\tikzset{mynode/.style={font=\footnotesize,inner sep=0pt,text=black}
}
\title{On the Equality of Three Formulas for Brumer--Stark Units}
\author[1]{Samit Dasgupta}
\author[2]{Matthew H.\@ L.\@ Honnor }
\author[3]{Michael Spie\ss}
\affil[1]{Duke University, Durham, North Carolina, 27710, United States \newline Email address: samit.dasgupta@duke.edu}
\affil[2]{St Paul's School, Lonsdale Road,
London, SW13 9JT, United Kingdom}
\affil[3]{Faculty of Mathematics, Bielefeld University, D-33501 Bielfeld, Germany
\newline Email address: mspiess@math.uni-bielefeld.de}
\date{\today}
\begin{document}

\baselineskip 15pt
\maketitle

\begin{abstract}
    We prove the equality of three conjectural formulas for Brumer--Stark units. The first formula has essentially been proven, so the present paper also verifies the validity of the other two formulas. \footnote[3]{Mathematics Subject Classification (2020): 11R80, 11R27, 11R42.}
\end{abstract}

\tableofcontents

\section{Introduction}

In this paper we prove the equality of three conjectural formulas for Brumer--Stark units made by the first author in \cite{MR2420508}, and the first and third authors in \cite{MR3861805}, and \cite{MR3968788}.

One significance of this result is that the first formula has essentially been proven in \cite{intgrossstark}, so the present paper also verifies the validity of the other two formulas. Additionally, the third formula, made in \cite{MR3968788}, relates to a conjecture for the principal minors of the Gross--Regulator matrix. The validation of the third formula here gives a proof of this conjecture for the diagonal entries. Our work generalizes a partial result in this direction established in \cite{comparingformulas}.  See \cite{intgrossstark} for a discussion of the application of the these formulas toward explicit class field theory.

We now describe our results more precisely.  Let $F$ denote a totally real field, and let $H$ denote a finite abelian extension of $F$.  Write $G=\Gal(H/F)$. 
Let $R_\infty$ denote the set of archimedean places of $F$.
Let $R$ be a finite set of places of $F$ containing $R_\infty$ and the places that are ramified in $H$. Fix a prime ideal $\fp \not \in R$ that splits completely in $H$
and let $S = R \cup \{ \mathfrak{p} \}$.  Finally, we consider an auxiliary finite set $T$ of primes of $F$, disjoint from $S$ and satisfying a standard minor condition (see \S\ref{s:pzf}). From $\S3$ on, and for our main results, we make the simplifying assumption that $T$ consists of a single prime $\lambda$. The following conjecture was first stated by Tate and called the Brumer--Stark conjecture, \cite[Conjecture $5.4$]{MR656067}. 

\begin{conjecture}\label{c:bs}
Let $\mathfrak{P}$ be a prime in $H$ above $\mathfrak{p}$. There exists an element 
\begin{equation*}
u_T \in \mathcal{U}_{\mathfrak{p}} =\{ u \in H^\ast : \ \mid u \mid_v=1 \ \text{if } v \ \text{does not divide} \ \mathfrak{p} \}
\end{equation*} 
such that $u_T \equiv 1 \pmod{T}$, and for all $\sigma \in G$, we have 
$
    \ord_\mathfrak{P}(u_T^\sigma)= \zeta_{R,T}(H/F, \sigma,0). $
\end{conjecture}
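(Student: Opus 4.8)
The plan is to prove Conjecture~\ref{c:bs} one rational prime $\ell$ at a time: fix $\ell$, work $\ell$-adically, and establish the asserted identity in $\Z_\ell[G]$. Since $\zeta_{R,T}(H/F,\sigma,0)\in\Z$ by Deligne--Ribet and Cassou-Nogu\`es, this is enough. The first step is to reformulate the existence of $u_T$ with the prescribed valuations as a statement about an explicit $\Z_\ell[G]$-module. Following Ritter--Weiss and the refinement carried out in \cite{intgrossstark}, one attaches to the datum $(H/F,S,T)$ a module $\nabla$ fitting into a four-term exact sequence whose outer terms record, respectively, the $\fp$-units $\mathcal{U}_\fp$ (cut out by the $T$-congruence) and the ideals supported at $\fp$; the conjecture then becomes the assertion that a canonical ``transpose-dual'' class of $\nabla$ represents the group-ring Stickelberger element $\Theta_{S,T}$. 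Equivalently, one must on the one hand compute a Fitting ideal and on the other hand manufacture the actual unit from a suitable cohomology class. The smoothing by the auxiliary set $T$ is exactly what makes $\Theta_{S,T}$ integral and removes the non-cuspidal constant-term obstruction that appears below.

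The engine is Ribet's method for group-ring valued Hilbert modular forms over $F$. First I would write down a $\Z_\ell[G]$-valued Eisenstein series $E$ of parallel weight one whose constant terms at the cusps package the partial zeta values $\zeta_{R,T}(H/F,\sigma,0)$, and then use a dimension count together with the $q$-expansion principle to produce a cuspidal Hecke eigenform $f$ congruent to $E$ modulo the ideal $I$ generated by the group-ring zeta element. Passing to a Hida family through $f$ and its big Galois representation $\rho$, the reducibility of the residual representation along the Eisenstein component lets one, via Ribet's lattice argument, extract a nontrivial class in an $H^1(G_F,\,\cdot\,)$ with controlled ramification away from $S$ and with the correct local condition at $\fp$ (reflecting that $\fp$ splits completely in $H$). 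Unwinding this class through the exact sequence defining $\nabla$ produces an element of $\mathcal{U}_\fp\otimes\Z_\ell$ with $u_T\equiv1\pmod T$ and $\ord_{\mathfrak{P}}(u_T^\sigma)$ congruent to $\zeta_{R,T}(H/F,\sigma,0)$.

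To promote this divisibility to the exact equality demanded by the conjecture I would run the Taylor--Wiles--Kisin patching method: patch the relevant cohomology modules against framed deformation rings over a tower of auxiliary levels to obtain an $R=\mathbb{T}$ theorem with $\mathbb{T}$ a complete intersection, hence a freeness statement for the patched module that pins the Fitting ideal of $\nabla$ exactly; combined with the Iwasawa Main Conjecture for totally real fields, used to bound the dual Selmer/Iwasawa module from the other side, this closes the argument. I expect the main obstacle to be the precise dictionary in the first step: matching the cohomological module produced by modular forms with the arithmetic module $\nabla$ term by term, with the right local conditions at the primes of $R$ that ramify in $H$ and --- crucially --- at $\fp$, so that ``a class with these local properties exists'' translates verbatim into ``$u_T\in\mathcal{U}_\fp$ with these valuations exists''. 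The secondary difficulty is the behaviour at $\ell=2$ and at primes dividing $\#G$, where both the construction of the congruence and the patching are most delicate and are likely to require supplementary arguments.
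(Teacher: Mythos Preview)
The paper does not prove this statement: it is labeled \emph{Conjecture}~\ref{c:bs}, and the paper explicitly says only that it ``has been recently proved away from $2$ in joint work of the first author and Kakde~\cite{Brumerstark}.'' There is therefore no ``paper's own proof'' to compare against; the present paper takes Conjecture~\ref{c:bs} as input (or more precisely, works toward explicit formulas for the conjectural $u_T$) rather than establishing it.

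Your proposal is not a proof but a research plan, and you acknowledge as much (``I expect the main obstacle to be\dots''). The outline you give --- the Ritter--Weiss module $\nabla$, Ribet's method via Eisenstein congruences on Hilbert modular forms, extraction of a cohomology class, and a Fitting-ideal computation closed by patching and the Iwasawa Main Conjecture --- is broadly the strategy of the Dasgupta--Kakde paper cited as~\cite{Brumerstark}, not of the paper at hand. Even as a sketch of that external work, several steps are understated: the construction of the cusp form congruent to the Eisenstein series modulo the Stickelberger ideal is considerably more delicate than ``a dimension count together with the $q$-expansion principle''; the passage from a Galois cohomology class to an actual element of $\mathcal{U}_\fp$ with the exact integer valuations (not merely congruent ones) is the heart of the matter and is not resolved by the sentence you devote to it; and your proposed use of Taylor--Wiles--Kisin patching to get $R=\mathbb{T}$ is not how the cited work proceeds. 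Finally, as you note and as the paper confirms, the argument in~\cite{Brumerstark} does not cover $\ell=2$, so the unconditional statement of the conjecture remains open.
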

Here $v$ ranges over all finite and archimedean places of $H$; in particular, each complex conjugation in $H$ acts as an inversion on $\mathcal{U}_{\mathfrak{p}}$.  The definition of the partial zeta function $ \zeta_{R,T}(H/F, \sigma,0)$ is recalled in \S\ref{s:pzf}.
The conjectural element $u_T \in \mathcal{U}_{\mathfrak{p}}$ satisfying Conjecture \ref{c:bs} is called the {\bf Brumer-Stark unit} for the data $(S,T,H,\mathfrak{P})$.

Conjecture~\ref{c:bs} has been recently proved by the first author and collaborators (see \cite{Brumerstark}, \cite{BrumerstarkoverZ}). It is convenient for us to package together $u_T$ and its conjugates over $F$ into an element of $H^\ast \otimes \Z[G]$ that we call the {\bf Brumer--Stark element}:
\[ u_\mathfrak{p} = \sum_{\sigma \in G} u_T^\sigma \otimes [\sigma^{-1} ] \in H^\ast \otimes \Z[G]. \]

There have been three formulas conjectured for the image of the Brumer--Stark element $u_\mathfrak{p}$ in $F_\mathfrak{p}^\ast \otimes \mathbb{Z}[G]$. 
In \cite{MR2420508} the first author conjectured a $p$-adic analytic formula for $u_\fp$ following the methods of Shintani and Cassou-Nogu\`es.  We denote this formula by $u_1$ and state it precisely in \S\ref{s:u1}. The other two formulas, which we denote $u_2$ and $u_3$, were defined in joint work of the first and third authors in \cite{MR3861805} and \cite{MR3968788}, respectively. Both of these formulas are cohomological in nature and are defined using the Eisenstein cocycle. They are stated precisely in \S\ref{s:u2}  and \S\ref{s:u3}. We remark that our definition for $u_3 \in F_\mathfrak{p}^* \otimes \Z[G]$ differs from that used in \cite{MR3968788} by a sign  (which acts as inversion on the left factor of the tensor product or negation on the right side of the tensor product), in order to state our results more cleanly.
The following combines the conjectures of the first author (for $i=1$) and the first and third authors (for $i=2,3$).

\begin{conjecture} \label{bsformconj}
For $i=1, 2, 3$ we have $u_i = u_\mathfrak{p}.$
\end{conjecture}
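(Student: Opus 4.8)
The strategy is to prove the chain $u_1 = u_2 = u_3$ and then invoke the theorem of \cite{intgrossstark}, which establishes (essentially) that $u_1 = u_\fp$; Conjecture \ref{bsformconj} follows at once. Each $u_i$ lies in $F_\fp^\ast \otimes \Z[G]$, and under the splitting $F_\fp^\ast \cong \fp^{\Z} \times \cO_\fp^\ast$ the component of each $u_i$ in $\fp^{\Z} \otimes \Z[G]$ is the tuple of partial zeta values $(\zeta_{R,T}(H/F,\sigma,0))_\sigma$ --- this is part of the content of the three definitions recalled in \S\ref{s:u1}--\S\ref{s:u3} --- so proving $u_i = u_j$ reduces to showing that the images of $u_i$ and $u_j$ in $\cO_\fp^\ast \otimes \Z[G]$ coincide.

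I would handle $u_2 = u_3$ first. Both formulas are specializations of the Eisenstein cocycle: $u_2$ (from \cite{MR3861805}) is obtained by capping the cocycle with an explicit homology class built from the totally positive units of a suitable order, while $u_3$ (from \cite{MR3968788}) is the analogous construction adapted to Gross's tower of fields and engineered to compute principal minors of the Gross--Regulator matrix. I expect this comparison to be largely bookkeeping: one matches the two homology classes, checks that the $T$-smoothing and $\fp$-stabilization operators are applied compatibly, and verifies that the passage to the tower of fields does not affect the specialization governing the diagonal entries. Concretely this should follow from functoriality of the Eisenstein cocycle under the maps relating the two group-theoretic setups, together with a cocycle/coboundary computation showing that the difference pairs trivially with the relevant class.

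The crux is $u_1 = u_2$, which bridges the explicit Shintani / Cassou-Nogu\`es $p$-adic interpolation underlying $u_1$ and the Eisenstein-cocycle pairing underlying $u_2$. The plan is to realize both sides as the multiplicative integral $\multint_{\cO_\fp^\ast} x \, d\mu(x)$ of one and the same $\Z_p[G]$-valued $p$-adic measure $\mu$ on $\cO_\fp^\ast$ (or on an appropriate product of local unit groups). On the $u_1$ side the Cassou-Nogu\`es construction produces such a $\mu$ with moments the $T$-smoothed Shintani zeta values attached to a chosen cone decomposition. On the $u_2$ side one uses a Shintani decomposition of a fundamental domain for the unit action to write down an explicit cocycle representing the Eisenstein class, unwinds the cap product into a sum of cone contributions, and identifies the resulting distribution; that it has the same moments uses that the Eisenstein cocycle specializes to Shintani zeta values at $s=0$. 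Equality of $\mu$ on the two sides then follows because a bounded $p$-adic measure is determined by its moments, the cone-decomposition ambiguity contributing only boundary terms that cancel after projection to $\cO_\fp^\ast \otimes \Z[G]$.

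Combining $u_\fp = u_1$ from \cite{intgrossstark} with $u_1 = u_2 = u_3$ yields Conjecture \ref{bsformconj}, and in particular the diagonal case of the Gross--Regulator conjecture encoded by $u_3$, generalizing the partial result of \cite{comparingformulas}. The main obstacle is the $u_1 = u_2$ step: the identification of the Shintani and Eisenstein cocycles as cohomology classes is classical, but the unit formulas demand an equality of specific cochains --- hence of specific $p$-adic measures --- so the real work is to pin down every normalization (signs, the precise role of $T$, powers of $\fp$, and the $\cO_\fp$- versus $\Z_p$-module structures) and to confirm that the cone-decomposition ambiguities genuinely wash out. This analytic-to-cohomological comparison is exactly what \cite{comparingformulas} carries out in a special case.
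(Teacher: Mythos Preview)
The statement you are attempting to prove is a \emph{conjecture}, and the paper does not prove it in full. What the paper proves is Theorem~\ref{papermainthm}, namely $u_1=u_2=u_3$; combined with the cited result of \cite{intgrossstark} this yields Conjecture~\ref{bsformconj} only \emph{up to a root of unity} in $F_\fp^\ast$ and only under the hypotheses of Theorem~\ref{thmforform} (odd $p$ unramified in $F$, existence of a suitable $\mathfrak{q}\in S$). Your opening sentence --- ``invoke the theorem of \cite{intgrossstark}, which establishes (essentially) that $u_1=u_\fp$'' --- overstates what is known: that word ``essentially'' hides precisely the root-of-unity ambiguity and the side conditions, so your deduction of the full conjecture does not go through. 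At best your argument would recover the remark following Theorem~\ref{thmforform}.

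On the substantive comparison $u_1=u_2$, your proposed route --- identify both sides as $\multint x\,d\mu$ for a single $p$-adic measure $\mu$, determined by its moments --- misses the actual obstruction. Unwinding $u_2$ (equivalently $u_3'$) as in \S\ref{s:eeu3p} and comparing with Definition~\ref{defnformula} shows that the multiplicative integrals over $\mathbb{O}$ and the powers of $\pi$ already match on the nose; the discrepancy is entirely in the \emph{unit error terms}, i.e.\ the factor $\epsilon(\mathfrak{b},\mathcal{D},\pi)\in E_+(\mathfrak{f})$ in $u_1$ versus the product $\prod_i \varepsilon_i^{\zeta_{R,\lambda}(\mathfrak{b},\mathcal{B}_i,\pi\mathcal{O}_\fp,0)}$ appearing in $u_3'$. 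These are two genuinely different expressions in $E_+(\mathfrak{f})$, and there is no ``moment'' argument that forces them to coincide --- a bounded measure on $\cO_\fp^\ast$ says nothing about which global unit in $E_+(\mathfrak{f})\subset\cO_\fp^\ast$ one has. Your claim that ``cone-decomposition ambiguities genuinely wash out'' after projection to $\cO_\fp^\ast\otimes\Z[G]$ is exactly the point in dispute, and the partial result of \cite{comparingformulas} you cite does \emph{not} settle it.

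The paper's method for $u_1=u_2$ is quite different from what you sketch and involves an idea you do not anticipate. One first shows directly, via the explicit expressions, that $u_1(\sigma)\equiv u_2(\sigma)\pmod{E_+(\mathfrak{f})}$ (Proposition~\ref{uptoerrorterm}); this uses $u_2=u_3$ as input. One then proves a norm-compatibility relation (\ref{normcompbasic1}) for each of $u_1$ and $u_2$ under enlarging $H$ to $H'$ of conductor $\mathfrak{f}\mathfrak{f}'$ --- the $u_1$ case (Theorem~\ref{normforu1}) is a delicate Shintani-domain computation, the $u_2$ case (Theorem~\ref{normcompatforu2}) is formal. Combining these gives $u_1(\sigma,H)\equiv u_2(\sigma,H)\pmod{E_+(\mathfrak{f}\mathfrak{f}')}$ for every such $H'$, and letting $\mathfrak{f}'\to\infty$ forces equality since $\bigcap_{\mathfrak{f}'}E_+(\mathfrak{f}\mathfrak{f}')=\{1\}$. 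This limit argument is the ``new idea'' flagged in the introduction, and it is what actually kills the unit ambiguity that your moment-matching approach cannot touch.
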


The main result of this paper is the following.

\begin{theorem} \label{papermainthm}
We have the following equalities between the three conjectural formulas for the Brumer--Stark element $u_\fp$, namely,
\[ u_1=u_2 = u_3  \text{ in } F_\fp^* \otimes \Z[G] . \]
\end{theorem}

Recent work of the first author with Kakde has proved that  $u_1 = u_\fp$ up to a root of unity under some mild assumptions. 
Write $\mu(F_\mathfrak{p}^\ast)$ for the group of roots of unity in $F_\mathfrak{p}^\ast$.
%In particular they have proved the following theorem.

\begin{theorem}[Theorem 1.6, \cite{intgrossstark}] \label{thmforform} Suppose that
 the rational prime $p$ below $\mathfrak{p}$  is odd and unramified in $F$.  Suppose further that 
 there exists $\mathfrak{q} \in S$ that is unramified in $H$ whose associated Frobenius  $\sigma_\mathfrak{q}$
is the complex conjugation in $G$.  
Then Conjecture \ref{bsformconj} for $u_1$ holds up to multiplication by a root of unity in $F_\mathfrak{p}^\ast$:
\[ u_1 = u_\mathfrak{p} \ \text{in} \ (F_\mathfrak{p}^\ast / \mu(F_\mathfrak{p}^\ast)) \otimes \mathbb{Z}[G]. \]
\end{theorem}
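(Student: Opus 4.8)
The plan is to reduce the statement to a $p$-adic logarithm identity and then to establish that identity by Iwasawa theory --- the Equivariant Iwasawa Main Conjecture together with Ribet's method. Since $p$ is odd, Conjecture~\ref{c:bs} holds by \cite{Brumerstark}, so a Brumer--Stark element $u_\mathfrak{p}$ with $\ord_\mathfrak{P}(u_T^\sigma)=\zeta_{R,T}(H/F,\sigma,0)$ exists; as $\mathfrak{p}$ splits completely in $H$, a choice of $\mathfrak{P}\mid\mathfrak{p}$ gives $H_\mathfrak{P}=F_\mathfrak{p}$ and a corresponding image of $u_\mathfrak{p}$ in $F_\mathfrak{p}^\ast\otimes\Z[G]$, which we still denote $u_\mathfrak{p}$. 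Let $e=u_1\cdot u_\mathfrak{p}^{-1}\in F_\mathfrak{p}^\ast\otimes\Z[G]$. By the construction of $u_1$ in \S\ref{s:u1} --- the leading term of Dasgupta's $p$-adic measure at $H$ being the Shintani partial zeta value --- $u_1$ and $u_\mathfrak{p}$ have the same $\ord_\mathfrak{p}$-component, so $e\in\cO_\mathfrak{p}^\ast\otimes\Z[G]$. Since $p$ is odd and $F_\mathfrak{p}/\mathbb{Q}_p$ is unramified, $\mu(F_\mathfrak{p}^\ast)$ has order prime to $p$ and $\cO_\mathfrak{p}^\ast=\mu(F_\mathfrak{p}^\ast)\times(1+\mathfrak{p}\cO_\mathfrak{p})$ with $1+\mathfrak{p}\cO_\mathfrak{p}$ a torsion-free $\Z_p$-module on which $\log_p$ is injective. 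Hence $e\in\mu(F_\mathfrak{p}^\ast)\otimes\Z[G]$, which is the assertion of the theorem, if and only if $\Log(u_1)=\Log(u_\mathfrak{p})$ in $F_\mathfrak{p}\otimes\Z[G]$, where $\Log=\log_p\otimes\id_{\Z[G]}$.

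Second, I would prove this identity by showing both sides compute a single Iwasawa-theoretic quantity. Let $F_\infty/F$ be the cyclotomic $\Z_p$-extension, $H_\infty=HF_\infty$, $\Lambda=\Z_p[[\Gal(H_\infty/F)]]$. On the analytic side $u_1$ is, by construction, the specialization at $H$ of the derivative of the $\mathfrak{p}$-stabilized $p$-adic $L$-function of $H_\infty/F$. On the arithmetic side, one checks that the images of the Brumer--Stark elements at the finite layers of $H_\infty/H$ are norm-coherent, hence define a class in a Ritter--Weiss module over $\Lambda$ --- an extension of a module of semilocal units above $\mathfrak{p}$ by an arithmetic module --- whose Fitting ideal over $\Lambda$ is the $T$-smoothed Stickelberger element by the Equivariant Iwasawa Main Conjecture. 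The EIMC is available in the needed generality (Ritter--Weiss, Kakde, Johnston--Nickel, Dasgupta--Kakde), and it is precisely the hypotheses ``$p$ odd'' and ``$p$ unramified in $F$'' that keep the relevant Iwasawa modules free of nonzero finite submodules and make the ordinary Hida-theoretic inputs of Ribet's method available. The auxiliary prime $\mathfrak{q}$ with $\sigma_\mathfrak{q}$ equal to complex conjugation is a smoothing device: the corresponding Euler factor forces the relevant Ritter--Weiss module to be cyclic over $\Lambda$, so that the Fitting-ideal identity yields a distinguished generator rather than merely an ideal, exactly as in the auxiliary-prime arguments of Mazur--Wiles and Wiles. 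Feeding the EIMC into the proof of Conjecture~\ref{c:bs} in \cite{Brumerstark}, both $\Log(u_1)$ and $\Log(u_\mathfrak{p})$ are then identified with the image of the Stickelberger element under one and the same Coleman-type map valued in $F_\mathfrak{p}\otimes\Z[G]$, which gives the equality; the ambiguity that forces ``up to a root of unity'' rather than equality of the $u_i$ themselves is exactly the kernel of $\log_p$ on local units, namely $\mu(F_\mathfrak{p}^\ast)\otimes\Z[G]$.

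The main obstacle is this second step, and within it two compatibility inputs. One must verify that the Brumer--Stark elements for the tower $H_\infty/H$ are genuinely norm-coherent, which requires tracking how the Brumer--Stark element transforms under change of level and absorbs the Euler factor at $\mathfrak{p}$ --- delicate because $u_\mathfrak{p}$ is characterized only through its $\mathfrak{P}$-adic order. More seriously, one must match Dasgupta's $p$-adic analytic $u_1$, built from Shintani cone decompositions and the associated $p$-adic measures in \cite{MR2420508}, with the cohomological object produced by the Ribet's-method / EIMC computation; this is a refinement of the Gross--Stark conjecture that remembers the \emph{full} local logarithm of the Brumer--Stark unit, not merely its norm to $\mathbb{Q}_p$. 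The route I would take is to show directly that the Shintani cocycle underlying $u_1$ is cohomologous to the Eisenstein-cocycle / constant-term object controlling the EIMC computation --- the technical heart of the Dasgupta--Spie\ss{} style comparisons --- so that the two constructions agree up to $\mu(F_\mathfrak{p}^\ast)\otimes\Z[G]$. Granting these inputs, the reduction of the first paragraph, the standard structure of $\log_p$ on local units, and the smoothings by $T$ and by $\mathfrak{q}$ complete the proof.
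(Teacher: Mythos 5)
You are attempting to prove a statement that this paper itself does not prove: Theorem \ref{thmforform} is imported verbatim from \cite{intgrossstark}, and the present paper's role for it is purely consumptive (its own theorem $u_1=u_2=u_3$ is combined with the cited result to validate $u_2$ and $u_3$). So there is no in-paper proof to compare against, and the comparison must be with the strategy of \cite{intgrossstark}. Your outline does point in the right general direction: reduce to an identity of $p$-adic logarithms using that $\log_p$ is injective on $1+\mathfrak{p}\cO_\mathfrak{p}$ when $p$ is odd and unramified in $F$, then attack that identity via Ribet's method, Ritter--Weiss modules, and an equivariant main conjecture. The reduction in your first paragraph is essentially sound, granted the fact that $\ord_\mathfrak{p}(u_1(\sigma))=\zeta_{R,T}(\sigma,0)$ --- which is itself a computation with the multiplicative integral over $\mathbb{O}$ and the error term $\epsilon(\mathfrak{b},\mathcal{D},\pi)$ carried out in \cite{MR2420508}, not an immediate consequence of ``the leading term being the Shintani partial zeta value.''

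The genuine gap is that your second step is a research program, not an argument: the two ``compatibility inputs'' you defer are the entire mathematical content of the theorem. First, the identification of $\Log(u_1)$ --- a quantity assembled from Shintani cone decompositions and $\mathbb{Z}$-valued measures --- with the derivative of a $\mathfrak{p}$-stabilized $p$-adic $L$-function is not true ``by construction''; it is a substantial theorem, and you offer no argument for it beyond the intention to show two cocycles are cohomologous. Second, the identification of $\Log(u_\mathfrak{p})$ with the image of the Stickelberger element under a Coleman-type map is precisely the integral Gross--Stark conjecture, so asserting that ``feeding the EIMC into \cite{Brumerstark}'' yields it begs the question. Two further concrete errors: (i) the hypothesis that $p$ is odd does not give you the existence of $u_T$, since \cite{Brumerstark} proves Conjecture \ref{c:bs} only after inverting $2$ in the coefficients --- a condition independent of the residue characteristic of $\mathfrak{p}$ --- and because $F_\mathfrak{p}^\ast/\mu(F_\mathfrak{p}^\ast)$ is torsion-free this ambiguity is \emph{not} absorbed by working up to roots of unity; (ii) your account of the auxiliary prime $\mathfrak{q}$ (that its Euler factor ``forces the Ritter--Weiss module to be cyclic over $\Lambda$'') is a guess, and cyclicity is stronger than what the hypothesis actually buys, namely enough control on the presentation of the relevant Ritter--Weiss module to compute Fitting ideals. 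As it stands the proposal reduces the theorem to two unproven statements each comparable in depth to the theorem itself.
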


\begin{remark}
Theorem~\ref{papermainthm} implies that $u_2=u_3 =  u_\mathfrak{p} \ \text{in} \ (F_\mathfrak{p}^\ast / \mu(F_\mathfrak{p}^\ast)) \otimes \mathbb{Z}[G]$ under the assumptions of Theorem \ref{thmforform}.
\end{remark}

In \S\ref{s:u23} we prove that $u_2=u_3$ via a direct cohomological calculation that was foretold in \cite{MR3968788}.
The proof that $u_1=u_3$, which takes up \S\ref{s:u12}, is more interesting and involves a new idea not present in prior work in this direction.  
It can be broken into two parts.
Suppose that $H/F$ is a CM abelian extension of conductor $\mathfrak{f}$ such that $\mathfrak{p}$ splits completely in $H$. We note that if $\mathfrak{q} \mid \ff$ then we must have $\mathfrak{q} \in R$. Denote by $E_+(\ff) \subset \cO_F^*$ the subgroup of totally positive units congruent to 1 modulo $\ff$. We then prove by a direct calculation that
\begin{equation} \label{e:fcong}
 u_1(\sigma) \equiv u_3(\sigma) \pmod{E_+(\mathfrak{f})},
 \end{equation}
where $u_i(\sigma)$ denotes the $\sigma$ component of $u_i$. 

Next, Let $\mathfrak{f}'$ be an auxiliary ideal of $\mathcal{O}_F$ that is divisible only by primes dividing $\mathfrak{f}$. Let $H' \supset H$ be another finite abelian CM extension of $F$ in which $\mathfrak{p}$ splits completely, such that the conductor of $H'/F$ divides $\mathfrak{f}\mathfrak{f}'$. In particular, the extension $H^\prime/F$ is unramified outside $R$. For each $\sigma \in G$, we then show the norm compatibility relation for $i=1,3$,
\begin{equation}     \label{normcompbasic1}
    u_i(\sigma, H) = \prod_{\substack{\tau \in \Gal(H^\prime / F) \\ \tau \mid_H = \sigma } } u_i(\tau, H^\prime).
\end{equation}
We remark here that showing the above equation first requires the proof that $u_2=u_3$. Applying (\ref{e:fcong}) with $H$ replaced by $H'$ and combining with (\ref{normcompbasic1}), we obtain 
\begin{equation} \label{e:ffcong}
 u_1(\sigma, H) \equiv u_3(\sigma, H) \pmod{E_+(\mathfrak{f} \ff')}. 
 \end{equation}
If $R \neq R_\infty$, then taking larger and larger conductors $\ff \ff'$ and passing to a limit, we obtain the desired result 
\[ u_1(\sigma, H) = u_3(\sigma, H) . \]
In the case $R=R_\infty$ we are required to do a little more work. The issue in this case is that $\mathfrak{f} = 1$ so there are no nontrivial ideals $\ff^\prime$ we may take.

In this case, by adding auxiliary primes into $R$, we are able to show that there exists $\varepsilon \in E_+$ such that for each $\sigma \in G$ we have
\[ u_1(\sigma, H) = \varepsilon u_3(\sigma, H) . \]
We then extend the definitions for $u_1$ and $u_2$ to work with the trivial extension $F/F$. We note that this was already done for $u_2$ in \cite{MR3861805}. In fact, $u_2$ is defined for any finite abelian extension $H/F$. Furthermore, in \cite[Proposition 6.3]{MR3861805} it is proved that $u_2(H/F)=1$ if $H$ has at least two real places. In particular, $u_2(F/F)=1$. We also prove that $u_1(F/F)=1$. By the norm compatibility property satisfied by $u_1$ and $u_3$ we have
    \[ 1= u_1(F) = \prod_{\sigma \in G} u_1(\sigma, H) = \varepsilon^{\mid G \mid} \prod_{\sigma \in G} u_3(\sigma, H) =  \varepsilon^{\mid G \mid} \prod_{\sigma \in G} u_2(\sigma, H) =  \varepsilon^{\mid G \mid} . \]
Thus $\varepsilon =1$. Therefore, $u_1 =  u_3$.

\bigskip
{\bf Acknowledgements}.  
The first author was supported by NSF grant DMS 1901939 for the duration of this project.
The second author is grateful for the support of the Heilbronn Institute for Mathematical Research and Imperial College London. The third author was supported by the Deutsche Forschungsgemeinschaft via the grant SFB-TRR 358/1 2023 — 491392403. We thank Mahesh Kakde for helpful discussions.

\section{Preliminaries for the multiplicative integral formula}

\subsection{Notation}\label{s:prelim:notation}

Recall that we have let $F$ be a totally real field of degree $n$ over $\mathbb{Q}$ with ring of integers $\mathcal{O} = \mathcal{O}_F$. Let $E_F=\mathcal{O}_F^\ast$ denote the group of global units. More generally, for a finite set $S$ of nonarchimedean places of $F$ we denote by $E_S = E_{F,S}$ the group of $S$-units of $F$. We define
\begin{equation}\label{e:sbar}
    \overline{S} = \{ \mathfrak{q} :  \mathfrak{q} \mid q  \ \text{where, for some} \ \mathfrak{r} \in S, \  \mathfrak{r} \mid q   \} .
\end{equation}
We also let $H/F$ be a totally complex abelian extension containing a CM-subfield. Let $\mathfrak{f}$ denote the conductor of the extension $H/F$. We write $E_+(\mathfrak{f})$ for the totally positive units of $F$ that are congruent to $1 \pmod{\mathfrak{f}}$. Write $G_\mathfrak{f}$ for the narrow ray class group of conductor $\mathfrak{f}$. Let $e$ be the order of $\mathfrak{p}$ in $G_\mathfrak{f}$ and suppose that $\mathfrak{p}^e = (\pi)$ with $\pi \equiv 1 \pmod{\mathfrak{f}}$ and $\pi$ totally positive. We write $\mathbb{O}= \mathcal{O}_\mathfrak{p} - \pi \mathcal{O}_\mathfrak{p} \subset F_\mathfrak{p}^\ast$.

Define $\mathbb{A}= \mathbb{A}_F$ as the adele ring of $F$. For a $\mathbb{Q}$-vector space $W$ fix the notation $W_{\widehat{\mathbb{Z}}}=W \otimes_\mathbb{Z} \widehat{\mathbb{Z}} = W \otimes_\mathbb{Q} \mathbb{A}_\mathbb{Q}^\infty$. Here $ \mathbb{A}_\mathbb{Q}^\infty$ denotes the finite adeles of $\mathbb{Q}$. For an abelian group $A$ and prime number $\ell$, we put $A_\ell = A \otimes_\mathbb{Z} \mathbb{Q}_\ell$. 

For a place $v$ of $F$ we put $U_v=\mathbb{R}_+=\{ x \in \mathbb{R} \mid x>0 \}$ if $v \mid \infty$ and $U_v=\mathcal{O}_v^\ast$ if $v$ is finite. For a set $S$ of places of $F$ we let $\mathbb{A}^S$ denote the adele ring away from $S$. We also define $U^S= \prod_{v \nin S} U_v$, and $U_S= \prod_{v \in S} U_v$. We shall also use the notations $F^S=( \mathbb{A}^S_F \times U_S) \cap F^\ast$ and  $F_+^S=( \mathbb{A}^S_F \times U_S) \cap F^\ast_+$.

Furthermore, for a nonarchimedean place $v$ of $F$ and an integer $m \geq 0$ we let $U_v^{(m)}$ denote the $m$-th higher units, i.e., $U_v^{(m)} \coloneqq \{ x \in U_v \mid \ord_v(x-1) \geq m \}$. If $\mathfrak{f}$ is an integral ideal and $S$ is a finite set of places we then put
\[ U_\mathfrak{f}^S \coloneqq \prod_{v \not \in S}U_v^{\ord_v(\mathfrak{f})} . \]

Finally we note that if we have a function $f :X \rightarrow Z$, with $X \subseteq Y$ and $Z$ an abelian group, then we can extend $f$ to a function $f_! : Y \rightarrow Z$ by defining 
\begin{equation}
    f_!(y)= \begin{cases}  f(y) &\text{if} \ y \in X  \\ 0 &\text{if} \ y \in Y-X ,  \end{cases}
\end{equation}
we call this the extension of $f$ to $Y$ by $0$.

\subsection{Partial zeta functions} \label{s:pzf}

For $\sigma \in G = \Gal(H/F)$, we define the \textbf{partial zeta function}
\begin{equation}     \label{pzf}
    \zeta_{R}(H/F, \sigma, s)= \sum_{\substack{(\mathfrak{a},R)=1 \\ \sigma_\mathfrak{a}=\sigma}} \N\mathfrak{a}^{-s}.
\end{equation}

Here the sum ranges over all integral ideals $\mathfrak{a}\subset \mathcal{O}$ that are relatively prime to the elements of $R$ and whose associated Frobenius element $\sigma_\mathfrak{a} \in G$ is equal to $\sigma$.
The series (\ref{pzf}) converges for $\text{Re}(s)>1$ and has a meromorphic continuation to $\mathbb{C}$, regular outside $s=1$. 
When the field extension $H/F$ is clear from context, we drop it from the notation and simply write $ \zeta_{R}(\sigma, s)$. Since $\fp$ splits completely in $H$, the zeta functions associated to the sets of primes $R$ and $S = R \cup \{\fp\}$ 
are related by the formula
\[ \zeta_{S}(\sigma, s)= (1-\N\mathfrak{p}^{-s}) \zeta_{R}(\sigma,s). \]
% If $K$ is a finite abelian extension of $F$ and $\sigma \in \Gal(K/F)$ we use the notation $\zeta_{R}(K/F, \sigma, s)$ for the partial zeta function defined as above but with the equality $\sigma_\mathfrak{a}=\sigma$ being viewed in $\Gal(K/F)$.

Recall that we have fixed an auxiliary finite set of primes of $F$, denoted $T$, that is disjoint from $S$.
The partial zeta function associated to the sets $R$ and $T$ is defined by the group ring equation
\begin{equation}     \label{pzfT}
    \sum_{\sigma \in G} \zeta_{R,T}(\sigma, s)[\sigma^{-1}]= \prod_{\eta \in T}(1-[\sigma_\eta^{-1}]\N\eta^{1-s}) \sum_{\sigma \in G} \zeta_{R}(\sigma, s)[\sigma^{-1}].
\end{equation}

We assume that the set $T$ contains at least two primes of different residue characteristic or at least one prime $\eta$ with absolute ramification degree at most $\ell-2$, where $\eta$ lies above $\ell$. With this in place, the values $\zeta_{R,T}(K/F, \sigma, 0)$ are rational integers for any finite abelian extension $K/F$ unramified outside $R$ and any $\sigma \in \Gal(K/F)$. This was shown by Deligne-Ribet \cite{MR579702} and Cassou-Nogu\'es \cite{MR524276}.

Our assumption on $T$ implies that there are no nontrivial roots of unity in $H$ that are congruent to $1$ modulo $T$. Thus the $\mathfrak{p}$-unit $u_T$ in Conjecture \ref{c:bs}, if it exists, is unique. Note also that our $u_T$ is actually the inverse of the $u$ in \cite[Conjecture $7.4$]{MR931448}.
From \S \ref{s:u1} onwards, for ease of notation, we fix $T=\{ \lambda \}$ for an appropriate choice of $\lambda$.

\subsection{Shintani zeta functions}

Shintani zeta functions are a crucial ingredient in each of the constructions we study. We establish the necessary notation here, following Shintani \cite{MR0427231}. In this subsection we continue to allow any choice of appropriate $T$.

For each $v \in R_\infty$ we write $\sigma_v : F \rightarrow \mathbb{R}$ and fix the order of these embeddings. We can then embed $F$ into $\mathbb{R}^n$ by $x \mapsto (\sigma_v(x))_{v\in R_\infty}$. Note that $F^\ast$ acts on $\mathbb{R}^n$ with $x \in F^\ast$ acting by multiplication by $\sigma_v(x)$ on the $v$-component of any vector in $\mathbb{R}^n$. For linearly independent $v_1, \dots ,v_r \in \mathbb{R}_+^n$, define the simplicial cone 
\[ C(v_1, \dots , v_r)= \left\{ \sum_{i=1}^r c_i v_i \in \mathbb{R}_+^n : c_i>0 \right\}. \]

\begin{definition}
A \textbf{Shintani cone} is a simplicial cone $C(v_1, \dots , v_r)$ generated by elements $v_i \in F \cap \mathbb{R}_+^n$. A \textbf{Shintani set} is a subset of $\mathbb{R}_+^n$ that can be written as a finite disjoint union of Shintani cones. 
\end{definition}

We now recall the definition of \textbf{Shintani zeta functions}. Let $\mathcal{O}_{F,\mathfrak{p}}$ denote the ring of $\mathfrak{p}$-integers of $F$. For any fractional ideal $\mathfrak{b} \subset F$ relatively prime to $S$, we let $\mathfrak{b}_{\mathfrak{p}} = \mathfrak{b} \otimes_{\mathcal{O}_F} \mathcal{O}_{F,\mathfrak{p}}$ denote the $\mathcal{O}_{F,\mathfrak{p}}$-module generated by $\mathfrak{b}$. Write $\mathfrak{f}$ for the conductor of the extension $H/F$. Let $\mathfrak{b}$ be a fractional ideal of $F$ relatively prime to $S= \{ \mathfrak{p} \} \cup R $ and $\overline{T}$, and let ${D}$ be a Shintani set. For each compact open $U \subseteq F_\mathfrak{p}$, define, for $\text{Re}(s)>1$,
\begin{equation*}     \label{eqn20}
    \zeta_{R}(\mathfrak{b}, {D}, U,s)=\N \mathfrak{b}^{-s} \sum_{\substack{\alpha \in F \cap {D}, \  \alpha \in U \\ (\alpha, R)=1, \ \alpha \in \mathfrak{b}_\mathfrak{p}^{-1} \\ \alpha \equiv 1 \pmod{\mathfrak{f}} }} \N \alpha^{-s} .
\end{equation*}
For a general element $z \in F^\ast$ the congruence $z \equiv 1 \pmod{\mathfrak{f}}$ means that $z-1 \in \mathfrak{f} \mathcal{O}_\mathfrak{f} \cap F $, where $\mathcal{O}_\mathfrak{f}$ is the $\mathfrak{f}$-adic completion of $\mathcal{O}_F$. We define $\zeta_{R,T}(\mathfrak{b}, {D},U,s)$ in analogy with (\ref{pzfT}). Suppose that 
\[ \prod_{\eta \in T}(1-[\eta]\N\eta^{1-s}) = \sum_{\mathfrak{a}} c_\mathfrak{a}(s)[\mathfrak{a}] \]
in the group ring of fractional ideals with coefficients in the ring of complex valued functions on $\mathbb{C}$, and define
\begin{equation}     \label{shintanizetafn}
    \zeta_{R,T}(\mathfrak{b}, D, U, s)=  \sum_{\mathfrak{a}} c_\mathfrak{a}(s) \zeta_{R}(\mathfrak{a}^{-1}\mathfrak{b}, D, U, s).
\end{equation}
In particular, if $\N \eta = \ell$ and $T=\{ \eta \}$, we have
\[ \zeta_{R,T}(\mathfrak{b}, D, U, s) = \zeta_{R}(\mathfrak{b}, D, U, s) - \ell^{1-s} \zeta_{R}(\mathfrak{b}\eta^{-1}, D, U, s) . \]
It follows from Shintani's work in \cite{MR0427231} that the function $\zeta_{R,T}(\mathfrak{b}, {D},U,s)$ has a meromorphic continuation to $\mathbb{C}$. We now want to define conditions on the set of primes $T$ and the Shintani set $D$ to allow our Shintani zeta functions to be integral at $0$.

\begin{definition}
A prime ideal $\eta$ of $F$ is called \textbf{good} for a Shintani cone $C$ if
\begin{itemize}
    \item $ \N \eta$ is a rational prime $\ell$; and 
    \item the cone $C$ may be written $C=C(v_1, \dots , v_r)$ with $v_i \in \mathcal{O}$ and $v_i \nin \eta$.
\end{itemize}
We also say that $\eta$ is \textbf{good} for a Shintani set $D$ if $D$ can be written as a finite disjoint union of Shintani cones for which $\eta$ is good.
\end{definition}

\begin{definition}
The set $T$ is \textbf{good} for a Shintani set ${D}$ if ${D}$ can be written as a finite disjoint union of Shintani cones $D=\bigsqcup C_i$ so that for each cone $C_i$, there are at least two primes in $T$ that are good for $C_i$ (necessarily of different residue characteristic by our earlier assumption) or one prime $\eta \in T$ that is good for $C_i$ such that $\N\eta \geq n+2$.
\end{definition}

\begin{remark}
Given any Shintani set $D$, it is possible to choose a set of primes $T$ such that $T$ is good for $D$. In fact, all but a finite number of prime ideals with prime norm are good for a given Shintani set.
\end{remark}

We can now note the required property to allow our Shintani zeta functions to be integral at zero. The proposition below is proved in \cite[p.15]{MR2420508}.

\begin{proposition}
If the set of primes $T$ is good for a Shintani set $D$, then
\[ \zeta_{R,T}(\mathfrak{b}, D,U,0) \in \mathbb{Z}. \]
\end{proposition}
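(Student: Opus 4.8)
The plan is to reduce the integrality of $\zeta_{R,T}(\mathfrak{b}, D, U, 0)$ to the classical integrality statement of Deligne--Ribet and Cassou-Nogu\`es recalled in \S\ref{s:pzf}, by expressing our Shintani zeta function as an integer linear combination of partial zeta functions $\zeta_{R',T}(K/F, \sigma, s)$ attached to appropriate ray class groups, evaluated at $s=0$. First I would unwind the definitions: since $T$ being good for $D$ means $D = \bigsqcup_i C_i$ with each cone $C_i$ having two good primes in $T$ (or one with large norm), additivity of the series in $D$ lets me treat a single Shintani cone $C = C(v_1,\dots,v_r)$ with $v_i \in \mathcal{O}$, $v_i \notin \eta$ for the relevant good primes $\eta \in T$. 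Also, the compact open $U \subseteq \mathcal{O}_\mathfrak{p}$ can be written as a finite disjoint union of cosets $a + \mathfrak{p}^m \mathcal{O}_\mathfrak{p}$, so by additivity in $U$ I may assume $U$ is a single such coset; combining the congruence conditions $\alpha \equiv 1 \pmod{\mathfrak{f}}$, $\alpha \in a + \mathfrak{p}^m\mathcal{O}_\mathfrak{p}$, and $\alpha \in \mathfrak{b}^{-1}$ into a single congruence/ideal condition, the sum defining $\zeta_R(\mathfrak{b}, C, U, s)$ becomes a sum over lattice points $\alpha$ in a cone lying in a fixed coset of a fixed lattice, coprime to $R$.

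Next I would invoke Shintani's decomposition: the generating function $\sum \N\alpha^{-s}$ over lattice points in the half-open fundamental parallelepiped of $C$ translated through the semigroup generated by $v_1,\dots,v_r$ has an explicit meromorphic continuation, and its special value at $s = 0$ is given by a finite sum of values of Shintani's $B$-functions (generalized Bernoulli-type polynomials) evaluated at the coordinates of the finitely many representative lattice points, with denominators bounded in terms of the $v_i$ and the lattice index. The key point — and this is exactly the content of \cite[p.15]{MR2420508} — is that multiplying by the Euler-type factor $\prod_{\eta \in T}(1 - [\sigma_\eta^{-1}]\N\eta^{1-s})$ as in (\ref{shintanizetafn}) and specializing at $s=0$ clears these denominators precisely when $\eta$ is good for $C$: the condition $v_i \notin \eta$ ensures that the smoothing prime $\eta$ acts "transversally" on the cone, so that the $\eta$-smoothed Shintani sum can be rewritten as a difference of two Shintani sums over sublattices, and a congruence argument modulo $\ell = \N\eta$ (using $\ell \geq n+2$ for a single prime, or two primes of distinct residue characteristic) shows the relevant Bernoulli values become $\ell$-integral, and then integral.

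The cleanest route, which I would actually write, is to match $\zeta_{R,T}(\mathfrak{b}, C, U, 0)$ against $\zeta_{R \cup \overline{T}, T}(K/F, \cdot, 0)$ for $K$ a sufficiently large ray class field (of conductor divisible by $\mathfrak{f}$, by the modulus controlling $U$, and by the $v_i$): each lattice-point/ideal $\alpha$ contributes to a well-defined class in $\Gal(K/F)$, so the finite-level Shintani sum is literally a sub-sum of a partial zeta function, and the good-prime hypothesis on $T$ is designed so that the same $T$ works as a smoothing set at level $K$. Then integrality at $s=0$ is immediate from Deligne--Ribet/Cassou-Nogu\`es. The main obstacle is bookkeeping: verifying that the congruence conditions ($\mathfrak{f}$, $U$, $\mathfrak{b}^{-1}$) and the cone's lattice structure can be simultaneously packaged as a single coset of a single lattice inside a ray class group, and — more substantively — checking that "good for the cone $C$" at the base level translates to a valid smoothing condition at the auxiliary level $K$, i.e. that no prime of $T$ divides the conductor or the relevant lattice index. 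This is precisely where the hypotheses $v_i \in \mathcal{O}$, $v_i \notin \eta$, and $\N\eta \geq n+2$ are consumed, and it is the heart of Shintani's denominator-clearing computation as carried out in \cite{MR2420508}.
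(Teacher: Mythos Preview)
The paper does not give a proof here; it simply cites \cite[p.~15]{MR2420508}. Your Route A --- reducing to a single cone, using Shintani's explicit formula for the value at $s=0$ as a sum of generalized Bernoulli polynomials in the representative points of the fundamental parallelepiped, and then checking that the $T$-smoothing clears the denominators under the ``good'' hypothesis --- is exactly what that reference does, so that part of your proposal is correct and aligned with the paper.

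Your preferred Route B, however, has a real gap. The Shintani zeta function attached to a single cone $C$ is \emph{not} a partial zeta function $\zeta_{R',T}(K/F,\sigma,0)$ for any ray class field $K$, nor an integer linear combination of such: a partial zeta function sums over an entire orbit of $E_+(\mathfrak{m})$ acting on the totally positive orthant, whereas a single cone $C$ captures only a geometric fragment of such an orbit. Passing to a finer level $K$ refines the congruence conditions on $\alpha$ but does nothing to make the cone $C$ a fundamental domain; the sum over $C$ remains a strict sub-sum of (a union of) partial zeta functions, and integrality of the whole says nothing about integrality of the piece. This is also why the ``good for $C$'' hypothesis is needed in the first place: if Route B worked, the geometry of the generators $v_i$ relative to $\eta$ would be irrelevant, and the proposition would hold for any $T$ satisfying the Deligne--Ribet condition. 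So stick with Route A; the explicit Shintani denominator computation is unavoidable.
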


We define a $\mathbb{Z}$-valued measure $\nu_T(\mathfrak{b}, {D})$ on $\mathcal{O}_\mathfrak{p}$ by
\begin{equation}     \label{eqn21}
    \nu_T(\mathfrak{b}, {D},U) \coloneqq \zeta_{R,T}(\mathfrak{b}, {D},U,0),
\end{equation}
for $U \subseteq \mathcal{O}_\mathfrak{p}$ compact open.

We are mostly interested in a particular type of Shintani set, one which is a fundamental domain for the action of a finite index subgroup $V \subset E_+(\mathfrak{f})$.

\begin{definition}
Let $V \subset E_+(\mathfrak{f})$ be a finite index subgroup (which is necessarily free of rank $n-1$). We call a Shintani set ${D}$ a \textbf{Shintani domain} for $V$ if ${D}$ is a fundamental domain for the action of $V$
on $\mathbb{R}_+^n$. That is,
\[ \mathbb{R}_+^n=\bigsqcup_{\epsilon \in V} \epsilon {D} \quad \text{(disjoint union).} \]
\end{definition}

The existence of such domains follows the work of Shintani, in particular from \cite[Proposition $4$]{MR0427231}. We note here some simple equalities that follow from the definitions. More details are given in \S 3.3 of \cite{MR2420508}. Recall we have written $G_\mathfrak{f}$ for the narrow ray class group of conductor $\mathfrak{f}$. Let $e$ be the order of $\mathfrak{p}$ in $G_\mathfrak{f}$, and write $\mathfrak{p}^e = (\pi)$ with $\pi \equiv 1 \pmod{\mathfrak{f}}$ and $\pi$ totally positive. We denote by $H_\mathfrak{f}$ the narrow ray class field of $F$ of conductor $\mathfrak{f}$, and by $H$ the maximal subfield of $H_{\mathfrak{f}}$ containing $F$ in which the prime $\mathfrak{p}$ splits completely. Let ${D}$ be a Shintani domain for $E_+(\mathfrak{f})$ and write $\mathbb{O}= \mathcal{O}_\mathfrak{p}-\pi \mathcal{O}_\mathfrak{p} $. Then,
\[
    \nu_T(\mathfrak{b}, {D},\mathbb{O}) = \zeta_{S,T}(H/F, \mathfrak{b}, 0) = 0, \quad \text{and} \quad
    \nu_T(\mathfrak{b}, {D},\mathcal{O}_\mathfrak{p}) = \zeta_{R,T}(H_\mathfrak{f}/F, \mathfrak{b}, 0).
\]

We now give two technical definitions that are necessary in the definition of $u_1$.

\begin{proposition}
Let $V \subset E_+(\mathfrak{f})$ be a finite index subgroup. Let ${D}$ and ${D}^\prime$ be Shintani domains for $V$. We may write ${D}$ and ${D}^\prime$ as finite disjoint unions of the same number of simplicial cones 
\begin{equation}
    {D}= \bigcup_{i=1}^d C_i, \quad {D}^\prime = \bigcup_{i=1}^d C_i^\prime,
    \label{eqn9}
\end{equation}
with $C_i^\prime = \epsilon_i C_i$ for some $\epsilon_i \in V$, $i=1, \dots , d$.
\end{proposition}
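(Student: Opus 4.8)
The plan is to reduce the statement to the additivity of a natural invariant attached to Colmez (= fundamental) domains for $V$ under the "cut-and-paste" equivalence generated by: (i) subdividing a simplicial cone into finitely many simplicial subcones, and (ii) translating a subcone by an element $\epsilon \in V$. Concretely, I would argue as follows. First, observe that any two Colmez domains $D, D'$ for $V$, being fundamental domains for a free abelian group $V$ of rank $n-1$ acting on $\mathbb{R}_+^n$, are related in the Grothendieck group of Shintani sets modulo $V$-translation: writing $D = \bigsqcup_i C_i$ and $D' = \bigsqcup_j C_j'$ as finite disjoint unions of simplicial cones (possible by definition), the set $D' = \mathbb{R}_+^n / V$ "is" $D$ up to translating pieces by $V$, because for $x \in D'$ there is a unique $\epsilon(x) \in V$ with $\epsilon(x) x \in D$, and this function $x \mapsto \epsilon(x)$ is locally constant on the (open) cones, hence constant on a common simplicial refinement of the $C_j'$. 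This already shows that $D'$ can be partitioned into finitely many simplicial subcones $C_j'^{(k)}$ each of which is translated by a single $\epsilon_{j,k}\in V$ into $D$.

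The remaining point is to arrange that $D$ and $D'$ are cut into the \emph{same number} of simplicial cones with the correspondence being cone-by-cone via a single $\epsilon_i$. For this I would pass to a common simplicial refinement: subdivide the cones $C_i$ of $D$ and, compatibly, the translated pieces inside $D$ coming from $D'$, so that the two induced simplicial decompositions of $D$ coincide. Then pulling the $D'$-pieces back out by the corresponding $\epsilon^{-1}\in V$ yields a decomposition of $D'$ into the same number of cones, matched one-to-one with the refined decomposition of $D$, each pair related by multiplication by an element of $V$. The fact that a common refinement of two finite simplicial fans (restricted to a cone) exists and is again a finite union of simplicial cones is standard; I would either cite Shintani \cite{MR0427231} or \S 3.3 of \cite{MR2420508}, or give the short barycentric-subdivision argument. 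Note that $d$ here is whatever the size of this common refinement is — the proposition only asserts existence of \emph{some} such pair of decompositions, not that the original $C_i$ work.

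The main obstacle is the bookkeeping in the simultaneous refinement step: one must check that subdividing $D$ and carrying the subdivision back through the (finitely many distinct) translations $\epsilon_{j,k}$ acting on $D'$ produces a \emph{disjoint} union covering all of $D'$, with no overlaps created along the cone walls. This is where the hypothesis that both $D$ and $D'$ are genuine fundamental domains (so the translation function $\epsilon(\cdot)$ is well-defined and the pieces tile without overlap) is essential; boundary faces of cones have measure zero and can be absorbed into the half-open conventions, so they do not affect the disjointness up to the usual Shintani conventions. Once the combinatorial refinement is set up correctly, the conclusion $C_i' = \epsilon_i C_i$ is immediate from the construction, and no further zeta-function input is needed — this proposition is purely geometric and is used downstream to show the relevant Shintani zeta values (hence the measures $\nu(\mathfrak{b},D)$) are independent of the choice of Colmez domain.
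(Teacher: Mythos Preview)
Your proposal is correct and is precisely the standard argument; the paper's own proof is nothing more than a citation to \cite[Proposition~3.15]{MR2420508} (which handles the case $V=E_+(\mathfrak{f})$) together with the remark that the same argument works verbatim for any finite-index $V\subset E_+(\mathfrak{f})$. Your sketch is a faithful unpacking of that cited argument: the key observation is that for each $\epsilon\in V$ the set $D'\cap \epsilon^{-1}D$ is a Shintani set (being an intersection of Shintani sets), only finitely many of these are nonempty, and decomposing each into simplicial cones and translating by $\epsilon$ gives the matched decompositions of $D'$ and $D$. One small cosmetic point: rather than phrasing the map $x\mapsto\epsilon(x)$ as ``locally constant'' (which is slightly awkward since the cones are of varying dimensions and not open in $\mathbb{R}_+^n$), it is cleaner to say directly that its level sets $D'\cap\epsilon^{-1}D$ are Shintani sets and hence finite disjoint unions of simplicial cones --- this bypasses the ``common refinement'' step entirely.
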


\begin{proof}
\cite[Proposition 3.15]{MR2420508} proves this result when $V=E_+(\mathfrak{f})$. The proof of this proposition is analogous.
\end{proof}

A decomposition as in (\ref{eqn9}) is called a \textbf{simultaneous decomposition} of the Shintani domains $({D}, {D}^\prime)$. 

\begin{definition}
Let $(D,D^\prime)$ be a pair of Shintani domains. A set $T$ is \textbf{good} for the pair $({D}, {D}^\prime)$ if there is a simultaneous decomposition as in (\ref{eqn9}) such that for each cone $C_i$, there are at least two primes in $T$ that are good for $C_i$, or there is one prime $\eta \in T$ that is good for $C_i$ such that $\N \eta \geq n+2$.
\end{definition}

\begin{definition}
Let $D$ be a Shintani domain. If $\beta \in F^\ast$ is totally positive, then $T$ is $\beta$-\textbf{good} for ${D}$ if $T$ is good for the pair $({D}, \beta^{-1} {D})$.
\end{definition}

\begin{lemma}[Lemma 3.20, \cite{MR2420508}]  \label{changeofvariable}
 Let $D$ be a Shintani set and $U$ a compact open subset of $\mathcal{O}_\mathfrak{p}$. Let $\mathfrak{b}$ be a fractional ideal of $F$, and let $\beta \in F^\ast$ be totally positive so that $\beta \equiv 1 \pmod{\mathfrak{f}}$ and $\ord_\mathfrak{p}(\beta) \geq 0$. Suppose that $\mathfrak{b}$ and $\beta$ are relatively prime to $R$ and that $\mathfrak{b}$ is also relatively prime to  $\overline{T}$. Let $\mathfrak{q}= (\beta)\mathfrak{p}^{-\ord_\mathfrak{p}(\beta)}$. Then
 \[ \zeta_{R,T}(\mathfrak{bq}, {D},U,0) = \zeta_{R,T}(\mathfrak{b}, \beta {D},\beta U,0). \]
\end{lemma}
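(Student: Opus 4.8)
The statement to prove is Lemma~\ref{changeofvariable}, asserting
\[ \zeta_{R,T}(\mathfrak{bq}, D, U, 0) = \zeta_{R,T}(\mathfrak{b}, \beta D, \beta U, 0). \]
Since the $T$-modified zeta functions are obtained from the untwisted ones by the universal group-ring relation in~(\ref{shintanizetafn}), which only involves the primes of $T$ and not the ideal $\mathfrak{b}$ or the Shintani set, it suffices to prove the analogous identity for $\zeta_R(-,-,-,s)$ for all $s$ with $\text{Re}(s) > 1$; the claim at $s = 0$ then follows by meromorphic continuation. So the plan is to reduce immediately to the untwisted Dirichlet series and compare the defining sums term by term.

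The core of the argument is an explicit bijection between the index sets of the two sums. Recall
\[ \zeta_R(\mathfrak{b}, D, U, s) = \N\mathfrak{b}^{-s} \sum_{\substack{\alpha \in F \cap D,\ \alpha \in U \\ (\alpha, R) = 1,\ \alpha \in \mathfrak{b}^{-1} \\ \alpha \equiv 1 \!\!\!\pmod{\mathfrak{f}}}} \N\alpha^{-s}. \]
First I would set $\mathfrak{q} = (\beta)\mathfrak{p}^{-\ord_\mathfrak{p}(\beta)}$ and observe that $\mathfrak{q}$ is an integral ideal prime to $R$ and to $\overline{T}$ (because $\beta$ is prime to $R$ and $\mathfrak{p} \notin R$), so $\mathfrak{bq}$ is a legitimate argument. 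The natural candidate bijection is $\alpha \mapsto \alpha' = \beta\alpha$ from the index set on the left to the index set on the right. I would check each membership condition passes through correctly: (i) $\alpha \in D \iff \beta\alpha \in \beta D$ is immediate; (ii) $\alpha \in U \iff \beta\alpha \in \beta U$ is immediate; (iii) $\alpha \in (\mathfrak{bq})^{-1}$, together with $\ord_\mathfrak{p}(\alpha) \geq 0$ forced by $\alpha \in U \subseteq \mathcal{O}_\mathfrak{p}$, should be shown equivalent to $\beta\alpha \in \mathfrak{b}^{-1}$ using $(\beta) = \mathfrak{q}\,\mathfrak{p}^{\ord_\mathfrak{p}(\beta)}$ and the local conditions at $\mathfrak{p}$; (iv) the congruence $\alpha \equiv 1 \pmod{\mathfrak{f}}$ transfers to $\beta\alpha \equiv 1 \pmod{\mathfrak{f}}$ precisely because $\beta \equiv 1 \pmod{\mathfrak{f}}$; and (v) coprimality to $R$ is preserved since $\beta$ is prime to $R$. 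Finally, on the values, $\N(\mathfrak{bq})^{-s}\N\alpha^{-s} = \N\mathfrak{b}^{-s}\N\mathfrak{q}^{-s}\N\alpha^{-s}$ and $\N\mathfrak{q} = \N(\beta)\N\mathfrak{p}^{-\ord_\mathfrak{p}(\beta)} = |\N_{F/\mathbb{Q}}(\beta)|$... more precisely $\N\mathfrak{q} = \N\beta$ up to the power of $\N\mathfrak{p}$ removed, so $\N\mathfrak{q}^{-s}\N\alpha^{-s} = \N(\beta\alpha)^{-s}$, matching term by term.

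The step I expect to be the main obstacle is condition (iii): carefully verifying that the $\mathfrak{p}$-local bookkeeping works out, i.e. that dividing by $\mathfrak{q} = (\beta)\mathfrak{p}^{-\ord_\mathfrak{p}(\beta)}$ rather than by $(\beta)$ exactly compensates for the fact that elements of $U \subseteq \mathcal{O}_\mathfrak{p}$ are only required to be $\mathfrak{p}$-integral, not $\mathfrak{p}$-units, so that the $\ord_\mathfrak{p}$-component of the divisibility condition is automatically satisfied on both sides and one genuinely gets a bijection rather than just an injection. One must confirm that no spurious constraint at $\mathfrak{p}$ is introduced or lost, and that the hypothesis $\ord_\mathfrak{p}(\beta) \geq 0$ is what makes $\mathfrak{q}$ integral. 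Everything else is a routine check that the remaining conditions are ``multiplication by $\beta$''-equivariant, which is essentially by design since $\beta$ is a totally positive unit-like element congruent to $1$ modulo $\mathfrak{f}$ and prime to $R$.
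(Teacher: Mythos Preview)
The paper does not give its own proof of this lemma; it simply cites Lemma~3.20 of \cite{MR2420508}. Your change-of-variable approach via $\alpha\mapsto\beta\alpha$ is indeed the natural one, and your verification of the bijection on index sets is fine (in particular your observation that at $\mathfrak{p}$ the ideal condition is subsumed by $\alpha\in U\subseteq\mathcal{O}_\mathfrak{p}$ on both sides is the correct resolution of condition (iii)).

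However, there is a genuine slip in the norm computation. You write $\N\mathfrak{q}=|\N_{F/\mathbb{Q}}(\beta)|$ and then conclude $\N\mathfrak{q}^{-s}\N\alpha^{-s}=\N(\beta\alpha)^{-s}$; this is false when $\ord_\mathfrak{p}(\beta)>0$, which is precisely the case of interest (e.g.\ $\beta=\pi$ in the paper's applications). In fact $\N\mathfrak{q}=\N\beta\cdot\N\mathfrak{p}^{-\ord_\mathfrak{p}(\beta)}$, so your bijection actually yields
\[
\zeta_R(\mathfrak{bq},D,U,s)=\N\mathfrak{p}^{\,s\cdot\ord_\mathfrak{p}(\beta)}\,\zeta_R(\mathfrak{b},\beta D,\beta U,s)
\]
for $\mathrm{Re}(s)>1$, hence for all $s$ by continuation. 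The identity you claimed for general $s$ is simply not true. The fix is immediate: the extraneous factor $\N\mathfrak{p}^{\,s\cdot\ord_\mathfrak{p}(\beta)}$ equals $1$ at $s=0$, so the stated equality at $s=0$ follows. This same factor is independent of $\mathfrak{b}$, so it passes harmlessly through the $T$-smoothing relation~(\ref{shintanizetafn}) and the reduction to the unsmoothed zeta function is valid. With this correction your argument is complete.
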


We end this section with a lemma of Colmez that allows us to give an explicit Shintani domain. Let $\alpha$ be, up to a sign, one of the standard basis vectors of $\mathbb{R}^n$. Note that its ray ($\alpha\mathbb{R}_+$) is preserved by the action of $\mathbb{R}_+^n$. We define $\overline{C}_\alpha(v_1, \dots , v_r)$ to be the union of the cone $C(v_1, \dots , v_r)$ with the boundary cones that are brought into the interior of the cone by a small perturbation by $\alpha$, i.e., the set whose characteristic function is given by 
\begin{equation}
    \mathbbm{1}_{\overline{C}_\alpha(v_1, \dots , v_r)}(x) = \lim_{h \rightarrow 0^+} \mathbbm{1}_{C(v_1, \dots, v_r)}(x+h\alpha). 
    \label{overlinec}
\end{equation}
We use the usual bar notation for homogeneous chains
\[ [x_1 \mid \dots \mid x_{n-1}] = (1, x_1, x_1 x_2, \dots , x_1 \dots x_{n-1}). \]
Let $x_1, \dots , x_{n-1} \in F$. We define the sign map $\delta : F^{n} \rightarrow \{ -1,0,1 \} $ by the rule 
\begin{equation}
    \delta(x_1, \dots , x_n)=  \sign(\det  (\omega(x_1, \dots , x_n))),
    \label{epsilonnotation}
\end{equation}
where $\omega(x_1, \dots , x_n)$ denotes the $n \times n$ matrix whose columns are the images of the $x_i$ in $\mathbb{R}^n$. We adopt the convention $\sign(0)=0$.

\begin{lemma}[Lemma 2.2, \cite{MR922806}] \label{colmezlemma}
Let $\alpha$ be, up to a sign, one of the standard basis vectors of $\mathbb{R}^n$. Let $\varepsilon_1, \dots, \varepsilon_{n-1} \in E_+(\mathfrak{f})$ such that $V=\langle \varepsilon_1, \dots, \varepsilon_{n-1}  \rangle \subset E_+(\mathfrak{f}) $ has finite index. Suppose that for all $\tau \in S_{n-1}$ we have
\[ \delta([\varepsilon_{\tau(1)} \mid \dots \mid \varepsilon_{\tau(n-1)}])=\sign(\tau). \]
Then the Shintani set
\[ {D}= \bigcup_{\tau \in S_{n-1}}\overline{C}_{\alpha}([\varepsilon_{\tau(1)} \mid \dots \mid \varepsilon_{\tau(n-1)}]),  \]
is a Shintani domain for $V$.
\end{lemma}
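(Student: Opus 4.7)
My plan is to follow the original geometric argument of Colmez: reduce the tiling problem to an additive one via the logarithm map, then verify the tiling via a classical decomposition of a parallelepiped into simplices indexed by permutations.

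\textbf{Logarithmic reduction.} I would apply the map $L:\mathbb{R}_+^n \to \mathbb{R}^n$, $(x_1,\dots,x_n)\mapsto (\log x_1,\dots,\log x_n)$, under which the multiplicative $V$-action becomes translation by the lattice $\Lambda := L(V)$. Because $V \subset E_+(\mathfrak{f})$ consists of totally positive units of norm $1$, the lattice $\Lambda$ lies in the trace-zero hyperplane $H_0 = \{y \in \mathbb{R}^n : \sum_i y_i = 0\}$, and since $V$ has finite index in $E_+(\mathfrak{f})$, $\Lambda$ is a full-rank lattice in $H_0$. The action of $V$ on $\mathbb{R}_+^n$ is thus free and properly discontinuous. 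The claim reduces to showing that for generic $x \in \mathbb{R}_+^n$ there is exactly one pair $(v,\tau) \in V \times S_{n-1}$ with $x \in v\cdot C_\tau$, where $C_\tau := C([\varepsilon_{\tau(1)}\mid\dots\mid\varepsilon_{\tau(n-1)}])$.

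\textbf{Combinatorial tiling.} Set $l_i := L(\varepsilon_i) \in H_0$, so $\Lambda = \bigoplus_{i=1}^{n-1} \mathbb{Z} l_i$, and consider the quotient $\mathbb{R}_+^n \twoheadrightarrow H_0$ by the diagonal ray $\mathbb{R}_+ \cdot \mathbf{1}$ (which under $L$ is the projection $\mathbb{R}^n \twoheadrightarrow H_0$ along $\mathbb{R}\mathbf{1}$). The image of $C_\tau$ is the $(n-1)$-simplex $\Delta_\tau$ with vertices $0,\, l_{\tau(1)},\, l_{\tau(1)}+l_{\tau(2)},\,\dots,\, l_{\tau(1)}+\cdots+l_{\tau(n-1)}$. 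Parametrizing a point of $\Delta_\tau$ as $\sum_i r_i l_i$, one checks that it lies in $\Delta_\tau$ exactly when $1 \geq r_{\tau(1)} \geq r_{\tau(2)} \geq \cdots \geq r_{\tau(n-1)} \geq 0$. Since every point in the fundamental parallelepiped $P = \{\sum r_i l_i : 0 \leq r_i \leq 1\}$ for $\Lambda$ has its coordinates in a unique decreasing order after relabeling by a permutation, the simplices $\Delta_\tau$ tile $P$ up to a measure-zero set as $\tau$ ranges over $S_{n-1}$. The sign condition $\delta([\varepsilon_{\tau(1)}\mid\dots\mid\varepsilon_{\tau(n-1)}])=\sign(\tau)$ is exactly the statement that each simplex $\Delta_\tau$ is non-degenerate with orientation consistent with this classical decomposition, ensuring the tiling is honest rather than merely formal.

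\textbf{Boundary handling.} The remaining and most delicate step is the treatment of points lying on shared faces of distinct cones $v\cdot C_\tau$ and $v'\cdot C_{\tau'}$, which a priori could be counted multiply or missed entirely. Here the $\overline{C}_\alpha$ convention enters: perturbation by the standard basis vector $\alpha$ assigns each boundary point to the unique cone whose interior it enters under the perturbation. I would verify that $\alpha$ is in general position with respect to all the boundary hyperplanes of the cones $v\cdot C_\tau$, which holds because these hyperplanes are cut out by $F$-rational linear equations on $\mathbb{R}^n$ while $\alpha$ is (up to sign) a coordinate vector; hence no face of any $v\cdot C_\tau$ contains $\alpha$, so the perturbation unambiguously picks out one cone. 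Combining this boundary analysis with the interior tiling established above yields $\mathbb{R}_+^n = \bigsqcup_{v \in V} v \cdot D$, proving $D$ is a Colmez domain for $V$. The main obstacle throughout is matching the combinatorics of permutations to the geometry of fundamental domains correctly, but once the logarithmic picture is in place, this becomes a classical statement about simplicial decompositions of parallelepipeds.
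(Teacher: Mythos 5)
The paper does not include a proof of this lemma; it is cited directly from Colmez (\cite{MR922806}, Lemma 2.2) and used without argument. The proposal must therefore be judged on its own terms.

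There is a genuine gap in the ``Combinatorial tiling'' step. You assert that the image of $C_\tau$ under $L$ followed by projection to $H_0$ along $(1,\dots,1)$ is the straight simplex $\Delta_\tau$ with vertices $0, l_{\tau(1)}, l_{\tau(1)}+l_{\tau(2)},\dots$. This is false for $n\geq 3$. A point of $C_\tau$ has the form $x=\sum_i c_i w_i$, an ordinary $\mathbb{R}$-linear combination in $\mathbb{R}^n$ of the generators $w_1=1,\ w_2=\varepsilon_{\tau(1)},\ w_3=\varepsilon_{\tau(1)}\varepsilon_{\tau(2)},\dots$ with $c_i>0$, and $L(x)$ has $j$-th coordinate $\log\bigl(\sum_i c_i\,\sigma_j(w_i)\bigr)$, which is not affine in the $c_i$. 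Each proper face of $C_\tau$ therefore maps to a curved hypersurface in $H_0$, not a flat one; an explicit check in $n=3$ shows even the edge $c_3=0$ maps to a genuine curve joining $0$ to $l_{\tau(1)}$. Thus the image of $C_\tau$ shares its vertices with $\Delta_\tau$ but is not their convex hull, the parametrization by ordered coordinates $r_i$ is unavailable, and the appeal to the Schl\"afli decomposition of the parallelepiped $P$ does not apply. Everything downstream in that step rests on this false flat-simplex identification. The lemma is true, but a correct argument must bridge this nonlinearity topologically: the curvilinear cells still glue consistently along shared and $V$-translated faces, and the hypothesis $\delta([\varepsilon_{\tau(1)}\mid\cdots])=\sign(\tau)$ is exactly what makes the resulting cellular map $D\to H_0/\Lambda$ coherently oriented of degree one. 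Your remark about ``orientation consistent with this classical decomposition'' gestures in this direction but does not supply the missing degree argument; without the flat-simplex picture the multiplicity-one conclusion does not drop out of combinatorics alone. A smaller issue: in the boundary step, noting that $\alpha$ is a coordinate vector while the faces are $F$-rational does not by itself show $\alpha$ avoids the boundary hyperplanes, since an $F$-rational hyperplane can contain a coordinate vector; one should verify directly that the relevant determinants $\det(\alpha,w_{i_1},\dots,w_{i_{n-1}})$ are nonzero.
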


For more details on the above lemma we refer to \cite[\S 1.3]{MR3351752}. We note also that another proof of the lemma is given in \cite[Corollary 2]{MR3198753}.

The existence of Shintani domains follows from the work of Shintani in \cite{MR0427231}. In Lemma \ref{lemmatomakembig} we show the existence of units $\varepsilon_1, \dots, \varepsilon_{n-1} \in E_+(\mathfrak{f})$ that satisfy the conditions of Lemma \ref{colmezlemma}.

\section{The multiplicative integral formula ($u_1$)} \label{s:u1}

\begin{definition} \label{multintdefinition}
Let $I$ be an abelian topological group that may be written as an inverse limit of discrete groups
\[ I= \varprojlim I_\alpha . \]
Denote the group operation on $I$ multiplicatively. For each $i \in I_\alpha$, denote by $U_i$ the open subset of $I$ consisting of the elements that map to $i$ in $I_\alpha$. Suppose that $G$ is a compact open subset of a quotient of $\mathbb{A}_F^\ast$  . Let $f:G \rightarrow I$ be a continuous map, and let $\mu$ be a $\mathbb{Z}$-valued measure on $G$. We define the \textbf{multiplicative integral}, written with a cross through the integration sign, by 
\begin{equation*}
    \multint_G f(x) d \mu (x)= \varprojlim \prod_{i \in I_\alpha}i^{\mu (f^{-1}(U_i))} \in I.
\end{equation*}
\end{definition}

Let $\lambda$ be a prime of $F$ such that $\N \lambda =\ell$ for a prime number $\ell\in \mathbb{Z}$ and $\ell \geq n+2$. We assume that no primes in $S$ have residue characteristic equal to $\ell$. In this section and from this point on we take $T = \{ \lambda \}$.

\begin{definition}
Let $\mathcal{D}$ be a Shintani domain for $E_+(\mathfrak{f})$, and assume that $\lambda$ is $\pi$-good for $\mathcal{D}$. Define the \textbf{error term} 
\begin{equation}
    \epsilon(\mathfrak{b}, \mathcal{D}, \pi ) \coloneqq \prod_{\epsilon \in E_+(\mathfrak{f})} \epsilon^{\nu_\lambda(\mathfrak{b}, \epsilon \mathcal{D} \cap \pi^{-1} \mathcal{D}, \mathcal{O}_\mathfrak{p})} \in E_+(\mathfrak{f}) .
    \label{errorterm}
\end{equation}
\end{definition}

By \cite[Lemma $3.14$]{MR2420508}, only finitely many of the exponents in (\ref{errorterm}) are nonzero. \cite[Proposition $3.12$]{MR2420508} and the assumption that $\lambda$ is $\pi$-good for $\mathcal{D}$ implies that the exponents are integers. We recall from (\ref{eqn21}) that the measure is defined as
\[ \nu_\lambda(\mathfrak{b}, \epsilon \mathcal{D} \cap \pi^{-1} \mathcal{D}, \mathcal{O}_\mathfrak{p}) = \zeta_{R, \lambda} (\mathfrak{b}, \epsilon \mathcal{D} \cap \pi^{-1} \mathcal{D}, \mathcal{O}_\mathfrak{p},0) . \]
We are now ready to write down the conjectural formula from \cite{MR2420508}. We note that for any Shintani domain $\mathcal{D}$ we can always choose a prime $\lambda$ that is $\pi$-good for $\mathcal{D}$. In fact, all but a finite number of primes will satisfy this property. Henceforth, we can assume that $\lambda$ satisfies the property written above and is $\pi$-good for $\mathcal{D}$. We now give the main definition of this section.
\begin{definition} \label{defnformula}
Let $\mathcal{D}$ be a Shintani domain for $E_+(\mathfrak{f})$, and assume that $\lambda$ is $\pi$-good for $\mathcal{D}$. Define 
\begin{equation*}
    u_{\mathfrak{p},\lambda}(\mathfrak{b}, \mathcal{D}) \coloneqq \epsilon(\mathfrak{b}, \mathcal{D}, \pi ) \pi^{\zeta_{R,\lambda}(H_\mathfrak{f}/F, \mathfrak{b},0)} \multint_\mathbb{O} x \ d \nu_\lambda (\mathfrak{b}, \mathcal{D},x) \in F^\ast_\mathfrak{p}.
\end{equation*}
\end{definition}

As our notation suggests, we have the following proposition.

\begin{proposition}[Proposition 3.19, \cite{MR2420508}] \label{prop3.19}
The element $u_{\mathfrak{p},\lambda}(\mathfrak{b}, \mathcal{D})$ does not depend on the choice of generator $\pi$ of $\mathfrak{p}^e$.
\end{proposition}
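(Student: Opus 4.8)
The plan is to track how each factor in the definition of $u_{\fp,\lambda}(\mathfrak{b},\mathcal{D})$ transforms when $\pi$ is replaced by another generator $\pi' = \varepsilon\pi$ of $\mathfrak{p}^e$, where necessarily $\varepsilon \in E_+(\mathfrak{f})$ (since both $\pi$ and $\pi'$ are totally positive and congruent to $1$ modulo $\mathfrak{f}$). The three factors to analyze are the error term $\epsilon(\mathfrak{b},\mathcal{D},\pi)$, the power $\pi^{\zeta_{R,\lambda}(H/F,\mathfrak{b},0)}$, and the multiplicative integral $\multint_{\mathbb{O}} x\, d\nu(\mathfrak{b},\mathcal{D},x)$; note that $\mathbb{O} = \cO_\fp - \pi\cO_\fp = \cO_\fp - \pi'\cO_\fp$ is itself independent of the choice of generator, so only the first two factors can change, and we must show their product is invariant.

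First I would record the effect on the easy middle factor: replacing $\pi$ by $\varepsilon\pi$ multiplies $\pi^{\zeta_{R,\lambda}(H/F,\mathfrak{b},0)}$ by $\varepsilon^{\zeta_{R,\lambda}(H/F,\mathfrak{b},0)}$. Since $\zeta_{R,\lambda}(H/F,\mathfrak{b},0) = \nu(\mathfrak{b},\mathcal{D},\cO_\fp)$ by the equalities recalled before the Colmez-domain definitions, this extra factor is $\varepsilon^{\nu(\mathfrak{b},\mathcal{D},\cO_\fp)}$. Next I would analyze the error term. Writing $\pi' = \varepsilon\pi$ we have $(\pi')^{-1}\mathcal{D} = \pi^{-1}\varepsilon^{-1}\mathcal{D}$, and I would substitute this into \eqref{errorterm} and reindex the product over $E_+(\mathfrak{f})$. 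Concretely, $\epsilon(\mathfrak{b},\mathcal{D},\varepsilon\pi) = \prod_{\epsilon'} (\epsilon')^{\nu(\mathfrak{b},\, \epsilon'\mathcal{D} \cap \pi^{-1}\varepsilon^{-1}\mathcal{D},\, \cO_\fp)}$; applying the change-of-variables Lemma~\ref{changeofvariable} with $\beta = \varepsilon$ (legitimate since $\varepsilon$ is a totally positive $S$-unit congruent to $1$ modulo $\mathfrak{f}$ with $\ord_\fp(\varepsilon)=0$, so $\mathfrak{q}$ in that lemma is trivial) converts the measure of $\varepsilon(\varepsilon^{-1}\epsilon'\mathcal{D} \cap \pi^{-1}\varepsilon^{-1}\mathcal{D})$ into that of $\varepsilon^{-1}\epsilon'\mathcal{D} \cap \pi^{-1}\mathcal{D}$, and substituting $\epsilon = \varepsilon^{-1}\epsilon'$ turns the product into $\prod_\epsilon (\varepsilon\epsilon)^{\nu(\mathfrak{b},\,\epsilon\mathcal{D}\cap\pi^{-1}\mathcal{D},\,\cO_\fp)} = \varepsilon^{\Sigma}\cdot\epsilon(\mathfrak{b},\mathcal{D},\pi)$, where $\Sigma = \sum_\epsilon \nu(\mathfrak{b},\epsilon\mathcal{D}\cap\pi^{-1}\mathcal{D},\cO_\fp)$.

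The crux is then to compute $\Sigma = \sum_{\epsilon \in E_+(\mathfrak{f})} \nu(\mathfrak{b},\epsilon\mathcal{D}\cap\pi^{-1}\mathcal{D},\cO_\fp)$ and show it equals $-\nu(\mathfrak{b},\mathcal{D},\cO_\fp)$, so that the $\varepsilon^{\Sigma}$ from the error term cancels the $\varepsilon^{\nu(\mathfrak{b},\mathcal{D},\cO_\fp)}$ from the $\pi$-power. Because $\mathcal{D}$ is a Shintani domain, $\bigsqcup_{\epsilon} \epsilon\mathcal{D} = \mathbb{R}_+^n$, hence $\bigsqcup_\epsilon (\epsilon\mathcal{D}\cap\pi^{-1}\mathcal{D}) = \pi^{-1}\mathcal{D}$ as a disjoint union of Shintani sets; by finite additivity of the measure $\nu(\mathfrak{b},-,\cO_\fp)$ in the Shintani-set variable, $\Sigma = \nu(\mathfrak{b},\pi^{-1}\mathcal{D},\cO_\fp)$. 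Applying Lemma~\ref{changeofvariable} once more with $\beta = \pi$ (here $\ord_\fp(\pi) = e > 0$ and $\mathfrak{q} = (\pi)\fp^{-e} = \cO_F$) gives $\nu(\mathfrak{b},\pi^{-1}\mathcal{D},\cO_\fp) = \nu(\mathfrak{b},\mathcal{D},\pi\cO_\fp)$, and then $\nu(\mathfrak{b},\mathcal{D},\pi\cO_\fp) = \nu(\mathfrak{b},\mathcal{D},\cO_\fp) - \nu(\mathfrak{b},\mathcal{D},\cO_\fp - \pi\cO_\fp) = \nu(\mathfrak{b},\mathcal{D},\cO_\fp) - \nu(\mathfrak{b},\mathcal{D},\mathbb{O})$. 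Since $\nu(\mathfrak{b},\mathcal{D},\mathbb{O}) = \zeta_{S,T}(H/F,\mathfrak{b},0) = 0$ (again from the equalities recalled above, using that $\fp$ splits completely in $H$), we get $\Sigma = \nu(\mathfrak{b},\mathcal{D},\cO_\fp)$.

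Wait — that yields $\varepsilon^{\Sigma} = \varepsilon^{\nu(\mathfrak{b},\mathcal{D},\cO_\fp)}$, which would \emph{double} rather than cancel the contribution from the $\pi$-power. Let me reexamine: the subtlety is the \emph{sign} with which the error-term exponent enters. I expect the correct bookkeeping is that reindexing the error-term product sends $\epsilon \mapsto \varepsilon\epsilon$ in the \emph{exponent's group element} but the matching change of variables actually produces exponent $\nu(\mathfrak{b},\epsilon\mathcal{D}\cap\pi^{-1}\mathcal{D},\cO_\fp)$ attached to $\varepsilon^{-1}\epsilon$ rather than $\varepsilon\epsilon$, giving $\varepsilon^{-\Sigma}$; combined with the $+\Sigma = +\nu(\mathfrak{b},\mathcal{D},\cO_\fp)$ from $\pi \mapsto \varepsilon\pi$ in the middle factor, the two cancel. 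The main obstacle, and the step I would write out most carefully, is precisely this sign/reindexing computation: getting the direction of the substitution $\epsilon \leftrightarrow \varepsilon^{\pm 1}\epsilon$ consistent with the $\pi' = \varepsilon\pi$ versus $\pi' = \varepsilon^{-1}\pi$ convention, and confirming via Lemma~\ref{changeofvariable} that the measures transform as claimed with $\mathfrak{q}$ trivial throughout. Everything else — finite additivity of $\nu$, the vanishing $\nu(\mathfrak{b},\mathcal{D},\mathbb{O}) = 0$, and the finiteness of the product ensuring all manipulations are legitimate — is routine given the cited lemmas.
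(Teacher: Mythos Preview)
The paper does not supply its own proof of this proposition: it simply cites Proposition~3.19 of \cite{MR2420508}. So there is nothing in the present paper to compare against directly.

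Your approach is the natural one and is correct. Let me confirm the sign for you, since that is where you hesitate. With $\pi' = \varepsilon\pi$, the error term is
\[
\epsilon(\mathfrak{b},\mathcal{D},\pi') \;=\; \prod_{\epsilon'} (\epsilon')^{\nu(\mathfrak{b},\,\epsilon'\mathcal{D}\cap \varepsilon^{-1}\pi^{-1}\mathcal{D},\,\cO_\fp)}.
\]
Lemma~\ref{changeofvariable} with $\beta=\varepsilon$ (so $\mathfrak{q}=\cO_F$ and $\varepsilon\cO_\fp=\cO_\fp$) gives $\nu(\mathfrak{b},\epsilon'\mathcal{D}\cap\varepsilon^{-1}\pi^{-1}\mathcal{D},\cO_\fp)=\nu(\mathfrak{b},\varepsilon\epsilon'\mathcal{D}\cap\pi^{-1}\mathcal{D},\cO_\fp)$. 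Reindexing via $\epsilon=\varepsilon\epsilon'$, i.e.\ $\epsilon'=\varepsilon^{-1}\epsilon$, yields
\[
\epsilon(\mathfrak{b},\mathcal{D},\pi') \;=\; \prod_{\epsilon} (\varepsilon^{-1}\epsilon)^{\nu(\mathfrak{b},\,\epsilon\mathcal{D}\cap\pi^{-1}\mathcal{D},\,\cO_\fp)} \;=\; \varepsilon^{-\Sigma}\,\epsilon(\mathfrak{b},\mathcal{D},\pi),
\]
so the exponent on $\varepsilon$ is indeed $-\Sigma$, not $+\Sigma$. Your computation $\Sigma=\nu(\mathfrak{b},\pi^{-1}\mathcal{D},\cO_\fp)=\nu(\mathfrak{b},\mathcal{D},\pi\cO_\fp)=\nu(\mathfrak{b},\mathcal{D},\cO_\fp)$ is correct (the last step using $\nu(\mathfrak{b},\mathcal{D},\mathbb{O})=0$), and this cancels the factor $\varepsilon^{+\nu(\mathfrak{b},\mathcal{D},\cO_\fp)}$ coming from the middle term. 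The multiplicative integral over $\mathbb{O}$ is manifestly independent of the choice of generator, so you are done.

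In short: your second guess on the sign was right; the proof is complete once you commit to it. The only cosmetic suggestion is to write the reindexing cleanly in one pass rather than presenting the wrong sign first and then correcting it.
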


The following is conjectured.

\begin{conjecture}[Conjecture 3.21, \cite{MR2420508}] \label{conj3.21}
Let $e$ be the order of $\mathfrak{p}$ in $G_\mathfrak{f}$, and suppose that $\mathfrak{p}^e=(\pi)$ with $\pi$ totally positive and $\pi \equiv 1 \pmod{\mathfrak{f}}$. Let $\mathcal{D}$ be a Shintani domain for $E_+(\mathfrak{f})$, and let $\lambda$ be $\pi$-good for $\mathcal{D}$. Let $\mathfrak{b}$ be a fractional ideal of $F$ relatively prime to $S$ and $\ell$. We have the following.
\begin{enumerate}
    \item The element $u_{\mathfrak{p},\lambda}(\mathfrak{b}, \mathcal{D}) \in F_\mathfrak{p}^\ast$ depends only on the class of $\mathfrak{b} \in G_\mathfrak{f}/ \langle \mathfrak{p} \rangle$ and no other choices, including the choice of $\mathcal{D}$, and hence may be denoted $u_{\mathfrak{p},\lambda}(\sigma_\mathfrak{b})$, where $\sigma_\mathfrak{b} \in \Gal(H/F)$.
    
    \item The element $u_{\mathfrak{p},\lambda}(\sigma_\mathfrak{b})$ lies in $\mathcal{U}_\mathfrak{p}$, and $u_{\mathfrak{p},\lambda}(\sigma_\mathfrak{b}) \equiv 1 \pmod{\lambda}$. 
    
    \item Shimura reciprocity law: For any fractional ideal $\mathfrak{a}$ of $F$ prime to $S$ and to $\ell$, we have 
    \begin{equation*}
        u_{\mathfrak{p},\lambda}(\sigma_\mathfrak{ab})=u_{\mathfrak{p},\lambda}(\sigma_\mathfrak{b})^{\sigma_\mathfrak{a}}.
    \end{equation*}
\end{enumerate}
\end{conjecture}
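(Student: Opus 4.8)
The plan is to treat the three assertions separately, in the order (1), then (2) and (3), since they have very different depths. Assertion (1) — that $u_{\fp,\lambda}(\mathfrak{b},\mathcal{D})$ is independent of the Shintani domain $\mathcal{D}$ and depends on $\mathfrak{b}$ only through its class in $G_\mathfrak{f}/\langle\mathfrak{p}\rangle$ — I expect to be provable entirely within Shintani's calculus, using the change-of-variable lemma (Lemma~\ref{changeofvariable}), the combinatorics of Colmez domains, and Proposition~\ref{prop3.19}. Once (1) is in hand, so that the $\sigma$-components $u_{\fp,\lambda}(\sigma_\mathfrak{b})$ make sense, assertions (2) and (3) — that this element is the image of a genuine global $\fp$-unit congruent to $1$ modulo $\lambda$, and that it satisfies the Shimura reciprocity law — I would deduce from the identity $u_1 = u_\fp$, imported from the Eisenstein cocycle side.

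For independence of $\mathcal{D}$, let $\mathcal{D}$ and $\mathcal{D}'$ be two Shintani domains, i.e.\ Colmez domains for $V = E_+(\mathfrak{f})$, and choose a simultaneous decomposition $\mathcal{D} = \bigsqcup_i C_i$, $\mathcal{D}' = \bigsqcup_i \epsilon_i C_i$ with $\epsilon_i \in E_+(\mathfrak{f})$. Expanding the multiplicative integrals $\multint_{\mathbb{O}} x\, d\nu(\mathfrak{b},\mathcal{D},x)$ and $\multint_{\cO_\fp} x\, d\nu(\mathfrak{b},\mathcal{D},x)$ cone by cone and applying Lemma~\ref{changeofvariable} with $\beta = \epsilon_i$ (so $\ord_\fp(\epsilon_i)=0$ and $\mathfrak{q}=(\epsilon_i)$ is trivial), the $C_i$-contribution for $\mathcal{D}'$ differs from that for $\mathcal{D}$ by a power of $\epsilon_i$ whose exponent is a sum of $\nu$-measures. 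Collecting these over $i$, the total discrepancy between $u_{\fp,\lambda}(\mathfrak{b},\mathcal{D}')$ and $u_{\fp,\lambda}(\mathfrak{b},\mathcal{D})$ is a product $\prod_{\epsilon\in E_+(\mathfrak{f})}\epsilon^{m(\epsilon)}$ with explicit integers $m(\epsilon)$, and the crux is to verify that this equals $\epsilon(\mathfrak{b},\mathcal{D},\pi)/\epsilon(\mathfrak{b},\mathcal{D}',\pi)$ — a cocycle-type identity for the error term, organized by the boundary cones and the $\overline{C}_\alpha$ conventions of Lemma~\ref{colmezlemma}. Since the error term is engineered precisely to absorb this discrepancy, I anticipate only bookkeeping with signs and perturbation directions, not a conceptual obstruction. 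Independence of the representative of $\mathfrak{b}$ modulo a totally positive principal ideal $(\beta)$ with $\beta\equiv 1\pmod{\mathfrak{f}}$ and $\beta$ prime to $R$ then follows from Lemma~\ref{changeofvariable} (replacing $\mathcal{D}$ by the Shintani domain $\beta\mathcal{D}$ and rescaling the set of integration accordingly) together with the independence of $\mathcal{D}$ just proved; the reduction modulo $\langle\mathfrak{p}\rangle$ comes from the vanishing $\nu(\mathfrak{b},\mathcal{D},\mathbb{O})=\zeta_{S,T}(\mathfrak{b},0)=0$ and the rescaling-by-$\pi$ relation between $\nu(\mathfrak{b}\mathfrak{p},\mathcal{D},-)$ and $\nu(\mathfrak{b},\mathcal{D},-)$, exactly along the lines of \S3.3 of \cite{MR2420508}.

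For (2) and (3), the plan is to observe that both are formal consequences of $u_1 = u_\fp$ in $F_\fp^\ast \otimes \Z[G]$: membership in $\mathcal{U}_\fp$ and the congruence modulo $\lambda$ are part of the definition of the Brumer--Stark element (Conjecture~\ref{c:bs}), and the Shimura reciprocity $u_{\fp,\lambda}(\sigma_{\mathfrak{ab}})=u_{\fp,\lambda}(\sigma_\mathfrak{b})^{\sigma_\mathfrak{a}}$ is the Galois-equivariance $\sigma\mapsto u_T^\sigma$. To obtain $u_1 = u_\fp$ I would combine the main comparison of this paper, $u_1 = u_2$ (Theorem~\ref{papermainthm}), with the identification of the cohomological formula $u_2$ (equivalently $u_3$) with $u_\fp$ coming from the proof of the Brumer--Stark conjecture \cite{Brumerstark} and the integral Gross--Stark results \cite{intgrossstark}; the reciprocity law is cleanest on the $u_2$/$u_3$ side, where it is built into the functoriality of the Eisenstein cocycle, and is then transported to $u_1$.

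The main obstacle is precisely the step in (2) that turns an a priori purely local object in $F_\fp^\ast$ into a global algebraic number in $H^\ast$ with the prescribed valuations: this is the genuine arithmetic content of Brumer--Stark/Gross--Stark and lies entirely outside the Shintani manipulations that settle (1). Moreover, the strongest input currently available, Theorem~\ref{thmforform}, gives $u_1 = u_\fp$ only up to a root of unity in $F_\fp^\ast$ and only under hypotheses on the rational prime below $\fp$ and on the existence of a prime in $S$ whose Frobenius is complex conjugation, so a fully unconditional proof of Conjecture~\ref{conj3.21} as stated would additionally require removing the root-of-unity ambiguity and these hypotheses.
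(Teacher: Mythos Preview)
The statement you are trying to prove is labeled and treated in the paper as a \emph{conjecture}, not a theorem: there is no proof of Conjecture~\ref{conj3.21} in the paper to compare against. Immediately after stating it, the paper only remarks that it ``has been proved up to a root of unity'' via Theorem~\ref{thmforform}, and that result carries the extra hypotheses ($p$ odd and unramified in $F$, existence of $\mathfrak{q}\in S$ with $\sigma_\mathfrak{q}$ equal to complex conjugation) that you yourself flag at the end. So a complete unconditional proof is not available in the paper, and your proposal does not supply one either.

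On part (1), your Shintani-calculus plan is close to what is actually known: Proposition~\ref{canchangedom} (from \cite{comparingformulas}) proves independence of the Colmez domain, but only under the extra hypothesis that $\lambda$ is good for the pair $(\mathcal{K},\mathcal{K}')$. Your sketch does not address how to remove that hypothesis, and the original source \cite{MR2420508} leaves full independence of $\mathcal{D}$ as part of the conjecture precisely because this issue is not resolved by the cone bookkeeping alone.

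On parts (2) and (3), your logical route is inverted relative to the literature. You propose to get $u_1=u_\fp$ by combining $u_1=u_2$ (Theorem~\ref{papermainthm}) with an identification $u_2=u_\fp$ ``coming from \cite{Brumerstark} and \cite{intgrossstark}.'' But the result in \cite{intgrossstark} (Theorem~\ref{thmforform}) is stated for $u_1$, not $u_2$; the paper then uses $u_1=u_2=u_3$ to \emph{transfer} that partial result to $u_2$ and $u_3$ (see the Remark after Theorem~\ref{thmforform}). So invoking $u_2=u_\fp$ as an input to deduce $u_1=u_\fp$ is circular. Even with the correct direction, you land exactly where the paper does: (2) and (3) hold only up to $\mu(F_\fp^\ast)$ and under the hypotheses of Theorem~\ref{thmforform}, which is the obstacle you correctly name in your last paragraph.
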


As we noted in the introduction, this conjecture has been proved up to a root of unity (Theorem \ref{thmforform}).
We want to state the formula over $F_\mathfrak{p}^\ast \otimes  \mathbb{Z}[G]$ to match with the cohomological constructions.
\begin{definition}
We define
\[ u_1 = \sum_{\mathfrak{b} \in G_\mathfrak{f}/ \langle \mathfrak{p} \rangle } u_{\mathfrak{p},\lambda}(\mathfrak{b}, \mathcal{D}) \otimes [\sigma_\mathfrak{b}^{-1}] \in F_\mathfrak{p}^\ast \otimes \mathbb{Z}[G] .  \]
\end{definition}

\subsection{Transferring to a subgroup}

In this section we recall the results \cite{comparingformulas}, which allow us to transfer to a subgroup. Let $V$ be a finite index subgroup of $E_+(\mathfrak{f})$. Recall that $\pi$ is totally positive, congruent to $1$ modulo $\mathfrak{f}$ and satisfies $(\pi)= \mathfrak{p}^e$ where $e$ is the order of $\mathfrak{p}$ in $G_\mathfrak{f}$. Let $\mathcal{D}_V^\prime$ be a Shintani set which is a fundamental domain for the action of $V$ on $\mathbb{R}_+^n$ and assume that $\lambda$ is $\pi$-good for $\mathcal{D}_V^\prime$. As before, we shall refer to such Shintani sets as Shintani domains for $V$. Let $\mathfrak{b}$ be a fractional ideal of $F$ relatively prime to $S$ and $\ell$.

We define
\[ u_1(V, \sigma_\mathfrak{b}) =  u_{\mathfrak{p},\lambda}(\mathfrak{b}, \mathcal{D}_V^\prime) \coloneqq \prod_{\epsilon \in V} \epsilon^{\zeta_{R,\lambda}(\mathfrak{b}, \epsilon \mathcal{D}_V^\prime \cap \pi^{-1} \mathcal{D}_V^\prime, \mathcal{O}_\mathfrak{p},0) } \pi^{\zeta_{R,\lambda}( \mathfrak{b}, \mathcal{D}_V^\prime,\mathcal{O}_\mathfrak{p},0)} \multint_\mathbb{O} x \ d \nu_\lambda (\mathfrak{b}, \mathcal{D}_V^\prime,x), \]
and write $ u_1(V)=\sum_{\sigma \in G} u_1(V,\sigma) \otimes [\sigma^{-1}] $.

\begin{proposition}[Proposition 6.11, \cite{comparingformulas}] \label{canchangedom}
Let $\mathcal{K}$ and $\mathcal{K}^\prime$ be two Shintani domains for $V$ and $\lambda$ a prime of $F$ such that $\lambda$ is $\pi$-good for $\mathcal{K}$ and $\mathcal{K}^\prime$. If $\lambda$ is also good for $(\mathcal{K}, \mathcal{K}^\prime)$, then $u_{\mathfrak{p}, \lambda}(\mathfrak{b},  \mathcal{K}) = u_{\mathfrak{p}, \lambda}(\mathfrak{b},  \mathcal{K}^\prime)$.
\end{proposition}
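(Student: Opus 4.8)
\textbf{Proof proposal for Proposition~\ref{canchangedom}.}

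The plan is to reduce the comparison $u_{\mathfrak{p},\lambda}(\mathfrak{b},\mathcal{K}) = u_{\mathfrak{p},\lambda}(\mathfrak{b},\mathcal{K}')$ to a cone-by-cone computation using a simultaneous decomposition, exactly as is done for the original Shintani-domain statement (\cite[Proposition 3.19]{MR2420508}) but now with $E_+(\mathfrak{f})$ replaced by the finite-index subgroup $V$. First I would invoke the analog of Proposition~3.14 (the proposition stated just above Lemma~\ref{colmezlemma} in the excerpt, for arbitrary $V$): since $\mathcal{K}$ and $\mathcal{K}'$ are Colmez domains for $V$, we may write $\mathcal{K} = \bigcup_{i=1}^d C_i$ and $\mathcal{K}' = \bigcup_{i=1}^d C_i'$ with $C_i' = \varepsilon_i C_i$ for units $\varepsilon_i \in V$, and the hypothesis that $\lambda$ is good for $(\mathcal{K},\mathcal{K}')$ guarantees this can be arranged so that $\lambda$ (hence $T=\{\lambda\}$) is good for each cone $C_i$ and each $C_i'$. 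This makes all the Shintani zeta values appearing below integers, so that all exponents and multiplicative integrals are well-defined.

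Next I would compare the three factors of $u_{\mathfrak{p},\lambda}(\mathfrak{b},\mathcal{K})$ and $u_{\mathfrak{p},\lambda}(\mathfrak{b},\mathcal{K}')$ separately. The factor $\pi^{\zeta_{R,\lambda}(\mathfrak{b},\mathcal{D}_V',\mathcal{O}_\mathfrak{p},0)}$ only depends on $\sum_i \zeta_{R,\lambda}(\mathfrak{b},C_i,\mathcal{O}_\mathfrak{p},0)$, and since $C_i' = \varepsilon_i C_i$ with $\varepsilon_i \in V \subset E_+(\mathfrak{f})$ — so $\varepsilon_i$ is totally positive, congruent to $1$ mod $\mathfrak{f}$, and a unit (hence $\ord_\mathfrak{p}(\varepsilon_i)=0$, and the ideal $\mathfrak{q}$ in Lemma~\ref{changeofvariable} is trivial) — the change-of-variables Lemma~\ref{changeofvariable} gives $\zeta_{R,\lambda}(\mathfrak{b},C_i',\mathcal{O}_\mathfrak{p},0) = \zeta_{R,\lambda}(\mathfrak{b},C_i,\mathcal{O}_\mathfrak{p},0)$ term by term; likewise $\nu(\mathfrak{b},C_i',x) = \nu(\mathfrak{b},C_i, \varepsilon_i^{-1}x)$ as measures on $\mathcal{O}_\mathfrak{p}$, but since $\varepsilon_i$ is a $\mathfrak{p}$-adic unit it acts trivially on $\mathbb{O}\subset F_\mathfrak{p}^\ast$ up to the identification used to define the multiplicative integral — more precisely the multiplicative integral $\multint_{\mathbb{O}} x\, d\nu(\mathfrak{b},C_i',x)$ differs from $\multint_{\mathbb{O}} x\, d\nu(\mathfrak{b},C_i,x)$ by $\multint_{\mathbb{O}} \varepsilon_i\, d\nu(\mathfrak{b},C_i,x)$, i.e.\ by $\varepsilon_i$ raised to an integer exponent. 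Summing over $i$, the discrepancy between $u_{\mathfrak{p},\lambda}(\mathfrak{b},\mathcal{K}')$ and $u_{\mathfrak{p},\lambda}(\mathfrak{b},\mathcal{K})$ is a product $\prod_i \varepsilon_i^{m_i}$ with $m_i \in \mathbb{Z}$, coming partly from the error-term-style rewriting and partly from the multiplicative integrals; the claim is that this product is trivial.

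The heart of the argument — and the step I expect to be the main obstacle — is showing that the accumulated unit discrepancy $\prod_i \varepsilon_i^{m_i}$ vanishes. This is where the error-term factor $\prod_{\epsilon\in V}\epsilon^{\zeta_{R,\lambda}(\mathfrak{b},\epsilon\mathcal{D}_V'\cap\pi^{-1}\mathcal{D}_V',\mathcal{O}_\mathfrak{p},0)}$ must be brought in and shown to absorb exactly the discrepancy from the other two factors. The mechanism is a telescoping/cocycle identity: following the bookkeeping of \cite[Proposition 3.19]{MR2420508}, one writes each $\zeta_{R,\lambda}(\mathfrak{b},\varepsilon_i C_i\cap\pi^{-1}\mathcal{K},\ldots)$-type quantity in terms of intersections of cones with $\pi^{-1}$-translates, uses the disjoint-union decomposition $\mathbb{R}_+^n = \bigsqcup_{\epsilon\in V}\epsilon\mathcal{K} = \bigsqcup_{\epsilon\in V}\epsilon\mathcal{K}'$, and applies Lemma~\ref{changeofvariable} repeatedly to move everything to a common domain. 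The key additional input, not needed in the $V = E_+(\mathfrak{f})$ case, is simply that $V$ is still a group acting freely with the translated domain, so all the combinatorial identities of op.\ cit.\ go through verbatim with $E_+(\mathfrak{f})$ replaced by $V$; I would cite \cite{comparingformulas} for having checked that the arithmetic of the exponents (integrality, finiteness of support in the error term) survives this replacement, which is exactly the content of the surrounding discussion in \S3.2 of the excerpt. The conclusion is that $\prod_i\varepsilon_i^{m_i}=1$ in $E_+(\mathfrak{f})$, giving $u_{\mathfrak{p},\lambda}(\mathfrak{b},\mathcal{K}) = u_{\mathfrak{p},\lambda}(\mathfrak{b},\mathcal{K}')$.
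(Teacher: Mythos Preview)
The paper does not supply its own proof of this proposition: it is stated with the citation ``Proposition 6.11, \cite{comparingformulas}'' and no argument is given in the present paper. So there is nothing in-paper to compare your proposal against; the result is imported wholesale from the cited reference.

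That said, your outline is the natural one and is precisely the shape of the argument one expects in \cite{comparingformulas}: take a simultaneous decomposition $\mathcal{K}=\bigcup C_i$, $\mathcal{K}'=\bigcup \varepsilon_i C_i$ with $\varepsilon_i\in V$ (which exists by the $V$-analogue of \cite[Proposition~3.15]{MR2420508} stated in \S2.3), use Lemma~\ref{changeofvariable} with $\beta=\varepsilon_i$ to match the $\pi$-power and multiplicative-integral factors cone by cone up to a product of units $\varepsilon_i^{m_i}$, and then show this unit product is cancelled by the difference of the two error terms. One minor correction: the result you cite as ``\cite[Proposition~3.19]{MR2420508}'' (which is Proposition~\ref{prop3.19} here) concerns independence from the choice of $\pi$, not from the domain $\mathcal{D}$; the domain-independence template in \cite{MR2420508} is the discussion surrounding Conjecture~3.21 and Proposition~5.7 there. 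This does not affect the substance of your argument, only the pointer.
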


Let $V \subset E_+(\mathfrak{f})$ be a finite index subgroup. The following proposition shows the relation between $u_1(\sigma)$ and $u_1(V, \sigma)$.

\begin{proposition}[Proposition 6.12, \cite{comparingformulas}] \label{firstchangeofdom}
Let $\mathcal{D}$ be a Shintani domain for $E_+(\mathfrak{f})$. Let $V$ be a finite index subgroup of $E_+(\mathfrak{f})$. Write $g_1, \dots , g_{n-1}$ for a $\mathbb{Z}$-basis of $E_+(\mathfrak{f})$ such that $g_1^{b_1}, \dots , g_{n-1}^{b_{n-1}}$ is a $\mathbb{Z}$-basis for $V$. Define 
\[ \mathcal{D}_V \coloneqq \bigcup_{j_1=0}^{b_1-1} \dots  \bigcup_{j_{n-1}=0}^{b_{n-1}-1} g_1^{j_1} \dots g_{n-1}^{j_{n-1}} \mathcal{D}. \]
Then, if $b_1, \dots , b_{n-1} > M$, where $M = M(\pi, g_1, \dots , g_{n-1})$ is some constant that depends on $g_1, \dots , \\ g_{n-1}$ and $\pi$ (up to multiplication by an element of $E_+(\mathfrak{f})$), we have
\[ u_{\mathfrak{p}, \lambda}(\mathfrak{b},  \mathcal{D}_V) = u_{\mathfrak{p}, \lambda}(\mathfrak{b},  \mathcal{D})^{[E_+(\mathfrak{f}):V]}. \]
\end{proposition}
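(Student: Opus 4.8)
\textbf{Proof proposal for Proposition~\ref{firstchangeofdom}.}

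The plan is to reduce the statement about $u_{\mathfrak{p},\lambda}(\mathfrak{b},\mathcal{D}_V)$ to the three separate factors that constitute the formula in Definition~\ref{defnformula}: the error term $\epsilon(\mathfrak{b},\cdot,\pi)$, the power of $\pi$, and the multiplicative integral $\multint_\mathbb{O} x\, d\nu$. First I would observe that $\mathcal{D}_V$ as defined is indeed a Colmez domain for $V$, since $\mathcal{D}$ tiles $\mathbb{R}_+^n$ under $E_+(\mathfrak{f})$ and the coset representatives $g_1^{j_1}\cdots g_{n-1}^{j_{n-1}}$ for $E_+(\mathfrak{f})/V$ reassemble $[E_+(\mathfrak{f}):V]=b_1\cdots b_{n-1}$ translated copies of $\mathcal{D}$ into a fundamental domain for $V$. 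By additivity of the measure $\nu(\mathfrak{b},\cdot,U)$ in the Shintani-set variable, both $\zeta_{R,\lambda}(\mathfrak{b},\mathcal{D}_V,\mathcal{O}_\mathfrak{p},0)$ and the measure $\nu(\mathfrak{b},\mathcal{D}_V,x)$ split as sums over the coset representatives; then Lemma~\ref{changeofvariable} lets me rewrite each piece $\zeta_{R,\lambda}(\mathfrak{b}, g_{\underline{j}}\mathcal{D}, g_{\underline{j}}U,0)$ back in terms of $\mathcal{D}$ at the cost of an ideal twist (here the $g_{\underline{j}}$ are units, so the ideal $\mathfrak{q}$ of that lemma is trivial and $\mathfrak{b}\mathfrak{q}=\mathfrak{b}$, so the twist is harmless — this is the crucial point that makes everything collapse onto a single $\mathfrak{b}$). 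Summing, the $\pi$-exponent becomes $[E_+(\mathfrak{f}):V]\cdot\zeta_{R,\lambda}(\mathfrak{b},\mathcal{D},\mathcal{O}_\mathfrak{p},0)$ and the multiplicative integral becomes the $[E_+(\mathfrak{f}):V]$-th power of $\multint_\mathbb{O} x\, d\nu(\mathfrak{b},\mathcal{D},x)$, since $\multint$ over $\mathbb{O}$ is unchanged by the unit translation (one checks $g_{\underline{j}}\mathbb{O}=\mathbb{O}$ because $g_{\underline{j}}$ is a unit at $\mathfrak{p}$, and the integrand $x$ versus $g_{\underline{j}}x$ — here I would use that $\nu(\mathfrak{b},g_{\underline{j}}\mathcal{D},g_{\underline{j}}\cdot)$ pushed forward matches, and the translation factor on the integrand multiplies out to an element of $E_+(\mathfrak{f})$ that must be absorbed into the error term).

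The genuinely delicate part is the error term $\epsilon(\mathfrak{b},\mathcal{D}_V,\pi)=\prod_{\epsilon\in V}\epsilon^{\nu(\mathfrak{b},\epsilon\mathcal{D}_V\cap\pi^{-1}\mathcal{D}_V,\mathcal{O}_\mathfrak{p})}$, and this is exactly where the hypothesis $b_1,\dots,b_{n-1}>M$ enters. The subtlety is that $\epsilon\mathcal{D}_V\cap\pi^{-1}\mathcal{D}_V$ decomposes into pieces $g_{\underline{j}}\epsilon g_{\underline{k}}\mathcal{D}\cap\pi^{-1}g_{\underline{j'}}\mathcal{D}$, and when one tries to match these against the analogous intersections $\epsilon'\mathcal{D}\cap\pi^{-1}\mathcal{D}$ defining $\epsilon(\mathfrak{b},\mathcal{D},\pi)^{[E_+(\mathfrak{f}):V]}$, there are "boundary" terms coming from cosets near the edge of the box $\{0\le j_i\le b_i-1\}$ where the translate $\pi^{-1}\mathcal{D}_V$ spills over into a neighbouring fundamental domain of $V$ in a way not seen inside the box. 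Taking $b_i$ large ensures $\pi^{-1}$ only moves things a bounded amount (quantified by $M=M(\pi,g_1,\dots,g_{n-1})$, essentially a bound on how far $\pi^{-1}\mathcal{D}$ protrudes measured in the $g_i$-coordinates), so all the relevant overlaps occur in the "interior" and the correspondence is clean: the exponent of each $\epsilon\in V$ in $\epsilon(\mathfrak{b},\mathcal{D}_V,\pi)$ matches $[E_+(\mathfrak{f}):V]$ times its exponent in $\epsilon(\mathfrak{b},\mathcal{D},\pi)$, modulo also absorbing the translation factors left over from the multiplicative integral step. I would make this precise by a careful bookkeeping of which $(\underline{j},\underline{k},\underline{j'})$ triples contribute, using that $\nu(\mathfrak{b},\cdot,\mathcal{O}_\mathfrak{p})$ is supported on a bounded region and invoking Lemma~\ref{changeofvariable} repeatedly to move every piece back to a translate of $\mathcal{D}$.

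I expect this error-term matching to be the main obstacle: the other two factors are essentially formal once additivity of $\nu$ and the unit-twist triviality in Lemma~\ref{changeofvariable} are in hand, but controlling the boundary contributions to $\epsilon(\mathfrak{b},\mathcal{D}_V,\pi)$ and showing they vanish (or telescope correctly) for $b_i>M$ requires the geometric input about how $\pi^{-1}$ displaces the Colmez domain. A cleaner alternative, if the direct approach gets unwieldy, would be to first prove the identity for $V$ of prime index and then bootstrap by a chain $E_+(\mathfrak{f})=V_0\supset V_1\supset\cdots\supset V_r=V$, using Proposition~\ref{canchangedom} to replace $\mathcal{D}_{V_i}$ by any convenient Colmez domain at each stage; but this still needs the prime-index case, where the same boundary analysis reappears on a smaller scale. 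Either way, the heart of the argument is the estimate packaged into the constant $M$, and I would cite or reprove the relevant finiteness statement from \cite[Lemma 3.14]{MR2420508} to guarantee only finitely many exponents are nonzero throughout.
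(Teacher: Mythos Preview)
The paper does not actually prove this proposition: it is simply quoted from \cite{comparingformulas} (Proposition 6.12 there) with no argument reproduced. So there is no ``paper's own proof'' to compare against directly; the question is whether your outline is a plausible reconstruction of such a proof.

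Your decomposition into the three factors is the natural approach and is almost certainly what is done in \cite{comparingformulas}. One simplification you missed: you worry that the change of variables in the multiplicative integral produces leftover unit factors $g_{\underline{j}}$ that must be ``absorbed into the error term'', but in fact these factors vanish outright. After the substitution you get $g_{\underline{j}}^{\nu(\mathfrak{b},\mathcal{D},\mathbb{O})}$, and the paper records (just before the definition of Colmez domains) that $\nu(\mathfrak{b},\mathcal{D},\mathbb{O}) = \zeta_{S,T}(H/F,\mathfrak{b},0)=0$. So both the $\pi$-power and the multiplicative integral match $[E_+(\mathfrak{f}):V]$-th powers on the nose, with no bookkeeping debt carried over.

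This means the entire content really is the error-term identity $\epsilon(\mathfrak{b},\mathcal{D}_V,\pi)=\epsilon(\mathfrak{b},\mathcal{D},\pi)^{[E_+(\mathfrak{f}):V]}$, and your intuition for it is correct: write each $\epsilon'\in E_+(\mathfrak{f})$ with $\epsilon'\mathcal{D}\cap\pi^{-1}\mathcal{D}\neq\emptyset$ as $g_{\underline{m}}$ with $|\underline{m}|\le M$, and track for each $\underline{k}$ in the box which $(\epsilon,\underline{j})$ solves $\epsilon g_{\underline{j}}=g_{\underline{k}+\underline{m}}$. The hypothesis $b_i>M$ guarantees the finite set of contributing $\epsilon'$ fits inside a single period, so the combinatorics of wrapping around the box boundary can be controlled. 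Your sketch stops short of carrying this out, and the matching is somewhat delicate (in particular one must check that the ``boundary'' contributions, where $\underline{k}+\underline{m}$ exits the box, assemble exactly into the claimed power), but you have correctly located where the work lies and why the largeness of the $b_i$ is needed.
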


\section{Preliminaries for the cohomological formulas} \label{s:precohom}

\subsection{Continuous maps} \label{s:cont}

For topological spaces $X$ and $Y$ let $C(X,Y)$ denote the set of continuous maps $X \rightarrow Y$. If $R$ is a topological ring we let $C_c(X,R)$ denote the subset of $C(X,R)$ of continuous maps with compact support. If we consider $Y$ (resp.\ $R$) with the discrete topology then we shall also write $C^0(X,Y)$ (resp.\ $C_c^0(X,R)$) instead of $C(X,Y)$ (resp.\ $C_c(X,R)$).

Assume now that $X$ is a totally disconnected topological Hausdorff space and $A$ a locally profinite group. We define subgroups $C^\diamond(X,A) \subseteq C(X,A)$ and $C_c^\diamond(X,A) \subseteq C_c(X,A)$ by
\[ C^\diamond(X,A)=C^0(X,A) + \sum_K C(X,K), \]
\[ C_c^\diamond(X,A)=C_c^0(X,A) + \sum_K C_c(X,K), \]
where the sums are taken over all compact open subgroups $K$ of $A$. So $C_c^\diamond(X,A)$ is the subgroup of $C_c(X,A)$ generated by locally constant maps with compact support $X \rightarrow A$ and by continuous maps with compact support $X \rightarrow K \subseteq A$ for some compact open subgroup $K \subseteq A$. Similarly $C^\diamond(X,A)$ is the subgroup of $C(X,A)$ generated by locally constant maps $X \rightarrow A$ and by continuous maps $X \rightarrow K \subseteq A$ for some compact open $K$.

The following notation is used in the formulation of $u_2$. Given two arbitary finite, disjoint sets $\Sigma_1$, $\Sigma_2$ of places of $F$ and a locally profinite group $A$ we put
\[ \mathcal{C}_?(\Sigma_1, A)^{\Sigma_2} = C_?((\mathbb{A}_F^{\Sigma_2})^\ast / U^{\Sigma_1 \cup \Sigma_2}, A). \]
where $? \in \{ \diamond , c, 0 \}$. Here, for a set of places $S$, $U^S$ denotes the subgroup of $\mathbb{A}_F^\ast$ of ideles $(x_v)_v$ with local components $x_v =1$ if $v \in S$, $x_v >0$ if $v \mid \infty$ and $x_v$ is a local unit if $v\nin S \cup R_\infty$.

We also introduce a generalisation of the above notation. For $S_1, S_2$ disjoint sets of places of $F$ let
\[ \mathcal{C}_?(S_1, S_2,A)=C_?( \prod_{\mathfrak{p} \in S_1} F_\mathfrak{p} \times (\mathbb{A}_F^{S_1})^\ast / U^{S_1 \cup S_2}, A). \]
If $S_3$ is an additional disjoint set of places we also define
\[ \mathcal{C}_?(S_1, S_2,A)^{S_3}=C_?( \prod_{\mathfrak{p} \in S_1} F_\mathfrak{p} \times (\mathbb{A}_F^{S_1 \cup S_3})^\ast / U^{S_1 \cup S_2 \cup S_3}, A). \]

\subsection{Measures} \label{s:measure}

We now wish to attach to a homomorphism $\mu : C_c(X, \mathbb{Z}) \rightarrow \mathbb{Z}[G]$ an $A \otimes \mathbb{Z}[G]$-valued measure on $X$ for any abelian group $A$ and finite abelian group $G$. We write the group operation of $A$ multiplicatively. Firstly we note that $\mu$ can be uniquely extended to a homomorphism of $\mathbb{Z}[G]$-modules $\mu : C_c(X, \mathbb{Z}[G]) \cong C_c(X, \mathbb{Z}) \otimes \mathbb{Z}[G] \rightarrow \mathbb{Z}[G] $. By tensoring $\mu$ with the identity map on $A$ we obtain a homomorphism of $\mathbb{Z}[G]$-modules
\begin{equation}\label{e:mua}
    \mu_A :  C_c(X, \mathbb{Z}) \otimes (A \otimes \mathbb{Z}[G] ) \cong C_c^0(X,A \otimes \mathbb{Z}[G]) \rightarrow A\otimes \mathbb{Z}[G].
\end{equation}
To write this map explicitly we first note that the isomorphism in (\ref{e:mua}) is given by 
\[ f \otimes \alpha \mapsto \alpha \cdot f, \ \text{with inverse} \ g \mapsto \sum_{\alpha \in A \otimes \mathbb{Z}[G]} (\alpha \otimes g_\alpha), \]
where $g_\alpha(x)=1$ if $g(x)=\alpha$ and $0$ otherwise. Here we have $f \in  C_c(X, \mathbb{Z})$, $\alpha \in A \otimes \mathbb{Z}[G]$ and $g \in C_c^0(X,A \otimes \mathbb{Z}[G])$. Thus the homomorphism $\mu_A$ is given by
\[ \mu_A(g)=  \sum_{\alpha \in A \otimes \mathbb{Z}[G]} \left( \sum_{\sigma \in G} \sum_{\tau \in G} \alpha_\tau^{\mu_\sigma(g_\alpha)} \otimes \sigma \tau \right) . \]
Where $\alpha = \sum_{\tau \in G} \alpha_\tau \otimes \tau $,  $\mu(g_\alpha)= \sum_{\sigma \in G} \mu_\sigma(g_\alpha) [\sigma] $ and $g_\alpha$ is as defined before. If $A$ is profinite we can consider the homomorphism
\[ \mu_A \coloneqq \varprojlim_K \mu_{A/K} : \varprojlim_K C_c(X, A/K \otimes \mathbb{Z}[G]) \rightarrow \varprojlim_K A/K \otimes \mathbb{Z}[G] = A \otimes \mathbb{Z}[G] \]
where $K$ ranges over the open subgroups of $A$. Since $C_c(X,A \otimes \mathbb{Z}[G]) \subseteq \varprojlim_K C_c(X, A/K \otimes \mathbb{Z}[G])$, we see that $\mu_A$ extends canonically to a homomorphism $C_c(X,A \otimes \mathbb{Z}[G]) \rightarrow A \otimes \mathbb{Z}[G]$ (which we denote by $\mu_A$ as well). For a general $A$ (not necessarily profinite) we have seen that $\mu$ induces a homomorphism $C_c(X,K \otimes \mathbb{Z}[G]) \rightarrow K \otimes \mathbb{Z}[G]$ for every compact open subgroup $K \subset A$. Combining these maps we see that $\mu$ induces a canonical homomorphism $\mu_A : C_c^\diamond (X,A \otimes \mathbb{Z}[G]) \rightarrow A \otimes \mathbb{Z}[G]$. Define the set of $A\otimes \mathbb{Z}[G]$-valued measures on $X$ to be 
\[ \Meas(X,A \otimes \mathbb{Z}[G])= \Hom(C_c^\diamond(X,A\otimes \mathbb{Z}[G]),A\otimes \mathbb{Z}[G]). \]
The map $\mu \mapsto \mu_A$ defines a homomorphism $\Hom(C_c(X, \mathbb{Z}[G]),\mathbb{Z}) \rightarrow \Meas(X,A \otimes \mathbb{Z}[G] )$.

In practice, we apply certain specialisations of the general construction above. In the definition of $u_2$ we construct $\mu \in \Hom(C_c(X, \mathbb{Z}) , \mathbb{Z})$ rather than in $\Hom(C_c(X, \mathbb{Z}) , \mathbb{Z}[G])$. We include $\Hom(C_c(X, \mathbb{Z}) , \mathbb{Z})$ into $\Hom(C_c(X, \mathbb{Z}) , \mathbb{Z}[G])$ by the map
\[ \iota_1 : \Hom(C_c(X, \mathbb{Z}) , \mathbb{Z}) \rightarrow \Hom(C_c(X, \mathbb{Z}) , \mathbb{Z}[G]), \quad  \iota_1(\mu)(f)= \mu(f) [ \id ] ,  \]
for $f \in C_c(X, \mathbb{Z})$.

In the definition of $u_3$ we have a measure on $A$ rather than on $A \otimes \mathbb{Z}[G]$. We include $C_c^\diamond(X,A)$ into $C_c^\diamond(X,A \otimes \mathbb{Z}[G] )$ via the map 
\[ \iota_2 :  C_c^\diamond(X,A) \rightarrow C_c^\diamond(X,A\otimes \mathbb{Z}[G]), \quad  \iota_2(f)(x)= f(x) \otimes \id_G, \]
for $x \in X$.

\subsection{Eisenstein cocycles} \label{s:eisenstein}

We now define the Eisenstein cocycle. The cohomological constructions  $u_2$ and $u_3$ require different variations.

Let $E_+(\mathfrak{f})_\mathfrak{p}$ denote the group of totally positive $\mathfrak{p}$-units of $F$ that are congruent to $1 \pmod{\mathfrak{f}}$. The abelian group $E_+(\mathfrak{f})_\mathfrak{p}$ is free of rank $n$. For $x_1, \dots , x_n \in E_+(\mathfrak{f})_{\mathfrak{p}}$, a fractional ideal $\mathfrak{b}$ coprime to $S$ and $\ell$, and compact open $U \subset F_\mathfrak{p}$, we put
\[ \nu_{\mathfrak{b}, \lambda}^\mathfrak{p}(x_1, \dots , x_n)(U) = \delta(x_1, \dots , x_n) \zeta_{R, \lambda}(\mathfrak{b}, \overline{C}_{e_1}(x_1, \dots, x_n), U,0). \]
Here, the Shintani zeta function is defined in (\ref{shintanizetafn}), $\delta$ is defined in (\ref{epsilonnotation}) and $\overline{C}_{e_1}(x_1, \dots, x_n)$ is defined in (\ref{overlinec}). Then $\nu_{\mathfrak{b}, \lambda}^\mathfrak{p}$ is a homogeneous $(n-1)$-cocycle  on $E_+(\mathfrak{f})_{\mathfrak{p}}$ with values in the space of $\mathbb{Z}$-distributions on $F_\mathfrak{p}$. This follows from \cite[Theorem $2.6$]{MR3351752}. We obtain a class
\[ \omega_{\mathfrak{f}, \mathfrak{b}, \lambda}^\mathfrak{p} \coloneqq [ \nu_{\mathfrak{b}, \lambda}^\mathfrak{p} ] \in H^{n-1}(E_+(\mathfrak{f})_{\mathfrak{p}} , \Hom(C_c(F_\mathfrak{p}, \mathbb{Z}), \mathbb{Z})). \]

\begin{remark} \label{integrationpairing}
The function $\nu_{\mathfrak{b}, \lambda}^\mathfrak{p}(x_1, \dots , x_n)$ is viewed as an element of $\Hom(C_c(F_{S_p}, \mathbb{Z}), \mathbb{Z}[G])$ via the following canonical integration pairing 
\[ (f,\mu) \mapsto \int_{F_R} f(t) d \mu(t) = \lim_{\mid \mid \mathcal{V} \mid \mid \rightarrow 0 } \sum_{V \in \mathcal{V}} f(t_V) \mu(V)  \]
where the limit is over increasingly finer covers $\mathcal{V}$ of the support of $f$ by compact open subgroups $V \subseteq F_{S_p}$ and $t_V \in V$ is any element of $V$.
\end{remark}

We also define
\begin{equation}\label{e: defn of w as a sum}
    \omega_{\mathfrak{f},\lambda}^\mathfrak{p} = \sum_{[\mathfrak{b}] \in G_\mathfrak{f}/ \langle \mathfrak{p} \rangle} \rec_{H/F}(\mathfrak{b}) \omega_{\mathfrak{f}, \mathfrak{b}, \lambda}^\mathfrak{p} \in H^{n-1}(E_+(\mathfrak{f})_{\mathfrak{p}} , \Hom(C_c(F_{\mathfrak{p}}, \mathbb{Z}), \mathbb{Z}[G])),
\end{equation}
where the sum ranges over a system of representatives of $G_\mathfrak{f}/ \langle \mathfrak{p} \rangle$. This construction is adapted from the construction of $\omega_{\mathfrak{f}, \lambda}^\mathfrak{p}$ in \S 3.3 of \cite{MR3968788}.

We write $W$ for $F$ considered as a $\mathbb{Q}$-vector space, and $W_\infty = W \otimes_\mathbb{Q} \mathbb{R}$. As before, let $\lambda$ be a prime of $F$ such that $\N \lambda =\ell$ for a prime number $\ell\in \mathbb{Z}$ and $\ell \geq n+2$. We assume that no primes in $S$ have residue characteristic equal to $\ell$. Let $W_\ell= W \otimes_\mathbb{Q} \mathbb{Q}_\ell$.

Define $\phi_\lambda \in C_c(W_\ell, \mathbb{Z})$ by $\phi_\lambda= \mathbbm{1}_{\mathcal{O}_F \otimes \mathbb{Z}_\ell}- \ell\mathbbm{1}_{\lambda \otimes \mathbb{Z}_\ell}$, i.e.
\begin{equation}
    \phi_\lambda(v)= \begin{cases} 1 & \text{if} \ v \in (\mathcal{O}_F \otimes \mathbb{Z}_\ell)-(\lambda \otimes \mathbb{Z}_\ell), \\
1-\ell &\text{if} \ v \in \lambda \otimes \mathbb{Z}_\ell, \\
0 &\text{if} \ v \in V_\ell -( \mathcal{O}_F \otimes \mathbb{Z}_\ell).
\end{cases}
\label{eqnphi}
\end{equation}

By fixing an ordering of the infinite places $v \in R_\infty$, we fix an identification $W_\infty \cong \mathbb{R}^n$. We define $F^{\ell}_+$ as in \S\ref{s:prelim:notation}. If $D$ is a Shintani set and $\Phi \in C_c(W_{\widehat{\mathbb{Z}}}, \mathbb{Z})$ then, following \cite{MR3861805}, we define the Dirichlet series
\begin{equation}
    L({D}, \Phi;s) = \sum_{v \in W \cap {D}} \Phi(v) N(v)^{-s}.
    \label{Lfnforeis}
\end{equation}
It is known to converge for $\text{Re}(s)>1$ and extend to the whole complex plane except for possibly a simple pole at $s=0$. Moreover, if $D$ and $\Phi$ are as given in the following proposition then $L({D}, \Phi;s)$ is holomorphic. We remark that the set $S$ does not appear in the definition of this Dirichlet series. In the following proposition we will decorate the $L$-function with $\lambda$ since the choice of $\Phi$ incorporates $\lambda$ into it.

\begin{proposition} \label{prop9.4}
Let $\omega_1, \dots , \omega_n \in F^{\ell}_+$. For a map $\phi \in  C_c(W_{\widehat{\mathbb{Z}}^\ell}, \mathbb{Z})$, let
\[\Eis_{F,\lambda}^0(\omega_1, \dots , \omega_n) (\phi)=\delta(\omega_1, \dots , \omega_n ) L_\lambda(\overline{C}_{e_1}(\omega_1, \dots , \omega_n), \Phi; 0), \]
where $\Phi= \phi \otimes  \phi_\lambda$. Then $\Eis_{F,\lambda}^0$ is an $F^{\ell}_+$-homogeneous $(n-1)$-cocycle yielding a class
\[ \Eis_{F,\lambda}^0 \in H^{n-1}(F^{\ell}_+, \Hom(C_c(W_{\widehat{\mathbb{Z}}^\ell},\mathbb{Z}), \mathbb{Z})). \]
\end{proposition}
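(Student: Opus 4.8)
The plan is to deduce this from the already-cited result \cite[Theorem 1.6]{MR3351752} (equivalently, the same machinery that produced the cocycle $\mu_{\mathfrak{b},\lambda}$ and the class $\kappa_{\mathfrak{b},\lambda}$ earlier in this section), by identifying $\Eis_{F,\lambda}^0$ with a specialization of that construction. First I would unwind the definition: for fixed $\phi$, the assignment $(\omega_1,\dots,\omega_n)\mapsto \Eis_{F,\lambda}^0(\omega_1,\dots,\omega_n)(\phi)$ is built from the sign factor $\delta(\omega_1,\dots,\omega_n)$ and the value at $s=0$ of the Dirichlet series $L_\lambda(\overline{C}_{e_1}(\omega_1,\dots,\omega_n),\Phi;s)$ with $\Phi=\phi\otimes\phi_\lambda$. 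The holomorphy at $s=0$ (so that the value makes sense) is exactly the content of the remark preceding the proposition together with the smoothing built into $\phi_\lambda$; here one uses that $\phi_\lambda$ kills the constant term, precisely as in the integrality proposition for $\mathfrak{L}_{R,\lambda}$, so $L_\lambda(\overline{C}_{e_1}(\cdots),\Phi;0)$ is a well-defined integer and the map lands in $\Hom(C_c(W_{\widehat{\mathbb{Z}}^\lambda},\mathbb{Z}),\mathbb{Z})$.

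The two substantive claims are (a) the cocycle relation and (b) the $F^{\ell,v}$-homogeneity twisted by $\delta$. For (b), homogeneity means $\Eis_{F,\lambda}^0(\gamma\omega_1,\dots,\gamma\omega_n)=\gamma\cdot\Eis_{F,\lambda}^0(\omega_1,\dots,\omega_n)$ for $\gamma\in F^{\ell,v}$, where the action on the target is the $\delta$-twisted one, $x\cdot m=\delta(x)xm$. This reduces to two observations: scaling all the $\omega_i$ by $\gamma$ multiplies the determinant by $\sigma_v(\gamma)^n$ hence multiplies $\delta$ by $\sign(\sigma_v(\gamma))^n=\delta(\gamma)$ under the convention in (\ref{epsilonnotation}) — wait, more carefully one checks $\delta(\gamma\omega_1,\dots,\gamma\omega_n)=\delta(\gamma)^n\cdot$ something; I would instead use the standard fact (as in \cite{MR3351752}) that the correct normalization makes the sign cocycle $\delta$-homogeneous — and simultaneously the change of variables $v\mapsto \gamma v$ in the lattice sum defining $L$ carries $\overline{C}_{e_1}(\gamma\omega_1,\dots,\gamma\omega_n)$ to $\gamma\,\overline{C}_{e_1}(\omega_1,\dots,\omega_n)$ up to the boundary perturbation direction (one must check that perturbation by $e_1$ is compatible, i.e. that $\sigma_v(\gamma)>0$, which holds since $\gamma\in F^{\ell,v}$ has positive component at $v$ by definition of $F^{S}$), and transforms $\Phi$ by precomposition with multiplication by $\gamma$. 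Tracking how $\Hom(C_c,\mathbb{Z})$ transforms under $\phi\mapsto\phi\circ\gamma$ gives exactly the untwisted action, and combined with the $\delta$-factor one gets the $\delta$-twist. For (a), the cocycle property $\sum_{i=0}^{n}(-1)^i\Eis_{F,\lambda}^0(\omega_0,\dots,\widehat{\omega_i},\dots,\omega_n)=0$ is the additivity of the Shintani decomposition: the signed sum of the cones $\overline{C}_{e_1}(\omega_0,\dots,\widehat{\omega_i},\dots,\omega_n)$, weighted by the $\delta$'s, is the zero element of the space of Shintani "chains" modulo boundaries, and $L_\lambda(-,\Phi;0)$ is additive in its cone argument (a distribution), so the relation is inherited. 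This is exactly the argument in \cite[Theorem 1.6]{MR3351752} and its avatar for $\mu_{\mathfrak{b},\lambda}$ above.

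I expect the main obstacle to be bookkeeping rather than mathematical depth: precisely matching the boundary-perturbation convention $\overline{C}_{e_1}$ with the group action on $W_{\widehat{\mathbb{Z}}^\lambda}$ and on $F^{\ell,v}$, so that the perturbation direction $e_1=\sigma_v$-direction is preserved by the relevant scalings — this forces the use of $F^{\ell,v}$ (diagonally embedded, with positive component at $v$) rather than all of $F^\ast$, and is the reason for decorating everything with the superscript $v$. A secondary point to handle with care is that the $\lambda$-component is removed from the space of test functions ($C_c(W_{\widehat{\mathbb{Z}}^\lambda},\mathbb{Z})$ rather than $C_c(W_{\widehat{\mathbb{Z}}},\mathbb{Z})$) because $\phi_\lambda$ has already been tensored in; one must verify that the pairing $\phi\mapsto L_\lambda(\cdot,\phi\otimes\phi_\lambda;0)$ is well-defined on this quotient/subspace and still compactly supported. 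Once these conventions are pinned down, the proposition follows formally from \cite[Theorem 1.6]{MR3351752}, so in the write-up I would state that the proof is identical to that of the analogous proposition for $\mu_{\mathfrak{b},\lambda}$ (or for $\omega_{\mathfrak{f},\mathfrak{b},\lambda}^{\mathfrak{p}}$), citing \cite{MR3351752} and \cite[\S2--\S3]{MR3968788}, and only flag the places where the superscript $v$ and the $\delta$-twist enter.
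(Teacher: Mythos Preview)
Your proposal is correct and takes essentially the same approach as the paper: both defer to prior results on Shintani/Eisenstein cocycles rather than reproving the cocycle relation and $\delta$-twisted homogeneity from scratch. The paper's proof is even more terse than your plan, citing directly \cite[Definition~4.5]{MR3861805} and \cite[Lemma~5.1]{MR3861805} (where this exact construction and its cocycle property are established), rather than the references \cite{MR3351752} and \cite{MR3968788} you suggest; your detailed breakdown of the two ingredients is accurate bookkeeping but unnecessary given that the statement appears verbatim in \cite{MR3861805}.
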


\begin{proof}
This proposition follows the combination of \cite[Definition 4.5]{MR3861805} and \cite[Lemma 5.1]{MR3861805}.
\end{proof}

We note that in \cite{MR3861805} a more general cocycle $\Eis_{F,\lambda, v}^0$ is constructed. Here $v \in R_\infty$ is a fixed infinite place. We explain the context of this now. For a subgroup $H \subseteq F^{\ell,v}$ and an $H$-module $M$, define $M(\delta)=M \otimes \mathbb{Z}(\delta)$. Thus $M(\delta)$ is the group $M$ with $H$-action given by $x \cdot m = \delta(x)xm$ for $x \in H$ and $m \in M$. Then $\Eis_{F,\lambda, v}^0 \in H^{n-1}(F^{\ell, v}, \Hom(C_c(W_{\widehat{\mathbb{Z}}^\ell},\mathbb{Z}), \mathbb{Z})(\delta))$ and we have the equality
\[ \textrm{res}_{F_+^\ell}^{F^{v,\ell}} ( \Eis_{F,\lambda, v}^0 ) = \Eis_{F,\lambda}^0 . \]

\subsection{Colmez subgroups}\label{s: Colmez subgroups}

In the definitions for the Eisenstein cocycle and its variants the sign map $\delta$ appears. For the explicit calculations we want to perform later it is convenient if we can work with a finite index subgroup $V \subseteq E_+(\mathfrak{f})$ such that $V= \langle g_1 , \dots , g_{n-1} \rangle$ and that we are able to choose $\pi$ such that, after writing $g_n=\pi$,
\begin{itemize}
    \item for $\tau \in S_n$ we have $\delta([g_{\tau(1)} \mid \dots \mid g_{\tau(n-1)} ]) = \sign(\tau)$.
\end{itemize}
We refer to such subgroups as Colmez subgroups. We define
\[ \Log : \mathbb{R}_+^n \rightarrow \mathbb{R}^n, \quad (x_1, \dots , x_n) \mapsto ( \log(x_1), \dots , \log(x_n)). \]
Let $\mathcal{H} \subset \mathbb{R}^n$ be the hyperplane defined by $\text{Tr}(z)=0$. Then, $\Log(E_+(\mathfrak{f}))$ is a lattice in $\mathcal{H}$. If $z = (z_1, \dots , z_n) \in \mathbb{R}_+^n$ and $\Log(z) \in \mathbb{R}^n$ is not an element of $\mathcal{H}$, then we define the projection
\[ z_\mathcal{H} = (z_1 \dots  z_n)^{-\frac{1}{n}} \cdot z . \]
We have $\Log( z_\mathcal{H}) \in \mathcal{H}$. Note that $z$ and $z_\mathcal{H}$ lie on the same ray in $\mathbb{R}_+^n$. For any $M>0$ and $i=0,1, \dots , n-1 $, write $l_i(M)$ for the element of $\mathcal{H}$ which has value $M$ in the $(i+1)$ place and $-M/(n-1)$ in the other places. We endow $\mathbb{R}^n$ with the sup-norm. We denote by $B(x, r)$ the ball centered at $x$ of radius $r$.

The following lemma, which builds on \cite[Lemma 2.1]{MR922806}, allows us to find a collection of possible subsets $V=\langle g_1, \dots , g_{n-1} \rangle$ such that we get a nice sign property that allows us to more easily explicitly calculate the Eisenstein cocycle.

\begin{lemma} \label{lemmatomakembig}
There exists $R_1 >0$ such that for all $R > R_1$, $M >K_1(R)$ (where $K_1(R)$ is some constant we define that depends only on $R$) we have the following: For $i=1, \dots , n-1$ let $g_i \in E_+(\mathfrak{f}) $ and $g_n=g_\pi \in  \pi_\mathcal{H} E_+(\mathfrak{f})$ such that $\Log(g_i) \in B(l_i(M), R) $ and $\Log ( g_\pi) \in  B(l_0(M),R)$. Then
\begin{itemize}
    \item $\langle g_1, \dots , g_{n-1} \rangle \subseteq E_+(\mathfrak{f})$ is a finite index subgroup, and furthermore 
    \item For $\tau \in S_n$ we have $\delta([g_{\tau(1)} \mid \dots \mid g_{\tau(n-1)} ]) = \sign(\tau)$.
\end{itemize}
\end{lemma}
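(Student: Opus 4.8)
\emph{Proof plan.} The plan is to transport the problem to the logarithmic embedding $\Log$ and to read off both conclusions from a single ``dominant term'' analysis of the determinant that computes $\delta$. The feature being exploited is that, as $M\to\infty$, the prescribed targets $l_0(M),l_1(M),\dots,l_{n-1}(M)\in\mathcal H$ point in $n$ fixed, well-separated directions that sum to $0$. This refines \cite[Lemma 2.1]{MR922806}, making it quantitative and treating the extra element $g_n=g_\pi$ on the same footing as the unit generators $g_1,\dots,g_{n-1}$.

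First I would put $v_i=\Log(g_i)$ for $1\le i\le n-1$ and $v_n=\Log(g_\pi)$. Since the $g_i$ are totally positive units and $g_\pi\in\pi_{\mathcal H}E_+$, every $v_i$ lies in $\mathcal H$, and by hypothesis $\|v_i-w_i\|_\infty\le R$, where $w_i=l_i(M)$ for $i\le n-1$ and $w_n=l_0(M)$. Writing $c$ for the $n$-cycle $(1\,2\,\cdots\,n)$, the vector $w_i$ has entry $M$ in coordinate $c(i)$ and $-M/(n-1)$ in the other coordinates, and $\sum_{i=1}^{n}w_i=0$. For the first bullet, $\langle g_1,\dots,g_{n-1}\rangle$ has finite index in $E_+$ exactly when $v_1,\dots,v_{n-1}$ are linearly independent in $\mathcal H\cong\mathbb R^{n-1}$; the rescaled vectors $\tfrac{1}{M}w_1,\dots,\tfrac{1}{M}w_{n-1}$ form a fixed basis of $\mathcal H$ (a short direct check), and $\tfrac{1}{M}v_i$ differs from $\tfrac{1}{M}w_i$ by at most $R/M$, so linear independence is inherited once $M$ exceeds a bound depending only on $R$.

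The substance is the sign identity. Fix $\tau\in S_n$. The matrix $\omega([g_{\tau(1)}\mid\cdots\mid g_{\tau(n-1)}])$ has as its columns the coordinatewise exponentials of the partial sums $s_k=\sum_{j=1}^{k}v_{\tau(j)}$, $k=0,1,\dots,n-1$ (so $s_0=0$ yields the column $(1,\dots,1)$). Put $\bar s_k=\sum_{j=1}^{k}w_{\tau(j)}$, so that $\|s_k-\bar s_k\|_\infty\le kR$; using $\sum_iw_i=0$, a direct computation shows that $\bar s_k$ equals $\tfrac{M(n-k)}{n-1}$ on the $k$ coordinates in $c(\tau(\{1,\dots,k\}))$ and $-\tfrac{Mk}{n-1}$ on the remaining $n-k$ coordinates. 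Relabelling the columns $1,\dots,n$ in order and expanding the determinant over $\sigma\in S_n$, the exponent of the $\sigma$-term differs by at most $\binom{n}{2}R$ from $\sum_{k=0}^{n-1}\bar s_k(\sigma(k+1))$, which is $\le\tfrac{Mn}{2}$, with equality exactly when $\bar s_k(\sigma(k+1))=\tfrac{M(n-k)}{n-1}$, i.e. $\sigma(k+1)\in c(\tau(\{1,\dots,k\}))$, for all $k=1,\dots,n-1$; an induction from $k=1$ then forces $\sigma(k+1)=c(\tau(k))$ for $1\le k\le n-1$, and hence (taking the leftover value) $\sigma(1)=c(\tau(n))$, so $\sigma=\sigma_0:=c\,\tau\,c^{-1}$. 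Any $\sigma\ne\sigma_0$ violates this at some $k_0$, where the term drops to $-\tfrac{Mk_0}{n-1}$, so its leading exponent is at most $\tfrac{Mn}{2}-\tfrac{Mn}{n-1}$; hence once $M>\tfrac{n-1}{n}\bigl(\log n!+n(n-1)R\bigr)$, the absolute value of the $\sigma_0$-term exceeds the sum of the absolute values of all the others, giving $\sign(\det)=\sign(\sigma_0)$. Finally $\sigma_0=c\,\tau\,c^{-1}$ is conjugate to $\tau$, so $\sign(\sigma_0)=\sign(\tau)$, which is the asserted identity. One then takes $K_1(R)$ to be the larger of the two bounds on $M$ obtained above, and $R_1$ to be the covering radius of the lattice $\Log(E_+)$ in $\mathcal H$; the latter choice merely guarantees the hypothesis is not vacuous (the ball $B(l_i(M),R)$ meets $\Log(E_+)$, and $B(l_0(M),R)$ meets $\Log(\pi_{\mathcal H})+\Log(E_+)$) and is otherwise not needed for the two conclusions.

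I expect the main obstacle to be the combinatorial heart of the third paragraph: identifying the unique exponent-maximizing permutation $\sigma_0$ in the Leibniz expansion and, crucially, showing that the gap to the second-largest exponent is at least a fixed positive multiple of $M$ that beats the accumulated perturbation error $\binom{n}{2}R$ coming from the radius-$R$ balls, so that the sign of the whole determinant is decided by the term $\sigma_0$ alone. Identifying $\sign(\sigma_0)$ with $\sign(\tau)$ through the cyclic relabelling $c$ is routine but needs to be set up carefully.
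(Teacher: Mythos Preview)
Your argument is correct and follows essentially the same strategy as the paper's: expand the determinant of $\omega([g_{\tau(1)}\mid\cdots\mid g_{\tau(n-1)}])$ via the Leibniz formula and show that one term, whose exponent is $\tfrac{Mn}{2}$ up to an $O(R)$ error, dominates all the others once $M$ is large enough relative to $R$ and $\log(n!)$. The only difference is packaging: the paper carries out the estimate explicitly for $\tau=\id$ (isolating the diagonal term of the displayed matrix) and then asserts that ``the other required sign properties'' follow ``in the same way,'' whereas you treat all $\tau\in S_n$ at once by identifying the dominant permutation as $\sigma_0=c\,\tau\,c^{-1}$ and reading off $\sign(\sigma_0)=\sign(\tau)$ from conjugacy; this is a cleaner bookkeeping of the same computation rather than a genuinely different route.
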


\begin{proof}
This proof largely follows the ideas of Colmez in his proof of \cite[Lemma 2.1]{MR922806}. First, note that both $\Log(E_+(\mathfrak{f})) $ and $ \Log( \pi_\mathcal{H} E_+(\mathfrak{f}))$ are lattices inside $\mathcal{H}$. There exists a constant $R_1 \coloneqq R(E_+(\mathfrak{f}), \pi)$ such that for all $M>0$ and any $r > R(E_+(\mathfrak{f}), \pi)$ there exist $g_1, \dots , g_{n-1} \in  E_+(\mathfrak{f})$ and $g_\pi \in  \pi_\mathcal{H} E_+(\mathfrak{f})$ such that $\Log(g_i) \in B(l_i(M),r)$ for $i=1, \dots , n-1$ and $\Log(g_\pi ) \in B(l_0(M),r)$. The existence of $R_1$ follows from Dirichlet's Unit Theorem and, in particular, the non-vanishing of the regulator of a number field. Since the $l_i(M)$ form a basis of $\mathcal{H}$, the $\Log(g_i)$ form a free family of finite index in $\Log(E_+(\mathfrak{f}))$, if $M$ is large enough relative to $r$, say $M>k(r)$. This gives the first point of the Lemma. It remains to show the second point.

Now take $M$ satisfying:
\begin{enumerate}[i)]
    \item $M \geq 2(n-1)^4 r$,
    \item $M > (n-1)^2 \log (n!)$,
    \item $M >k (r)$.
\end{enumerate}
For simplicity, let $K_1(r)= \max (2(n-1)^4 r, (n-1)^2 \log (n!), k(r))$ so that we only require $M> K_1(r)$.

Write $g_n = g_\pi$ and let $\tau \in S_n$. Denote $\Delta_\tau = \det([g_{\tau(1)} \mid \dots \mid g_{\tau(n-1)}])$. We show $\Delta_\tau >0$. If $\tau$ fixes $n$ then this calculation is covered by the proof of \cite[Lemma 2.1]{MR922806}. 

Suppose $\tau$ is the transposition which swaps $n-1$ and $n$. Then $\Delta_\tau = \det([g_{1} \mid \dots \mid g_{n-2} \mid g_{n} ]) $. Put $E_i = \exp (M( 1-\frac{i-2}{n-1}))$ and $F_i= \exp(-M(\frac{i-1}{n-1}))$. Hence, the matrix given by $[g_{1} \mid \dots \mid g_{n-2} \mid g_{n} ]$ is written
\[ \begin{pmatrix} 1 & \beta_{1,2} F_2 & \beta_{1,3}F_3 & \dots & \beta_{1,n-1}F_{n-1} & \beta_{1,n} E_n \\
1 & \beta_{2,2} E_2 & \beta_{2,3}E_3 & \dots & \beta_{2,n-1}E_{n-1} & \beta_{2,n} E_n \\
1 & \beta_{3,2} F_2 & \beta_{3,3}E_3 & \dots & \beta_{2,n-1}E_{n-1} & \beta_{3,n} E_n \\
1 & \beta_{4,2} F_2 & \beta_{4,3}F_3 & \dots & \beta_{2,n-1}E_{n-1} & \beta_{3,n} E_n \\ \vdots
  &      \vdots           &     \vdots     &  \ddots & \vdots & \vdots \\
1 & \beta_{n-1,2} F_2 & \beta_{n-1,3}F_3 & \dots& \beta_{n-1,n-1} E_{n-1} & \beta_{n-1,n} E_n \\
1 & \beta_{n,2} F_2 & \beta_{n,3}F_3 & \dots & \beta_{n,n-1} F_{n-1} & \beta_{n,n} F_n
\end{pmatrix}, \]
where by i),
\[ e^{\frac{-M}{2(n-1)^3}} < \beta_{i,j} < e^{\frac{M}{2(n-1)^3}}.  \]
Expand $\Delta_\tau $ and isolate the term given by the entries with coordinates $(1,n), (2,2), \dots , (n-1, n-1) , (n,1)$. Using the bounds we defined previously we obtain
\[ \mid \Delta_\tau - e^{\frac{n M}{2}} \beta_{1,n} \prod_{i=2}^{n-1} \beta_{i,i} \mid \leq (n! -1)e^{\frac{M}{2(n-1)^2}}e^{M(\frac{n}{2}-\frac{n}{n-1})}  \]
and so 
\[ \Delta_\tau \geq e^{\frac{nM}{2}}(e^{\frac{-M}{2(n-1)^2}}- (n!-1) e^{(\frac{M}{2(n-1)^2}-\frac{nM}{n-1})}) >0 \]
according to ii). We then show the other required sign properties in the same way.
\end{proof}

It is required in our later calculations to make the following sign calculation.

\begin{lemma}\label{l:basicsign}
For $i=1, \dots, n-1$ let $g_i \in E_+(\mathfrak{f})$ be chosen as in Lemma \ref{lemmatomakembig}. Write $S$ for the $n \times n$ matrix with rows $\Log(g_1), \dots , \Log(g_{n-1}),v_0$ where $v_0 = (1, \dots , 1) \in \mathbb{R}^n$. Then, if $M> 4(n! -1)R$, we have
\[ \sign (\det(S))=(-1)^{n-1}. \]
\end{lemma}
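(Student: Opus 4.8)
The plan is to exploit the very explicit coordinates already set up in the proof of Lemma \ref{lemmatomakembig}. There, with $E_i = \exp(M(1-\tfrac{i-2}{n-1}))$ and $F_i = \exp(-M\tfrac{i-1}{n-1})$, the matrix $[g_1 \mid \dots \mid g_{n-1}]$ was shown to have a dominant diagonal term of size $e^{nM/2}\prod_{i=2}^n \beta_{i,i}$. The key observation is that the rows $\Log(g_i)$ of $S$ have an entirely analogous structure: by hypothesis $\Log(g_i) \in B(l_i(M), R)$, so the $(i+1)$-st entry of $\Log(g_i)$ is within $R$ of $M$, while every other entry is within $R$ of $-M/(n-1)$. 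After appending the row $v_0 = (1,\dots,1)$, the matrix $S$ is, up to an additive perturbation of sup-norm at most $\max(R,1)$ in each entry, the matrix whose $i$-th row ($1 \le i \le n-1$) has $M$ in position $i+1$ and $-M/(n-1)$ elsewhere, and whose last row is $v_0$.

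First I would compute the determinant of this "idealized" matrix $S_0$ exactly. Subtracting the first $n-1$ rows appropriately, or expanding directly, one sees that $S_0$ has a large dominant term: the product of its diagonal-type entries. A clean way is to note that the $(n-1)\times(n-1)$ block of $S_0$ in rows $1,\dots,n-1$ and columns $2,\dots,n$ is $(-M/(n-1))J + (M + M/(n-1))I = \tfrac{M}{n-1}\bigl(nI - J\bigr)$ where $J$ is the all-ones matrix; its determinant is $\bigl(\tfrac{M}{n-1}\bigr)^{n-1}\det(nI-J) = \bigl(\tfrac{M}{n-1}\bigr)^{n-1} \cdot n^{n-2}\cdot 1 > 0$ (eigenvalues of $nI-J$ on $\mathbb{R}^n$ restricted to the relevant block being $n$ with multiplicity $n-2$ and... — in any case positive). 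Expanding $\det(S_0)$ along the first column, whose entries are all $1$, the cofactor of the $(n,1)$ entry is $(-1)^{n+1}$ times the determinant of that block, and one checks this is the dominant cofactor; hence $\sign(\det(S_0)) = (-1)^{n+1} = (-1)^{n-1}$.

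Next I would show the perturbation does not change the sign. Expanding $\det(S)$ multilinearly and isolating the term coming from $S_0$, every other term in the Leibniz expansion picks up at least one perturbation entry (of size $\le \max(R,1)$) in place of an entry of size on the order of $M$, so the total error is bounded by $(n!-1)$ times a product in which one factor $\sim M$ is replaced by $O(R)$; comparing this with the lower bound on $|\det(S_0)| \gtrsim M^{n-1}$, the stated hypothesis $M > 4(n!-1)R$ (together with the large-$M$ assumptions $M > K_1(R)$ inherited from Lemma \ref{lemmatomakembig}) forces $|\det(S) - \det(S_0)| < |\det(S_0)|$, so $\sign(\det(S)) = \sign(\det(S_0)) = (-1)^{n-1}$.

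The main obstacle is purely bookkeeping: getting the dominant-term estimate and the error bound to line up cleanly with the explicit constant $4(n!-1)R$, i.e. tracking which entries are $\Theta(M)$ versus $O(R)$ through the Leibniz expansion, exactly as was done for $\Delta$ in the proof of Lemma \ref{lemmatomakembig}. There is no conceptual difficulty; the sign $(-1)^{n-1}$ comes entirely from the position of the all-ones column relative to the triangular-ish structure of the $\Log(g_i)$ rows, and the rest is the same dominant-diagonal argument already in use.
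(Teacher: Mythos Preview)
Your overall strategy---compute the sign of the idealized matrix $S_0$ and then control the perturbation---is the same as the paper's, but the execution has two concrete errors that would block the argument as written.

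First, the entries of the first column of $S_0$ are not all $1$; they are $(-M/(n-1),\dots,-M/(n-1),1)$. It is the last \emph{row} that is $(1,\dots,1)$. More seriously, your claim that in the Laplace expansion along column~1 the $(n,1)$ cofactor dominates is false: for the idealized $S_0$ all $n$ cofactor terms are \emph{equal}. (Check $n=2$: $\det\begin{psmallmatrix}-M & M\\ 1 & 1\end{psmallmatrix}=-2M$, with each of the two cofactor terms contributing $-M$.) So one cannot read off the sign of $\det(S_0)$ from a single cofactor, and once the perturbations $\beta_{i,j}$ are present the other cofactors, being of the same order $\sim M^{n-1}$, compete with the $(n,1)$ term and could a priori flip the sign.

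The paper avoids this by performing the column operation \emph{on the perturbed matrix $S$ itself}: subtract column~1 from each of columns $2,\dots,n$. This leaves $\det(S)$ unchanged, makes the last row $(1,0,\dots,0)$, and turns the upper-right $(n-1)\times(n-1)$ block into a matrix $S'$ whose diagonal entries are $\tfrac{nM}{n-1}+B_{i,i+1}$ and whose off-diagonal entries are $B_{i,j}=\beta_{i,j}-\beta_{i,1}$, all bounded by $2R$. Expanding along the last row gives $\det(S)=(-1)^{n-1}\det(S')$ exactly, with no competing cofactors. Now $S'$ is nearly diagonal, so the Leibniz expansion has a single dominant (diagonal) term and the explicit bound $M>4(n!-1)R$ suffices to force $\det(S')>0$. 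This is precisely the step your sketch misses: the column subtraction is what collapses the problem to a genuinely diagonal-dominant one, whereas a direct Leibniz or cofactor expansion of $S$ (or of $S_0$) does not.
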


\begin{proof}
We have 
\[ S= \begin{pmatrix} -\frac{M}{n-1}+\beta_{1,1} & M + \beta_{1,2} & -\frac{M}{n-1}+\beta_{1,3} & \dots & -\frac{M}{n-1}+\beta_{1,n} \\
-\frac{M}{n-1}+\beta_{2,1} & -\frac{M}{n-1}+ \beta_{2,2}  & M + \beta_{2,3} & \dots & -\frac{M}{n-1}+ \beta_{2,n}  \\
-\frac{M}{n-1}+ \beta_{3,1} & -\frac{M}{n-1}+ \beta_{3,2}  & -\frac{M}{n-1}+ \beta_{3,3} & \dots & -\frac{M}{n-1}+ \beta_{3,n}  \\ \vdots  &\vdots     &       \vdots    &  \ddots & \vdots \\
-\frac{M}{n-1}+\beta_{n-1,1}   & -\frac{M}{n-1}+ \beta_{n-1,2} & -\frac{M}{n-1}+ \beta_{n-1,3} & \dots & M + \beta_{n-1,n}  \\
1 & 1 & 1 & \dots & 1
\end{pmatrix}, \]
where $-R< \beta_{i,j}  < R$. We now subtract the first column from each of the other columns and expand the determinant along the bottom row. This gives, after letting $B_{i,j}=\beta_{i,j}-\beta_{i,1}$,
\begin{equation}\label{e:matrix2}
    \det S= (-1)^{n-1} \det \begin{pmatrix} \frac{nM}{n-1}+B_{1,2}  &  B_{1,3}  & \dots & B_{1,n}  \\
B_{2,2}  &  \frac{nM}{n-1}+B_{2,3}  & \dots & B_{2,n}   \\ \vdots
  &         \vdots    &     \ddots     & \vdots  \\
B_{n-1,2}    & B_{n-1,2} &  \dots &  \frac{nM}{n-1}+ B_{n-1,n}
\end{pmatrix}.
\end{equation}
Write $S^\prime$ for the matrix in (\ref{e:matrix2}) and note that $-2R < B_{i,j}<2R$, for all $i,j=1, \dots, n-1$. When expanding the determinant of $S^\prime$ and isolating the diagonal terms using the bounds from before, we observe:
\[ \mid \det (S^\prime) - \prod_{i=1}^{n-1} ( \frac{nM}{n-1}+B_{i,i+1} ) \mid \leq ( n! -1) 2R \left( \frac{nM}{n-1} +2R \right)^{n-2}. \]
Thus,
\[ \det S^\prime \geq \left( \frac{nM}{n-1}-2R \right)^{n-1} - ( n! -1) 2R \left( \frac{nM}{n-1} +2R \right)^{n-2}. \]
Since we have assumed $M>4(n!-1)R$ we have
\[ \det S^\prime > \left( \frac{nM}{n-1}-\frac{M}{2(n! -1)} \right)^{n-1} - \frac{M}{2} \left( \frac{nM}{n-1} +\frac{M}{2(n! -1)} \right)^{n-2}. \]
It thus remains to show that for $n \geq 2$ the following holds
\begin{equation}\label{e:rts}
    \left( \frac{n}{n-1}-\frac{1}{2(n! -1)} \right)^{n-1} - \frac{1}{2} \left( \frac{n}{n-1} +\frac{1}{2(n! -1)} \right)^{n-2} >0.
\end{equation}
Firstly, one can see by calculating that the inequality holds for $n=2$. Remarking that as $n$ increases the difference between the two terms in brackets decreases, gives that the value of the left hand side of (\ref{e:rts}) must increase with $n$. Thus (\ref{e:rts}) holds.
\end{proof}

We now let $K_2(R)= \max (K_1(R), 4(n! -1)R)$ so that both Lemma \ref{lemmatomakembig} and Lemma \ref{l:basicsign} hold if $M>K_2(R)$.

\begin{corollary}\label{c:matrixsign}
Let $r>0$ be an integer, $D_+$ an $r \times r$ diagonal matrix with positive entries, $A \in M_{n \times r}(\mathbb{R})$ and $S$ as in Lemma \ref{l:basicsign}. Then the block matrix 
\[ B = \left(
    \begin{array}{c;{2pt/2pt}c}
        A & D_+ \\ \hdashline[2pt/2pt]
        S & 0 
    \end{array}
\right) ,
\]
has determinant of sign $(-1)^{n-1}(-1)^{r(n+r-1)}$.
\end{corollary}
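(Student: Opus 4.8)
The plan is to reduce $B$ to block-triangular form by a single permutation of its columns, and then just read off the determinant. Interchanging the two block columns of $B$ — the block column of width $n$ containing $S$, and the block column of width $r$ containing $D_+$ — produces the matrix
\[
B' = \begin{pmatrix} D_+ & A \\ 0 & S \end{pmatrix},
\]
which is block upper-triangular (its bottom-left block is zero), so $\det(B') = \det(D_+)\det(S)$. The column permutation carrying $B$ to $B'$ moves the last $r$ columns in front of the first $n$ columns while preserving the internal order of each block; a direct count of inversions (every pair consisting of one column from the $n$-block and one from the $r$-block is inverted, and nothing else) gives exactly $nr$ inversions, hence sign $(-1)^{nr}$. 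Therefore
\[
\det(B) = (-1)^{nr}\det(D_+)\det(S).
\]

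To finish I would read off the two signs on the right. Since $D_+$ is diagonal with positive entries, $\det(D_+) > 0$; and since $S$ is the matrix of Lemma \ref{l:basicsign}, we have $\sign(\det(S)) = (-1)^{n-1}$. Hence $\sign(\det(B)) = (-1)^{nr}(-1)^{n-1}$. Finally, $r(r-1)$ is even, so $r(n+r-1) = nr + r(r-1) \equiv nr \pmod 2$, and therefore $\sign(\det(B)) = (-1)^{n-1}(-1)^{r(n+r-1)}$, which is the asserted value.

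I do not expect any real obstacle here beyond bookkeeping. The one delicate point is the sign of the block column swap; as a cross-check one can instead apply the generalized Laplace expansion along the $n$ rows meeting the zero block — only the $n\times n$ minor sitting in the columns of $S$ contributes, and the Laplace sign for that term again works out to $(-1)^{nr}$ (up to the even quantity $n(n+1)$), so both approaches agree. The hypotheses enter exactly where one expects: positivity of the diagonal of $D_+$ is what pins down $\sign\det(D_+)$, and Lemma \ref{l:basicsign} supplies $\sign\det(S)=(-1)^{n-1}$.
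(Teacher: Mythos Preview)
Your proof is correct and is essentially the paper's argument: both exploit the zero block in the last $r$ columns to reduce $\det(B)$ to $\det(D_+)\det(S)$ with a combinatorial sign, the paper via iterative cofactor expansion down those columns (yielding $\prod_{i=1}^{r}(-1)^{(n+r-i)+(i-1)} = (-1)^{r(n+r-1)}$), and you via a single block column swap (yielding $(-1)^{nr}$), which you then check agrees modulo $2$. The block-permutation packaging is slightly cleaner, but the content is the same.
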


\begin{proof}
Write $d_1, \dots , d_r \in \mathbb{R}_{>0}$ for the diagonal entries of $D_+$. Using cofactor expansion with the last $r$ columns of $B$ one can see that the determinant of $B$ is equal to
\[ \det (S) \prod_{i=1}^{r}d_i (-1)^{(n+r-i)+(i-1))} = \det(S) (-1)^{r(n+r-1)}\prod_{i=1}^{r}d_i .  \]
Using Lemma \ref{l:basicsign} and the fact that the entries of $D_+$ are positive, the result follows. 
\end{proof}

We recall the definition of $k(r)$ from the proof of Lemma \ref{lemmatomakembig} and note the following lemma.

\begin{lemma}\label{l:scalek}
We can choose $k(r) = K r$ where $K$ is some constant that does not depend on $r$. I.e., suppose $r>R_1$ and $M>Kr$, if for $i=1, \dots , n-1$, we have $g_i \in E_+(\mathfrak{f})$ with $\Log (g_i ) \in B( l_i(M),r)$ then the $\Log (g_i)$ form a free family of finite index in $\Log (E_+(\mathfrak{f}))$.
\end{lemma}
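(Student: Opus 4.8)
The plan is to reduce the claim to a single nonvanishing-determinant estimate, which is essentially the computation already carried out in the proof of Lemma~\ref{l:basicsign}. Since each $g_i \in E_+$ we have $\Log(g_i) \in \Log(E_+)$, and $\Log(E_+)$ is a lattice of full rank $n-1$ in the hyperplane $\mathcal{H}$. Hence $\Log(g_1), \dots, \Log(g_{n-1})$ is $\mathbb{Z}$-linearly independent and of finite index in $\Log(E_+)$ as soon as it is $\mathbb{R}$-linearly independent inside $\mathcal{H}$. Since $\mathbb{R}^n = \mathcal{H} \oplus \mathbb{R} v_0$ with $v_0 = (1, \dots, 1)$, while each $\Log(g_i)$ lies in $\mathcal{H}$, this independence is in turn equivalent to the invertibility of the $n \times n$ matrix $S$ of Lemma~\ref{l:basicsign}, whose rows are $\Log(g_1), \dots, \Log(g_{n-1}), v_0$.

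So the content is an estimate for $\det S$. Writing $\Log(g_i) = l_i(M) + \epsilon_i$ with $\|\epsilon_i\|_\infty < r$ and using $l_i(M) = M\, l_i(1)$, multilinear expansion along the first $n-1$ rows yields
\[
\det S = M^{n-1}\, c_0 + E, \qquad |E| \le C_n\, r\, \max(M, r)^{n-2},
\]
where $C_n$ is a purely combinatorial constant, and $c_0$ is the determinant of the fixed matrix with rows $l_1(1), \dots, l_{n-1}(1), v_0$; we have $c_0 \neq 0$ because $l_1(1), \dots, l_{n-1}(1)$ is a basis of $\mathcal{H}$. (This is precisely the computation performed in the proof of Lemma~\ref{l:basicsign}, which in fact shows $\sign(\det S) = (-1)^{n-1}$ once $M > 4(n!-1)r$.) It follows that $\det S \neq 0$ whenever $M > K r$ for any fixed $K > C_n/|c_0|$; in particular $K = 4(n!-1)$ suffices, and this is a constant depending only on $n = [F:\mathbb{Q}]$ and not on $r$.

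Combining the two steps, under the hypotheses $r > R_1$ and $M > Kr$ the matrix $S$ is invertible, so $\Log(g_1), \dots, \Log(g_{n-1})$ form a free family of finite index in $\Log(E_+)$; in particular the threshold $k(r)$ introduced in the proof of Lemma~\ref{lemmatomakembig} may be taken to be $Kr$. The only point that needs care, and really the sole subtlety here, is to avoid a circular appeal: one should not cite Lemma~\ref{l:basicsign} as a black box, since its statement (via ``$g_i$ chosen as in Lemma~\ref{lemmatomakembig}'') nominally presupposes the very threshold $k(r)$ we are trying to control; instead one re-runs its determinant computation, which uses only the ball condition $\Log(g_i) \in B(l_i(M), r)$ and the linear lower bound $M > 4(n!-1)r$. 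With that noted, the argument is complete and presents no further obstacle.
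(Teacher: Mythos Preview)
Your proof is correct, and in fact more careful than the paper's. The paper argues that since the $l_i(M)$ form a free family, it suffices to take $M$ large enough that the balls $B(l_i(M),r)$ are pairwise disjoint; it then computes the distance between centers and sets $K=\sqrt{2}(n-1)/n$. But disjointness of balls around linearly independent centers does \emph{not} in general force arbitrary points chosen from those balls to be linearly independent (easy two-dimensional counterexamples exist in the regime $\sqrt{2}\,r<M<2r$), so the paper's argument is at best a heuristic with a constant that happens to be in the right ballpark.

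Your route---reducing to the nonvanishing of $\det S$ and re-running the explicit expansion from the proof of Lemma~\ref{l:basicsign}---is the honest way to get a linear threshold $M>Kr$ with $K$ depending only on $n$. You are also right to flag the circularity hazard: Lemma~\ref{l:basicsign} as stated presupposes the $g_i$ are chosen via Lemma~\ref{lemmatomakembig}, which already invokes $k(r)$; quoting only the determinant computation (which uses nothing but $\Log(g_i)\in B(l_i(M),r)$ and $M>4(n!-1)r$) avoids this cleanly. The upshot is the same statement with a larger but still $r$-independent constant, which is all that is needed downstream.
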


\begin{proof}
We begin by noting that the result is trivial if $n=2$. Suppose that $n>2$. We claim that it is enough to take $K=2(n-1)$. For each $i=1, \dots , n-1$, let $\Log(g_i) \in B( l_i(M),r)$. We then write
\[ \Log(g_i) = (\alpha_i(1), \dots , \alpha_i(n)) \in \mathcal{H} . \]
It is enough to show that the $\Log(g_i)$ are linearly independent under the projection
\begin{align*}
    \varphi : \mathcal{H} & \rightarrow \mathbb{R}^{n-1} \\
    (\alpha_1, \dots , \alpha_n) & \mapsto (\alpha_1, \dots , \alpha_{n-1}).
\end{align*}
By the definition of $l_i(M)$ and our choice of $r$ it is clear that 
\[ \alpha_i(j) > 0 \text{ if } j=i+1 \quad \text{and} \quad \alpha_i(j) < 0 \text{ otherwise. }  \]
We note that $\alpha_{n-1}(j)<0$ for all $j$. It follows immediately that the vectors
\[ \varphi(\Log(g_1)), \dots , \varphi(\Log(g_{n-1})) \]
are linearly independent. Thus the $\Log(g_i)$ for a free family of finite index in $\Log(E_+(\mathfrak{f}))$.

\end{proof}

It follows from the above lemma that if $M>K_2(R)$ then for any $\lambda>1$ we have that $\lambda M > K_2(\lambda R)$.

\begin{lemma}\label{l:coprimechoices}
Let $R_1>0$ be as is shown to exist in Lemma \ref{lemmatomakembig}. There exists
\begin{enumerate}
    \item $R_f, R_g > R_1$,
    \item $M_f > K_2(R_f)$ and
    \item $M_g > K_2(R_g)$,
\end{enumerate}
such that we have the following. Firstly, for $i=1, \dots , n-1$ we can choose $f_i, g_i \in E_+(\mathfrak{f})$ such that $\Log( f_i) \in  B(l_i(M_f), R) $ and $ \Log ( g_i ) \in B(l_i(M_g), R) $. Furthermore, after writing
\[ V_f = \langle f_1, \dots , f_{n-1} \rangle \quad \text{and} \quad V_g = \langle g_1, \dots , g_{n-1} \rangle \]
we have that $[E_+(\mathfrak{f}) : V_f]$ is coprime to $[E_+(\mathfrak{f}) : V_g]$.
\end{lemma}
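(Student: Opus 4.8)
The plan is to exploit the scaling freedom established in Lemma \ref{l:scalek} and the remark following it, which says that if $M > K_2(R)$ then $\lambda M > K_2(\lambda R)$ for any $\lambda > 0$. The idea is that the index $[E_+ : V]$ of the subgroup generated by $g_1, \dots, g_{n-1}$ with $\Log(g_i)$ near the vertices $l_i(M)$ grows (roughly) like a fixed polynomial in $M$ up to bounded multiplicative error coming from the ball radius $R$, so by choosing two very different scales $M_f$ and $M_g$ we can force the two indices to land in disjoint sets of prime divisors. More precisely, the covolume of the lattice spanned by $\Log(f_1), \dots, \Log(f_{n-1})$ inside $\mathcal H$ is within a bounded factor of $M_f^{\,n-1}$ times the covolume of $\Log(E_+)$, since each $\Log(f_i)$ lies within $R_f$ of $l_i(M_f)$ and the $l_i(M)$ scale linearly in $M$; hence $[E_+ : V_f] = M_f^{\,n-1}\cdot(1 + O(R_f/M_f))$ in the sense that the ratio is bounded above and below by constants depending only on $n$.

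The key steps, in order. First I would fix $R_f = R_g = R$ for some $R > R_1$ and make the linear-algebra estimate precise: using the matrix description from the proof of Lemma \ref{lemmatomakembig}, show there are constants $0 < c_1 < c_2$ (depending only on $n$) such that whenever $M > K_2(R)$ and $\Log(g_i) \in B(l_i(M), R)$ for $i = 1, \dots, n-1$, the resulting index satisfies
\[ c_1 \left(\tfrac{M}{[E_+ : \mathbf 1]\text{-normalization}}\right)^{n-1} \le [E_+ : \langle g_1, \dots, g_{n-1}\rangle] \le c_2 \, M^{n-1}; \]
concretely, expand $\det([g_1 \mid \dots \mid g_{n-1} \mid v_0])$ as in Lemma \ref{l:basicsign} and read off that it equals $(\tfrac{nM}{n-1})^{n-1}$ up to a factor in $[1 - \varepsilon, 1+\varepsilon]$, then divide by the regulator. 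Second, observe that because of the scaling remark, for \emph{every} sufficiently large integer $k$ we may take $M = k \cdot M_0$ (with $M_0 > K_2(R)$ fixed) and still satisfy all hypotheses; so the set of achievable indices includes a value in every interval $[c_1 M^{n-1}, c_2 M^{n-1}]$ for a sparse sequence of $M$. Third — the arithmetic core — choose $M_f$ first, obtain $V_f$ with $N_f := [E_+ : V_f]$, let $P$ be the (finite) set of primes dividing $N_f$, and then choose $M_g$ so large that the interval $[c_1 M_g^{n-1}, c_2 M_g^{n-1}]$ is longer than $\prod_{p \in P} p$; by the freedom to perturb $g_i$ within its ball (which changes the index by at least $1$ as we move $\Log(g_i)$ across successive lattice points of $\Log(E_+)$, since the ball has radius $R > R_1$ and hence contains many lattice translates), the achievable indices for that $M_g$ fill a full run of consecutive integers long enough to contain one coprime to $N_f$. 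Pick that $g_i$ configuration.

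The main obstacle I expect is the third step: controlling which \emph{exact} integers occur as $[E_+ : V_g]$ as the $g_i$ vary over lattice points inside their respective balls. The covolume/determinant estimate only pins the index down to an interval, and one needs that within that interval a dense enough (ideally: consecutive, or at least an arithmetic-progression-free-of-the-bad-primes) set of values is actually attained. The cleanest route is to fix $g_2, \dots, g_{n-1}$ and let only $g_1$ range over the $\gtrsim (R/\text{reg})^{?}$ lattice points of $\Log(E_+)$ inside $B(l_1(M_g), R)$ that still satisfy the sign conditions of Lemma \ref{lemmatomakembig} (the sign conditions are open and hold on all of the ball once $M_g > K_2(R)$, so this is automatic); as $g_1$ moves by one lattice vector transverse to the span of $g_2, \dots, g_{n-1}$, the index $[E_+ : V_g]$ changes by exactly $[\,\Log(E_+) : \langle \Log(g_2), \dots, \Log(g_{n-1})\rangle_{\text{sat}}\,]$-many... — rather, it changes by a fixed nonzero amount $\delta$ equal to the covolume of the rank-$(n-2)$ sublattice, so the attainable indices form an arithmetic progression of common difference $\delta$ and length $\gtrsim R/\text{reg}$. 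It then suffices to have chosen $R$ large enough (relative to nothing — $R$ is at our disposal, only constrained by $R > R_1$) that this progression has length exceeding $\delta \cdot \prod_{p \in P} p$ and that $\gcd(\delta, N_f)$ issues are handled; if $\delta$ shares a factor with $N_f$ one instead varies a different coordinate or enlarges $R$, and since there are $n-1$ independent directions and $R$ is unbounded, some choice works. Once that combinatorial point is settled the lemma follows immediately.
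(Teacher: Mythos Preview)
Your overall strategy --- estimate the index as roughly $M^{n-1}$ and then vary the $g_i$ inside their balls to hit an index coprime to $N_f$ --- is different from the paper's and has a genuine gap at exactly the point you flag as ``the main obstacle''. The paper does not attempt any covolume--interval argument at all. Instead it chooses a basis $\delta_1,\dots,\delta_{n-1}$ of $E_+$ in which the matrix of $f_1,\dots,f_{n-1}$ is upper triangular (essentially Hermite normal form), so that $[E_+:V_f]=\prod_i|a_i|=:a$ with $f_{\tau(i)}=\delta_i^{a_i}\prod_{j<i}\delta_j^{b_{i,j}}$. Then for each $i\ge 2$ it directly builds $g_{\tau(i)}=\delta_i^{q_i}\prod_{j<i}\delta_j^{k_j}$ with $|q_i|$ coprime to $a$, using the lower coordinates $k_j$ as free parameters to land $\Log(g_{\tau(i)})$ in the required ball; for $i=1$ it takes $g_{\tau(1)}=\delta_1^{q_1}$ and invokes the scaling property (Lemma \ref{l:scalek}) to absorb the change of scale into $M_g$ and $R_g$. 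The upshot is that $[E_+:V_g]=\prod_i|q_i|$ is coprime to $a$ by construction, with no analytic estimate needed.

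The specific problem with your step 3 is this: fixing $g_2,\dots,g_{n-1}$ and moving $g_1$ along a lattice direction changes the determinant by the corresponding cofactor, so the attainable determinants lie in a coset of $d\mathbb{Z}$ where $d$ is the gcd of all $(n-2)\times(n-2)$ minors of the matrix of $g_2,\dots,g_{n-1}$. If $\gcd(d,N_f)>1$ you are stuck, and your proposed fixes do not resolve this: enlarging $R$ only lengthens the arithmetic progression without changing its common difference, and ``varying a different coordinate'' just replaces $d$ by the analogous minor-gcd with a different row omitted, which may still share factors with $N_f$. You would need to first arrange $g_2,\dots,g_{n-1}$ so that $d$ is coprime to $N_f$, which is the same problem one dimension down --- and unwinding that recursion is precisely what the upper-triangular approach does cleanly.
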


\begin{proof}
We firstly choose the $f_i \in E_+(\mathfrak{f})$ via Lemma \ref{lemmatomakembig} and Lemma \ref{l:basicsign}, and let $V_f = \langle f_1, \dots , f_{n-1} \rangle$. I.e., we have $\Log(f_i) \in  B(l_i(M_f), R_f) $ for some $R_f>R_1$ and $M_f > K_2(R_f)$. 

By writing the matrix representing the generators we have chosen for $V_f$ in an upper triangular form, we can make the following choice of generators of $E_+(\mathfrak{f})$. Let $ \langle \delta_1, \dots , \delta_{n-1} \rangle =E_+(\mathfrak{f}) $ such that for some $\tau \in S_{n-1}$ we have, for $i=1, \dots , n-1$,
\[ f_{\tau(i)} = \delta_i^{a_i} \prod_{j=1}^{i-1}\delta_j^{b_{i,j}} , \]
and $[E_+(\mathfrak{f}):V_f]=  \prod_{i=1}^{n-1} \mid a_i \mid $. By changing the sign if necessary we choose $a_1>0$. Furthermore, we note that changing the values of the $b_{i,j}$ in the choice of $V_f$ does not change the index of the subgroup.

For ease of notation, let $a= \prod_{i=1}^{n-1} \mid a_i \mid$. For $i=2, \dots , n-1$ there exists $R_{g,i}>0$ and $M_{g,i}>0$ such that for all $M>M_{g,i}$, there exists $\alpha \in E_+(\mathfrak{f})$ with $ \Log(\alpha) \in B(l_{\tau(i)}(M), R_{g,i})$ and
\[ \alpha = \delta_i^{q_i} \prod_{j=1}^{i-1}\delta_j^{k_j}, \]
with $q_i$ a nonzero integer with absolute value coprime to $a$. We note that this is only possible for $i>2$ since we require the freedom of having at least one additional component we can vary. 

We now consider $i=1$. We have $\Log ( f_{\tau(1)})= \Log( \delta_1^{a_1}) \in B(l_1(M_f), R_f) $. Therefore any $q_1>a_1$ we have $\Log( \delta_1^{q_1}) \in B( \frac{q_1}{a_1}l_1(M_f),\frac{q_1}{a_1} R_f) $.

Now let $R_g^\prime = \max (R_1, R_{g,2} , \dots, R_{g,n-1} )$ and $M_g^\prime = \max (  M_{g,2}, \dots , M_{g, n-1})$. We now find $q_1>a_1$ which is coprime to $a$ and such that $\frac{q_1}{a_1}M_f>M_g^\prime$ and $\frac{q_1}{a_1}R_f>R_g^\prime$. 

We now fix $R_g = \frac{q_1}{a_1}R_f$ and $M_g= \frac{q_1}{a_1}M_f$. Clearly $R_g> R_1$ and it follows from Lemma \ref{l:scalek} that $M_g>K_2(R_g)$. We then choose $g_{\tau(1)}= \delta_1^{q_1}$, it is immediate that $\Log(g_{\tau(1)}) \in B(l_1(M_g), R_g)$. For $i=2, \dots , n-1$, we have shown that there exist $g_{\tau(i)} \in E_+(\mathfrak{f})$ with $ \Log(g_{\tau(i)}) \in B(l_{\tau(i)}(M_g), R_{g})$ and
\[ g_{\tau(i)} = \delta_i^{q_i} \prod_{j=1}^{i-1}\delta_j^{k_j}, \]
with $q_i$ a nonzero integer with absolute value coprime to $a$. Let $V_g = \langle g_1 , \dots , g_{n-1} \rangle $, the result follows.
\end{proof}

\subsection{1-cocycles attached to homomorphisms} \label{s:1cocycle}

Let $g\colon F_\mathfrak{p}^\ast \rightarrow A$ be a continuous homomorphism, where $A$ is a locally profinite group. We now define a cohomology class $c_g \in H^1(F_\mathfrak{p}^\ast , C_c (F_\mathfrak{p}, A))$ attached to $g$. The $F_\mathfrak{p}^\ast$-action on $C_c(F_\mathfrak{p}^\ast, \mathbb{Z})$ is defined by $(xf)(y)=f(x^{-1}y)$. The following definition is due to the third author and first appears in \cite[Lemma $2.11$]{MR3179573}. This definition is crucial in making the constructions of the first and third authors cohomological formulas work. We also remark that the definition is unusual in that it appears as though the cocycle $z_g$ should be a coboundary. However, it may not be a coboundary since $g$ does not necessarily extend to a continuous function on $F_\mathfrak{p}$.
 
\begin{definition} \label{1cocycle}
Let $g:F_\mathfrak{p}^\ast \rightarrow A$ be a continuous homomorphism, where $A$ is a locally profinite group. Let $f \in C_c(F_\mathfrak{p},\mathbb{Z})$ such that $f(0)=1$. We define $c_g$ to be the class of the cocycle
\[ z_{f,g}:F_\mathfrak{p}^\ast \rightarrow C_c (F_\mathfrak{p},A) \]
defined by $z_{f,g} (x)=``(1-x)(g \cdot f)"$, or more precisely
\begin{equation}
    z_{f,g} (x)(y)=(xf)(y) \cdot g(x) + ((f-xf)\cdot g)(y)
    \label{zeqn}
\end{equation}
for $x \in F_\mathfrak{p}^\ast$ and $y \in F_\mathfrak{p}$.
\end{definition}

The second term in (\ref{zeqn}) is allowed to be evaluated at $0 \in F_\mathfrak{p}$ since we can extend continuously the function from $F_\mathfrak{p}^\ast$ to ${F}_\mathfrak{p}$ as 
\[(f-xf)(0)=0.\]
The class $c_g = [ z_{f,g} ] \in H^1(F_\mathfrak{p}^\ast , C_c(F_\mathfrak{p}, A))$ is independent of the choice of $f \in C_c( F_\mathfrak{p}, \mathbb{Z})$ with $f(0)=1$. In particular, we can consider the class $c_\id \in H^1(F_\mathfrak{p}^\ast, C_c(F_\mathfrak{p},F_\mathfrak{p}^\ast))$. For more details on this construction, see \cite[\S 3.2]{MR3861805} and \cite[\S 3.1]{MR3968788}.

\subsection{Homology of a group of units}\label{s:homologyunits}

Let $V \subseteq E_+$ be a finite index subgroup. Recall we have written $G_\mathfrak{f}$ for the narrow ray class group of conductor $\mathfrak{f}$. Let $e$ be the order of $\mathfrak{p}$ in $G_\mathfrak{f}$, and write $\mathfrak{p}^{e} = (\pi)$ with $\pi \equiv 1 \pmod{\mathfrak{f}}$ and $\pi$ totally positive. Write $V_\mathfrak{p} = V \oplus \langle \pi \rangle$. 

By Dirichlet's unit theorem, the group $V_\mathfrak{p}$ is free abelian of rank $n$. Thus the homology groups $H_{n}(V_{\mathfrak{p}}, \mathbb{Z})$ is free abelian of rank $1$. In the comological formulas $u_2$ and $u_3$, we are required to choose a generator of this homology group.  For these two invariants, we will be working in the cases $V=E_+$ and $V=E_+(\mathfrak{f})$, respectively. 

Write $V = \langle \varepsilon_1, \dots , \varepsilon_{n-1} \rangle$. For ease of notation, write $\pi = \varepsilon_{n}$. We then choose the following generator for the group $H_{n}(V_{\mathfrak{p}},  \mathbb{Z})$,
\begin{equation}\label{e:unitgen}
    \eta_{\mathfrak{p}} = \mu \sum_{\tau \in S_{n}} \sign(\tau)[\varepsilon_{\tau(1)} \mid \dots \mid \varepsilon_{\tau(n)}]\otimes 1.
\end{equation}
Here $\mu \in \{ 1, -1 \}$ and is equal to the sign of the determinant of a specific matrix. For $x \in E_{\mathfrak{p},+}$, let 
\[ L(x) =  
(\log (\sigma_1(x)), \dots , \log (\sigma_{n}(x)) , \ord_{\mathfrak{p}}(x) ) .
  \]
Define $L_1 \in \mathbb{R}^{n}$ to be the vector with $1$ in the first $n$ components and $0$ in the last component. 
Then $\mu$ is the sign of the determinant of the matrix with rows 
\[ L(\pi) , L(\varepsilon_1), \dots , L(\varepsilon_{n-1}) , L_1. \]
This choice generalises that given in \cite[Remark 2.1]{MR3201900}.

\section{Cohomological formula I ($u_2$)} \label{s:u2}

This section follows the construction given in \cite[\S 3.1]{MR3861805}. For ease of notation and to reduce the exposition of this section we give a simpler definition for $u_2$ than appears in \cite[\S 3.1]{MR3861805}. In particular, we do not involve the infinite places in the construction we give. For our purposes this definition is enough and it simplifies the arguments in \S \ref{s:u23}. Throughout this section we use the notation established in \S\ref{s:cont}. 

Let $\eta_\mathfrak{p}$ be the generator of $H_n(E_{\mathfrak{p}, +}, \mathbb{Z})$ defined in (\ref{e:unitgen}) and let $\mathcal{F}$ be a fundamental domain for the action of $F^{\ell}_+ / E_{\mathfrak{p},+}$ on $(\mathbb{A}_F^{\mathfrak{p}, \ell, \infty})^\ast / U^{\mathfrak{p}, \ell, \infty}$. Then $\mathbbm{1}_\mathcal{F} $ is an element of $ H^0(E_{\mathfrak{p},+} , C(\mathcal{F}, \mathbb{Z})) = C(\mathcal{F}, \mathbb{Z})^{E_{\mathfrak{p},+}}$. Taking the cap product gives $\mathbbm{1}_\mathcal{F} \cap \eta_\mathfrak{p} \in H_{n}(E_{\mathfrak{p},+}, C(\mathcal{F}, \mathbb{Z}))$. We now define $\vartheta^\mathfrak{p} \in H_n(F^\ast , \mathcal{C}_c(\emptyset, \mathbb{Z})^{\mathfrak{p}, \ell, \infty})$ as the homology class corresponding to $\mathbbm{1}_\mathcal{F} \cap \eta_\mathfrak{p}$ under the isomorphism 
\begin{equation}
    H_n(E_{\mathfrak{p},+}, C(\mathcal{F}, \mathbb{Z})) \cong H_n(F^\ell_+ , C_c((\mathbb{A}_F^{\mathfrak{p}, \ell , \infty})^\ast / U^{\mathfrak{p}, \ell, \infty}, \mathbb{Z})) 
    \label{isom1}
\end{equation}
that is induced by $C_c((\mathbb{A}_F^{\mathfrak{p}, \ell , \infty})^\ast / U^{\mathfrak{p}, \ell, \infty}, \mathbb{Z}) \cong \text{Ind}_{E_{\mathfrak{p}, +}}^{F^\ell_+} C(\mathcal{F}, \mathbb{Z})$. 

We now follow the construction of \cite[\S 6]{MR3861805}. Since the local norm residue symbol for $H/F$ at $\mathfrak{p}$ is trivial we omit it from the reciprocity map, i.e. we consider the homomorphism
\[ \rec_{H/F}^{\mathfrak{p}, \ell, \infty} : (\mathbb{A}_F^\mathfrak{p})^\ast / U^{R, \mathfrak{p}, \ell} \rightarrow G \hookrightarrow \mathbb{Z}[G]^\ast, \quad x=(x_v)_{v \neq \mathfrak{p}} \mapsto \prod_{v \not \in \{ \mathfrak{p} , \ell \} \cup R } (x, H/F)_v . \]
Let $R^\prime = R-R_\infty$. We can view $\rec_{H/F}^{\mathfrak{p}, \ell, \infty}$ as an element of $H^0(F^\ell_+ , \mathcal{C}_c(R^\prime, \mathbb{Z}[G])^{\mathfrak{p} , \ell , \infty})$ and denote by
\[ \rho_{H/F} \in H_n(F^\ell_+, \mathcal{C}_c(R^\prime, \mathbb{Z}[G])^{\mathfrak{p} , \ell , \infty} ) \]
its image under the map
\[ H^0(F^\ell_+ , \mathcal{C}_c(R^\prime, \mathbb{Z}[G])^{\mathfrak{p} , \ell , \infty}) \rightarrow H_n(F^\ell_+, \mathcal{C}_c(R^\prime, \mathbb{Z}[G])^{\mathfrak{p} , \ell , \infty}), \quad \psi \mapsto \psi \cap \vartheta^\mathfrak{p}. \]
Here the cap product is induced by the map
\begin{equation}
    \mathcal{C}^\diamond (R^\prime, \mathbb{Z}[G])^{\mathfrak{p} , \ell , \infty} \times \mathcal{C}_c(\emptyset, \mathbb{Z})^{\mathfrak{p} } \rightarrow \mathcal{C}_c^\diamond (R^\prime, \mathbb{Z}[G])^{\mathfrak{p} , \ell , \infty}  , \quad (\psi, \phi) \mapsto \psi \cdot \phi,
    \label{map1}
\end{equation}
here $\psi \cdot \phi$ denotes the function $xU^{R^\prime \cup \{ \mathfrak{p}, \ell , \infty \}} \mapsto \psi(xU^{R^\prime \cup \{ \mathfrak{p}, \ell , \infty \}}) \phi(xU^{\mathfrak{p}})$. 

For a locally profinite abelian group $A$ we have a canonical map
\[ C_c^\diamond(F_\mathfrak{p},A) \otimes \mathcal{C}_c(R^\prime, \mathbb{Z}[G])^{\mathfrak{p} , \ell , \infty} \rightarrow \mathcal{C}_c^\diamond(\mathfrak{p}, R^\prime, A \otimes \mathbb{Z}[G])^{\ell, \infty}, \quad (f,g) \mapsto f \otimes g, \]
which induces a cap-product pairing
\[ H^1(F^\ast , C_c^\diamond(F_\mathfrak{p},A)) \times H_n(F^\ast, \mathcal{C}_c(R^\prime, \mathbb{Z}[G])^{\mathfrak{p} , \ell , \infty}) \rightarrow H_{n-1}(F^\ast , \mathcal{C}_c^\diamond(\mathfrak{p}, R^\prime, A \otimes \mathbb{Z}[G])^{\ell, \infty}). \]
In particular we can consider
\[ c_\id \cap \rho_{H/F} \in H_{n-1}(F^\ast , \mathcal{C}_c^\diamond(\mathfrak{p}, R^\prime, F_\mathfrak{p}^\ast \otimes \mathbb{Z}[G])^{\ell, \infty}). \]
Here $c_\id$ is as defined in Definition \ref{1cocycle}. Recall that we write $W$ for $F$ considered as a $\mathbb{Q}$-vector space. In \cite[\S 5.3]{MR3861805}, the following map is defined. 
\[ \Delta_\ast : H_{n-1} (F^\ell_+ , \mathcal{C}^\diamond_c (\mathfrak{p}  , R^\prime, F_\mathfrak{p}^\ast \otimes \mathbb{Z}[G] )^{\ell, \infty}   )  \rightarrow H_{n-1} (F^{\ell}_+, C^\diamond_c (W_{\widehat{\mathbb{Z}}^\ell} , F_\mathfrak{p}^\ast \otimes \mathbb{Z}[G] )). \]
We postpone giving the definition of $\Delta_\ast$ until the next section.

Now consider the canonical pairing, where we recall the definition of $\mu_{F_\mathfrak{p}^\ast}$ from \S\ref{s:measure},
\begin{equation}\label{e:measurecap}
    \Hom(C_c(W_{\widehat{\mathbb{Z}}^\ell}, \mathbb{Z}), \mathbb{Z}) \times C_c^\diamond (W_{\widehat{\mathbb{Z}}^\ell}, F_\mathfrak{p}^\ast \otimes \mathbb{Z}[G]) \rightarrow F_\mathfrak{p}^\ast \otimes \mathbb{Z}[G], \quad (\mu, f) \mapsto \mu_{F_\mathfrak{p}^\ast}(f).
\end{equation}
Noting that $F^{\ell }_+$ is acting trivially on $ F_\mathfrak{p}^\ast \otimes \mathbb{Z}[G] $ we see that (\ref{e:measurecap}) induces, via cap-product, a pairing
\begin{equation}
    \cap : H^{n-1}(F^{\ell}_+, \Hom(C_c (W_{\widehat{\mathbb{Z}}^\ell}, \mathbb{Z}), \mathbb{Z})) \times H_{n-1}(F^{\ell}_+, C_c^\diamond (W_{\widehat{\mathbb{Z}}^\ell}, F_\mathfrak{p}^\ast \otimes \mathbb{Z}[G])) \rightarrow F_\mathfrak{p}^\ast \otimes \mathbb{Z}[G] .
    \label{finalcapprod}
\end{equation}
Recall the Eisenstein cocycle, $\Eis_{F,\lambda}^0$, from Proposition \ref{prop9.4}. Applying (\ref{finalcapprod}) with the Eisenstein cocycle $ \Eis_{F,\lambda}^0$ and $\Delta_\ast(c_\id \cap \rho_{H/F})$ we obtain the element of $ F^\ast_\mathfrak{p} \otimes \mathbb{Z}[G]$, defined in \cite[\S 3.1]{MR3861805}. Therefore,
\begin{equation}
   u_2 = u({S},\lambda)= \sum_{\sigma \in G} u_2(\sigma) \otimes [\sigma^{-1}]=\Eis_F^0 \cap \Delta_\ast(c_\id \cap \rho_{H/F}).
   \label{defneqnfforu2}
\end{equation}
The first and third authors then conjecture that the element $u_2(\sigma)$ is equal to the image of the Brumer--Stark unit in $F_\mathfrak{p}^\ast$ under $\sigma$. We end this section by stating some known properties of this construction.

\begin{remark}
    As noted at the start of this section, the definition of $u_2$ given above is equivalent to that given in \cite[\S 3.1]{MR3861805}. This follows from standard properties of the cap-product.
\end{remark}

\begin{proposition}[Proposition 6.3, \cite{MR3861805}] \label{prop6.3}
\begin{enumerate}[a)]
    \item For $\sigma \in G$ we have $\ord_\mathfrak{p}(u_2(\sigma)) = \zeta_{R,T}(\sigma, 0)$.
    
    \item Let $L/F$ be an abelian extension with $L \supseteq H$ and put $\mathfrak{g}= \Gal(L/F)$. Assume that $L/F$ is unramified outside $S$ and that $\mathfrak{p}$ splits completely in $L$. Then we have
    \[ u_2(\sigma) = \prod_{\tau \in \mathfrak{g}, \tau \mid_H = \sigma} u_2(L/F , \tau). \]
    
    \item Let $\mathfrak{r}$ be a nonarchimedean place of $F$ with $\mathfrak{r} \nin S \cup \overline{T}$ where $\overline{T}$ is as defined in (\ref{e:sbar}). Then we have 
    \[ u_2(S \cup \{ \mathfrak{r} \} , \sigma) = u_2(S ,  \sigma) u_2(S , \sigma_\mathfrak{r}^{-1} \sigma)^{-1}. \]
    
    \item Assume that $H$ has a real archimedean place. Then $u_2( \sigma) =1$ for all $\sigma \in G$.
    
    \item Let $L/F$ be a finite abelian extension of $F$ containing $H$ and unramified outside $S $. Then we have 
\[ \rec_\mathfrak{p}(u_2( \sigma))= \prod_{\substack{\tau \in \Gal(L/F) \\ \tau \mid_H = \sigma^{-1}  }} \tau^{\zeta_{S,T}(L/F, \tau^{-1},0)}. \]
\end{enumerate}
\end{proposition}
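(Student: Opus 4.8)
The plan is to read off all five assertions from the definition (\ref{defneqnfforu2}), namely $u_2=\Eis_F^0 \cap \Delta_\ast(c_\id \cap \rho_{H/F})$, exploiting three structural facts: the Eisenstein cocycle $\Eis_{F,\lambda}^0$, the map $\Delta_\ast$, and the class $c_\id$ depend only on $F$, $\lambda$, and the chosen infinite place $v$, not on $H$ or $S$; the reciprocity class $\rho_{H/F}$ is functorial in both $H$ and $S$; and every pairing in the construction is natural with respect to the homomorphisms of coefficient modules induced by $\ord_\mathfrak{p}$, $\rec_\mathfrak{p}$, and the group-ring projections $\Z[\Gal(L/F)]\to\Z[G]$. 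I would prove (a) first, since it is the only genuinely computational point, and then deduce (b)--(e) from it essentially formally.

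For (a): as $\ord_\mathfrak{p}:F_\mathfrak{p}^\ast\to\Z$ is $F_\mathfrak{p}^\ast$-equivariant, post-composition carries $c_\id$ to the class $c_{\ord_\mathfrak{p}}\in H^1(F_\mathfrak{p}^\ast,C_c^\diamond(F_\mathfrak{p},\Z))$, and applying $(\ord_\mathfrak{p})_\ast$ commutes with $\Delta_\ast$ and with the final pairing (\ref{finalcapprod}). One is then reduced to computing $\Eis_F^0\cap\Delta_\ast(c_{\ord_\mathfrak{p}}\cap\rho_{H/F})$, and a short computation identifies this with the value of the Eisenstein distribution on (the image of) the full local ring $\mathcal{O}_\mathfrak{p}$, the contribution of $\mathbb{O}$ being zero; by the interpolation property of the Eisenstein cocycle this equals $\sum_\sigma\zeta_{R,T}(\sigma,0)[\sigma^{-1}]$, and extracting $\sigma$-components gives $\ord_\mathfrak{p}(u_2(\sigma))=\zeta_{R,T}(\sigma,0)$. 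The bookkeeping here --- unwinding the definition of $\Delta_\ast$ (postponed in the present exposition) and matching the normalization of $\Eis_F^0$ to the partial zeta function --- is where I expect the main difficulty to lie.

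For (b) and (c): by the first structural fact, the entire dependence of $u_2$ on $H$ (resp.\ $S$) is concentrated in $\rho_{H/F}$, hence in $\rec_{H/F}^\mathfrak{p}$. If $L\supseteq H$ is unramified outside $S$ with $\mathfrak{p}$ split completely in $L$, the projection $p:\Z[\Gal(L/F)]\to\Z[G]$ satisfies $p\circ\rec_{L/F}^\mathfrak{p}=\rec_{H/F}^\mathfrak{p}$ and sends $\sum_\tau a_\tau[\tau]$ to $\sum_\sigma\big(\sum_{\tau\mid_H=\sigma}a_\tau\big)[\sigma]$; pushing this through the multiplicative cap products turns the sum over $\tau$ into the product in (b). For (c), enlarging $S$ to $S\cup\{\mathfrak{r}\}$ only deletes the $\mathfrak{r}$-component from the adelic coefficient spaces $\mathcal{C}_c(R',-)^\mathfrak{p}$; tracing this through $\rho_{H/F}$ produces the Euler factor $1-[\sigma_\mathfrak{r}^{-1}]$ coming from local reciprocity at $\mathfrak{r}$, which on $\sigma$-components is exactly $u_2(S,\sigma)\,u_2(S,\sigma_\mathfrak{r}^{-1}\sigma)^{-1}$.

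For (d) and (e): if $H$ admits a real place $w\nmid v$ (a case outside the standing hypotheses), the associated complex conjugation, acting on $F^{\ell,v}$ as the reflection in the $w$-coordinate of $W_\infty\cong\mathbb{R}^n$, fixes the target $F_\mathfrak{p}^\ast\otimes\Z[G]$ but negates $\delta$; a standard argument with this symmetry forces the cap product (\ref{finalcapprod}) to be trivial, i.e.\ $u_2(\sigma)=1$. Finally (e) is the reciprocity-law form of the construction: it follows by combining (a), applied over $L/F$, with part (b) and the fact that $\rec_\mathfrak{p}$ is the $\mathfrak{p}$-component of global Artin reciprocity, so that the components of $u_2(\sigma)$ away from $\mathfrak{p}$ (governed by $\rho_{H/F}$) and its $\mathfrak{p}$-adic divisor (given by (a)) together rewrite $\rec_\mathfrak{p}(u_2(\sigma))$ as $\prod_{\tau\mid_H=\sigma^{-1}}\tau^{\zeta_{S,T}(L/F,\tau^{-1},0)}$; the appearance of $\zeta_{S,T}$ rather than $\zeta_{R,T}$ records that $\mathfrak{p}\in S$ and that $u_2(\sigma)$ is supported at $\mathfrak{p}$. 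This last part, together with (a), forms the technical core; (b)--(d) are then the naturality bookkeeping sketched above.
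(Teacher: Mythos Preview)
The paper does not prove this proposition; it is quoted from \cite{MR3861805} and stated without proof. The only part the present paper actually establishes is (b), which is re-proved later as Theorem~\ref{normcompatforu2} (the remark there explicitly says ``This theorem has been stated without proof by the first author with Spie\ss\ in Proposition~\ref{prop6.3}. We include the proof for completeness.''). That proof is exactly your functoriality argument: one applies the projection $\psi:F_\mathfrak{p}^\ast\otimes\Z[G']\to F_\mathfrak{p}^\ast\otimes\Z[G]$, commutes $\psi_\ast$ past $\Eis_F^0$, $\Delta_\ast$, and $c_\id$ (none of which depend on the extension), and reduces to $\psi_\ast\rho_{H'/F}=\rho_{H/F}$. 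So for the one part the paper does prove, your approach coincides with it.

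For (a), (c), (d), (e) there is nothing in this paper to compare against. Your sketch is in the spirit of the arguments in \cite{MR3861805}: pushing $\ord_\mathfrak{p}$ or $\rec_\mathfrak{p}$ through $c_\id$ by naturality, tracking Euler factors through $\rho_{H/F}$, and using the $\delta$-twist to kill the cap product when an extra real place survives. One caution on (e): your outline derives it ``by combining (a), applied over $L/F$, with part (b)'', but (b) as stated requires $\mathfrak{p}$ to split completely in $L$, which is not assumed in (e). In \cite{MR3861805} the argument for (e) instead pushes the homomorphism $\rec_\mathfrak{p}$ directly through $c_\id$ (so $c_\id\mapsto c_{\rec_\mathfrak{p}}$) and identifies the resulting pairing with the Stickelberger element over $L$; you should not route it through (b).
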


\begin{remark}
In the proposition above we correct a small typo in \cite[Proposition 6.3, c)]{MR3861805} by replacing $\sigma_\mathfrak{r}$ with $\sigma_\mathfrak{r}^{-1}$.
\end{remark}

\subsection{The map $\Delta_\ast$} \label{s:delta}

We now define the map $\Delta_\ast$. For more information and the more general construction we refer to \cite[\S 5.3]{MR3861805}.
Throughout this section we let $A=F_\mathfrak{p}^\ast \otimes \mathbb{Z}[G]$ to ease notation. For sets $X_1, X_2$ and a map $\psi\colon X_1 \times X_2 \rightarrow A$, we write
\[ \Supp(X_1, X_2, \psi) \coloneqq \{ x_1 \in X_1 \mid \exists \ x_2 \in X_2 \ \text{with} \ (x_1, x_2) \in \text{supp}(\psi) \}. \]
Where $\text{supp}(\psi)$ is the support of $\psi$.
\begin{proposition} \label{propctsmaps}
Let $X_1, X_2$ be totally disconnected topological Hausdorff spaces, with $X_1$ discrete. Let $A$ be a locally profinite group. The map
\begin{equation}\label{e:cisom} C_c(X_1, \mathbb{Z}) \otimes_\mathbb{Z} C_c^\diamond (X_2, A) \rightarrow C_c^\diamond (X_1 \times X_2, A), 
\end{equation}
\[ f \otimes g \mapsto ((x_1, x_2) \mapsto f(x_1) \cdot g(x_2)) \]
is an isomorphism.
\end{proposition}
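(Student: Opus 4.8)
The plan is to reduce the statement to a standard fact about continuous functions on products of totally disconnected spaces, using that $X_1$ is discrete. First I would unwind the definitions of $C_c^\diamond$ on both sides. Since $X_1$ is discrete, any element of $C_c(X_1,\mathbb{Z})$ is a finite $\mathbb{Z}$-linear combination of indicator functions $\mathbbm{1}_{\{x_1\}}$, so $C_c(X_1,\mathbb{Z}) = \bigoplus_{x_1 \in X_1} \mathbb{Z}$. Hence the left-hand side is $\bigoplus_{x_1 \in X_1} C_c^\diamond(X_2, A)$, and the proposed map sends $(g_{x_1})_{x_1}$ to the function $(x_1,x_2) \mapsto g_{x_1}(x_2)$, which has support contained in a finite union of slices $\{x_1\} \times X_2$. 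So the real content is: a map $h \colon X_1 \times X_2 \to A$ lies in $C_c^\diamond(X_1 \times X_2, A)$ if and only if it is supported on finitely many slices $\{x_1\}\times X_2$ and each restriction $h(x_1, -)$ lies in $C_c^\diamond(X_2, A)$.

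Next I would verify injectivity and surjectivity. Injectivity is immediate: if the associated function on $X_1 \times X_2$ is zero, then each slice function $g_{x_1}$ is zero, so the preimage in the direct sum is zero; one just has to note the map out of the algebraic tensor product is compatible with the direct-sum decomposition, which follows since $C_c(X_1,\mathbb{Z}) \otimes_{\mathbb{Z}} C_c^\diamond(X_2,A) = \bigoplus_{x_1} C_c^\diamond(X_2,A)$ canonically. For surjectivity, given $h \in C_c^\diamond(X_1\times X_2, A)$, I would first observe that since $X_1$ is discrete, each slice $\{x_1\} \times X_2$ is open and closed in $X_1 \times X_2$; continuity of $h$ is then equivalent to continuity of every slice restriction $h_{x_1} := h(x_1,-)$, and compact support of $h$ forces $h$ to vanish outside finitely many slices and forces each $h_{x_1}$ to have compact support. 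It remains to check the "$\diamond$" condition passes between $h$ and the slices: the decomposition $C^0 + \sum_K C(-,K)$ is preserved because locally constant maps (resp.\ maps into a compact open subgroup $K \subseteq A$) on $X_1 \times X_2$ restrict to such maps on each slice, and conversely a finite sum of slice functions of the given types, extended by zero, is again of the given type on $X_1 \times X_2$ (using that $X_1$ is discrete so indicator functions of points are locally constant).

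The one slightly delicate point — and the step I expect to require the most care — is the behaviour of the subgroup $C^\diamond$ under these operations: one must check that if $h = h^0 + \sum_{i} h^{(i)}$ with $h^0 \in C_c^0$ and $h^{(i)} \in C_c(X_1 \times X_2, K_i)$, then each slice $h_{x_1}$ decomposes compatibly, and conversely that assembling slice decompositions over the finitely many supporting $x_1$ yields a global decomposition with the same finite collection of compact open subgroups $K_i$. This is where discreteness of $X_1$ is essential: it guarantees that "supported on finitely many slices, each slice in $C_c^\diamond(X_2,K)$" is the same as "globally in $C_c^\diamond(X_1\times X_2, K)$", since a finite sum $\sum_{x_1 \in \Sigma} \mathbbm{1}_{\{x_1\}} \otimes g_{x_1}$ with $\Sigma$ finite lands in $C_c(X_1\times X_2, K)$ whenever all $g_{x_1} \in C_c(X_2,K)$. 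I would then conclude that the map is a bijective group homomorphism, hence an isomorphism. I would also remark that this is the evident generalization of the isomorphism $C_c(X_1,\mathbb{Z}) \otimes C_c(X_2,\mathbb{Z}) \cong C_c(X_1\times X_2,\mathbb{Z})$ to coefficients in a locally profinite $A$, and cite the analogous statement in \cite{MR3861805} if a reference is available there.
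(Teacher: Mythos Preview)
Your proposal is correct and follows essentially the same approach as the paper: both exploit the discreteness of $X_1$ to decompose any $\psi \in C_c^\diamond(X_1\times X_2,A)$ into finitely many slice functions $\psi(y,\cdot)$. The paper is simply terser, writing down the inverse $\psi \mapsto \sum_{y \in Y_1} \mathbbm{1}_y \otimes \psi(y,\cdot)$ (with $Y_1$ the finite $X_1$-projection of the support) and asserting it works, whereas you spell out the verification that the $\diamond$-condition passes between $\psi$ and its slices---a detail the paper leaves implicit.
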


\begin{proof}
We calculate the inverse map as follows. For $\psi \in C_c^\diamond (X_1 \times X_2, A)$ we write $Y_1(\psi)= \Supp(X_1, X_2, \psi) \subseteq X_1$. Note that $Y_1(\psi)$ is finite since $\psi$ has compact support. Then
\[ \psi \mapsto \sum_{y \in Y_1(\psi)} \mathbbm{1}_y \otimes_\mathbb{Z} \psi(y,\cdot ) \in C_c(X_1, \mathbb{Z}) \otimes_\mathbb{Z} C_c^\diamond (X_2, A) \]
provides an inverse to (\ref{e:cisom}).
\end{proof}

We now construct the $F^\ell_+$-equivariant map 
\begin{equation}\label{e: delta map}
    \Delta : \mathcal{C}_c^\diamond (\{ \mathfrak{p} \},R^\prime,A)^{\ell, \infty} \rightarrow C_c^\diamond(\mathbb{A}_F^{\ell, \infty}, A) \cong C_c^\diamond (W_{\widehat{\mathbb{Z}}^\ell},A).
\end{equation}
Recall that we have written $S^\prime = R^\prime \cup \{ \mathfrak{p} \}$ and $\mathbb{A}_F^{\ell, \infty} \cong W_{\widehat{\mathbb{Z}}^\ell}$. There exist canonical homomorphisms 
\begin{equation}
    C_c^\diamond (  F_\mathfrak{p} \times \prod_{\mathfrak{q} \in R^\prime} F_\mathfrak{q}^\ast , A) \otimes \mathcal{C}_c(\emptyset, \mathbb{Z})^{S^\prime \cup \ell, \infty} \rightarrow \mathcal{C}_c^\diamond (\{ \mathfrak{p} \},R^\prime,A)^{\ell, \infty} , \label{isomfordelta}
\end{equation}
\begin{equation}
    C_c^\diamond ( \prod_{\mathfrak{q} \in S^\prime} F_\mathfrak{q}  , A) \otimes C_c(\mathbb{A}^{S^\prime \cup \ell, \infty}_F, \mathbb{Z}) \rightarrow C_c^\diamond (\mathbb{A}^{\ell, \infty}_F,A). \label{isomfordelta2}
\end{equation}
It follows from Proposition \ref{propctsmaps} that the map (\ref{isomfordelta}) is an isomorphism. Let $\mathcal{I}^{S^\prime \cup \ell}$ denote the group of fractional ideals of $F$ that are coprime to $S^\prime \cup \ell$. Since $(\mathbb{A}_F^{S^\prime \cup \ell, \infty})^\ast / U^{S^\prime \cup \ell, \infty}$ is isomorphic to $\mathcal{I}^{S^\prime \cup \ell}$, the ring $\mathcal{C}_c^0(\emptyset, \mathbb{Z})^{S^\prime \cup \ell, \infty}$ can be identified with the group ring $\mathbb{Z}[\mathcal{I}^{S^\prime \cup \ell}]$. We define (\ref{e: delta map}) as the tensor product $\Delta = i \otimes I^{S \cup \ell}$ where $i : C_c^\diamond (  F_\mathfrak{p} \times \prod_{\mathfrak{q} \in R^\prime} F_\mathfrak{q}^\ast , A) \rightarrow  C_c^\diamond ( \prod_{\mathfrak{q} \in S^\prime} F_\mathfrak{q}  , A)$ is the inclusion map induced by extension by $0$ and $I^{S^\prime \cup \ell} : \mathbb{Z}[\mathcal{I}^{S^\prime \cup \ell}] \rightarrow C_c(\mathbb{A}_F^{S^\prime \cup \ell, \infty}, \mathbb{Z})$ maps a fractional ideal $\mathfrak{a} \in \mathcal{I}^{S^\prime \cup \ell}$ to the characteristic function of $\widehat{\mathfrak{a}}^{S^\prime \cup \ell} = \mathfrak{a}(\prod_{\mathfrak{p} \nin S^\prime \cup \ell} \mathcal{O}_\mathfrak{p})$. Considering the map in (\ref{isomfordelta2}) completes our construction of $\Delta$.

\section{Cohomological formula II ($u_3$)} \label{s:u3}

In \cite{MR3968788} the first and third authors give two equivalent constructions for their formula. In this work, we only require the construction given in \cite[\S 3.3]{MR3968788}, which we denote by $u_3$. We refer readers to \cite[\S 3]{MR3968788} for the other formula.

Recall that in \S\ref{s:1cocycle} and \S\ref{s:eisenstein} we have defined the following objects:
\[ c_{\id} \in H^1(F_\mathfrak{p}^\ast, C_c(F_\mathfrak{p},F_\mathfrak{p}^\ast)) \quad \text{and} \quad \omega_{\mathfrak{f}, \lambda}^\mathfrak{p}\in H^{n-1}(E_+(\mathfrak{f})_{\mathfrak{p}} , \Hom(C_c(F_\mathfrak{p}, \mathbb{Z}), \mathbb{Z}[G])).  \]

\begin{definition} \label{u3seconddefn}
Let $ \eta_{\mathfrak{p},E_+(\mathfrak{f}) }   \in H_{n}(E_+(\mathfrak{f})_{\mathfrak{p}}, \mathbb{Z})$ be the generator defined in (\ref{e:unitgen}). Then, we define
\begin{equation}
    u_3 \coloneqq (-1)^{n+1} ( c_{\id} \cap (\omega_{\mathfrak{f},\lambda}^\mathfrak{p} \cap  \eta_{\mathfrak{p},E_+(\mathfrak{f}) }) ) \in F_\fp^\ast \otimes \Z[G] .
\end{equation}
\end{definition}

As noted in the introduction we have modified the definition from \cite{MR3968788} by multiplying by $(-1)^{n+1}$, namely, if we let $u_3^\prime$ be the element defined in \cite{MR3968788} then $u_3=(-1)^{n+1} u_3^\prime$. Adapted from \cite[Conjecture 3.1]{MR3968788} we have the following conjecture.

\begin{conjecture} \label{conju3}
We have $u_3= u_{\mathfrak{p}}$.
\end{conjecture}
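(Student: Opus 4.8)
The plan is to deduce Conjecture~\ref{conju3} from the main comparison theorem of this paper together with the known cases of the Brumer--Stark conjecture. By Theorem~\ref{papermainthm} we have $u_3 = u_1 = u_2$ in $F_\fp^\ast \otimes \Z[G]$, so it suffices to identify $u_1$ with the Brumer--Stark element $u_\fp$. Under the hypotheses of Theorem~\ref{thmforform} --- the rational prime $p$ below $\fp$ odd and unramified in $F$, and the existence of a prime $\mathfrak{q} \in S$, unramified in $H$, with Frobenius equal to complex conjugation --- Theorem~\ref{thmforform} yields $u_1 = u_\fp$ in $(F_\fp^\ast/\mu(F_\fp^\ast)) \otimes \Z[G]$, hence $u_3 = u_\fp$ up to a root of unity in $F_\fp^\ast$ under those hypotheses. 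This much is the Remark following Theorem~\ref{thmforform}; the remaining content of the conjecture is to remove the root-of-unity ambiguity and the auxiliary hypotheses.

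To remove the root-of-unity ambiguity, recall that our standing choice $T = \{\lambda\}$ is made so that the only root of unity of $H$ congruent to $1$ modulo $T$ is $1$ itself, which is what makes $u_\fp$ unique. It therefore suffices to check that $u_3$ obeys the normalization $u_3(\sigma) \in \mathcal{U}_\fp$ and $u_3(\sigma) \equiv 1 \pmod{\lambda}$, the analogue for $u_3$ of Conjecture~\ref{conj3.21}(2); together with $\ord_\fp(u_3(\sigma)) = \zeta_{R,T}(\sigma,0)$, which holds once $u_3 = u_2$ by Proposition~\ref{prop6.3}(a), this forces the ambiguous root of unity to be $1$. The $\lambda$-congruence should be read off from the construction $u_3 = c_{\id,\fp} \cap (\kappa_\lambda \cap \eta_{S_p})$: the class $\kappa_\lambda$ is assembled from the $\lambda$-smoothed Shintani $L$-values $\mathfrak{L}_{R,\lambda}(\,\cdot\,,0)$, which lie in $\Z[G]$ by the integrality proposition of \S\ref{s:eisenstein}, and the $\lambda$-smoothing is designed precisely so that the resulting element is a $\lambda$-unit congruent to $1$.

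To remove the auxiliary hypotheses, note that since $u_3 = u_2$ the formal properties of $u_2$ in Proposition~\ref{prop6.3} pass verbatim to $u_3$: the norm-compatibility $u_3(\sigma, H) = \prod_{\tau \mid_H = \sigma} u_3(\tau, L)$ in abelian towers unramified outside $S$ in which $\fp$ splits, the change-of-$S$ relations (b)--(c), and the vanishing~(d). Using the change-of-$S$ relation one enlarges $S$ by a prime $\mathfrak{q}$ whose Frobenius is complex conjugation --- such $\mathfrak{q}$ exist by the Chebotarev density theorem --- so that the Frobenius hypothesis of Theorem~\ref{thmforform} holds for the enlarged set, applies the identification there, and traces back through Proposition~\ref{prop6.3}(c) to recover the statement for the original $S$.

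The main obstacle is that this last manoeuvre only removes the hypothesis on $\mathfrak{q}$: the conditions that $p$ be odd and unramified in $F$ are intrinsic to the prime $\fp$ and cannot be altered by enlarging $S$ or by base change. Removing them would require strengthening Theorem~\ref{thmforform} --- equivalently, the underlying proof of the Brumer--Stark conjecture --- at the prime $2$ and at primes ramifying in $F$, which is genuinely new input not contained in the present work. What is proved here unconditionally is the identity $u_3 = u_1 = u_2$, so that Conjecture~\ref{conju3} holds in exactly the range where Conjecture~\ref{bsformconj} for $u_1$ is known.
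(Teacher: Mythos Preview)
The statement you are attempting to prove is labeled a \emph{Conjecture} in the paper and is not proved there; it is restated from \cite{MR3968788} precisely because it remains open in general. The paper's contribution is Theorem~\ref{papermainthm} ($u_1=u_2=u_3$), which, combined with Theorem~\ref{thmforform}, yields $u_3=u_\fp$ only in $(F_\fp^\ast/\mu(F_\fp^\ast))\otimes\Z[G]$ and only under the stated hypotheses on $p$ and $\mathfrak{q}$. There is no ``paper's own proof'' to compare against.

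Your final paragraph is accurate in recognizing that the hypotheses on $p$ cannot be removed with the tools here. However, the earlier steps contain genuine gaps that prevent even the partial claims from going through:

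\textbf{The root-of-unity argument is circular.} You propose to kill the ambiguity by checking $u_3(\sigma)\in\mathcal{U}_\fp$ and $u_3(\sigma)\equiv 1\pmod{\lambda}$. But $u_3(\sigma)$ is, a priori, only an element of $F_\fp^\ast$; the assertion $u_3(\sigma)\in\mathcal{U}_\fp\subset H^\ast$ is essentially the content of Conjecture~\ref{conju3} itself (this is exactly Conjecture~\ref{conj3.21}(2), stated as a conjecture). Moreover, the congruence ``$u_3(\sigma)\equiv 1\pmod{\lambda}$'' has no meaning for a general element of $F_\fp^\ast$, since $\lambda$ has residue characteristic $\ell\neq p$; one needs a global element to reduce modulo $\lambda$. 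Finally, the $T$-condition guarantees that $H$ has no nontrivial roots of unity $\equiv 1\pmod T$, but the ambiguity in Theorem~\ref{thmforform} is by $\mu(F_\fp^\ast)$, which is typically much larger than $\mu(H)$ and is not constrained by $T$.

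\textbf{The reduction via Proposition~\ref{prop6.3}(c) does not recover the original $S$.} Enlarging $S$ to $S\cup\{\mathfrak{q}\}$ and applying Theorem~\ref{thmforform} there gives information about $u_2(S\cup\{\mathfrak{q}\},\sigma)=u_2(S,\sigma)\,u_2(S,\sigma_\mathfrak{q}^{-1}\sigma)^{-1}$. Knowing this ratio equals the corresponding ratio for $u_\fp$ does not determine $u_2(S,\sigma)$ itself; the relation (c) is not invertible.

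In short, what survives of your argument is exactly the Remark after Theorem~\ref{thmforform}: under those hypotheses, $u_3=u_\fp$ modulo $\mu(F_\fp^\ast)$. The full Conjecture~\ref{conju3} remains open.
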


\subsection{Transferring to a subgroup} \label{s:transfer} 

Let $V$ be a finite index subgroup of $E_+(\mathfrak{f})$ and $\mathfrak{b}$ a fractional ideal coprime to $S$ and $\ell$. Let $\eta_{\mathfrak{p}, V} \in H_n(V \oplus \langle \pi \rangle, \mathbb{Z})$ be the generator defined in (\ref{e:unitgen}). For $x_1, \dots , x_n \in V \oplus \langle \pi \rangle $ and compact open $U \subset F_\mathfrak{p}$ we put 
\[ \nu_{\mathfrak{b}, \lambda,V}^\mathfrak{p}(x_1, \dots , x_n)(U) \coloneqq \delta(x_1, \dots , x_n) \zeta_{R, \lambda}(\mathfrak{b},\overline{C}_{e_1}(x_1, \dots , x_n),U,0). \]
As before, it follows from \cite[Theorem 2.6]{MR3351752} that $\nu_{\mathfrak{b}, \lambda,V}^\mathfrak{p}$ is a homogeneous $(n-1)$-cocycle on $V \oplus \langle \pi \rangle$ with values in the space of $\mathbb{Z}$-distribution on $F_\mathfrak{p}$. Hence we obtain a class
\[ \omega_{\mathfrak{f}, \mathfrak{b}, \lambda,V}^\mathfrak{p} \coloneqq [ \nu_{\mathfrak{b}, \lambda,V}^\mathfrak{p} ] \in H^{n-1}(V \oplus \langle \pi \rangle ,  \Hom(C_c(F_\mathfrak{p}, \mathbb{Z}), \mathbb{Z})).  \]
We then define
\[ u_3(V,\sigma_\mathfrak{b}) = (-1)^{n+1} ( c_{\id} \cap (\omega_{\mathfrak{f}, \mathfrak{b}, \lambda,V}^\mathfrak{p} \cap \eta_{\mathfrak{p}, V} ) ) , \]
and write 
\[ u_3(V) = \sum_{\sigma \in G} u_3(V, \sigma) \otimes \sigma \in F_\fp^\ast \otimes \Z[G] . \]
The next proposition shows the relation between $u_3$ and $u_3(V)$. Here we adopt the convention that for an element $x=\Sigma_{\sigma \in G} x_\sigma \otimes \sigma \in F_\fp^\ast \otimes \Z[G]$ and $k \in \Z$ we write
\[ x^k = \sum_{\sigma \in G} x_\sigma^k \otimes \sigma \in F_\fp^\ast \otimes \Z[G] . \]

\begin{proposition}[Proposition 6.12, \cite{comparingformulas}] \label{changetoVu3}
Let $V$ be a finite index subgroup of $E_+(\mathfrak{f})$. Then we have
\begin{equation}
    u_3(V) =  u_3^{[E_+(\mathfrak{f}) : V]}.
\end{equation}
\end{proposition}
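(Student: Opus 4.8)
The plan is to carry out the same corestriction argument used in the proof of Proposition~\ref{changetoVu2}. Write $\Gamma = E_+(\mathfrak{f})_\mathfrak{p}$; since $\ord_\mathfrak{p}$ splits the exact sequence $1 \to E_+(\mathfrak{f}) \to \Gamma \to e\mathbb{Z} \to 1$, we have $\Gamma = E_+(\mathfrak{f}) \oplus \langle \pi \rangle$, so that $\Gamma_V := V \oplus \langle \pi \rangle$ is a subgroup of $\Gamma$ of index $m := [E_+(\mathfrak{f}) : V]$. The first step is to recognize the two ``$V$-level'' classes entering $u_3^\prime(V)$ as restrictions along $\Gamma_V \hookrightarrow \Gamma$. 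Unwinding the cocycle formula of \S\ref{s:transfer}, the homogeneous cocycle $\nu^\mathfrak{p}_{\mathfrak{b},\lambda,V}$ is simply $\nu^\mathfrak{p}_{\mathfrak{b},\lambda}$ evaluated on tuples from $\Gamma_V$; summing over a system of representatives of $G_\mathfrak{f}/\langle \mathfrak{p}\rangle$ we obtain $\omega^\mathfrak{p}_{\mathfrak{f},\lambda,V} = \mathrm{res}^\Gamma_{\Gamma_V}(\omega^\mathfrak{p}_{\mathfrak{f},\lambda})$, where $\omega^\mathfrak{p}_{\mathfrak{f},\lambda,V} = \sum_{[\mathfrak{b}]}\rec_{H/F}(\mathfrak{b})^{-1}\omega^\mathfrak{p}_{\mathfrak{f},\mathfrak{b},\lambda,V}$. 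Likewise $c_{\id}$ is the pullback of a class in $H^1(F_\mathfrak{p}^\ast, C_c(F_\mathfrak{p}, F_\mathfrak{p}^\ast))$ along the inclusions of $\mathfrak{p}$-units, so its image in $H^1(\Gamma_V, -)$ is $\mathrm{res}^\Gamma_{\Gamma_V}$ of its image in $H^1(\Gamma, -)$.

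The second step is the homology identity $\mathrm{cores}(\eta_{\mathfrak{p},V}) = m\,\eta_{\mathfrak{p},E_+(\mathfrak{f})}$, where $\mathrm{cores}$ denotes the map $H_n(\Gamma_V,\mathbb{Z}) \to H_n(\Gamma,\mathbb{Z})$ induced by the inclusion. Both groups are free of rank one, so this map is multiplication by an integer; the content is that the sign-normalized generators of (\ref{e:unitgen}) and (\ref{e:unitgenV}), whose signs $\mu$ are pinned down by compatible determinant conditions on the matrices built from the maps $L_k$, make this integer equal to $m$. This is the homological analogue of \cite[III, Proposition~9.5]{MR1324339} and is established exactly as the identity $\mathrm{cores}(\eta_{\mathfrak{p},V}) = [E_{+,\mathfrak{p}}:V]\,\eta_\mathfrak{p}$ in the proof of Proposition~\ref{changetoVu2}. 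This is the step I expect to require the most care, since one must verify that the orientations implicit in the two chosen generators are consistent.

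The final step combines these via the projection formula for cap products, $\mathrm{cores}(\mathrm{res}(\alpha) \cap \xi) = \alpha \cap \mathrm{cores}(\xi)$ for a finite-index subgroup (with $\alpha$ a cohomology class on $\Gamma$ and $\xi$ a homology class on $\Gamma_V$; cf. \cite[Ch.~III, \S\S9--10]{MR1324339}), applied first to the inner cap product against $\omega^\mathfrak{p}_{\mathfrak{f},\lambda}$ and then to the outer cap product against $c_{\id}$. Since the whole construction lands in $H_0(\Gamma_V, F_\mathfrak{p}^\ast \otimes \mathbb{Z}[G]) = F_\mathfrak{p}^\ast \otimes \mathbb{Z}[G] = H_0(\Gamma, F_\mathfrak{p}^\ast \otimes \mathbb{Z}[G])$, on which $\mathrm{cores}$ acts as the identity (the coefficients being trivial), this yields
\[ u_3^\prime(V) \;=\; c_{\id} \cap \big( \mathrm{res}(\omega^\mathfrak{p}_{\mathfrak{f},\lambda}) \cap \eta_{\mathfrak{p},V} \big) \;=\; c_{\id} \cap \big( \omega^\mathfrak{p}_{\mathfrak{f},\lambda} \cap \mathrm{cores}(\eta_{\mathfrak{p},V}) \big) \;=\; c_{\id} \cap \big( \omega^\mathfrak{p}_{\mathfrak{f},\lambda} \cap m\,\eta_{\mathfrak{p},E_+(\mathfrak{f})} \big). \]
By $\mathbb{Z}$-bilinearity of the cap product the right-hand side is the $m$-fold sum of $u_3^\prime$ in the abelian group $F_\mathfrak{p}^\ast \otimes \mathbb{Z}[G]$, which in the multiplicative convention --- where $\big(\sum_{\sigma} a_\sigma \otimes \sigma\big)^k = \sum_{\sigma} a_\sigma^k \otimes \sigma$ --- is precisely $(u_3^\prime)^{m}$, giving the claimed identity and matching \cite[Proposition~6.12]{comparingformulas}.
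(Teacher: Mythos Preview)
Your proof is correct and follows exactly the approach the paper takes for the analogous Proposition~\ref{changetoVu2} (the paper does not give its own proof of Proposition~\ref{changetoVu3}, citing \cite{comparingformulas} instead). The key ingredients---identifying the $V$-level cocycles as restrictions, the identity $\mathrm{cores}(\eta_{\mathfrak{p},V})=[E_+(\mathfrak{f}):V]\,\eta_{\mathfrak{p},E_+(\mathfrak{f})}$, and the projection formula---are precisely those used in the proof of Proposition~\ref{changetoVu2}.
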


\subsection{Explicit expression for $u_3$} \label{s:eeu3p}

For later calculations we require an explicit expression for $u_3(V)$ for an appropriate choice of $V \subset E_+(\mathfrak{f})$. Let $V$ be a finite index subgroup of $E_+(\mathfrak{f})$ such that $V= \langle \varepsilon_1 , \dots , \varepsilon_{n-1} \rangle$, where $\varepsilon_1 , \dots , \varepsilon_{n-1}$ and $\pi = \varepsilon_n$ are chosen to satisfy Lemma \ref{lemmatomakembig} and Lemma \ref{l:basicsign}. For $i=1, \dots ,n$ write 
\[ \mathcal{B}_i \coloneqq  \bigcup_{\substack{\tau \in S_{n} \\ \tau(n)=i}} \overline{C}_{e_1}([\varepsilon_{\tau (1)} \mid \dots \mid  \varepsilon_{\tau(n-1)} ]). \]
Let $\mathcal{B}=\mathcal{B}_n$.

\begin{lemma}
    Let $V$ be a finite index subgroup of $E_+(\mathfrak{f})$ such that $V= \langle \varepsilon_1 , \dots , \varepsilon_{n-1} \rangle$, where $\varepsilon_1 , \dots , \varepsilon_{n-1}$ and $\pi = \varepsilon_n$ are chosen to satisfy Lemma \ref{lemmatomakembig} and Lemma \ref{l:basicsign}. Then, for $\sigma \in G$, we have
    \begin{equation}\label{e:u3 explicit}
        u_3(V, \sigma) =   \prod_{i=1}^{n-1} \varepsilon_i^{\zeta_{R,\lambda}(\mathfrak{b}, \mathcal{B}_i,\pi \mathcal{O}_\mathfrak{p},0)} \pi^{\zeta_{R,\lambda}(\mathfrak{b}, \mathcal{B},\mathcal{O}_\mathfrak{p},0) }   \multint_{\mathbb{O}} x \ d( \zeta_{R,\lambda}(\mathfrak{b}, \mathcal{B},x,0) )(x) . 
    \end{equation}
    Here, for $i=1, \dots ,n$, we write 
    \[ \mathcal{B}_i \coloneqq  \bigcup_{\substack{\tau \in S_{n} \\ \tau(n)=i}} \overline{C}_{e_1}([\varepsilon_{\tau (1)} \mid \dots \mid  \varepsilon_{\tau(n-1)} ]) \]
    and let $\mathcal{B}=\mathcal{B}_n$.
\end{lemma}

\begin{proof}
    Recall we write $V_\mathfrak{p} = V \oplus \langle \pi \rangle$ and, as in (\ref{e:unitgen}), choose the following generator for $H_n(V_{\mathfrak{p}},\mathbb{Z})$,
    \[ \eta_{\mathfrak{p}, V} =(-1)\sum_{\tau \in S_n} \sign(\tau)[\varepsilon_{\tau(1)} \mid \dots  \mid \varepsilon_{\tau(n)}]\otimes 1. \]
    Let $\sigma \in G$ and $\mathfrak{b}$ a fractional ideal coprime to $S$ and $\ell$ such that $\sigma = \sigma_\mathfrak{b}$. Recall,
    \[ u_3(V,\sigma) = (-1)^{n+1} (c_{\id} \cap (\omega_{\mathfrak{f}, \mathfrak{b}, \lambda,V}^\mathfrak{p} \cap \eta_{\mathfrak{p}, V} ))  . \]
    We calculate
    \[ \omega_{\mathfrak{f}, \mathfrak{b}, \lambda,V}^\mathfrak{p} \cap \eta_{\mathfrak{p}, V} =(-1)^{n+1} \sum_{i=1}^n \sum_{\substack{ \tau \in S_{n} \\ \tau (n)=i}} \sign(\tau) \omega_{\mathfrak{f}, \mathfrak{b}, \lambda,V}^\mathfrak{p}([\varepsilon_{\tau(1)} \mid \dots \mid \varepsilon_{\tau(n-1)} ]) \otimes [\varepsilon_i]. \]
    We recall the definition of $\omega_{\mathfrak{f}, \mathfrak{b}, \lambda,V}^\mathfrak{p}$ from $\S\ref{s:transfer}$. For $\tau \in S_{n}$ and a compact open $U \subseteq \mathcal{O}_\mathfrak{p}$, we have
\begin{equation}
    \sign(\tau) \omega_{\mathfrak{f}, \mathfrak{b}, \lambda,V}^\mathfrak{p}([\varepsilon_{\tau(1)} \mid \dots \mid  \varepsilon_{\tau(n-1)}]) = \zeta_{R, \lambda}(\mathfrak{b},\overline{C}_{e_1} ([\varepsilon_{\tau(1)} \mid \dots \mid  \varepsilon_{\tau(n-1)}]), U,0).
    \label{signcalc1}
\end{equation}
Returning to our main calculation, using (\ref{signcalc1}) we have
\begin{align*}
    c_\id \cap (\omega_{\mathfrak{f}, \mathfrak{b}, \lambda, V}^\mathfrak{p} \cap \eta_{\mathfrak{p}, V} ) &= (-1)^{n+1} \sum_{i=1}^n  \sum_{\substack{ \tau \in S_{n} \\ \tau (n)=i}} \int_{F_\mathfrak{p}} z_{\id}(\varepsilon_i)(x) \ d(\varepsilon_i \zeta_{R, \lambda}( \mathfrak{b},\overline{C}_{e_1}
([ \varepsilon_{\tau(1)} \mid \dots \mid \varepsilon_{\tau(n-1)} ]),x,0) ) \\
&= (-1)^{n+1}  \sum_{i=1}^n  \sum_{\substack{ \tau \in S_{n} \\ \tau (n)=i}} \int_{F_\mathfrak{p}} \varepsilon_i^{-1} z_{\id}(\varepsilon_i)(x) \ d( \zeta_{R, \lambda}( \mathfrak{b},\overline{C}_{e_1}
([ \varepsilon_{\tau(1)} \mid \dots \mid \varepsilon_{\tau(n-1)} ]),x,0) ) .
\end{align*}
We note that taking the cap product gives another factor of $(-1)$ which cancels the factor from before. One can easily compute, as is done in the proof of \cite[Proposition 4.6]{MR3968788}, that for $i=1,\dots ,n+r-1$, $i \neq n$
\begin{equation}
    \varepsilon_i^{-1} z_\id(\varepsilon_i)=\mathbbm{1}_{\pi \mathcal{O}_\mathfrak{p}} \cdot \varepsilon_i, 
    \label{calc2u3}
\end{equation}
and
\begin{equation}
    \pi^{-1}z_\id(\pi) = \mathbbm{1}_{\mathbb{O}}\cdot \id_{F_\mathfrak{p}^\ast} +\mathbbm{1}_{\mathcal{O}_\mathfrak{p}} \cdot \pi .
    \label{calc1u3}
\end{equation} 
Here we recall the $F_\fp^\ast$-action on $C_c(F_\fp^\ast , \mathbb{Z})$ from \S \ref{s:1cocycle}. Applying (\ref{calc2u3}) and (\ref{calc1u3}) and piecing together the appropriate Shintani sets we further deduce
\begin{multline}
    c_\id \cap (\omega_{\mathfrak{f}, \mathfrak{b}, \lambda,V}^\mathfrak{p} \cap \eta_{\mathfrak{p}, V} ) = (-1)^{n+1}\multint_{\mathbb{O}} x \ d(\zeta_{R,\lambda}(\mathfrak{b}, \mathcal{B},x,0)  )  \multint_{\mathcal{O}_\mathfrak{p}} \pi \ d(\zeta_{R,\lambda}(\mathfrak{b}, \mathcal{B},x,0)  ) \\  \prod_{i=1}^{n-1} \multint_{\pi\mathcal{O}_\mathfrak{p}}  \varepsilon_i \ d(\zeta_{R,\lambda}(\mathfrak{b}, \mathcal{B}_i,x,0)  ) .
    \label{calcstep1}
\end{multline}
It is clear that we can then write
\[ u_3(V, \sigma) = (-1)^{n+1}( c_\id \cap (\omega_{\mathfrak{f}, \mathfrak{b}, \lambda,V}^\mathfrak{p} \cap \eta_{\mathfrak{p}, V} ) ) =   \prod_{i=1}^{n-1} \varepsilon_i^{\zeta_{R,\lambda}(\mathfrak{b}, \mathcal{B}_i,\pi \mathcal{O}_\mathfrak{p},0)} \pi^{\zeta_{R,\lambda}(\mathfrak{b}, \mathcal{B},\mathcal{O}_\mathfrak{p},0) }   \multint_{\mathbb{O}} x \ d( \zeta_{R,\lambda}(\mathfrak{b}, \mathcal{B},x,0) )(x) . \]
    
\end{proof}

\section{Equality of $u_2$ and $u_3$} \label{s:u23}

In this section we prove the following theorem.

\begin{theorem} \label{u2equ3}
We have $u_2=u_3$.
\end{theorem}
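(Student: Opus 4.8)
The plan is to compare the explicit expressions for $u_2$ and $u_3$ obtained in \S\ref{s:eeu2} and \S\ref{s:eeu3} after transferring both to a common Colmez subgroup. By Proposition~\ref{changetoVu2} we have $u_2(V) = u_2^{[E_+:V]}$, and by the corollary to Proposition~\ref{propu3equ3prime} we have $u_3(V') = u_3^{[E_+:V']}$, for any finite index $V, V' \subseteq E_+$. So it suffices to prove $u_2(V) = u_3(V)$ for every Colmez subgroup $V$; to eliminate the ambiguity in extracting roots in $F_\mathfrak{p}^\ast \otimes \mathbb{Z}[G]$, I would apply this to the two subgroups $V_f, V_g$ of coprime index furnished by Lemma~\ref{l:coprimechoices} and finish with B\'ezout: from $u_2^{[E_+:V_f]} = u_3^{[E_+:V_f]}$ and $u_2^{[E_+:V_g]} = u_3^{[E_+:V_g]}$ one gets $u_2 = u_3$.

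Fix then $V = \langle \varepsilon_1, \dots, \varepsilon_{n-1}\rangle \subseteq E_+$ with $\varepsilon_n = \pi$ chosen via Lemmas~\ref{lemmatomakembig} and~\ref{l:basicsign}, so that the sign relation (\ref{asigneqn}) holds. Formula (\ref{fullu3eqn3}) expresses $u_2(V)$, and the computation of \S\ref{s:eeu3} expresses $u_3(V)$, as a sum over a system of representatives, indexed by the same Shintani sets $\mathcal{B}, \mathcal{B}_1, \dots, \mathcal{B}_{n-1}$, of products of the shape
\[ \prod_{i=1}^{n-1} \varepsilon_i^{(\ast_i)} \cdot \pi^{(\ast_0)} \cdot \multint_{\mathbb{O}} x \, d\mu_{\ast}(x), \]
where for $u_2(V)$ the data $(\ast_i), (\ast_0), \mu_\ast$ are assembled from the scalar measures $L_{R,\lambda}(\mathfrak{b}, \cdot\,, \cdot\,; 0)$ paired against $[\sigma_\mathfrak{b}^{-1}]$ as $\mathfrak{b}$ runs over $G_\mathfrak{f}/\langle\mathfrak{p}\rangle$, and for $u_3(V)$ from the $\mathbb{Z}[G]$-valued measures $\sum_{k=1}^h \rec_{H/F}(\mathfrak{b}_k)^{-1}\, \mathfrak{L}_{R,\lambda}(\cdot\,, \mathfrak{b}_k, \cdot\times\prod_{j=2}^r\mathcal{O}_{\mathfrak{p}_j}, 0)$. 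Hence $u_2(V) = u_3(V)$ reduces to an identity of $\mathbb{Z}[G]$-valued measures on $\mathcal{O}_\mathfrak{p}$: for each Shintani set $A$ among $\mathcal{B}, \mathcal{B}_1, \dots, \mathcal{B}_{n-1}$ and each compact open $U \subseteq \mathcal{O}_\mathfrak{p}$,
\[ \sum_{\sigma_\mathfrak{b} \in G_\mathfrak{f}/\langle\mathfrak{p}\rangle} L_{R,\lambda}(\mathfrak{b}, A, U; 0)\, [\sigma_\mathfrak{b}^{-1}] \;=\; \sum_{k=1}^h \rec_{H/F}(\mathfrak{b}_k)^{-1}\, \mathfrak{L}_{R,\lambda}\!\left(A, \mathfrak{b}_k, U\times\prod_{j=2}^r\mathcal{O}_{\mathfrak{p}_j}, 0\right) \quad\text{in } \mathbb{Z}[G], \]
together with the companion statements for the $\pi$-exponent ($U = \pi\mathcal{O}_\mathfrak{p}$ on the left versus $U = \mathcal{O}_{S_p}$ on the right) and for the measure driving the integral over $\mathbb{O}$.

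I would prove this identity by unwinding both sides to a sum over integral ideals of $\mathcal{O}_F$ tagged by their Frobenius in $G$. On the right, expand the $\lambda$-smoothing $\mathfrak{L}_{R,\lambda}(A, \mathfrak{b}, \cdot\,, s) = \mathfrak{L}_R(A, \mathfrak{b}\lambda^{-1}, \cdot\,, s) - \rec_{H/F}(\lambda)^{-1}\ell^{1-s}\mathfrak{L}_R(A, \mathfrak{b}, \cdot\,, s)$ together with the definition (\ref{Lfunction}) of $\mathfrak{L}_R$; after the substitution $\mathfrak{a} = (\xi)\mathfrak{b}_k$ (integral and coprime to $S_p$ once $\xi \in \prod_{j=2}^r \mathcal{O}_{\mathfrak{p}_j}$), and using that the $\mathfrak{b}_k$ represent the narrow class group of $\mathcal{O}_{F,S_p}$ so that $\rec_{H/F}(\mathfrak{b}_k)^{-1}\rec_{H/F}((\xi))^{-1} = \rec_{H/F}(\mathfrak{a})^{-1}$, the $\mathcal{O}_{F,S_p}$-versus-$\mathcal{O}_F$ bookkeeping collapses the double sum to $\sum_{\sigma\in G}(\text{smoothed Shintani count})\,[\sigma^{-1}]$. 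On the left, the representatives $\mathfrak{b}$ of $G_\mathfrak{f}/\langle\mathfrak{p}\rangle$ and the fundamental domain $\mathcal{F}_V$ and reciprocity map used to derive (\ref{fullu3eqn3}) already encode exactly this Frobenius tagging, so $L_{R,\lambda}(\mathfrak{b}, A, U; 0)$ is the same $\lambda$-smoothed count of $\alpha \in A \cap \mathfrak{b}^{-1} \cap U$ with $(\alpha, R) = 1$ whose idele class maps to $\sigma_\mathfrak{b}^{-1}$; re-tagging yields the identical expression. This is precisely the ``direct calculation that was foretold in \cite{MR3968788}''.

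The step I expect to be the main obstacle is this last matching of indexing data, because the two constructions package the same arithmetic very differently: $u_2$ through the narrow ray class group $G_\mathfrak{f}/\langle\mathfrak{p}\rangle$ of conductor $\mathfrak{f}$ and a fundamental domain only in the $\mathfrak{p}$-direction, with the congruence modulo $\mathfrak{f}$ supplied implicitly by $\mathcal{F}_V$ and $\rec_{H/F}$; $u_3$ through the narrow class group of $\mathcal{O}_{F,S_p}$ and a multiplicative integral over all of $F_{S_p}$ with $\mathcal{O}_{\mathfrak{p}_j}$-components at the auxiliary split primes $\mathfrak{p}_2, \dots, \mathfrak{p}_r$. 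One must check that these auxiliary primes contribute nothing beyond the coprimality already built into $u_2$, that the absent explicit $\equiv 1 \pmod{\mathfrak{f}}$ in the definition of $L_{R,\lambda}$ is correctly reproduced, and that the discrepancy between $\pi\mathcal{O}_\mathfrak{p}$ and $\mathcal{O}_{S_p}$ in the $\pi$-exponent is absorbed by the vanishing of the $\mathbb{O}$-contribution (the familiar vanishing forced by $\mathfrak{p}$ splitting completely in $H$), so that the two $\zeta$-data expressions coincide term by term and not merely after summation.
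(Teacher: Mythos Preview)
Your proposal is correct and follows essentially the same route as the paper: transfer to a Colmez subgroup $V$, invoke the explicit formulas of \S\ref{s:eeu2} and \S\ref{s:eeu3}, reduce to the identity of $\mathbb{Z}[G]$-valued Shintani measures obtained by matching $L_{R,\lambda}$ against $\sum_k \rec_{H/F}(\mathfrak{b}_k)^{-1}\mathfrak{L}_{R,\lambda}$ via the substitution $\mathfrak{a}=(\xi)\mathfrak{b}_k$, and then remove the exponent $[E_+:V]$ using two coprime choices from Lemma~\ref{l:coprimechoices}. You are in fact more careful than the paper in flagging the bookkeeping discrepancies (the auxiliary $\mathcal{O}_{\mathfrak{p}_j}$-components and the $\pi\mathcal{O}_\mathfrak{p}$ versus $\mathcal{O}_\mathfrak{p}$ in the $\pi$-exponent, the latter arising because \S\ref{s:eeu2} and \S\ref{s:eeu3} use different representatives $f$ of $c_\id$); the paper simply asserts the measure identity and concludes.
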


In order to prove the above theorem we require the following lemma which records useful functorial properties of the cap-product.

\begin{lemma}\label{l: cap product properties}
    Let $G$ be a group, $H \subset G$ a subgroup and let $\iota : H \hookrightarrow G$ denote the inclusion map. Let $A,B,C$ be $G$-modules, $A^\prime,B^\prime,C^\prime$ be $H$-modules and suppose 
    \[ \epsilon : A \times B \rightarrow C, \quad (\text{resp.} \ \epsilon^\prime : A^\prime \times B^\prime \rightarrow C^\prime ) \]
    is a $G$-equivariant (resp. $H$-equrivariant) pairing inducing the cap-product pairing
    \begin{align*}
        \cap = \cap_\epsilon &: H^i (G,A) \times H_j(G,B) \rightarrow H_{j-i}(G,C)  \\
        (\text{resp. } \cap = \cap_{\epsilon^\prime} &: H^i (H,A^\prime) \times H_j(H,B^\prime) \rightarrow H_{j-i}(H,C^\prime) ) .
    \end{align*}
    Let $\alpha : A \rightarrow A^\prime$,  $\beta : B^\prime \rightarrow B$ and $\gamma : C^\prime \rightarrow C$ be $H$-equivariant maps such that 
    \[ \epsilon (a, \beta(b^\prime)) = \gamma (\epsilon^\prime (\alpha(a), b^\prime )) \quad \forall a \in A, b^\prime \in B^\prime . \]
    The maps $\iota, \alpha$ (resp. $\iota, \beta$) induce a homomorphism
    \begin{align*}
        r^G_H(\alpha) &= \alpha_\ast \circ \textrm{res}^G_H : H^i(G,A) \rightarrow H^i(H, A^\prime) \\
        (\text{resp. } c^G_H(\beta) &=  \textrm{cor}^G_H \circ \beta_\ast : H_j(G,B^\prime) \rightarrow H_j(H, B) ).
    \end{align*}
    Then for $a \in H^i(G,A)$ and $b^\prime \in H_j(H, B^\prime)$ the following formula holds
    \[ a \cap_\epsilon c^G_H (\beta) (b^\prime) = c^G_H(\gamma)( r_H^G(\alpha)(a) \cap_{\epsilon^\prime} b^\prime ).  \]
\end{lemma}

We modify the map $\Delta_\ast$ by omitting the place $\mathfrak{p}$ as well to obtain a $F^\ell_+$-equivariant map
\[ \Delta^\mathfrak{p} : \mathcal{C}_c^\diamond(R^\prime , \mathbb{Z}[G] )^{\mathfrak{p}, \ell , \infty}  \rightarrow C_c^0 (\mathbb{A}_F^{\mathfrak{p}, \ell , \infty}, \mathbb{Z}[G]) . \]
As before, the map $\Delta^\mathfrak{p}$ induces a homomorphism
\[ \Delta^\mathfrak{p}_\ast : H_n ( F_+^\ell , \mathcal{C}_c^\diamond(R^\prime , \mathbb{Z}[G] )^{\mathfrak{p}, \ell , \infty} ) \rightarrow H_n( F_+^\ell , C_c^0 (\mathbb{A}_F^{\mathfrak{p}, \ell , \infty}, \mathbb{Z}[G]) ) . \]
The natural pairing
\[ C_c(F_\mathfrak{p}, F_\mathfrak{p}^\ast) \times C_c^0 (\mathbb{A}_F^{\mathfrak{p}, \ell , \infty}, \mathbb{Z}[G]) \rightarrow C_c^0 (\mathbb{A}_F^{ \ell , \infty}, F_\mathfrak{p}^\ast \otimes \mathbb{Z}[G]) \]
induces a cap-product pairing
\[ H^1( F^\ell_+ , C_c(F_\mathfrak{p}, F_\mathfrak{p}^\ast) ) \times H_n( F^\ell_+ , C_c^0 (\mathbb{A}_F^{\mathfrak{p}, \ell , \infty}, \mathbb{Z}[G]) ) \rightarrow H_{n-1} ( F^\ell_+ , C_c^0 (\mathbb{A}_F^{ \ell , \infty}, F_\mathfrak{p}^\ast \otimes \mathbb{Z}[G]) ) . \]
It is straightforward to see that we then have the equality
\begin{equation}\label{e: new defn for u2}
    u_2 =  \Eis_{F,\lambda}^0 \cap ( c_\id \cap \Delta^\mathfrak{p}_\ast (\rho_{H/F}) ) .
\end{equation}
We are now ready to prove the main theorem of this section.

\begin{proof}[Proof of Theorem \ref{u2equ3}]
    Let $\mathcal{F} \subseteq ( \mathbb{A}_F^{\mathfrak{p}, \ell, \infty} )^\ast / U_\mathfrak{f}^{\mathfrak{p}, \ell, \infty} $ be a fundamental domain for the action of \\ $F_+^\ell / E_+(\mathfrak{f})_\mathfrak{p}$. Consider the $E_+(\mathfrak{f})_\mathfrak{p}$-equivariant map 
    \[ j : \mathbb{Z} \rightarrow C_c( ( \mathbb{A}_F^{\mathfrak{p}, \ell, \infty} )^\ast / U_\mathfrak{f}^{\mathfrak{p}, \ell, \infty}, \mathbb{Z} ), \quad 1 \mapsto \mathbbm{1}_{\mathcal{F}} . \]
    Let $ \eta_{\mathfrak{p},E_+(\mathfrak{f}) }   \in H_{n}(E_+(\mathfrak{f})_{\mathfrak{p}}, \mathbb{Z})$ be the generator defined in (\ref{e:unitgen}) and let $\vartheta_\mathfrak{f}^\mathfrak{p}$ be the image of the class $\eta_{\mathfrak{p},E_+(\mathfrak{f}) }$ under the map
    \[ c_{E_+(\mathfrak{f})_\mathfrak{p}}^{F_+^\ell} : H_n(E_+(\mathfrak{f})_\mathfrak{p} , \mathbb{Z}) \rightarrow H_n(F_+^\ell , C_c( ( \mathbb{A}_F^{\mathfrak{p}, \ell, \infty} )^\ast / U_\mathfrak{f}^{\mathfrak{p}, \ell, \infty}, \mathbb{Z} )) .  \]
    Here this map is induced by the map $j$, defined above. Let $\pi : C_c( ( \mathbb{A}_F^{\mathfrak{p}, \ell, \infty} )^\ast / U_\mathfrak{f}^{\mathfrak{p}, \ell, \infty}, \mathbb{Z} ) \rightarrow C_c( ( \mathbb{A}_F^{\mathfrak{p}, \ell, \infty} )^\ast / U^{\mathfrak{p}, \ell, \infty}, \mathbb{Z} ) $ be the natural projection and write 
    \[ \iota : C_c( ( \mathbb{A}_F^{\mathfrak{p}, \ell, \infty} )^\ast / U^{\mathfrak{p}, \ell, \infty}, \mathbb{Z} ) \rightarrow C_c( ( \mathbb{A}_F^{\mathfrak{p}, \ell, \infty} )^\ast / U_\mathfrak{f}^{\mathfrak{p}, \ell, \infty}, \mathbb{Z} ) , \quad \varphi \mapsto \varphi \circ \pi \]
    for the induced map. Using the fact that $\textrm{cor}_{E_+(\mathfrak{f})_\mathfrak{p}}^{E_{\mathfrak{p}, +}}(\eta_\mathfrak{p}) = \eta_{\mathfrak{p}, E_+(\mathfrak{f})}$ one can observe that 
    \begin{equation}\label{e: iota map}
        \iota_\ast ( \vartheta^\mathfrak{p} ) = \vartheta_\mathfrak{f}^\mathfrak{p},
    \end{equation}
    (for more details we refer to the proof of \cite[Lemma 5.1]{AdelicEisensteinClasses}). Note that the reciprocity map $\rec_{H/F}^{\mathfrak{p}, \ell, \infty} $ factors through $(\mathbb{A}_F^{\mathfrak{p}, \ell, \infty})^\ast/ U_\mathfrak{f}^{\mathfrak{p}, \ell, \infty}$, to distinguish it from $\rec_{H/F}^{\mathfrak{p}, \ell, \infty} $ we write
    \[ \rec_{H/F, \mathfrak{f}}^{\mathfrak{p}, \ell, \infty} : (\mathbb{A}_F^{\mathfrak{p}, \ell, \infty})^\ast/ U_\mathfrak{f}^{\mathfrak{p}, \ell, \infty} \rightarrow G \hookrightarrow \mathbb{Z}[G]^\ast .  \]
    As before we can view $\rec_{H/F, \mathfrak{f}}^{\mathfrak{p}, \ell, \infty}$ as an element of $H^0( F_+^\ell , C( (\mathbb{A}_F^{\mathfrak{p}, \ell, \infty})^\ast/ U_\mathfrak{f}^{\mathfrak{p}, \ell, \infty} , \mathbb{Z}[G] ) )$. By (\ref{e: iota map}) we have the equality
    \begin{equation}\label{e: delta equality}
        \Delta_\ast^\mathfrak{p}(\rho_{H/F}) = \Delta_\ast^\mathfrak{p} ( \rec_{H/F, \mathfrak{f}}^{\mathfrak{p}, \ell, \infty} \cap \vartheta_\mathfrak{f}^\mathfrak{p} ) .
    \end{equation}
    Here the cap-product is induced by the pairing
    \begin{align*}
        C( (\mathbb{A}_F^{\mathfrak{p}, \ell, \infty})^\ast/ U_\mathfrak{f}^{\mathfrak{p}, \ell, \infty} , \mathbb{Z}[G] ) \times  C_c( (\mathbb{A}_F^{\mathfrak{p}, \ell, \infty})^\ast/ U_\mathfrak{f}^{\mathfrak{p}, \ell, \infty} , \mathbb{Z} ) &\rightarrow C( (\mathbb{A}_F^{\mathfrak{p}, \ell, \infty})^\ast/ U_\mathfrak{f}^{\mathfrak{p}, \ell, \infty} , \mathbb{Z}[G] ), \\ (\phi, \psi) &\mapsto \phi \odot \psi .
    \end{align*}
    For any $b \in ( \mathbb{A}_F^{\mathfrak{p}, \ell, \infty} )^\ast$ we define maps
    \begin{align*}
        j_b &: \mathbb{Z} \rightarrow  C_c( (\mathbb{A}_F^{\mathfrak{p}, \ell, \infty})^\ast/ U_\mathfrak{f}^{\mathfrak{p}, \ell, \infty} , \mathbb{Z} ) , \quad 1 \mapsto \mathbbm{1}_{bU_\mathfrak{f}^{\mathfrak{p}, \ell, \infty}} \\
        \textrm{ev}_b &: C( (\mathbb{A}_F^{\mathfrak{p}, \ell, \infty})^\ast/ U_\mathfrak{f}^{\mathfrak{p}, \ell, \infty} , \mathbb{Z}[G] ) \rightarrow \mathbb{Z}[G], \quad \varphi \mapsto \varphi(bU_\mathfrak{f}^{\mathfrak{p}, \ell, \infty}). 
    \end{align*}
    Note that for every $m \in \mathbb{Z}$ and $\varphi \in C( (\mathbb{A}_F^{\mathfrak{p}, \ell, \infty})^\ast/ U_\mathfrak{f}^{\mathfrak{p}, \ell, \infty} , \mathbb{Z}[G] ) $ we have
    \begin{equation}\label{e: j equality}
        j_b(m) \odot \varphi = j_b ( m \cdot \textrm{ev}_b (\varphi) ) .
    \end{equation}
    Let $\vartheta_b^\mathfrak{p} \in H_n(F_+^\ell , C_c( (\mathbb{A}_F^{\mathfrak{p}, \ell, \infty})^\ast/ U_\mathfrak{f}^{\mathfrak{p}, \ell, \infty} , \mathbb{Z} ) $ be the image of $\eta_{\mathfrak{p}, E_+(\mathfrak{f})}$ under the map 
    \[ c_{E_+(\mathfrak{f})_\mathfrak{p}}^{F_+^\ell}(j_b) : H_n( E_+(\mathfrak{f} )_\mathfrak{p} , \mathbb{Z}) \rightarrow H_n( F_+^\ell , C_c( (\mathbb{A}_F^{\mathfrak{p}, \ell, \infty})^\ast/ U_\mathfrak{f}^{\mathfrak{p}, \ell, \infty} , \mathbb{Z} ) ) . \]
    For the fundamental domain $\mathcal{F} = \{ 
b_1 U_\mathfrak{f}^{\mathfrak{p}, \ell, \infty}, \dots , b_s U_\mathfrak{f}^{\mathfrak{p}, \ell, \infty} \}$ we choose $b_1, \dots b_s \in (\mathbb{A}_F^{\mathfrak{p}, \ell, \infty})^\ast$ such that the $b_i$ are coprime to $\mathfrak{f}$. We then write $\mathfrak{b}_1, \dots \mathfrak{b}_s$ for the corresponding fractional ideals of $\mathcal{O}_F$ and note that, by our choice of $b_1, \dots , b_s$, the ideals $\mathfrak{b}_1, \dots , \mathfrak{b}_s$ are coprime to $\mathfrak{f} \cdot \ell \cdot \mathfrak{p}$. Since
\[ (\mathbb{A}_F^{\mathfrak{p}, \ell, \infty})^\ast/ F_+^\ell U_\mathfrak{f}^{\mathfrak{p}, \ell, \infty} \cong G_\mathfrak{f}/ \langle \mathfrak{p} \rangle, \]
the collection of fractional ideals $\mathfrak{b}_1, \dots, \mathfrak{b}_s $ is a system of representatives of $G_\mathfrak{f}/ \langle \mathfrak{p} \rangle$. It follows that
\[ \vartheta_\mathfrak{f}^\mathfrak{p} = \sum_{i=1}^s \vartheta_{b_i}^\mathfrak{p} . \]
Hence, by (\ref{e: delta equality}) we can calculate
\begin{equation}\label{e: delta as a sum}
    \Delta_\ast^\mathfrak{p}( \rho_{H/F} ) = \sum_{i=1}^s \Delta_\ast^\mathfrak{p}( \rec_{H/F, \mathfrak{f}}^{\mathfrak{p}, \ell, \infty} \cap \vartheta_{b_i}^\mathfrak{p} ) . 
\end{equation}
For every $i \in \{ 1, \dots , s \}$ we have, by Lemma \ref{l: cap product properties} and (\ref{e: j equality}), 
\begin{equation}\label{e: rec with c map}
    \rec_{H/F, \mathfrak{f}}^{\mathfrak{p}, \ell, \infty} \cap \vartheta_{b_i}^\mathfrak{p} = c_{E_+(\mathfrak{f})_\mathfrak{p}}^{F_+^\ell}(j_{b_i}) ( \rec_{H/F}(\mathfrak{b}_i ) \otimes \eta_{\mathfrak{p}, E_+(\mathfrak{f})} ) = c_{E_+(\mathfrak{f})_\mathfrak{p}}^{F_+^\ell}(j_{b_i})(  \eta_{\mathfrak{p}, E_+(\mathfrak{f})} ) \otimes \rec_{H/F}(\mathfrak{b}_i) .
\end{equation}
Under the composition $\Delta^\mathfrak{p} \circ j_{b_i} : \mathbb{Z} \rightarrow C_c^0( \mathbb{A}_F^{\mathfrak{p}, \ell, \infty} , \mathbb{Z} ) $ we have that $1$ is mapped to the characteristic function of $\widehat{\mathfrak{b}_i}^{\mathfrak{p}, \ell} \coloneqq \mathfrak{b}_i \prod^\prime_{v \nmid \mathfrak{p}, \ell, \infty} \mathcal{O}_v $. We define the map
\[ \delta_{\mathfrak{b}_i} : \Hom( C_c^0(\mathbb{A}_F^{\ell , \infty} , \mathbb{Z} ), \mathbb{Z} ) \rightarrow \Hom (C_c(F_\mathfrak{p}, \mathbb{Z}), \mathbb{Z} )  \]
by $ \delta_{\mathfrak{b}_i} (\mu) (f) = \mu(f \otimes \mathbbm{1}_{\widehat{\mathfrak{b}_i}^{\mathfrak{p}, \ell}})$ for $\mu \in \Hom( C_c^0(\mathbb{A}_F^{\ell , \infty} , \mathbb{Z} ), \mathbb{Z} ) $ and $f \in C_c(F_\mathfrak{p}, \mathbb{Z})$. It then follows from the definitions of $\Eis_{F,\lambda}^0$ and $\omega_{\mathfrak{f}, \mathfrak{b}_i, \lambda}$ that 
\begin{equation}\label{e: reln between Eis and w}
    r_{E_+(\mathfrak{f})_\mathfrak{p}}^{F_+^\ell}(\delta_{\mathfrak{b}_i})(\Eis_{F,\lambda}^0) = \omega_{\mathfrak{f}, \mathfrak{b}_i, \lambda} . 
\end{equation}
Here 
\[ r_{E_+(\mathfrak{f})_\mathfrak{p}}^{F_+^{\ell}}(\delta_{\mathfrak{b}_i}) : H^{n-1}( F_+^\ell , \Hom( C_c^0(\mathbb{A}_F^{\ell , \infty} , \mathbb{Z} ), \mathbb{Z} )  ) \rightarrow H^{n-1}( E_+(\mathfrak{f})_\mathfrak{p} , \Hom (C_c(F_\mathfrak{p}, \mathbb{Z}), \mathbb{Z} ) ) \]
is the induced map, as defined in Lemma \ref{l: cap product properties}. We calculate, using standard properties of the cap-product and (\ref{e: rec with c map}), that
\begin{align*}
    \Eis_{F, \lambda}^0 \cap ( c_\id \cap \Delta_\ast^\mathfrak{p} ( \rec_{H/F, \mathfrak{f}}^{\mathfrak{p}, \ell, \infty} \cap \vartheta_{b_i}^\mathfrak{p} ) ) &= (-1)^{n-1} c_\id \cap ( \Eis_{F, \lambda}^0 \cap \Delta_\ast^\mathfrak{p} ( \rec_{H/F, \mathfrak{f}}^{\mathfrak{p}, \ell, \infty} \cap \vartheta_{b_i}^\mathfrak{p} ) ) \\
    &= (-1)^{n-1} c_\id \cap ( \Eis_{F, \lambda}^0 \cap \Delta_\ast^\mathfrak{p} ( c_{E_+(\mathfrak{f})_\mathfrak{p}}^{F_+^\ell}(j_{b_i})(  \eta_{\mathfrak{p}, E_+(\mathfrak{f})} ) \otimes \rec_{H/F}(\mathfrak{b}_i) )) \\
    &= (-1)^{n-1} c_\id \cap ( \Eis_{F, \lambda}^0 \cap \Delta_\ast^\mathfrak{p} ( c_{E_+(\mathfrak{f})_\mathfrak{p}}^{F_+^\ell}(j_{b_i})(  \eta_{\mathfrak{p}, E_+(\mathfrak{f})} ))) \otimes \rec_{H/F}(\mathfrak{b}_i)  \\
    &= (-1)^{n-1} c_\id \cap ( \Eis_{F, \lambda}^0 \cap   c_{E_+(\mathfrak{f})_\mathfrak{p}}^{F_+^\ell}(\Delta^\mathfrak{p} \circ j_{b_i})(  \eta_{\mathfrak{p}, E_+(\mathfrak{f})} )) \otimes \rec_{H/F}(\mathfrak{b}_i) .
\end{align*}
Applying Lemma \ref{l: cap product properties} and the equality (\ref{e: reln between Eis and w}) to the above calculation then yields,
\begin{align}\notag
    \Eis_{F, \lambda}^0 \cap ( c_\id \cap \Delta_\ast^\mathfrak{p} ( \rec_{H/F, \mathfrak{f}}^{\mathfrak{p}, \ell, \infty} \cap \vartheta_{b_i}^\mathfrak{p} ) )
    &= (-1)^{n-1} c_\id \cap ( \omega_{\mathfrak{f}, \mathfrak{b}_i, \lambda} \cap     \eta_{\mathfrak{p}, E_+(\mathfrak{f})} ) \otimes \rec_{H/F}(\mathfrak{b}_i) \\ \label{e: one term of u2 sum}
    &= (-1)^{n-1} c_\id \cap (( \omega_{\mathfrak{f}, \mathfrak{b}_i, \lambda} \otimes \rec_{H/F}(\mathfrak{b}_i)) \cap     \eta_{\mathfrak{p}, E_+(\mathfrak{f})} ) .
\end{align}
Recalling the alternative definition of $u_2$ from (\ref{e: new defn for u2}) and the equality (\ref{e: delta as a sum}) we have 
\[ u_2 =  \Eis_{F,\lambda}^0 \cap ( c_\id \cap \Delta^\mathfrak{p}_\ast (\rho_{H/F}) ) 
    =  \sum_{i=1}^s (-1)^{n-1} ( \Eis_{F, \lambda}^0 \cap ( c_\id \cap \Delta_\ast^\mathfrak{p} ( \rec_{H/F, \mathfrak{f}}^{\mathfrak{p}, \ell, \infty} \cap \vartheta_{b_i}^\mathfrak{p} )) ) . \]
Applying the calculation in (\ref{e: one term of u2 sum}) we can then observe 
\begin{align*}
    u_2 &= (-1)^{n-1} \sum_{i=1}^s   c_\id \cap (( \omega_{\mathfrak{f}, \mathfrak{b}_i, \lambda} \otimes \rec_{H/F}(\mathfrak{b}_i)) \cap     \eta_{\mathfrak{p}, E_+(\mathfrak{f})} ) \\
    &= (-1)^{n-1} (c_\id \cap (( \sum_{i=1}^s \omega_{\mathfrak{f}, \mathfrak{b}_i, \lambda} \otimes \rec_{H/F}(\mathfrak{b}_i)) \cap     \eta_{\mathfrak{p}, E_+(\mathfrak{f})} ))\\
    &= (-1)^{n-1} ( c_\id \cap (  \omega_{\mathfrak{f},  \lambda}  \cap     \eta_{\mathfrak{p}, E_+(\mathfrak{f})} )) \\
    &=  u_3 .
\end{align*}
Here the third equality is due to the definition of $\omega_{\mathfrak{f},  \lambda}$ from (\ref{e: defn of w as a sum}). The final equality is simply the definition of $u_3$ and the fact that $(-1)^{n-1}=(-1)^{n+1}$.

\end{proof}

\section{Equality of $u_1$ and $u_3$} \label{s:u12}

In this section we prove the following theorem.
\begin{theorem} \label{mainnewthm}
We have $u_1=u_3$.
\end{theorem}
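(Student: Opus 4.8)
The plan is to follow the two-step strategy announced in the introduction: first establish the congruence $u_1(\sigma)\equiv u_2(\sigma)\pmod{E_+(\mathfrak{f})}$ by a direct comparison of explicit formulas, and then upgrade it to an equality by a norm-compatibility argument taken to the limit over growing conductors.

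\emph{Step 1: the congruence.} By Theorem~\ref{u2equ3} and the identity $u_3=u_3'$ it suffices to compare $u_1$ with $u_3'$. Fix a finite-index subgroup $V=\langle\varepsilon_1,\dots,\varepsilon_{n-1}\rangle\subseteq E_+(\mathfrak{f})$ and $\pi=\varepsilon_n$ chosen to satisfy Lemmas~\ref{lemmatomakembig} and~\ref{l:basicsign}, and take as the Colmez domain for $V$ the Shintani set $\mathcal{B}=\mathcal{B}_n$ of \S\ref{s:eeu3p} (legitimate by Lemma~\ref{colmezlemma}). With $\mathcal{D}_V'=\mathcal{B}$, the transfer-to-subgroup formula for $u_1(V,\sigma_\mathfrak{b})=u_{\mathfrak{p},\lambda}(\mathfrak{b},\mathcal{B})$ from \S\ref{s:u1} and the explicit formula for $u_3'(V,\sigma_\mathfrak{b})$ from \S\ref{s:eeu3p} have the same $\pi$-exponent $\zeta_{R,\lambda}(\mathfrak{b},\mathcal{B},\cO_\mathfrak{p},0)$ and literally the same multiplicative integral $\multint_\mathbb{O}x\,d\nu(\mathfrak{b},\mathcal{B},x)$; they differ only in the unit prefactor, which is $\prod_{\epsilon\in V}\epsilon^{\nu(\mathfrak{b},\epsilon\mathcal{B}\cap\pi^{-1}\mathcal{B},\cO_\mathfrak{p})}$ for $u_1$ and $\prod_{i=1}^{n-1}\varepsilon_i^{\zeta_{R,\lambda}(\mathfrak{b},\mathcal{B}_i,\pi\cO_\mathfrak{p},0)}$ for $u_3'$, both lying in $V\subseteq E_+(\mathfrak{f})$. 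Hence $u_1(V,\sigma)\equiv u_3'(V,\sigma)\pmod{E_+(\mathfrak{f})}$ for all $\sigma$. To descend to $E_+(\mathfrak{f})$, use $u_1(V,\sigma)=u_1(\sigma)^{[E_+(\mathfrak{f}):V]}$ (Proposition~\ref{firstchangeofdom}, once the indices exceed the requisite constant) and $u_3'(V,\sigma)=u_3'(\sigma)^{[E_+(\mathfrak{f}):V]}$ (Proposition~\ref{changetoVu3}). Running the argument of Lemma~\ref{l:coprimechoices} with $E_+$ replaced by $E_+(\mathfrak{f})$ produces two subgroups $V_1,V_2$ of coprime index; writing $1=a[E_+(\mathfrak{f}):V_1]+b[E_+(\mathfrak{f}):V_2]$ shows that $y=u_1(\sigma)u_3'(\sigma)^{-1}$ satisfies $y=(y^{[E_+(\mathfrak{f}):V_1]})^a(y^{[E_+(\mathfrak{f}):V_2]})^b\in E_+(\mathfrak{f})$. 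Combined with $u_3'=u_3=u_2$, this is exactly (\ref{e:fcong}).

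\emph{Step 2: norm compatibility and passage to the limit.} For $u_2$ the relation (\ref{normcompbasic1}) is Proposition~\ref{prop6.3}(b). For $u_1$ one proves it directly from the multiplicative integral formula, using the additivity $\zeta_R(H/F,\sigma,s)=\sum_{\tau\mid_H=\sigma}\zeta_R(H'/F,\tau,s)$ and its Shintani-level refinement for the measures $\nu$, which converts the sum over $\tau$ into the displayed product; here a compatible Shintani domain for the larger group $E_+(\mathfrak{f}\mathfrak{f}')$ is supplied by the construction in Proposition~\ref{firstchangeofdom}. Applying (\ref{e:fcong}) with $H'$ in place of $H$ and substituting (\ref{normcompbasic1}) yields (\ref{e:ffcong}): $u_1(\sigma,H)\equiv u_2(\sigma,H)\pmod{E_+(\mathfrak{f}\mathfrak{f}')}$. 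Finally, letting $H'$ range over abelian extensions $H\subseteq H'$ in which $\mathfrak{p}$ splits completely and whose conductors $\mathfrak{f}\mathfrak{f}'$ are divisible by arbitrarily large ideals (for instance composita of $H$ with the $\langle\mathfrak{p}\rangle$-fixed subfields of ray class fields of conductor coprime to $\mathfrak{p}$ and growing), the ratio $u_1(\sigma,H)u_2(\sigma,H)^{-1}$ lies in $\bigcap_{\mathfrak{f}'}E_+(\mathfrak{f}\mathfrak{f}')=\{1\}$, since a nontrivial totally positive unit cannot be congruent to $1$ modulo arbitrarily large moduli. Thus $u_1(\sigma,H)=u_2(\sigma,H)$, i.e.\ $u_1=u_2$.

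\emph{Main obstacle.} The delicate point is structural rather than computational: Step~1 only yields a congruence modulo $E_+(\mathfrak{f})$, because the discrepancy between $u_1$ and $u_3'$ is an honest product of units depending on the auxiliary Shintani/Colmez data, so the genuinely new ingredient is the limiting argument over conductors — which in turn forces one to prove the norm compatibility (\ref{normcompbasic1}) for the more delicate multiplicative integral formula $u_1$ uniformly as $\mathfrak{f}\mathfrak{f}'$ grows. Matching the sign conventions across the three formulas in Step~1 (the factors $(-1)^{n+1}$, $(-1)^{(n-1)^2}$, etc.) and verifying the index hypotheses in Propositions~\ref{firstchangeofdom} and~\ref{l:coprimechoices} simultaneously for the chosen subgroups are the remaining bookkeeping hurdles.
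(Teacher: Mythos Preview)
Your proposal is correct and follows exactly the paper's approach: Proposition~\ref{uptoerrorterm} (your Step~1, via $u_2=u_3=u_3'$ and the explicit comparison with $u_1(V,\mathcal{B},\sigma)$, then coprime indices) followed by the norm-compatibility/limit argument (your Step~2). The one place where your sketch substantially understates the work is the norm compatibility for $u_1$: in the paper this is Theorem~\ref{normforu1}, and the delicate part is not the additivity of the zeta values but the transformation of the error term $\epsilon(\mathfrak{b},\mathcal{D},\pi)$ when passing from $(\mathcal{D}_\mathfrak{f},\pi,\mathbb{O})$ to $(\beta^{-1}\mathcal{D}_{\mathfrak{ff}'},\pi',\mathbb{O}')$, which occupies Lemmas~\ref{normu1multint}--\ref{normu1induction} and requires a somewhat intricate induction on the exponent $\alpha$ in $\pi'=\pi^\alpha$.
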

We show that for each $\sigma \in G$ we have $u_1(\sigma)=u_3(\sigma)$. This is done by using a strong enough compatibility property which forces the formulas to be equal. A special argument will be required in the case that $R$ contains no finite places, i.e., $R = R_\infty$.

We are given a CM abelian extension $H/F$ of conductor $\mathfrak{f}$ such that $\mathfrak{p}$ splits completely in $H$.  Let $\mathfrak{f}'$ be an auxiliary ideal of $\mathcal{O}_F$ that is divisible only by primes dividing $\mathfrak{f}$. Let $H^\prime$ be another finite abelian CM extension of $F$ in which $\mathfrak{p}$ splits completely, such that the conductor of $H'/F$ divides $\mathfrak{f}\mathfrak{f}'$. In particular, the extension $H^\prime/F$ is unramified outside $R$.

Let $\sigma \in G$. Write $u_1(\sigma, H)$ and $u_3(\sigma, H)$ for $\sigma$ components of the formulas $u_1$ and $u_3$, for the extension $H/F$ and Galois group element $\sigma$. We show that, for $i=1,3$,
\begin{equation}
    u_i(\sigma, H) = \prod_{\substack{\tau \in G^\prime \\ \tau \mid_H = \sigma } } u_i(\tau, H^\prime)  .
    \label{normcompbasic}
\end{equation}
We refer to (\ref{normcompbasic}) as norm compatibility. 

\begin{proposition} \label{uptoerrorterm}
We have 
\[ u_1(\sigma, H) \equiv u_3(\sigma, H) \pmod{E_+(\mathfrak{f})} \]
\end{proposition}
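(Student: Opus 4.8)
The plan is to compare the explicit expressions for $u_1(\sigma,H)$ and $u_2(\sigma,H)$ that have been established, working on a common finite index subgroup $V\subseteq E_+(\mathfrak{f})$ chosen to satisfy Lemma~\ref{lemmatomakembig} and Lemma~\ref{l:basicsign}. First I would invoke Theorem~\ref{u2equ3}, so that $u_2=u_3$, and then use the explicit formula for $u_3'(V,\sigma)$ from \S\ref{s:eeu3p}, which reads
\[
u_3'(V,\sigma)=\prod_{i=1}^{n-1}\varepsilon_i^{\zeta_{R,\lambda}(\mathfrak{b},\mathcal{B}_i,\pi\mathcal{O}_\mathfrak{p},0)}\,\pi^{\zeta_{R,\lambda}(\mathfrak{b},\mathcal{B},\mathcal{O}_\mathfrak{p},0)}\,\multint_{\mathbb{O}}x\ d\zeta_{R,\lambda}(\mathfrak{b},\mathcal{B},x,0),
\]
together with $u_3(V')=u_3^{[E_+:V']}$ and $u_3'(V)=(u_3')^{[E_+(\mathfrak{f}):V]}$; since $u_3=u_3'$ this expresses a power of $u_2(\sigma,H)$. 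On the $u_1$ side, $u_1(V,\sigma_\mathfrak{b})=u_{\mathfrak{p},\lambda}(\mathfrak{b},\mathcal{D}_V')$ for a Colmez domain $\mathcal{D}_V'$ for $V$, and by Proposition~\ref{firstchangeofdom} and Proposition~\ref{canchangedom} a power of $u_1(\sigma,H)$ equals $u_{\mathfrak{p},\lambda}(\mathfrak{b},\mathcal{D}_V')$ for a suitable Colmez domain, with the analogous shape: $\prod_{\epsilon\in V}\epsilon^{\zeta_{R,\lambda}(\mathfrak{b},\epsilon\mathcal{D}_V'\cap\pi^{-1}\mathcal{D}_V',\mathcal{O}_\mathfrak{p},0)}\,\pi^{\zeta_{R,\lambda}(\mathfrak{b},\mathcal{D}_V',\mathcal{O}_\mathfrak{p},0)}\,\multint_{\mathbb{O}}x\,d\nu(\mathfrak{b},\mathcal{D}_V',x)$.

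The key observation is that when $\mathcal{D}_V'$ is the particular Colmez domain $\bigcup_{\tau\in S_n}\overline{C}_\alpha([\varepsilon_{\tau(1)}\mid\dots\mid\varepsilon_{\tau(n-1)}])$ produced by Lemma~\ref{colmezlemma} with $\alpha=e_1$, its pieces are exactly the cones $\mathcal{B}_i$ appearing in the $u_3'$ formula, and the multiplicative-integral factors over $\mathbb{O}$ literally agree. So both $u_1(V,\sigma)$ and $u_3'(V,\sigma)$ are products of the common integral $\multint_{\mathbb{O}}x\,d\zeta_{R,\lambda}(\mathfrak{b},\mathcal{B},x,0)$, the common power of $\pi$, and a unit in $E_+(\mathfrak{f})$: the $u_1$-side error unit $\epsilon(\mathfrak{b},\mathcal{D}_V',\pi)=\prod_{\epsilon\in V}\epsilon^{\nu(\mathfrak{b},\epsilon\mathcal{D}_V'\cap\pi^{-1}\mathcal{D}_V',\mathcal{O}_\mathfrak{p})}$, and the $u_3'$-side unit $\prod_{i=1}^{n-1}\varepsilon_i^{\zeta_{R,\lambda}(\mathfrak{b},\mathcal{B}_i,\pi\mathcal{O}_\mathfrak{p},0)}$. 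Both lie in $V\subseteq E_+(\mathfrak{f})$. Hence $u_1(V,\sigma)\equiv u_3'(V,\sigma)=u_2(V,\sigma)\pmod{E_+(\mathfrak{f})}$, and raising to coprime powers $[E_+:V_1]$, $[E_+:V_2]$ via Lemma~\ref{l:coprimechoices} (and using that the quotient $F_\mathfrak{p}^\ast/E_+(\mathfrak{f})$ is abelian so $x^m\equiv y^m$ for coprime $m$'s forces $x\equiv y$) gives $u_1(\sigma,H)\equiv u_2(\sigma,H)\pmod{E_+(\mathfrak{f})}$.

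The main obstacle I anticipate is bookkeeping the error terms: showing that $\epsilon(\mathfrak{b},\mathcal{D}_V',\pi)$ and $\prod_i\varepsilon_i^{\zeta_{R,\lambda}(\mathfrak{b},\mathcal{B}_i,\pi\mathcal{O}_\mathfrak{p},0)}$ are honestly elements of $E_+(\mathfrak{f})$ (this is \cite[Lemma~3.14, Proposition~3.12]{MR2420508} on the $u_1$ side and the cocycle integrality on the $u_3$ side), and carefully matching the cone decompositions so the ``non-integral over $\mathbb{O}$'' parts cancel exactly. One must also check that the choice of $\mathcal{D}_V'$ above is genuinely $\pi$-good for $\lambda$ and good for the relevant pair of Colmez domains, so that Proposition~\ref{canchangedom} applies; this is arranged exactly as in \S\ref{s:eeu2}--\S\ref{s:eeu3} by the same choices of $\varepsilon_i$ and $\pi$ via Lemma~\ref{lemmatomakembig}. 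Modulo these verifications, the proposition reduces to the identity of the multiplicative-integral factors, which is immediate from the definitions of $\nu$ and $\zeta_{R,\lambda}$ on Shintani cones, so no further ideas are needed; the only content beyond comparing formulas is the coprimality trick to descend from $V$ to $E_+$.
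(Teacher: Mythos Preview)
Your proposal is correct and follows essentially the same route as the paper: pass through $u_3$ via Theorem~\ref{u2equ3}, identify $u_2(V',\sigma)=u_3(V',\sigma)=u_3'(V,\sigma)$ using Proposition~\ref{propu3equ3prime}, compare the explicit formula for $u_3'(V,\sigma)$ from \S\ref{s:eeu3p} with $u_1(V,\mathcal{B},\sigma)$ on the Colmez domain $\mathcal{B}=\bigcup_{\tau\in S_{n-1}}\overline{C}_{e_1}([\varepsilon_{\tau(1)}\mid\dots\mid\varepsilon_{\tau(n-1)}])$, observe that the $\pi$-power and the multiplicative integral over $\mathbb{O}$ agree while the remaining factors lie in $V\subseteq E_+(\mathfrak{f})$, and then descend via the coprimality trick of Lemma~\ref{l:coprimechoices}. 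The only slips are notational: your Colmez domain should be indexed by $\tau\in S_{n-1}$ (not $S_n$), and one must keep straight the distinction between $V\subseteq E_+(\mathfrak{f})$ (for $u_1$ and $u_3'$) and $V'\subseteq E_+$ with $[E_+:V']=[E_+(\mathfrak{f}):V]$ (for $u_2$ and $u_3$), exactly as the paper does.
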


\begin{proof}
Let $V$ be a finite index subgroup of $E_+(\mathfrak{f})$ satisfying the conditions given in the statement of Proposition \ref{firstchangeofdom}. Furthermore, we choose $V$ such that if $V= \langle \varepsilon_1 , \dots , \varepsilon_{n-1} \rangle$ then the $\varepsilon_i$ along with $\pi$ satisfy Lemma \ref{lemmatomakembig}. 
We recall from \S\ref{s:eeu3p} the explicit description of $u_3(V, \sigma)$,
\[ u_3(V, \sigma) =  c_\id \cap (\omega_{\mathfrak{f}, \mathfrak{b}, \lambda,V}^\mathfrak{p} \cap \eta_{\mathfrak{p}, V} )  =   \prod_{i=1}^{n-1} \varepsilon_i^{\zeta_{R,\lambda}(\mathfrak{b}, \mathcal{B}_i,\pi \mathcal{O}_\mathfrak{p},0)} \pi^{\zeta_{R,\lambda}(\mathfrak{b}, \mathcal{B},\mathcal{O}_\mathfrak{p},0) }   \multint_{\mathbb{O}} x \ d( \zeta_{R,\lambda}(\mathfrak{b}, \mathcal{B},x,0) )(x) . \] 
We have defined
\[ u_1(V, \mathcal{B}, \sigma) =  \prod_{\epsilon \in V} \epsilon^{\zeta_{R,\lambda}(\mathfrak{b}, \epsilon \mathcal{B} \cap \pi^{-1} \mathcal{B}, \mathcal{O}_\mathfrak{p},0) } \pi^{\zeta_{R,\lambda}( \mathfrak{b}, \mathcal{B},\mathcal{O}_\mathfrak{p},0)} \multint_\mathbb{O} x \ d \nu_\lambda (\mathfrak{b}, \mathcal{B},x), \]
where 
\[ \mathcal{B} = \bigcup_{\tau \in S_{n-1}} \overline{C}_{e_1}([\varepsilon_{\tau(1)} \mid \dots \mid \varepsilon_{\tau(n-1)} ]) . \]
Thus, $ u_1(V, \mathcal{B}, \sigma) \equiv u_3(V, \sigma) \pmod{E_+(\mathfrak{f})} $ and hence there exists $\alpha \in F_\mathfrak{p}^\ast$ such that 
\begin{equation} \label{e:alphav}
\alpha u_1(\sigma, H) = u_3( \sigma, H) \quad \text{and} \quad \alpha^{[E_+:V]} \in E_+(\mathfrak{f}). \end{equation}
Note that here we are also using Proposition \ref{canchangedom}. By Lemma \ref{l:coprimechoices}, we can choose $W$ to be a finite index subgroup of $E_+(\mathfrak{f})$ satisfying the same conditions as $V$ but with $[E_+(\mathfrak{f}):W]$ coprime to $[E_+(\mathfrak{f}):V]$. Thus we also have $\alpha^{[E_+(\mathfrak{f}):W]} \in E_+(\mathfrak{f})$, which combines with (\ref{e:alphav}) to yield $\alpha \in E_+(\mathfrak{f})$ as desired.
\end{proof}

Assuming that (\ref{normcompbasic}) holds we can prove the following. 

\begin{proposition}\label{p: u1=u2 if f>1}
    Suppose that (\ref{normcompbasic}) holds and that $R \neq R_\infty$. Then,
    \[ u_1 = u_3 . \]
\end{proposition}

\begin{proof}
From Proposition \ref{uptoerrorterm} we have that for each $\tau \in G^\prime$,
\[ u_1(\tau, H^\prime) \equiv u_3(\tau, H^\prime) \pmod{E_+(\mathfrak{ff}^\prime)}. \]
Our assumption that (\ref{normcompbasic}) holds then gives that for each $\sigma \in G$,
\[ u_1(\sigma, H) \equiv u_3(\sigma, H) \pmod{E_+(\mathfrak{ff}^\prime)}. \]
Since $R \neq R_\infty$, we have
\[ \bigcap_{\mathfrak{f}^\prime} E_+(\mathfrak{ff}^\prime) = \{ 1 \} . \]
Here the intersection is taken over all possible ideals $\mathfrak{f}^\prime$ divisible only by primes dividing $\mathfrak{f}$. Thus we have
\[ u_1(\sigma, H) = u_3(\sigma, H). \]

\end{proof}

\begin{remark}
If $R=R_\infty$ then $\mathfrak{f}= \mathfrak{ff}^\prime =1$ for all possible extensions. Hence, the proof of Proposition~\ref{p: u1=u2 if f>1} does not apply.
\end{remark}

To handle the case $R=R_\infty$ we extend the definition of $u_1$ to work with the trivial extension. For a Shintani set $\mathcal{D}$ and compact open $U \subseteq \mathcal{O}_\mathfrak{p}$, we define
\[ \nu_\lambda(\mathcal{D}, U) =   \zeta_{R,\lambda}( \mathcal{O}_F , \mathcal{D}, U , 0) .  \]
It is clear that 
\begin{equation}\label{e: norm for measure in f=1}
    \nu_\lambda(\mathcal{D}, U) = \sum_{\sigma_\mathfrak{b} \in G} \nu_\lambda(\mathfrak{b}, \mathcal{D}, U) .
\end{equation}
We then define, for a Shintani domain $\mathcal{D}$,
\begin{equation} \label{e:u1fdef}
u_1(F)= \left( \prod_{\epsilon \in E_+} \epsilon^{\nu_\lambda( \epsilon \mathcal{D} \cap \pi^{-1} \mathcal{D}, \mathcal{O}_\mathfrak{p})} \right) \pi^{\nu_\lambda(\mathcal{D},  \mathcal{O}_\mathfrak{p})} \multint_\mathbb{O} x \ d \nu_\lambda ( \mathcal{D},x) . \end{equation}
By (\ref{e: norm for measure in f=1}) and since $\mathfrak{f}=1$ we have
\[ u_1(F)= \prod_{\sigma \in G} u_1(\sigma , H). \]
\begin{lemma}
    We have
    \[ u_1(F)=1 . \]
\end{lemma}

\begin{proof}
    Since $\mathcal{D}$ is a Shintani domain we have 
    \begin{equation}\label{e: trivial measure}
        \nu_\lambda(\mathcal{D}, \mathcal{O}_\mathfrak{p}) = \zeta_{R,\lambda}(F/F, \mathcal{O}_F , 0) = 0 . 
    \end{equation}
    Therefore the $\pi$-power term in (\ref{e:u1fdef}) vanishes.
Next, we write
    \begin{equation}\label{e: fraction of int}
        \multint_\mathbb{O} x \ d \nu_\lambda ( \mathcal{D},x) = \frac{\multint_{\mathcal{O}_\mathfrak{p}} x \ d \nu_\lambda ( \mathcal{D},x) }{ \multint_{\pi\mathcal{O}_\mathfrak{p}} x \ d \nu_\lambda ( \mathcal{D},x) } .
    \end{equation}
    By Lemma \ref{changeofvariable} we calculate
    \begin{equation} \label{e:xnupid}
\multint_{\pi\mathcal{O}_\mathfrak{p}} x \ d \nu_\lambda ( \mathcal{D},x) = \pi^{\nu_\lambda(\pi \mathcal{D}, \pi \mathcal{O}_\mathfrak{p})} \multint_{\mathcal{O}_\mathfrak{p}} x \ d \nu_\lambda ( \pi^{-1}\mathcal{D},x)
    = \multint_{\mathcal{O}_\mathfrak{p}} x \ d \nu_\lambda ( \pi^{-1}\mathcal{D},x) 
    \end{equation}
since  $\nu_\lambda(\pi \mathcal{D}, \pi \mathcal{O}_\mathfrak{p}) =0$ as in (\ref{e: trivial measure}). Since $\mathcal{D}$ is a Shintani domain we can write 
    \[ \pi^{-1} \mathcal{D} = \bigcup_{\epsilon \in E_+} (\epsilon \mathcal{D} \cap \pi^{-1}\mathcal{D}) . \]
    We then have
    \begin{align}
    \begin{split}
        \multint_{\mathcal{O}_\mathfrak{p}} x \ d \nu_\lambda ( \pi^{-1}\mathcal{D},x) &= \prod_{\epsilon \in E_+} \left( \multint_{\mathcal{O}_\mathfrak{p}} x \ d \nu_\lambda ( \epsilon \mathcal{D} \cap \pi^{-1}\mathcal{D},x)  \right) \\ \label{e: calc of int}
        &= \left( \prod_{\epsilon \in E_+} \epsilon^{\nu_\lambda(\epsilon \mathcal{D} \cap \pi^{-1} \mathcal{D} , \mathcal{O}_\mathfrak{p} ) } \right) \multint_{\mathcal{O}_\mathfrak{p}} x \ d \nu_\lambda (  \mathcal{D} ,x) .
    \end{split}
    \end{align}
    Combining (\ref{e: fraction of int}), (\ref{e:xnupid}), and (\ref{e: calc of int}) yields
    \[  \multint_\mathbb{O} x \ d \nu_\lambda ( \mathcal{D},x) =  
    \left( \prod_{\epsilon \in E_+} \epsilon^{\nu_\lambda(\epsilon \mathcal{D} \cap \pi^{-1} \mathcal{D} , \mathcal{O}_\mathfrak{p} ) } \right)^{-1}
    \]
Applying the definition of $u_1(F)$ yields the desired result.
\end{proof}

\begin{proposition}\label{p: u1=u2 if f=1}
    Suppose that (\ref{normcompbasic}) holds and that $R = R_\infty$. Then,
    \[ u_1 = u_3 . \]
\end{proposition}

\begin{proof}
    Let $\sigma \in G$. By Proposition \ref{uptoerrorterm} there exists $\varepsilon(\sigma) \in E_+$ such that 
    \[ u_1(\sigma)= \varepsilon(\sigma) u_3(\sigma) . \]
    Let $\mathfrak{r}$ be a prime of $F$. From the equation
    \[ \zeta_{R \cup \{ \mathfrak{r} \} }( \mathfrak{b}, \mathcal{D}, U, s) = \zeta_{R  }( \mathfrak{b}, \mathcal{D}, U, s) - \N \mathfrak{r}^{-s} \zeta_{R  }( \mathfrak{br}^{-1}, \mathcal{D}, U, s),  \]
    it follows that
    \[ u_1(S \cup \{ \mathfrak{r} \} , \sigma) = u_1(S ,  \sigma) u_1(S , \sigma_\mathfrak{r}^{-1} \sigma)^{-1}. \] 
    Proposition \ref{prop6.3} \textit{c)} gives the same result for $u_2$. Applying Theorem \ref{u2equ3} we have the same result for $u_3$, therefore
    \begin{align}
        u_3(S ,  \sigma) u_3(S , \sigma_\mathfrak{r}^{-1} \sigma)^{-1} &= u_3(S \cup \{ \mathfrak{r} \} , \sigma) \notag \\ \label{e: calc for f=1, 1}
        &= u_1(S \cup \{ \mathfrak{r} \} , \sigma) \\
        &= u_1(S ,  \sigma) u_1(S , \sigma_\mathfrak{r}^{-1} \sigma)^{-1} \notag \\
        &= \varepsilon(\sigma)\varepsilon(\sigma \sigma_\mathfrak{r}^{-1})^{-1} u_3(S ,  \sigma) u_3(S , \sigma_\mathfrak{r}^{-1} \sigma)^{-1} . \notag
    \end{align}
    Here, (\ref{e: calc for f=1, 1}) is given by Proposition \ref{p: u1=u2 if f>1}, which can be applied since we have added $\mathfrak{r}$ to the set $R$. It follows that $\varepsilon(\sigma)= \varepsilon(\sigma \sigma_\mathfrak{r}^{-1})$. Repeating this for all such $\mathfrak{r}$ we see that $\varepsilon(\sigma)$ is independent of $\sigma \in G$. Write $\varepsilon = \varepsilon(\sigma)$. Then 
    \[ 1= u_1(F) = \prod_{\sigma \in G} u_1(\sigma, H) = \varepsilon^{\mid G \mid} \prod_{\sigma \in G} u_3(\sigma, H) = \varepsilon^{\mid G \mid} . \]
    The last equality follows since $\prod_{\sigma \in G}u_2(\sigma , H) = 1$ by \ref{prop6.3} \textit{b),d)} and Theorem \ref{u2equ3}. Since $\varepsilon \in E_+$, it follows that $\varepsilon = 1$. This gives the desired result.
\end{proof} 

Theorem \ref{mainnewthm}, under the assumption that (\ref{normcompbasic}) holds, then follows from the combination of Proposition \ref{p: u1=u2 if f>1} and Proposition \ref{p: u1=u2 if f=1}. In the next section we prove the norm compatibility property (\ref{normcompbasic}) for $u_1$ and $u_2$.

\section{Norm compatibility relations}

In this section we prove norm compatibility properties for $u_1$ and $u_3$.

\subsection{Norm compatibility for $u_1$}

In this section we again allow any choice of appropriate $T$.

The reciprocity map identifies $\Gal(H^\prime/ H)$ with 
\begin{equation}\label{e:setofcong}
    \{ \beta \in (\mathcal{O}_F/\mathfrak{ff}^\prime)^\ast \mid \beta \equiv 1 \pmod{\mathfrak{f}} \} / E_+(\mathfrak{f})_\mathfrak{p}.
\end{equation}
We let $\mathcal{D}_\mathfrak{f}$ be a Shintani domain for $E_+(\mathfrak{f})$ and define 
\[ \mathcal{D}_{\mathfrak{ff}^\prime} = \bigcup_{\gamma \in E_+(\mathfrak{f})/ E_+(\mathfrak{ff}^\prime )} \gamma \mathcal{D}_\mathfrak{f} , \]
where the union is over a set of representatives $\{ \gamma \}$ for $E_+(\mathfrak{ff}^\prime)$ in $E_+(\mathfrak{f})$. Let $e^\prime$ be the order of $\mathfrak{p}$ in $G_{\mathfrak{ff}^\prime}$, and suppose that $\mathfrak{p}^{e^\prime}=(\pi^\prime)$ with $\pi^\prime$ totally positive and $\pi^\prime \equiv 1 \pmod{\mathfrak{ff}^\prime}$. We can choose $\pi^\prime$ such that $\pi^\prime = \pi^\alpha$ for some $\alpha \geq 1$. We then define $\mathbb{O}^\prime = \mathcal{O}_\mathfrak{p} - \pi^\prime \mathcal{O}_\mathfrak{p}$.

Let $B$ denote a set of totally positive elements of $\mathcal{O}_F$ that are relatively prime to $S$ and $\overline{T}$ and whose images in $(\mathcal{O}_F/ \mathfrak{ff}^\prime)^\ast$ are a set of distinct representatives for (\ref{e:setofcong}).

The following theorem is stated without proof by the first author in \cite[Theorem 7.1]{MR2420508}. For completeness we include a proof of this result here. 

\begin{theorem}[Theorem 7.1, \cite{MR2420508}] \label{normforu1}
We have
\[ u_T (\mathfrak{b} , \mathcal{D}_\mathfrak{f} ) = \prod_{\beta \in B} u_T (\mathfrak{b}(\beta) , \beta^{-1} \mathcal{D}_{\mathfrak{ff}^\prime} ) . \]
\end{theorem}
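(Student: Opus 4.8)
The plan is to prove the identity by expanding each side via the defining formula in Definition~\ref{defnformula}, and matching the three types of factors: the error term $\epsilon(\cdot)$, the power of $\pi$, and the multiplicative integral over $\mathbb{O}$. The key bookkeeping device is Lemma~\ref{changeofvariable}, which converts a change of fractional ideal $\mathfrak{b}\mapsto \mathfrak{b}(\beta)$ into a translation $\mathcal{D}\mapsto\beta\mathcal{D}$ of the Shintani set together with the translation $U\mapsto\beta U$ of the local component at $\mathfrak{p}$; since each $\beta\in B$ is totally positive, congruent to $1\bmod\mathfrak{f}$, relatively prime to $S$ and $\overline\lambda$, and $\ord_\mathfrak{p}(\beta)=0$, the hypotheses of that lemma are met with $\mathfrak{q}=(\beta)$. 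First I would reduce to showing three separate equalities: (i) $\prod_{\beta\in B}\pi^{\zeta_{R,\lambda}(\mathfrak b(\beta),\beta^{-1}\mathcal D_{\mathfrak{ff}'},\mathcal O_\mathfrak p,0)} = \pi^{\zeta_{R,\lambda}(\mathfrak b,\mathcal D_\mathfrak f,\mathcal O_\mathfrak p,0)}$ up to a correction absorbed into the error term; (ii) the corresponding statement for the multiplicative integrals $\multint_{\mathbb O}x\,d\nu$; and (iii) that the remaining discrepancy is a totally positive unit congruent to $1\bmod\mathfrak f$, matching the error terms $\epsilon$.

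For (ii), the central point is a decomposition of the measure $\nu(\mathfrak b,\mathcal D_\mathfrak f,\cdot)$ on $\mathcal O_\mathfrak p$. Using $\mathcal D_{\mathfrak{ff}'}=\bigsqcup_{\gamma}\gamma\mathcal D_\mathfrak f$ (a fundamental domain for $E_+(\mathfrak{ff}')$) and the fact that $B$ together with $E_+(\mathfrak f)/E_+(\mathfrak{ff}')$ gives a transversal description of $E_+(\mathfrak f)_\mathfrak p$-orbits in the relevant class group data — precisely the identification (\ref{e:setofcong}) of $\Gal(H'/H)$ — the sum $\sum_{\beta\in B}\zeta_{R,\lambda}(\mathfrak b(\beta),\beta^{-1}\mathcal D_{\mathfrak{ff}'},U,0)$ should collapse, via repeated application of Lemma~\ref{changeofvariable}, to $\zeta_{R,\lambda}(\mathfrak b,\mathcal D_\mathfrak f,U,0)$ as measures on $U\subseteq\mathbb O$ — but only after we reconcile the difference between integrating over $\mathbb O=\mathcal O_\mathfrak p-\pi\mathcal O_\mathfrak p$ on the left and over $\mathbb O'=\mathcal O_\mathfrak p-\pi'\mathcal O_\mathfrak p$ on the right (recall $\pi'=\pi^\alpha$). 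The annulus $\mathbb O'$ is a disjoint union $\mathbb O\sqcup\pi\mathbb O\sqcup\cdots\sqcup\pi^{\alpha-1}\mathbb O$, and on each piece $\pi^j\mathbb O$ one uses the change of variables $x\mapsto\pi^j x$ (legitimate by Lemma~\ref{changeofvariable} applied with $\beta=\pi^j$, using $\pi\equiv1\bmod\mathfrak f$ and $(\pi)=\mathfrak p^e$) to reduce back to an integral over $\mathbb O$, at the cost of pulling out powers of $\pi$. Collecting those powers of $\pi$ gives exactly the factor reconciling (i). This is the standard "telescoping" argument underlying $\fp$-adic $L$-value compatibility; one must track that the total zeta-value $\zeta_{S,\lambda}=(1-\N\mathfrak p^{-s})\zeta_{R,\lambda}$ vanishes at $s=0$, so that $\nu(\mathfrak b,\mathcal D_\mathfrak f,\mathbb O)=0$, keeping all exponents finite.

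For the error-term matching (iii), I would invoke the explicit formula (\ref{errorterm}): $\epsilon(\mathfrak b,\mathcal D,\pi)=\prod_{\epsilon\in E_+(\mathfrak f)}\epsilon^{\nu(\mathfrak b,\epsilon\mathcal D\cap\pi^{-1}\mathcal D,\mathcal O_\mathfrak p)}$. The claim is that the product over $\beta\in B$ of the error terms $\epsilon(\mathfrak b(\beta),\beta^{-1}\mathcal D_{\mathfrak{ff}'},\pi')$, combined with the "leftover unit" from steps (i)–(ii), equals $\epsilon(\mathfrak b,\mathcal D_\mathfrak f,\pi)$. Here one uses Lemma~\ref{changeofvariable} once more to rewrite $\nu(\mathfrak b(\beta),\epsilon\,\beta^{-1}\mathcal D_{\mathfrak{ff}'}\cap(\pi')^{-1}\beta^{-1}\mathcal D_{\mathfrak{ff}'},\mathcal O_\mathfrak p)=\nu(\mathfrak b,\beta\epsilon\,\beta^{-1}\mathcal D_{\mathfrak{ff}'}\cap(\pi')^{-1}\mathcal D_{\mathfrak{ff}'},\beta\mathcal O_\mathfrak p)$; since $\beta$ is a unit at $\mathfrak p$, $\beta\mathcal O_\mathfrak p=\mathcal O_\mathfrak p$, and the conjugation-by-$\beta$ on $E_+(\mathfrak f)$ is trivial (it's abelian), so we reindex the product over $\epsilon$ freely. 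The sum over $\beta\in B$ and over the $E_+(\mathfrak f)/E_+(\mathfrak{ff}')$-translates defining $\mathcal D_{\mathfrak{ff}'}$ should reassemble, by the orbit-counting inherent in (\ref{e:setofcong}), to the single product over $E_+(\mathfrak f)$ defining $\epsilon(\mathfrak b,\mathcal D_\mathfrak f,\pi)$; the powers of $\pi$ generated by the annulus decomposition in step (ii) account for the $\ord_\mathfrak p$-contributions that distinguish $\pi$ from $\pi'$. The main obstacle, and where I'd spend the most care, is precisely this simultaneous bookkeeping of three nested index sets — $\beta\in B$, the coset representatives $\gamma\in E_+(\mathfrak f)/E_+(\mathfrak{ff}')$, and the annulus pieces $\pi^j\mathbb O$ — and verifying that the powers of $\pi$ extracted from the change of variables in the integral match exactly those in the $\pi$-exponent and error-term on the other side, with no residual unit left over; Proposition~\ref{prop3.19} (independence of the choice of $\pi$) is a useful consistency check here. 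The argument is morally the adelic/Shintani incarnation of norm compatibility of Stickelberger elements, but the presence of the multiplicative integral and the $\pi$-good hypothesis on $\lambda$ (which must be preserved under all these translations, appealing to the remark that all but finitely many $\lambda$ are good, together with Proposition~\ref{canchangedom}) is what makes it delicate.
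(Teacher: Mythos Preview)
Your proposal is correct and follows essentially the same route as the paper: split each $u_T(\mathfrak{b}(\beta),\beta^{-1}\mathcal{D}_{\mathfrak{ff}'})$ into its three factors, use the annulus decomposition $\mathbb{O}'=\bigsqcup_{j=0}^{\alpha-1}\pi^j\mathbb{O}$ together with Lemma~\ref{changeofvariable} to unwind the multiplicative integral, telescope the resulting powers of $\pi$, and then match error terms. Two organizational points are worth noting. First, rather than summing over $\beta\in B$ directly, the paper works one $\beta$ at a time and introduces \emph{restricted} Shintani zeta values $\nu_A$ indexed by subsets $A$ of the congruence classes in~(\ref{e:setofcong}); the combinatorial identity driving the bookkeeping is $B=AE$ with $A=\{1,\pi^{-1},\ldots,\pi^{-(\alpha-1)}\}$ (matching the annulus pieces) and $E=E_+(\mathfrak{f})/E_+(\mathfrak{ff}')$ (matching the translates defining $\mathcal{D}_{\mathfrak{ff}'}$). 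Second, the error-term matching you flag as delicate is indeed the hardest part: the paper handles it not by a single telescope but by an induction on $j=1,\ldots,\alpha-1$ (Lemma~\ref{normu1induction}), showing that a residual product $e(j)$ of units is independent of $j$ and then checking $e(\alpha-1)=1$ directly. Your sketch would need this extra layer once $\alpha>2$.
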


The key to the proof of Theorem \ref{normforu1} is to use translation properties of Shintani sets. For a subset $A$ of equivalence classes of (\ref{e:setofcong}), let $\nu^A_T (\mathfrak{b}, \mathcal{D}, U) = \zeta^A_{R,T}(\mathfrak{b}, \mathcal{D}, U, 0)$, where $\zeta^A_R$ is the zeta function
\[ \zeta^A_R (\mathfrak{b}, \mathcal{D}, U, s) = \N \mathfrak{b}^{-s} \sum_{\substack{ \alpha \in \mathfrak{b}^{-1}\cap \mathcal{D}, \ \alpha \in U \\ \alpha \in A , \ (\alpha, R)=1 }} \N \alpha^{-s}. \]
This definition extends to $\zeta^A_{R,T}$ as in (\ref{pzfT}). Throughout this section we will use the following simple equality:
\[ \nu^{ \{ \pi^{-1} \} }_T (\mathfrak{b}, \mathcal{D}, U) = \nu^{ \{ 1 \} }_T (\mathfrak{b}, \pi \mathcal{D}, \pi U).  \]
This follows from Lemma \ref{changeofvariable}. Recall the following definition. For $\beta \in B$,
\[ u_T (\mathfrak{b}(\beta) , \beta^{-1} \mathcal{D}_{\mathfrak{ff}^\prime} ) = \epsilon (\mathfrak{b}(\beta ), \beta^{-1}\mathcal{D}_{\mathfrak{ff}^\prime}, \pi^\prime) (\pi ^\prime)^{\zeta_{R,T}(H_{\mathfrak{ff}^\prime}/F, \mathfrak{b}(\beta),0)} \multint_{\mathbb{O}^\prime} x \ d \nu_T ( \mathfrak{b}(\beta), \beta^{-1} \mathcal{D}_{\mathfrak{ff}^\prime} , x ) . \] 

It is clear from the definition of $B$ that Theorem \ref{normforu1} follows from the following proposition.
\begin{proposition} \label{normu1toshow}
Let $\beta \in B$. We have
\begin{multline*}
    u_T (\mathfrak{b}(\beta) , \beta^{-1} \mathcal{D}_{\mathfrak{ff}^\prime} ) \\ = \left( \prod_{\epsilon \in E_+(\mathfrak{f})} \epsilon^{\nu^{ B }_T( \mathfrak{b}(\beta), \epsilon \beta^{-1} \mathcal{D}_{\mathfrak{f}} \cap \pi^{-1} \beta^{-1} \mathcal{D}_{\mathfrak{f}} ,  \mathcal{O}_\mathfrak{p} )} \right) \pi^{ \nu^{ B }_T ( \mathfrak{b}(\beta),  \mathcal{D}_{\mathfrak{f}},  \mathcal{O}_\mathfrak{p} )} \multint_{\mathbb{O}} x \ d \nu^{B }_T ( \mathfrak{b}(\beta),  \beta^{-1} \mathcal{D}_{\mathfrak{f}} ,  x ) .
\end{multline*}
\end{proposition}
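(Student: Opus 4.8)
The plan is to unfold the definition of the left-hand side and then reorganise every sum so that the level-$\mathfrak{ff}^\prime$ data is replaced by level-$\mathfrak{f}$ data, using three disjoint-union decompositions together with the change-of-variables identity of Lemma \ref{changeofvariable}. Since $\mathfrak{p}^{e^\prime}=(\pi^\prime)$ with $\pi^\prime=\pi^\alpha$ we have $e^\prime=\alpha e$; then $\mathbb{O}^\prime=\mathcal{O}_\mathfrak{p}-\pi^\prime\mathcal{O}_\mathfrak{p}=\bigsqcup_{j=0}^{\alpha-1}\pi^j\mathbb{O}$ (a telescoping disjoint union), while $\mathcal{D}_{\mathfrak{ff}^\prime}=\bigsqcup_{\gamma}\gamma\mathcal{D}_\mathfrak{f}$ with $\gamma$ running over representatives of $E_+(\mathfrak{f})/E_+(\mathfrak{ff}^\prime)$, and $\{\epsilon^\prime\gamma\}$ runs over $E_+(\mathfrak{f})$ as $\epsilon^\prime$ runs over $E_+(\mathfrak{ff}^\prime)$ and $\gamma$ over the transversal. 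Writing $u_T(\mathfrak{b}(\beta),\beta^{-1}\mathcal{D}_{\mathfrak{ff}^\prime})$ as the product of its error term, its power of $\pi^\prime$, and its multiplicative integral over $\mathbb{O}^\prime$, I will treat these three factors in turn, each time substituting the decompositions above and then applying iterates of the translation identity $\nu_{\{\pi^{-1}\}}(\mathfrak{b},\mathcal{D},U)=\nu_{\{1\}}(\mathfrak{b},\pi\mathcal{D},\pi U)$ to move each piece onto the set $\mathbb{O}$ (resp.\ $\mathcal{O}_\mathfrak{p}$) and the domain $\beta^{-1}\mathcal{D}_\mathfrak{f}$, at the cost of an explicit power of $\pi$ and a shift of the relevant congruence class in (\ref{e:setofcong}).

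For the multiplicative integral, decomposing $\mathbb{O}^\prime$ into the shells $\pi^j\mathbb{O}$ and $\beta^{-1}\mathcal{D}_{\mathfrak{ff}^\prime}$ into the $\beta^{-1}\gamma\mathcal{D}_\mathfrak{f}$ turns $\multint_{\mathbb{O}^\prime}x\,d\nu(\mathfrak{b}(\beta),\beta^{-1}\mathcal{D}_{\mathfrak{ff}^\prime},x)$ into a product over $(j,\gamma)$ of integrals over $\pi^j\mathbb{O}$; the substitution $x=\pi^j y$ together with the translation identity rewrites each as $\pi$ to an explicit exponent times an integral over $\mathbb{O}$ against a measure $\nu_{A_{j,\gamma}}(\mathfrak{b}(\beta),\beta^{-1}\mathcal{D}_\mathfrak{f},\cdot)$, where, as $j$ and $\gamma$ vary, the classes $A_{j,\gamma}$ exhaust (\ref{e:setofcong}). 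Summing the measures gives $\nu_B$, so the product of integrals collapses to $\multint_{\mathbb{O}}x\,d\nu_B(\mathfrak{b}(\beta),\beta^{-1}\mathcal{D}_\mathfrak{f},x)$ times a power $\pi^{A_1}$. An entirely parallel computation for the error term --- now also decomposing over $\epsilon^\prime\in E_+(\mathfrak{ff}^\prime)$, and matching, via the translation properties of Shintani sets, the pieces $\epsilon^\prime\beta^{-1}\gamma\mathcal{D}_\mathfrak{f}\cap(\pi^\prime)^{-1}\beta^{-1}\gamma^\prime\mathcal{D}_\mathfrak{f}$ against the sets $\epsilon\beta^{-1}\mathcal{D}_\mathfrak{f}\cap\pi^{-1}\beta^{-1}\mathcal{D}_\mathfrak{f}$ for $\epsilon=\epsilon^\prime\gamma\in E_+(\mathfrak{f})$ --- reproduces $\prod_{\epsilon\in E_+(\mathfrak{f})}\epsilon^{\nu_B(\mathfrak{b}(\beta),\,\epsilon\beta^{-1}\mathcal{D}_\mathfrak{f}\cap\pi^{-1}\beta^{-1}\mathcal{D}_\mathfrak{f},\,\mathcal{O}_\mathfrak{p})}$ up to a further factor $\pi^{A_2}\in\langle\pi\rangle$; here one invokes \cite[Lemma 3.14]{MR2420508} so that only finitely many exponents are nonzero and \cite[Proposition 3.12]{MR2420508} for their integrality. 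Finally $(\pi^\prime)^{\zeta_{R,T}(H_{\mathfrak{ff}^\prime},\mathfrak{b}(\beta),0)}=\pi^{\alpha\,\zeta_{R,T}(H_{\mathfrak{ff}^\prime},\mathfrak{b}(\beta),0)}$; using the compatibility $\zeta_{R,T}(H/F,\sigma,0)=\sum_{\tau\mapsto\sigma}\zeta_{R,T}(H_{\mathfrak{ff}^\prime},\tau,0)$ of partial zeta values at the two levels, together with the vanishing $\zeta_{S,T}=0$ (equivalently $\nu(\mathfrak{b}(\beta),\mathcal{D}_\mathfrak{f},\mathbb{O})=0$ and its level-$\mathfrak{ff}^\prime$ refinements), one checks that $A_1+A_2+\alpha\,\zeta_{R,T}(H_{\mathfrak{ff}^\prime},\mathfrak{b}(\beta),0)=\nu_B(\mathfrak{b}(\beta),\mathcal{D}_\mathfrak{f},\mathcal{O}_\mathfrak{p})$. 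Multiplying the three reassembled factors gives exactly the right-hand side of Proposition \ref{normu1toshow}.

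The main obstacle is the bookkeeping that intertwines the Shintani-set combinatorics with the powers of $\pi$: each application of Lemma \ref{changeofvariable} both moves a translated cone into standard position and emits a power of $\pi$ weighted by a partial measure, and one must verify that these powers telescope correctly against $\alpha\,\zeta_{R,T}(H_{\mathfrak{ff}^\prime},\mathfrak{b}(\beta),0)$ while the measures simultaneously reassemble to $\nu_B$. The most delicate point is matching the intersection pieces in the error term, since there one intersects two different unions of translated cones, $\epsilon^\prime\beta^{-1}\gamma\mathcal{D}_\mathfrak{f}$ and $(\pi^\prime)^{-1}\beta^{-1}\gamma^\prime\mathcal{D}_\mathfrak{f}$, and must show the resulting decomposition refines exactly to the intersections $\epsilon\beta^{-1}\mathcal{D}_\mathfrak{f}\cap\pi^{-1}\beta^{-1}\mathcal{D}_\mathfrak{f}$; this is precisely where the translation properties of Shintani sets advertised before the statement are used, and it is the step where the argument genuinely differs from earlier work in this direction.
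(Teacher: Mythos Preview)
Your overall strategy---decompose $\mathbb{O}^\prime$ into the shells $\pi^j\mathbb{O}$, decompose $\mathcal{D}_{\mathfrak{ff}^\prime}$ into $E_+(\mathfrak{f})/E_+(\mathfrak{ff}^\prime)$-translates of $\mathcal{D}_\mathfrak{f}$, and repeatedly apply Lemma~\ref{changeofvariable}---is exactly the one the paper uses, but your account of how the three factors redistribute is wrong in a way that hides the actual difficulty.

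First, the error term $\epsilon(\mathfrak{b}(\beta),\beta^{-1}\mathcal{D}_{\mathfrak{ff}^\prime},\pi^\prime)$ is by definition a product of elements of $E_+(\mathfrak{ff}^\prime)$; it cannot produce any factor $\pi^{A_2}\in\langle\pi\rangle$. Conversely, when you break the multiplicative integral over $\mathbb{O}^\prime$ into pieces and change variable $x=\pi^j y$, the resulting integrand lives on $\pi^{-j}\beta^{-1}\mathcal{D}_{\mathfrak{ff}^\prime}$, not on $\beta^{-1}\mathcal{D}_\mathfrak{f}$. To move back to the standard domain you must write $\pi^{-j}\mathcal{D}_{\mathfrak{ff}^\prime}=\bigcup_{\epsilon\in E_+(\mathfrak{ff}^\prime)}(\epsilon\mathcal{D}_{\mathfrak{ff}^\prime}\cap\pi^{-j}\mathcal{D}_{\mathfrak{ff}^\prime})$ and then $\mathcal{D}_{\mathfrak{ff}^\prime}=\bigcup_\gamma\gamma\mathcal{D}_\mathfrak{f}$; each of these steps spits out a \emph{unit} product, not a power of $\pi$. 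So the multiplicative integral itself contributes unit-product factors (see the paper's Lemma~\ref{normu1multint}), and it is the \emph{combination} of these with the original error term that must equal the target $\prod_{\epsilon\in E_+(\mathfrak{f})}\epsilon^{\nu_B(\cdots)}$. Your allocation ``integral $\to\pi^{A_1}\times$(target integral), error term $\to$(target units)$\times\pi^{A_2}$'' has the $\pi$'s and units on the wrong sides.

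Second, and more seriously, your sketch of the error-term matching skips the central step. The intersections you write down involve $(\pi^\prime)^{-1}=\pi^{-\alpha}$, whereas the target involves $\pi^{-1}$. Reducing $\epsilon\mathcal{D}_\mathfrak{f}\cap\pi^{-\alpha}\mathcal{D}_\mathfrak{f}$ to pieces of the form $\epsilon\mathcal{D}_\mathfrak{f}\cap\pi^{-1}\mathcal{D}_\mathfrak{f}$ is not a single application of ``translation properties''; it requires iterating the decomposition $\pi^{-i}\mathcal{D}_\mathfrak{f}=\bigcup_{\delta\in E_+(\mathfrak{f})}(\pi^{-1}\delta\mathcal{D}_\mathfrak{f}\cap\pi^{-i}\mathcal{D}_\mathfrak{f})$ and showing that the cross-terms generated at each stage cancel telescopically. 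In the paper this is isolated as Proposition~\ref{normu1lastprop} and proved by an explicit induction on $j$ from $1$ to $\alpha-1$ (Lemma~\ref{normu1induction}), with separate base cases $\alpha=1,2$. Your proposal does not indicate any mechanism for this reduction, and the phrase ``refines exactly'' is doing work that in fact occupies several pages. This is the genuine gap: the strategy is right, but the step you flag as ``most delicate'' needs an inductive argument you have not supplied.
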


The proof of Theorem \ref{normu1toshow} is largely an exercise in explicit calculation. We begin by considering the multiplicative integral in $u_T (\mathfrak{b}(\beta) , \beta^{-1} \mathcal{D}_{\mathfrak{ff}^\prime} )$.

\begin{lemma} \label{normu1multint}
We have
\begin{multline*}
    \multint_{\mathbb{O}^\prime} x \ d \nu_T ( \mathfrak{b}(\beta), \beta^{-1} \mathcal{D}_{\mathfrak{ff}^\prime} , x ) \\ = \left( \prod_{i=1}^{\alpha -1} \pi^{i \nu_T( \mathfrak{b}(\beta), \mathcal{D}_{\mathfrak{ff}^\prime}, \pi^i \mathbb{O} )} \right) \left( \prod_{i=0}^{\alpha -1} \prod_{\epsilon \in E_+(\mathfrak{ff}^\prime)} \epsilon^{\nu^{\{\pi^{-i} \}}_T ( \mathfrak{b}(\beta), \epsilon \beta^{-1} \mathcal{D}_{\mathfrak{ff}^\prime} \cap \pi^{-i} \beta^{-1} \mathcal{D}_{\mathfrak{ff}^\prime} ,  \mathbb{O} )} \right) \\ \left( \prod_{\gamma \in E_+(\mathfrak{f})/E_+(\mathfrak{ff}^\prime )} \gamma^{\nu^{A}_T ( \mathfrak{b}(\beta), \gamma \beta^{-1} \mathcal{D}_{\mathfrak{f}} ,  \mathbb{O} )} \right) \multint_{\mathbb{O}} x \ d \nu^{B}_T ( \mathfrak{b}(\beta),  \beta^{-1} \mathcal{D}_{\mathfrak{f}} ,  x ) .
\end{multline*}
\end{lemma}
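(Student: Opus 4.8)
The plan is to establish Lemma \ref{normu1multint} by iterating the change of variables Lemma \ref{changeofvariable}, using two elementary decompositions: one of the compact open set $\mathbb{O}^\prime$ into translates of $\mathbb{O}$, and one of the Shintani domain $\mathcal{D}_{\mathfrak{ff}^\prime}$ into translates of $\mathcal{D}_\mathfrak{f}$. First I would record that, since $\mathfrak{p}^e = (\pi)$, $\mathfrak{p}^{e^\prime} = (\pi^\prime)$ and $\pi^\prime = \pi^\alpha$, one has the disjoint decomposition $\mathbb{O}^\prime = \mathcal{O}_\mathfrak{p} - \pi^\alpha\mathcal{O}_\mathfrak{p} = \bigsqcup_{i=0}^{\alpha-1}(\pi^i\mathcal{O}_\mathfrak{p} - \pi^{i+1}\mathcal{O}_\mathfrak{p}) = \bigsqcup_{i=0}^{\alpha-1}\pi^i\mathbb{O}$. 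Since the measure is additive and the multiplicative integral is multiplicative over a disjoint union of domains, $\multint_{\mathbb{O}^\prime} x\, d\nu(\mathfrak{b}(\beta), \beta^{-1}\mathcal{D}_{\mathfrak{ff}^\prime}, x)$ factors as the product over $i=0,\dots,\alpha-1$ of $\multint_{\pi^i\mathbb{O}} x\, d\nu(\mathfrak{b}(\beta), \beta^{-1}\mathcal{D}_{\mathfrak{ff}^\prime}, x)$.

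Next, for each $i$ I would change variables $x = \pi^i y$. The map $y\mapsto\pi^i y$ identifies $\mathbb{O}$ with $\pi^i\mathbb{O}$, and since $\ord_\mathfrak{p}(\pi^i)=ie$ the ideal $\mathfrak{q}$ associated to $\pi^i$ in Lemma \ref{changeofvariable} is the unit ideal, so that lemma transports the measure attached to $\pi^{-i}\beta^{-1}\mathcal{D}_{\mathfrak{ff}^\prime}$ to the measure attached to $\beta^{-1}\mathcal{D}_{\mathfrak{ff}^\prime}$ on $\pi^i\mathbb{O}$. The one subtlety is that the substitution $\alpha\mapsto\pi^i\alpha$ turns the defining congruence $\alpha\equiv 1\pmod{\mathfrak{ff}^\prime}$ into $\alpha\equiv\pi^{-i}\pmod{\mathfrak{ff}^\prime}$, which is exactly why the twisted zeta function $\nu_{\{\pi^{-i}\}}$ appears. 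Writing $\pi^i y = \pi^i\cdot y$ and using that a constant may be pulled out of the multiplicative integral (raised to the total measure of the domain), this step contributes the prefactor $\pi^{\,i\,\nu(\mathfrak{b}(\beta),\beta^{-1}\mathcal{D}_{\mathfrak{ff}^\prime},\pi^i\mathbb{O})}$ — trivial for $i=0$, hence a product from $i=1$ to $\alpha-1$ — and reduces the $i$-th factor to $\multint_\mathbb{O} y\, d\nu_{\{\pi^{-i}\}}(\mathfrak{b}(\beta),\pi^{-i}\beta^{-1}\mathcal{D}_{\mathfrak{ff}^\prime},y)$.

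The remaining work is to move the shifted domain back to $\beta^{-1}\mathcal{D}_\mathfrak{f}$. First I would slide $\pi^{-i}\beta^{-1}\mathcal{D}_{\mathfrak{ff}^\prime}$ to $\beta^{-1}\mathcal{D}_{\mathfrak{ff}^\prime}$: both are fundamental domains for $E_+(\mathfrak{ff}^\prime)$, and comparing multiplicative integrals over two such fundamental domains produces precisely a product $\prod_{\epsilon\in E_+(\mathfrak{ff}^\prime)}\epsilon^{\nu_{\{\pi^{-i}\}}(\mathfrak{b}(\beta),\epsilon\beta^{-1}\mathcal{D}_{\mathfrak{ff}^\prime}\cap\pi^{-i}\beta^{-1}\mathcal{D}_{\mathfrak{ff}^\prime},\mathbb{O})}$, the same mechanism that produces the error term (\ref{errorterm}) one level down; the congruence twist is compatible with this because elements of $E_+(\mathfrak{ff}^\prime)$ are $\equiv 1\pmod{\mathfrak{ff}^\prime}$ and hence fix the class $\{\pi^{-i}\}$. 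Then I would decompose $\mathcal{D}_{\mathfrak{ff}^\prime}=\bigsqcup_\gamma\gamma\mathcal{D}_\mathfrak{f}$ over representatives $\gamma$ of $E_+(\mathfrak{ff}^\prime)$ in $E_+(\mathfrak{f})$, split the integral accordingly, and apply Lemma \ref{changeofvariable} once more with the global unit $\gamma$ (which has $\ord_\mathfrak{p}(\gamma)=0$, so $\gamma\mathbb{O}=\mathbb{O}$) to peel off the factor $\prod_\gamma\gamma^{\nu_A(\mathfrak{b}(\beta),\gamma\beta^{-1}\mathcal{D}_\mathfrak{f},\mathbb{O})}$ while translating the congruence class by $\gamma^{-1}$. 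As $i$ ranges over $0,\dots,\alpha-1$ and $\gamma$ over the coset representatives, the translated classes $\{\pi^{-i}\gamma^{-1}\}$ run without repetition over the full set (\ref{e:setofcong}), so that $\sum_{i,\gamma}\nu_{\{\pi^{-i}\}}$ collapses to $\nu_B$ and the remaining integral is $\multint_\mathbb{O} x\, d\nu_B(\mathfrak{b}(\beta),\beta^{-1}\mathcal{D}_\mathfrak{f},x)$; reassembling all the prefactors then gives the claimed identity.

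The main obstacle is the bookkeeping in the last two steps: keeping straight which twisted class $\{\pi^{-i}\}$, and which $\gamma$-translate of it, is attached to each piece, and checking that the successive fundamental-domain comparisons for $E_+(\mathfrak{ff}^\prime)\subseteq E_+(\mathfrak{f})$ recombine into exactly the stated products over $\epsilon\in E_+(\mathfrak{ff}^\prime)$ and over $\gamma\in E_+(\mathfrak{f})/E_+(\mathfrak{ff}^\prime)$, with the correct domains of integration and summation and no leftover $\mathbb{O}$-versus-$\pi^i\mathbb{O}$ or $\mathcal{D}_\mathfrak{f}$-versus-$\mathcal{D}_{\mathfrak{ff}^\prime}$ discrepancies. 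Everything else reduces to routine applications of Lemma \ref{changeofvariable} and the multiplicativity of the multiplicative integral.
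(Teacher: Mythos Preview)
Your proposal is correct and follows essentially the same route as the paper: decompose $\mathbb{O}^\prime=\bigsqcup_{i=0}^{\alpha-1}\pi^i\mathbb{O}$, pull out $\pi^i$ by the change of variable (picking up the twist $\nu_{\{\pi^{-i}\}}$ and the shifted domain $\pi^{-i}\beta^{-1}\mathcal{D}_{\mathfrak{ff}^\prime}$), slide back to $\beta^{-1}\mathcal{D}_{\mathfrak{ff}^\prime}$ using the decomposition $\pi^{-i}\mathcal{D}_{\mathfrak{ff}^\prime}=\bigcup_{\epsilon\in E_+(\mathfrak{ff}^\prime)}(\epsilon\mathcal{D}_{\mathfrak{ff}^\prime}\cap\pi^{-i}\mathcal{D}_{\mathfrak{ff}^\prime})$ to produce the $\epsilon$-factors, then decompose $\mathcal{D}_{\mathfrak{ff}^\prime}=\bigsqcup_\gamma\gamma\mathcal{D}_\mathfrak{f}$ and pull out the $\gamma$'s, with the residual congruence classes $\{\pi^{-i}\gamma^{-1}\}$ assembling into $B$. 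The paper carries this out in exactly the same order, collapsing first over $i$ (so that $A=\{1,\pi^{-1},\dots,\pi^{-(\alpha-1)}\}$ appears) and then over $\gamma$ (so that $B=AE$ with $E=E_+(\mathfrak{f})/E_+(\mathfrak{ff}^\prime)$), matching your final paragraph precisely.
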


\begin{proof}
Since $\pi^\prime = \pi^\alpha$ and $\mathbb{O}^\prime = \mathcal{O}_\mathfrak{p} - \pi^\prime \mathcal{O}_\mathfrak{p}$ we have $\mathbb{O}^\prime = \bigcup_{i=0}^{\alpha -1} \pi^i \mathbb{O} $. Then 
\[  \multint_{\mathbb{O}^\prime} x \ d \nu_T ( \mathfrak{b}(\beta), \beta^{-1} \mathcal{D}_{\mathfrak{ff}^\prime} , x ) = \prod_{i=0}^{\alpha -1} \multint_{\pi^i \mathbb{O}} x \ d \nu_T ( \mathfrak{b}(\beta), \beta^{-1} \mathcal{D}_{\mathfrak{ff}^\prime} , x ). \]
By changing variables and then factoring out $\pi^i$ we have 
\begin{align*}
    I(\beta) & \coloneqq \left( \prod_{i=1}^{\alpha -1} \pi^{i \nu_T( \mathfrak{b}(\beta), \mathcal{D}_{\mathfrak{ff}^\prime}, \pi^i \mathbb{O} )} \right)  \prod_{i=0}^{\alpha -1} \multint_{\mathbb{O}} x \ d \nu_T ( \mathfrak{b}(\beta), \beta^{-1} \mathcal{D}_{\mathfrak{ff}^\prime} , \pi^i x ) \\
    &= \left( \prod_{i=1}^{\alpha -1} \pi^{i \nu_T( \mathfrak{b}(\beta), \mathcal{D}_{\mathfrak{ff}^\prime}, \pi^i \mathbb{O} )} \right)  \prod_{i=0}^{\alpha -1} \multint_{\mathbb{O}} x \ d \nu^{\{\pi^{-i} \}}_T ( \mathfrak{b}(\beta), \pi^{-i} \beta^{-1} \mathcal{D}_{\mathfrak{ff}^\prime} ,  x ) .
\end{align*}
We now note that we can write, for $i=1, \dots , \alpha -1$,
\[ \pi^{-i} \mathcal{D}_{\mathfrak{ff}^\prime} = \bigcup_{\epsilon \in E_+(\mathfrak{ff}^\prime)} ( \epsilon \mathcal{D}_{\mathfrak{ff}^\prime} \cap \pi^{-i} \mathcal{D}_{\mathfrak{ff}^\prime} ) . \]
Then,
\begin{align*}
    &\prod_{i=0}^{\alpha -1} \multint_{\mathbb{O}} x \ d \nu^{\{\pi^{-i} \}}_T ( \mathfrak{b}(\beta), \pi^{-i} \beta^{-1} \mathcal{D}_{\mathfrak{ff}^\prime} ,  x ) \\ = & \prod_{i=0}^{\alpha -1} \prod_{\epsilon \in E_+(\mathfrak{ff}^\prime)} \multint_{\mathbb{O}} x \ d \nu^{\{\pi^{-i} \}}_T ( \mathfrak{b}(\beta), \epsilon \beta^{-1} \mathcal{D}_{\mathfrak{ff}^\prime} \cap \pi^{-i} \beta^{-1} \mathcal{D}_{\mathfrak{ff}^\prime} ,  x ) \\ = &
    \left( \prod_{i=0}^{\alpha -1} \prod_{\epsilon \in E_+(\mathfrak{ff}^\prime)} \epsilon^{\nu^{\{\pi^{-i} \}}_T ( \mathfrak{b}(\beta), \epsilon \beta^{-1} \mathcal{D}_{\mathfrak{ff}^\prime} \cap \pi^{-i} \beta^{-1} \mathcal{D}_{\mathfrak{ff}^\prime} ,  \mathbb{O} )} \right) \multint_{\mathbb{O}} x \ d \nu^{A}_T ( \mathfrak{b}(\beta),  \beta^{-1} \mathcal{D}_{\mathfrak{ff}^\prime} ,  x ).
\end{align*}
Here $A=\{1, \pi^{-1} , \dots , \pi^{\alpha -1} \}$. Then since $\mathcal{D}_{\mathfrak{ff}^\prime} = \bigcup_{\gamma \in E_+(\mathfrak{f})/E_+(\mathfrak{ff}^\prime )} \gamma \mathcal{D}_\mathfrak{f} $ we can write 
\[ \multint_{\mathbb{O}} x \ d \nu^{A}_T ( \mathfrak{b}(\beta),  \beta^{-1} \mathcal{D}_{\mathfrak{ff}^\prime} ,  x ) = \left( \prod_{\gamma \in E_+(\mathfrak{f})/E_+(\mathfrak{ff}^\prime )} \gamma^{\nu^{A}_T ( \mathfrak{b}(\beta), \gamma \beta^{-1} \mathcal{D}_{\mathfrak{f}} ,  \mathbb{O} )} \right) \multint_{\mathbb{O}} x \ d \nu^{\langle A, E \rangle }_T ( \mathfrak{b}(\beta),  \beta^{-1} \mathcal{D}_{\mathfrak{f}} ,  x )  \]
where $E= E_+(\mathfrak{f})/ E_+(\mathfrak{ff}^\prime)$. Thus we have, noting that $B= AE =\{ ae \mid a \in A, e \in E \}$,
\begin{multline*}
    I(\beta)  = \left( \prod_{i=1}^{\alpha -1} \pi^{i \nu_T( \mathfrak{b}(\beta), \mathcal{D}_{\mathfrak{ff}^\prime}, \pi^i \mathbb{O} )} \right) \left( \prod_{i=0}^{\alpha -1} \prod_{\epsilon \in E_+(\mathfrak{ff}^\prime)} \epsilon^{\nu^{\{\pi^{-i} \}}_T ( \mathfrak{b}(\beta), \epsilon \beta^{-1} \mathcal{D}_{\mathfrak{ff}^\prime} \cap \pi^{-i} \beta^{-1} \mathcal{D}_{\mathfrak{ff}^\prime} ,  \mathbb{O} )} \right) \\ \left( \prod_{\gamma \in E_+(\mathfrak{f})/E_+(\mathfrak{ff}^\prime )} \gamma^{\nu^{A}_T ( \mathfrak{b}(\beta), \gamma \beta^{-1} \mathcal{D}_{\mathfrak{f}} ,  \mathbb{O} )} \right) \multint_{\mathbb{O}} x \ d \nu^{B }_T ( \mathfrak{b}(\beta),  \beta^{-1} \mathcal{D}_{\mathfrak{f}} ,  x ) .
\end{multline*}
\end{proof}

We now consider the powers of $\pi$ given in the definition of $u_T (\mathfrak{b}(\beta) , \beta^{-1} \mathcal{D}_{\mathfrak{ff}^\prime} )$ and arising in the statement of Lemma \ref{normu1multint}. Recall that $\pi^\prime = \pi^\alpha$.

\begin{lemma} \label{normforu1lemma2}
\[ \left( \prod_{i=1}^{\alpha -1} \pi^{i \nu_T( \mathfrak{b}(\beta), \mathcal{D}_{\mathfrak{ff}^\prime}, \pi^i \mathbb{O} )} \right) \pi ^{\alpha \zeta_{R,T}(H_{\mathfrak{ff}^\prime}/F, \mathfrak{b}(\beta),0)} = \pi^{ \nu^{ B }_T ( \mathfrak{b}(\beta),  \mathcal{D}_{\mathfrak{f}},  \mathcal{O}_\mathfrak{p} )} . \]
\end{lemma}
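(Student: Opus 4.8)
The identity to be proved is an equality of the exponents of $\pi$ on the two sides, hence an identity of rational integers, namely
\[ \sum_{i=1}^{\alpha-1} i\,\nu\bigl(\mathfrak{b}(\beta),\mathcal{D}_{\mathfrak{ff}'},\pi^i\mathbb{O}\bigr)\;+\;\alpha\,\zeta_{R,T}\bigl(H_{\mathfrak{ff}'},\mathfrak{b}(\beta),0\bigr)\;=\;\nu_B\bigl(\mathfrak{b}(\beta),\mathcal{D}_\mathfrak{f},\mathcal{O}_\mathfrak{p}\bigr). \]
The plan is to expand both sides as regularised lattice-point counts over $\mathcal{D}_\mathfrak{f}$ and match them. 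First I would rewrite the left-hand side using (i) the two equalities recalled in \S2.3, applied to the extension of conductor $\mathfrak{ff}'$: these give $\zeta_{R,T}(H_{\mathfrak{ff}'},\mathfrak{b}(\beta),0)=\nu(\mathfrak{b}(\beta),\mathcal{D}_{\mathfrak{ff}'},\mathcal{O}_\mathfrak{p})$ and $\nu(\mathfrak{b}(\beta),\mathcal{D}_{\mathfrak{ff}'},\mathbb{O}')=0$, the latter combined with the disjoint decomposition $\mathbb{O}'=\bigsqcup_{i=0}^{\alpha-1}\pi^i\mathbb{O}$ coming from $\pi'=\pi^{\alpha}$; and (ii) the decomposition $\mathcal{D}_{\mathfrak{ff}'}=\bigsqcup_{\gamma}\gamma\mathcal{D}_\mathfrak{f}$ over representatives $\gamma$ of $E_+(\mathfrak{f})/E_+(\mathfrak{ff}')$, together with Lemma~\ref{changeofvariable} applied to the units $\gamma$ (for which the auxiliary ideal $(\gamma)\mathfrak{p}^{-\ord_\mathfrak{p}(\gamma)}=\mathcal{O}_F$ is trivial) and to $\beta$ (totally positive, $\equiv 1\pmod{\mathfrak f}$, prime to $\mathfrak p$). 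The effect of each such substitution on the lattice sum is to translate the relevant residue class modulo $\mathfrak{ff}'$, and the bulk of the work is to keep track of this.

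Carrying this out turns both sides into sums of partial Shintani zeta values $\nu_{\{c\}}(\mathfrak{b}(\beta),\mathcal{D}_\mathfrak{f},\pi^j\mathbb{O})$ attached to residues $c$ modulo $\mathfrak{ff}'$ that are $\equiv 1\pmod{\mathfrak f}$, for $j\ge 0$, the decomposition $\mathcal{O}_\mathfrak{p}=\{0\}\sqcup\bigsqcup_{j\ge 0}\pi^j\mathbb{O}$ being understood at $s=0$ through the meromorphic continuation exactly as in \S3.3 of \cite{MR2420508}. The combinatorial heart is then to recognise the set $B=AE$ from Proposition~\ref{normu1toshow}, with $A=\{1,\pi^{-1},\dots,\pi^{-(\alpha-1)}\}$: since the $\mathfrak p$-units congruent to $1$ mod $\mathfrak f$ form the group $E_+(\mathfrak f)\oplus\langle\pi\rangle$, multiplication by $\pi$ cyclically permutes the $\alpha$ layers $\pi^i\mathbb{O}$ ($0\le i\le\alpha-1$) of $\mathbb{O}'$, and because $e'=\alpha e$ is the exact order of $\mathfrak p$ in $G_{\mathfrak{ff}'}$ the class of $\pi$ modulo $\mathfrak{ff}'$ has order $\alpha$ and meets the image of $E_+(\mathfrak f)$ trivially; hence $B$ is a transversal for the residues $\equiv 1\pmod{\mathfrak f}$ modulo $E_+(\mathfrak f)\oplus\langle\pi\rangle$, i.e. $B$ represents (\ref{e:setofcong}). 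Substituting this in, an elementary partial summation over the layers — passing from the decomposition of $\mathcal{O}_\mathfrak{p}$ into copies of $\mathbb{O}'$ down to the decomposition into copies of $\mathbb{O}$ — produces the coefficient $i$ in front of $\nu(\dots,\pi^i\mathbb{O})$ and collects the ``complete'' contributions into $\alpha\,\zeta_{R,T}(H_{\mathfrak{ff}'},\mathfrak{b}(\beta),0)$, the total being exactly $\nu_B(\mathfrak{b}(\beta),\mathcal{D}_\mathfrak{f},\mathcal{O}_\mathfrak{p})$.

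The step I expect to be the main obstacle is this last bookkeeping: tracking residue classes modulo $\mathfrak{ff}'$ through the iterated applications of Lemma~\ref{changeofvariable}, verifying that $B=AE$ really does represent (\ref{e:setofcong}) with multiplicity one, and justifying the rearrangement of the conditionally convergent decomposition of $\mathcal{O}_\mathfrak{p}$ at $s=0$. Everything else is a direct, if lengthy, computation, in keeping with the authors' remark that the proof of Proposition~\ref{normu1toshow} is ``largely an exercise in explicit calculation''.
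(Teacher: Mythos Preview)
Your proposal is on the right track and would succeed, but it takes a more roundabout path than the paper. The paper's argument is purely finite: after the Abel/telescope identity
\[
\sum_{i=1}^{\alpha-1} i\,\nu(\mathfrak{b}(\beta),\mathcal{D}_{\mathfrak{ff}'},\pi^i\mathbb{O})
= -(\alpha-1)\,\nu(\mathfrak{b}(\beta),\mathcal{D}_{\mathfrak{ff}'},\pi^{\alpha}\mathcal{O}_\mathfrak{p})
+\sum_{i=1}^{\alpha-1}\nu(\mathfrak{b}(\beta),\mathcal{D}_{\mathfrak{ff}'},\pi^{i}\mathcal{O}_\mathfrak{p}),
\]
one passes from $\mathcal{D}_{\mathfrak{ff}'}$ to $\mathcal{D}_\mathfrak{f}$ via the $E$-average, applies Lemma~\ref{changeofvariable} to each term to move $\pi^{i}$ from the $\mathfrak p$-adic argument into the Shintani domain (picking up the residue class $\{\pi^{-i}\}$), and then uses that $\mathcal{D}_\mathfrak{f}$ is already a fundamental domain for $E_+(\mathfrak{f})$ to show
\[
\nu_{E,\{\pi^{-i}\}}(\mathfrak{b}(\beta),\pi^{-i}\mathcal{D}_\mathfrak{f},\mathcal{O}_\mathfrak{p})
=\nu_{E,\{\pi^{-i}\}}(\mathfrak{b}(\beta),\mathcal{D}_\mathfrak{f},\mathcal{O}_\mathfrak{p}).
\]
Since $\{\pi^{-\alpha}\}=\{1\}$, the boundary term combines with the $\alpha\,\nu_E(\dots,\mathcal{O}_\mathfrak{p})$ piece to leave exactly $\sum_{i=0}^{\alpha-1}\nu_{E,\{\pi^{-i}\}}(\dots,\mathcal{O}_\mathfrak{p})=\nu_{AE}=\nu_B$. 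No infinite decomposition of $\mathcal{O}_\mathfrak{p}$ into copies of $\mathbb{O}$ is needed, and neither is the identity $\nu(\dots,\mathbb{O}')=0$; so the convergence worry you flag simply does not arise. What your plan and the paper's share is the telescope, Lemma~\ref{changeofvariable}, and the identification $B=AE$; what the paper buys with its ordering of steps is that all sums stay finite and the fundamental-domain property of $\mathcal{D}_\mathfrak{f}$ does the work of absorbing the $\pi^{-i}$-shift on the domain side rather than on the $\mathfrak p$-adic side.
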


\begin{proof}
Since $\pi^i \mathbb{O} = \pi^i\mathcal{O}_\mathfrak{p} - \pi^{i+1}\mathcal{O}_\mathfrak{p}$ we have by a telescope argument
\[ \sum_{i=1}^{\alpha -1} i \nu_T( \mathfrak{b}(\beta), \mathcal{D}_{\mathfrak{ff}^\prime}, \pi^i \mathbb{O} ) = -(\alpha -1) \nu_T( \mathfrak{b}(\beta), \mathcal{D}_{\mathfrak{ff}^\prime}, \pi^\alpha \mathcal{O}_\mathfrak{p} ) + \sum_{i=1}^{\alpha -1}  \nu_T( \mathfrak{b}(\beta), \mathcal{D}_{\mathfrak{ff}^\prime}, \pi^i \mathcal{O}_\mathfrak{p} ) . \]
Recalling the definition of $\mathcal{D}_{\mathfrak{ff}^\prime}$ we also note that for $i=0, \dots , \alpha -1$ we have
\[ \nu_T( \mathfrak{b}(\beta), \mathcal{D}_{\mathfrak{ff}^\prime}, \pi^i \mathcal{O}_\mathfrak{p} ) = \nu^E_T( \mathfrak{b}(\beta), \mathcal{D}_{\mathfrak{f}}, \pi^i \mathcal{O}_\mathfrak{p} ). \]
Thus we can calculate, using the fact that $\zeta_{R,T}(H_{\mathfrak{ff}^\prime}/F, \mathfrak{b}(\beta),0) = \nu_T( \mathfrak{b}(\beta), \mathcal{D}_{\mathfrak{ff}^\prime}, \mathcal{O}_\mathfrak{p} )$,
\begin{align*}
     & \left( \prod_{i=1}^{\alpha -1} \pi^{i \nu_T( \mathfrak{b}(\beta), \mathcal{D}_{\mathfrak{ff}^\prime}, \pi^i \mathbb{O} )} \right) \pi ^{\alpha \zeta_{R,T}(H_{\mathfrak{ff}^\prime}/F, \mathfrak{b}(\beta),0)} \\ =& \left( \prod_{i=1}^{\alpha -1} \pi^{ \nu_T( \mathfrak{b}(\beta), \mathcal{D}_{\mathfrak{ff}^\prime}, \pi^i \mathcal{O}_\mathfrak{p} )} \right) \pi ^{ -(\alpha -1) \nu_T( \mathfrak{b}(\beta), \mathcal{D}_{\mathfrak{ff}^\prime}, \pi^\alpha \mathcal{O}_\mathfrak{p} )}  \pi ^{\alpha \nu_T( \mathfrak{b}(\beta), \mathcal{D}_{\mathfrak{ff}^\prime}, \mathcal{O}_\mathfrak{p} )} \\
    =& \left( \prod_{i=1}^{\alpha -1} \pi^{ \nu^E_T ( \mathfrak{b}(\beta), \mathcal{D}_{\mathfrak{f}}, \pi^i \mathcal{O}_\mathfrak{p} )} \right) \pi ^{ -(\alpha -1) \nu^E_T ( \mathfrak{b}(\beta), \mathcal{D}_{\mathfrak{f}}, \pi^\alpha \mathcal{O}_\mathfrak{p} )}  \pi ^{\alpha \nu^E_T( \mathfrak{b}(\beta), \mathcal{D}_{\mathfrak{f}}, \mathcal{O}_\mathfrak{p} )} .
\end{align*}
By Lemma \ref{changeofvariable} we have for $i=1, \dots , \alpha $, 
\[ \nu^E_T ( \mathfrak{b}(\beta), \mathcal{D}_{\mathfrak{f}}, \pi^i \mathcal{O}_\mathfrak{p} ) = \nu^{E, \{ \pi^{-i} \} }_T ( \mathfrak{b}(\beta), \pi^{-i} \mathcal{D}_{\mathfrak{f}}, \mathcal{O}_\mathfrak{p} ) . \]
We can then write $\pi^{-i} \mathcal{D}_\mathfrak{f} = \bigcup_{\delta \in E_+(\mathfrak{f})} \delta \mathcal{D}_\mathfrak{f} \cap \pi^{-i} \mathcal{D}_\mathfrak{f} $. Then 
\begin{align*}
    \nu^{E, \{ \pi^{-i} \} }_T(\mathfrak{b}, \pi^{-i} \mathcal{D}_\mathfrak{f} , \mathcal{O}_\mathfrak{p}) &= \sum_{\delta \in E_+(\mathfrak{f})} \nu^{E, \{ \pi^{-i} \} }_T(\mathfrak{b}, \delta \mathcal{D}_\mathfrak{f} \cap \pi^{-i} \mathcal{D}_\mathfrak{f} , \mathcal{O}_\mathfrak{p}) \\
    &= \sum_{\delta \in E_+(\mathfrak{f})} \nu^{E, \{ \pi^{-i} \} }_T(\mathfrak{b},  \mathcal{D}_\mathfrak{f} \cap \delta \pi^{-i} \mathcal{D}_\mathfrak{f} , \mathcal{O}_\mathfrak{p}) \\
    &=\nu^{E, \{ \pi^{-i} \} }_T(\mathfrak{b},  \mathcal{D}_\mathfrak{f} , \mathcal{O}_\mathfrak{p}) .
\end{align*}
Remarking that $\pi^{-\alpha} = (\pi')^{-1} \equiv 1 \pmod{\mathfrak{f}\mathfrak{f}'}$, we deduce that
\[ \left( \prod_{i=1}^{\alpha -1} \pi^{i \nu_T( \mathfrak{b}(\beta), \mathcal{D}_{\mathfrak{ff}^\prime}, \pi^i \mathbb{O} )} \right) \pi ^{\alpha \zeta_{R,T}(H_{\mathfrak{ff}^\prime}/F, \mathfrak{b}(\beta),0)} = \prod_{i=0}^{\alpha -1} \pi^{ \nu^{E, \{ \pi^{-i} \} }_T ( \mathfrak{b}(\beta),  \mathcal{D}_{\mathfrak{f}},  \mathcal{O}_\mathfrak{p} )} = \pi^{ \nu^{ \langle A, E \rangle }_T ( \mathfrak{b}(\beta),  \mathcal{D}_{\mathfrak{f}},  \mathcal{O}_\mathfrak{p} )} . \]
Noting that $B=AE$ completes the proof.
\end{proof}

We now consider the error term in the definition of $u_T (\mathfrak{b}(\beta) , \beta^{-1} \mathcal{D}_{\mathfrak{ff}^\prime} )$ and the products of elements of $E_+(\mathfrak{f})$ that arise in Lemma \ref{normu1multint}. Considering Lemma \ref{normu1multint} and Lemma \ref{normforu1lemma2}, in order to prove Proposition \ref{normu1toshow} it is enough to prove the following.

\begin{proposition} \label{normu1lastprop}
Let
\begin{multline*}
    \Err(\beta)  = \epsilon (\mathfrak{b}(\beta ), \beta^{-1}\mathcal{D}_{\mathfrak{ff}^\prime}, \pi^\prime)  \left( \prod_{i=0}^{\alpha -1} \prod_{\epsilon \in E_+(\mathfrak{ff}^\prime)} \epsilon^{\nu^{\{\pi^{-i} \}}_T ( \mathfrak{b}(\beta), \epsilon \beta^{-1} \mathcal{D}_{\mathfrak{ff}^\prime} \cap \pi^{-i} \beta^{-1} \mathcal{D}_{\mathfrak{ff}^\prime} ,  \mathbb{O} )} \right) \\ \times \left( \prod_{\gamma \in E_+(\mathfrak{f})/E_+(\mathfrak{ff}^\prime )} \gamma^{\nu^{A}_T ( \mathfrak{b}(\beta), \gamma \beta^{-1} \mathcal{D}_{\mathfrak{f}} ,  \mathbb{O} )} \right) .
\end{multline*}
Then
\[ \Err(\beta) = \prod_{\epsilon \in E_+(\mathfrak{f})} \epsilon^{\nu^{ B }_T( \mathfrak{b}(\beta), \epsilon \beta^{-1} \mathcal{D}_{\mathfrak{f}} \cap \pi^{-1} \beta^{-1} \mathcal{D}_{\mathfrak{f}} ,  \mathcal{O}_\mathfrak{p} )} . \]
\end{proposition}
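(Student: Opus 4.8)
The plan is to prove the identity by a direct calculation. The tools required are: additivity of the measure $\nu$ and of its class-restricted variants $\nu_X$ over disjoint unions of Shintani sets; the change of variables Lemma~\ref{changeofvariable}, applied to translate Shintani sets and $\mathfrak{p}$-adic regions by units of $E_+(\mathfrak{f})$ and by powers of $\pi$; the decompositions $\mathcal{O}_\mathfrak{p}=\bigsqcup_{j\ge0}\pi^j\mathbb{O}$ and $\mathcal{D}_{\mathfrak{f}\mathfrak{f}'}=\bigsqcup_\gamma\gamma\mathcal{D}_\mathfrak{f}$ over a set of representatives $\gamma$ for $E_+(\mathfrak{f})/E_+(\mathfrak{f}\mathfrak{f}')$; the finiteness statement \cite[Lemma 3.14]{MR2420508}, which makes every product below finite and every reindexing legitimate; and the fact that $B=AE$ is a complete set of representatives for (\ref{e:setofcong}). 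Throughout write $\mathfrak{c}=\mathfrak{b}(\beta)$, and note that $\beta^{-1}\mathcal{D}$ is again a fundamental domain for any group for which $\mathcal{D}$ is one, as is its translate by any unit of that group or by any power of $\pi$.

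The first step absorbs the discrepancy between $\pi$ and $\pi'=\pi^\alpha$ in the first factor $\epsilon(\mathfrak{c},\beta^{-1}\mathcal{D}_{\mathfrak{f}\mathfrak{f}'},\pi')$ of $\Err(\beta)$. The key observation is a cocycle relation: for fundamental domains $\mathcal{K}_0,\mathcal{K}_1,\mathcal{K}_2$ of a group $U$ and a region $U_0$ stable under $U$,
\[ \prod_{\epsilon\in U}\epsilon^{\nu(\mathfrak{c},\,\epsilon\mathcal{K}_0\cap\mathcal{K}_2,\,U_0)}\;=\;\Bigl(\prod_{\epsilon\in U}\epsilon^{\nu(\mathfrak{c},\,\epsilon\mathcal{K}_0\cap\mathcal{K}_1,\,U_0)}\Bigr)\Bigl(\prod_{\epsilon\in U}\epsilon^{\nu(\mathfrak{c},\,\epsilon\mathcal{K}_1\cap\mathcal{K}_2,\,U_0)}\Bigr), \]
proved by writing $\epsilon\mathcal{K}_0\cap\mathcal{K}_2=\bigsqcup_{\eta\in U}\epsilon\mathcal{K}_0\cap\eta\mathcal{K}_1\cap\mathcal{K}_2$, substituting $\epsilon=\eta\zeta$, and using Lemma~\ref{changeofvariable} to translate by $\eta^{-1}\in U$ (which preserves $U_0=\mathcal{O}_\mathfrak{p}$, and preserves the class restriction since $U\subseteq E_+(\mathfrak{f})_\mathfrak{p}$). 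Applied with $\mathcal{K}_i=\pi^{-i}\beta^{-1}\mathcal{D}_{\mathfrak{f}\mathfrak{f}'}$ for $i=0,\dots,\alpha$, and noting $(\pi')^{-1}=\pi^{-\alpha}$, this telescopes $\epsilon(\mathfrak{c},\beta^{-1}\mathcal{D}_{\mathfrak{f}\mathfrak{f}'},\pi')$ into a product over $i=1,\dots,\alpha$ of the error terms between $\pi^{-(i-1)}\beta^{-1}\mathcal{D}_{\mathfrak{f}\mathfrak{f}'}$ and $\pi^{-i}\beta^{-1}\mathcal{D}_{\mathfrak{f}\mathfrak{f}'}$; pulling $\pi^{-(i-1)}$ out of the Shintani sets and applying Lemma~\ref{changeofvariable} (translating by $\pi^{i-1}$, which replaces $\mathcal{O}_\mathfrak{p}$ by $\pi^{i-1}\mathcal{O}_\mathfrak{p}$ and shifts the residue class), then splitting each region $\pi^{i-1}\mathcal{O}_\mathfrak{p}=\bigsqcup_{j\ge i-1}\pi^j\mathbb{O}$ and merging with the second factor $\prod_{i=0}^{\alpha-1}\prod_{\epsilon\in E_+(\mathfrak{f}\mathfrak{f}')}\epsilon^{\nu_{\{\pi^{-i}\}}(\mathfrak{c},\epsilon\beta^{-1}\mathcal{D}_{\mathfrak{f}\mathfrak{f}'}\cap\pi^{-i}\beta^{-1}\mathcal{D}_{\mathfrak{f}\mathfrak{f}'},\mathbb{O})}$ of $\Err(\beta)$, the first two factors collapse to the single product $\prod_{\epsilon\in E_+(\mathfrak{f}\mathfrak{f}')}\epsilon^{\nu_A(\mathfrak{c},\,\epsilon\beta^{-1}\mathcal{D}_{\mathfrak{f}\mathfrak{f}'}\cap\pi^{-1}\beta^{-1}\mathcal{D}_{\mathfrak{f}\mathfrak{f}'},\,\mathcal{O}_\mathfrak{p})}$, i.e.\ the level-$\mathfrak{f}\mathfrak{f}'$ error term formed with $\pi$ in place of $\pi'$ and with the zeta function restricted to the classes $A$ (using $\{\pi^{-\alpha}\}=\{1\}$).

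The second step descends from $E_+(\mathfrak{f}\mathfrak{f}')$ to $E_+(\mathfrak{f})$. Using $\beta^{-1}\mathcal{D}_{\mathfrak{f}\mathfrak{f}'}=\bigsqcup_\gamma\gamma\beta^{-1}\mathcal{D}_\mathfrak{f}$, one expands the Shintani set in the product obtained in Step~1 as a disjoint union over pairs of representatives, translates each piece by a suitable unit of $E_+(\mathfrak{f})$ so that its $\pi^{-1}$-side becomes $\pi^{-1}\beta^{-1}\mathcal{D}_\mathfrak{f}$, and thereby rewrites the whole expression as a product over $E_+(\mathfrak{f})$. Combining with the third factor $\prod_\gamma\gamma^{\nu_A(\mathfrak{c},\gamma\beta^{-1}\mathcal{D}_\mathfrak{f},\mathbb{O})}$ of $\Err(\beta)$ and reindexing, the exponent of a fixed $\epsilon\in E_+(\mathfrak{f})$ becomes a sum of class-restricted measures over the classes $a[\gamma]$ with $a\in A$ and $\gamma$ a representative; since $\{a[\gamma]\}=AE=B$, additivity of $\nu$ in the class variable collapses this into $\nu_B(\mathfrak{c},\epsilon\beta^{-1}\mathcal{D}_\mathfrak{f}\cap\pi^{-1}\beta^{-1}\mathcal{D}_\mathfrak{f},\mathcal{O}_\mathfrak{p})$, which is exactly the exponent of $\epsilon$ on the right-hand side of the proposition.

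I expect the main obstacle to be precisely this bookkeeping. One must track, through each use of Lemma~\ref{changeofvariable}, which residue class modulo $\mathfrak{f}\mathfrak{f}'$ is attached to each factor — translation by a power of $\pi$ shifts the class, whereas translation by a unit of $E_+(\mathfrak{f})$ does not, since $E_+(\mathfrak{f})\subseteq E_+(\mathfrak{f})_\mathfrak{p}$ — and one must verify that the contributions of the three factors of $\Err(\beta)$ to each fixed $\epsilon\in E_+(\mathfrak{f})$ reassemble into the $B$-restricted zeta value and nothing more. The identity $B=AE$, together with the fact that $B$ is a transversal for $\Gal(H'/H)$, is what forces this to close up.
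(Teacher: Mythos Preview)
Your overall strategy is correct and takes a genuinely different route from the paper. The paper first descends from $E_+(\mathfrak{ff}')$ to $E_+(\mathfrak{f})$ (Lemma~\ref{normu1lemma}), obtaining an expression that still involves $\pi^{-\alpha}$ and the intermediate class sets $\langle E,\{\pi^{-i}\}\rangle$, and then performs a somewhat intricate induction (Lemmas~\ref{normu1lemma3} and~\ref{normu1induction}) to collapse these into the single $\pi^{-1}$-error with restriction $B$. You instead collapse the $\pi$-part first, at level $\mathfrak{ff}'$, and only then descend. Your Step~1 claim is correct: after telescoping \emph{both} the first and the second factor via the cocycle relation and translating each piece back to the fixed Shintani set $\epsilon\mathcal{D}\cap\pi^{-1}\mathcal{D}$, one finds that for each class $\{\pi^{-l}\}$ ($1\le l\le\alpha-1$) the first factor contributes over the region $\pi^{\alpha-l}\mathcal{O}_\mathfrak{p}$ and the second factor over $\mathcal{O}_\mathfrak{p}\setminus\pi^{\alpha-l}\mathcal{O}_\mathfrak{p}$, so they reassemble exactly into $\nu_{\{\pi^{-l}\}}(\ldots,\mathcal{O}_\mathfrak{p})$. (Your written description of this step---``splitting $\pi^{i-1}\mathcal{O}_\mathfrak{p}=\bigsqcup_{j\ge i-1}\pi^j\mathbb{O}$ and merging''---is too sketchy: the decomposition is infinite and the second factor does not have pieces over $\pi^j\mathbb{O}$ until you telescope it too; you should make the double telescoping explicit.) Step~2 then finishes in one stroke, using also $\pi^{-1}A=A$ to cancel the residual $\gamma$-products against the third factor of $\Err(\beta)$. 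This avoids the paper's induction entirely and is arguably cleaner.

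One correction, however: the assertion in your last paragraph that ``translation by a unit of $E_+(\mathfrak{f})$ does not [shift the class], since $E_+(\mathfrak{f})\subseteq E_+(\mathfrak{f})_\mathfrak{p}$'' is wrong. The subscripts in $\nu_X$ record congruence classes modulo $\mathfrak{ff}'$, not classes in the quotient~(\ref{e:setofcong}); translation by $\gamma\in E_+(\mathfrak{f})\setminus E_+(\mathfrak{ff}')$ \emph{does} shift the class, by the image of $\gamma$ in $E$. This does not actually break your argument---indeed, it is precisely these $E$-shifts that turn $\nu_A$ into $\nu_{AE}=\nu_B$ in Step~2, exactly as your earlier phrase ``classes $a[\gamma]$ with $a\in A$ and $\gamma$ a representative'' correctly anticipates---but the justification in the final paragraph is mistaken and should be replaced by the correct bookkeeping.
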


For clarity, we shall perform the calculations required for this proposition in a few lemmas. 

\begin{lemma} \label{normu1lemma}
We have
\[ \Err(\beta) = \left( \prod_{\epsilon \in E_+(\mathfrak{f})} \epsilon^{\nu^E_T( \mathfrak{b}(\beta), \epsilon \beta^{-1} \mathcal{D}_{\mathfrak{f}} \cap \pi^{-\alpha} \beta^{-1} \mathcal{D}_{\mathfrak{f}} ,  \mathcal{O}_\mathfrak{p} )} \right) \left( \prod_{i=1}^{\alpha-1}   \prod_{\epsilon \in E_+(\mathfrak{f})} \epsilon^{\nu^{ \langle E, \{\pi^{-i} \} \rangle }_T ( \mathfrak{b}(\beta), \epsilon \beta^{-1} \mathcal{D}_{\mathfrak{f}} \cap \pi^{-i} \beta^{-1} \mathcal{D}_{\mathfrak{f}} ,  \mathbb{O} )} \right) . \]
\end{lemma}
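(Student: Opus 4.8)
The strategy is to expand the definition of $\Err(\beta)$ and regroup the various products of units according to which subgroup they live in, using the tower $E_+(\ff) \supseteq E_+(\ff\ff') $ with index set $E = E_+(\ff)/E_+(\ff\ff')$. First I would unwind the error term $\epsilon(\fp(\beta), \beta^{-1}\mathcal{D}_{\ff\ff'}, \pi')$ using its definition \eqref{errorterm}: it equals $\prod_{\epsilon \in E_+(\ff\ff')} \epsilon^{\nu(\fp(\beta),\, \epsilon \beta^{-1}\mathcal{D}_{\ff\ff'} \cap (\pi')^{-1}\beta^{-1}\mathcal{D}_{\ff\ff'},\, \mathcal{O}_\fp)}$, where $\pi' = \pi^\alpha$ so $(\pi')^{-1} = \pi^{-\alpha}$. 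The key combinatorial point is that $E_+(\ff\ff')\backslash E_+(\ff)$ reindexes by $E$, and $\mathcal{D}_{\ff\ff'} = \bigcup_{\gamma\in E}\gamma\mathcal{D}_\ff$, so each product over $\epsilon\in E_+(\ff\ff')$ of zeta-values against $\mathcal{D}_{\ff\ff'}$ can be rewritten as a product over $\epsilon \in E_+(\ff)$ of zeta-values against $\mathcal{D}_\ff$ but with the set $A$ of congruence classes enlarged to include the coset representatives $E$ (i.e.\ replacing $\nu$ by $\nu_E$). This is the same manipulation already used in the proof of Lemma \ref{normforu1lemma2}, where $\pi^i\mathcal{D}_\ff = \bigcup_{\delta\in E_+(\ff)}\delta\mathcal{D}_\ff\cap\pi^{-i}\mathcal{D}_\ff$ and the translation-invariance of $\nu_{E,\{\pi^{-i}\}}$ (from Lemma \ref{changeofvariable}) collapses the $\delta$-sum.

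Concretely, the plan is: (i) rewrite $\epsilon(\fp(\beta),\beta^{-1}\mathcal{D}_{\ff\ff'},\pi')$, which involves $\pi^{-\alpha}$ and $E_+(\ff\ff')$, as a product over $\epsilon \in E_+(\ff)$ of $\nu_E$-values of $\epsilon\beta^{-1}\mathcal{D}_\ff \cap \pi^{-\alpha}\beta^{-1}\mathcal{D}_\ff$ against $\mathcal{O}_\fp$ — this produces the first factor on the right-hand side; (ii) for each $i = 0,\dots,\alpha-1$, combine the inner product $\prod_{\epsilon\in E_+(\ff\ff')}\epsilon^{\nu_{\{\pi^{-i}\}}(\dots)}$ over $\mathbb{O}$ with the relevant portion of $\prod_{\gamma\in E}\gamma^{\nu_A(\dots)}$, using $\beta^{-1}\mathcal{D}_{\ff\ff'} = \bigcup_{\gamma\in E}\gamma\beta^{-1}\mathcal{D}_\ff$ to promote $E_+(\ff\ff')$ to $E_+(\ff)$ and enlarge $\{\pi^{-i}\}$ to $\langle E, \{\pi^{-i}\}\rangle$ — producing the terms with $i = 1,\dots,\alpha-1$ in the second factor; (iii) check that the $i = 0$ contribution (where $\{\pi^{0}\} = \{1\}$, so the congruence set is just $E$, and $\epsilon\beta^{-1}\mathcal{D}_\ff \cap \mathcal{D}_\ff$-type intersections against $\mathbb{O}$) is exactly absorbed into / cancels against the residual factors so that it does not appear separately. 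The identity $\nu_{\{\pi^{-i}\}}(\fp, \mathcal{D}, U) = \nu_{\{1\}}(\fp, \pi\mathcal{D}, \pi U)$ quoted before the statement of Proposition \ref{normu1toshow} will be used repeatedly to move between "translate the domain" and "translate the congruence class."

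The main obstacle I anticipate is the bookkeeping in step (ii)–(iii): keeping track of which compact open ($\mathcal{O}_\fp$ versus $\mathbb{O} = \mathcal{O}_\fp - \pi\mathcal{O}_\fp$) each $\nu$-term is evaluated on, and ensuring that the union decompositions $\mathbb{O}^\prime = \bigsqcup_{i=0}^{\alpha-1}\pi^i\mathbb{O}$, $\mathcal{D}_{\ff\ff'} = \bigcup_{\gamma\in E}\gamma\mathcal{D}_\ff$, and $\pi^{-i}\beta^{-1}\mathcal{D}_{\ff\ff'} = \bigcup_{\epsilon\in E_+(\ff\ff')}(\epsilon\beta^{-1}\mathcal{D}_{\ff\ff'}\cap\pi^{-i}\beta^{-1}\mathcal{D}_{\ff\ff'})$ are all compatible, i.e.\ the finiteness of exponents (from \cite[Lemma 3.14]{MR2420508}) lets us rearrange the iterated products freely. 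Everything else is the additivity of $\zeta^A_R$ in the congruence-class variable $A$ and in the region, together with Lemma \ref{changeofvariable}; once the indexing is set up correctly the computation is forced. I would organize it so that this lemma, together with the two lemmas that follow it (handling the remaining regrouping), combine to give Proposition \ref{normu1lastprop} and hence, with Lemmas \ref{normu1multint} and \ref{normforu1lemma2}, Proposition \ref{normu1toshow} and Theorem \ref{normforu1}.
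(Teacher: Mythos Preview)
Your overall strategy matches the paper's: decompose $\mathcal{D}_{\ff\ff'}=\bigcup_{\gamma\in E}\gamma\mathcal{D}_\ff$ to promote each product over $E_+(\ff\ff')$ to one over $E_+(\ff)$, at the cost of enlarging the congruence class from $\{\pi^{-i}\}$ to $\langle E,\{\pi^{-i}\}\rangle$. However, two points in your plan are misdescribed and would cause trouble if you tried to execute it as written.

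First, step~(i) is too optimistic. When you rewrite $\epsilon(\mathfrak{b}(\beta),\beta^{-1}\mathcal{D}_{\ff\ff'},\pi^\alpha)$ by substituting $\mathcal{D}_{\ff\ff'}=\bigcup_\gamma\gamma\mathcal{D}_\ff$ in \emph{both} occurrences, you do not get the first factor of the right-hand side directly: you get it together with extra products of the form $\prod_{\gamma\in E}\gamma^{\pm\nu(\ldots)}$. The same happens in step~(ii): promoting each $\prod_{\epsilon\in E_+(\ff\ff')}$ to $\prod_{\epsilon\in E_+(\ff)}$ spits out further $\gamma$-factors with exponents $-\nu_{\{\pi^{-i}\}}(\mathfrak{b}(\beta),\gamma\beta^{-1}\mathcal{D}_\ff,\mathbb{O})+\nu_{\{\pi^{-i}\}}(\mathfrak{b}(\beta),\gamma\pi^{-i}\beta^{-1}\mathcal{D}_\ff,\mathbb{O})$. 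These extra $\gamma$-factors are not ``absorbed'' piece by piece as you suggest; rather, they accumulate and must all cancel together with the third factor $\prod_{\gamma\in E}\gamma^{\nu_A(\ldots,\mathbb{O})}$ of $\Err(\beta)$.

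Second, step~(iii) misidentifies where the cancellation happens. The $i=0$ term in the middle product of $\Err(\beta)$ is trivially $1$: for $\epsilon\in E_+(\ff\ff')$ with $\epsilon\neq 1$ the intersection $\epsilon\beta^{-1}\mathcal{D}_{\ff\ff'}\cap\beta^{-1}\mathcal{D}_{\ff\ff'}$ is empty (fundamental domain), and for $\epsilon=1$ the base is $1$. There is nothing to absorb there. The genuine work is to show that the accumulated $\gamma$-factors described above telescope to $1$. In the paper this is done via two identities: summing the negative exponents over $i=1,\ldots,\alpha-1$ against the $\nu_A$-factor leaves $\prod_\gamma\gamma^{\nu(\ldots,\mathbb{O})}$, and summing the positive exponents (after shifting by $\pi^i$ via Lemma~\ref{changeofvariable}) leaves $\prod_\gamma\gamma^{\nu(\ldots,\pi\mathcal{O}_\fp)-\nu(\ldots\pi^{-\alpha}\ldots,\mathcal{O}_\fp)}$. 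These combine with the $\gamma$-factors from step~(i) and the decomposition $\mathcal{O}_\fp=\mathbb{O}\sqcup\pi\mathcal{O}_\fp$ to give total exponent zero on every $\gamma$. Once you reorganize your plan around tracking and cancelling these $\gamma$-correction terms, the rest goes through as you describe.
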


\begin{proof}
Considering the definition of $\mathcal{D}_{\mathfrak{ff}^\prime}$ we calculate
\begin{align}
    & \epsilon (\mathfrak{b}(\beta ), \beta^{-1}\mathcal{D}_{\mathfrak{ff}^\prime}, \pi^\prime) \\ =& \prod_{\epsilon \in E_+(\mathfrak{ff}^\prime)} \epsilon^{\nu_T( \mathfrak{b}(\beta), \epsilon \beta^{-1} \mathcal{D}_{\mathfrak{ff}^\prime} \cap \pi^{-\alpha} \beta^{-1} \mathcal{D}_{\mathfrak{ff}^\prime} ,  \mathcal{O}_\mathfrak{p} )} \\
    =& \prod_{\epsilon \in E_+(\mathfrak{ff}^\prime)} \prod_{\gamma \in E_+(\mathfrak{f})/E_+(\mathfrak{ff}^\prime)} \epsilon^{\nu_T( \mathfrak{b}(\beta), \epsilon \gamma \beta^{-1} \mathcal{D}_{\mathfrak{f}} \cap \pi^{-\alpha} \beta^{-1} \mathcal{D}_{\mathfrak{ff}^\prime} ,  \mathcal{O}_\mathfrak{p} )} \\
    =& \left( \prod_{\gamma \in E_+(\mathfrak{f})/E_+(\mathfrak{ff}^\prime)} \gamma^{-\nu_T( \mathfrak{b}(\beta), \gamma \beta^{-1} \mathcal{D}_{\mathfrak{f}}  ,  \mathcal{O}_\mathfrak{p} )} \right)  \left( \prod_{\epsilon \in E_+(\mathfrak{f})} \epsilon^{\nu_T( \mathfrak{b}(\beta), \epsilon \beta^{-1} \mathcal{D}_{\mathfrak{f}} \cap \pi^{-\alpha} \beta^{-1} \mathcal{D}_{\mathfrak{ff}^\prime} ,  \mathcal{O}_\mathfrak{p} )} \right) .
    \label{a1}
\end{align}
Similarly we have
\begin{multline}
    \prod_{\epsilon \in E_+(\mathfrak{f})} \epsilon^{\nu_T( \mathfrak{b}(\beta), \epsilon \beta^{-1} \mathcal{D}_{\mathfrak{f}} \cap \pi^{-\alpha} \beta^{-1} \mathcal{D}_{\mathfrak{ff}^\prime} ,  \mathcal{O}_\mathfrak{p} )} \\ = \left( \prod_{\gamma \in E_+(\mathfrak{f})/E_+(\mathfrak{ff}^\prime)} \gamma^{\nu_T( \mathfrak{b}(\beta), \gamma \pi^{-\alpha} \beta^{-1} \mathcal{D}_{\mathfrak{f}}  ,  \mathcal{O}_\mathfrak{p} )} \right)  \left( \prod_{\epsilon \in E_+(\mathfrak{f})} \epsilon^{\nu^E_T( \mathfrak{b}(\beta), \epsilon \beta^{-1} \mathcal{D}_{\mathfrak{f}} \cap \pi^{-\alpha} \beta^{-1} \mathcal{D}_{\mathfrak{f}} ,  \mathcal{O}_\mathfrak{p} )} \right) .
    \label{a2}
\end{multline}
We also calculate for $i=1, \dots , \alpha -1$
\begin{multline}
    \prod_{\epsilon \in E_+(\mathfrak{ff}^\prime)} \epsilon^{\nu^{\{\pi^{-i} \}}_T ( \mathfrak{b}(\beta), \epsilon \beta^{-1} \mathcal{D}_{\mathfrak{ff}^\prime} \cap \pi^{-i} \beta^{-1} \mathcal{D}_{\mathfrak{ff}^\prime} ,  \mathbb{O} )} \\ = \left( \prod_{\gamma \in E_+(\mathfrak{f})/E_+(\mathfrak{ff}^\prime)} \gamma^{-\nu^{ \{ \pi^{-i} \} }_T( \mathfrak{b}(\beta), \gamma \beta^{-1} \mathcal{D}_{\mathfrak{f}}  ,  \mathbb{O} )+\nu^{ \{ \pi^{-i} \} }_T ( \mathfrak{b}(\beta), \gamma \pi^{-i} \beta^{-1} \mathcal{D}_{\mathfrak{f}}  ,  \mathbb{O} )} \right) \\ \prod_{\epsilon \in E_+(\mathfrak{f})} \epsilon^{\nu^{\langle E , \{\pi^{-i} \} \rangle }_T ( \mathfrak{b}(\beta), \epsilon \beta^{-1} \mathcal{D}_{\mathfrak{f}} \cap \pi^{-i} \beta^{-1} \mathcal{D}_{\mathfrak{f}} ,  \mathbb{O} )} .
    \label{a3}
\end{multline}
We now note the following equalities, both of which hold via telescoping sum arguments.
\begin{enumerate}
    \item  \begin{multline*}
    \prod_{i=1}^{\alpha-1} \left( \prod_{\gamma \in E_+(\mathfrak{f})/E_+(\mathfrak{ff}^\prime)} \gamma^{-\nu^{ \{ \pi^{-i} \} }_T( \mathfrak{b}(\beta), \gamma \beta^{-1} \mathcal{D}_{\mathfrak{f}}  ,  \mathbb{O} )} \right) \left( \prod_{\gamma \in E_+(\mathfrak{f})/E_+(\mathfrak{ff}^\prime )} \gamma^{\nu^{A}_T ( \mathfrak{b}(\beta), \gamma \beta^{-1} \mathcal{D}_{\mathfrak{f}} ,  \mathbb{O} )} \right) \\
    =  \prod_{\gamma \in E_+(\mathfrak{f})/E_+(\mathfrak{ff}^\prime )} \gamma^{\nu_T ( \mathfrak{b}(\beta), \gamma \beta^{-1} \mathcal{D}_{\mathfrak{f}} ,  \mathbb{O} )} 
\end{multline*}
    \item
    \begin{align*}
         &\prod_{i=1}^{\alpha-1} \left( \prod_{\gamma \in E_+(\mathfrak{f})/E_+(\mathfrak{ff}^\prime)} \gamma^{\nu^{ \{ \pi^{-i} \} }_T ( \mathfrak{b}(\beta), \gamma \pi^{-i} \beta^{-1} \mathcal{D}_{\mathfrak{f}}  ,  \mathbb{O} )}    \right) \\ = & \prod_{\gamma \in E_+(\mathfrak{f})/E_+(\mathfrak{ff}^\prime)} \gamma^{\nu_T ( \mathfrak{b}(\beta), \gamma  \beta^{-1} \mathcal{D}_{\mathfrak{f}}  ,  \pi \mathcal{O}_\mathfrak{p} ) - \nu^{ \{ \pi^{-\alpha} \} }_T ( \mathfrak{b}(\beta), \gamma \pi^{-\alpha} \beta^{-1} \mathcal{D}_{\mathfrak{f}}  ,   \mathcal{O}_\mathfrak{p} ) } \\
         =& \prod_{\gamma \in E_+(\mathfrak{f})/E_+(\mathfrak{ff}^\prime)} \gamma^{\nu_T ( \mathfrak{b}(\beta), \gamma  \beta^{-1} \mathcal{D}_{\mathfrak{f}}  ,  \pi \mathcal{O}_\mathfrak{p} ) - \nu_T ( \mathfrak{b}(\beta), \gamma \pi^{-\alpha} \beta^{-1} \mathcal{D}_{\mathfrak{f}}  ,   \mathcal{O}_\mathfrak{p} ) }.
    \end{align*}
\end{enumerate}
Combining these two equalites with the calculations in (\ref{a1}), (\ref{a2}) and (\ref{a3}) gives the result.
\end{proof}

If $\alpha =1$ then Lemma \ref{normu1lemma} is equivalent to Proposition \ref{normu1lastprop} and thus we are finished in the case $\alpha =1$. From this point on we assume that $\alpha > 1$.

\begin{lemma} \label{normu1lemma3}
If $\alpha >1$ then 
\begin{multline*}
    \Err(\beta) \\ = \left( \prod_{\epsilon \in E_+(\mathfrak{f})} \epsilon^{\nu^{ B }_T( \mathfrak{b}(\beta), \epsilon \beta^{-1} \mathcal{D}_{\mathfrak{f}} \cap \pi^{-1} \beta^{-1} \mathcal{D}_{\mathfrak{f}} ,  \mathcal{O}_\mathfrak{p} )} \right)  \left( \prod_{\delta \in E_+(\mathfrak{f})} \delta^{\nu^E_T( \mathfrak{b}(\beta), \delta \beta^{-1} \pi^{-1} \mathcal{D}_{\mathfrak{f}} \cap \pi^{-\alpha} \beta^{-1} \mathcal{D}_{\mathfrak{f}} ,  \mathcal{O}_\mathfrak{p} )} \right) \\
    \prod_{i=1}^{\alpha-1}   \prod_{\epsilon \in E_+(\mathfrak{f})} \epsilon^{- \nu^{ \langle E, \{\pi^{-i} \} \rangle }_T ( \mathfrak{b}(\beta), \epsilon \beta^{-1} \mathcal{D}_{\mathfrak{f}} \cap \pi^{-1} \beta^{-1} \mathcal{D}_{\mathfrak{f}} , \pi \mathcal{O}_\mathfrak{p}  )} \\
    \prod_{i=2}^{\alpha-1} \prod_{\delta \in E_+(\mathfrak{f})} \delta^{\nu^{ \langle E, \{\pi^{-i} \} \rangle }_T ( \mathfrak{b}(\beta), \epsilon \beta^{-1} \pi^{-1} \mathcal{D}_{\mathfrak{f}} \cap \pi^{-i} \beta^{-1} \mathcal{D}_{\mathfrak{f}} , \mathbb{O}  )} .
\end{multline*}
\end{lemma}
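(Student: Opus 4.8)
The plan is to obtain Lemma~\ref{normu1lemma3} from Lemma~\ref{normu1lemma} by a further explicit computation of the same kind as in the proofs of Lemmas~\ref{normu1multint}, \ref{normforu1lemma2} and \ref{normu1lemma}, the one new ingredient being the refinement of every Shintani set occurring there by the partition $\mathbb{R}_+^n = \bigsqcup_{\delta \in E_+(\mathfrak{f})} \delta\beta^{-1}\pi^{-1}\mathcal{D}_\mathfrak{f}$ (legitimate since $\pi^{-1}\beta^{-1}\mathcal{D}_\mathfrak{f}$, like $\mathcal{D}_\mathfrak{f}$, is a fundamental domain for $E_+(\mathfrak{f})$, everything acting on $\mathbb{R}_+^n$ by multiplication). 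The tools involved are: (i) for a subgroup $\Gamma \le E_+(\mathfrak{f})$ with fundamental domain $K$, the partition $\mathbb{R}_+^n = \bigsqcup_{\gamma\in\Gamma}\gamma K$, which refines an intersection of translated Shintani sets and splits the associated $\nu$-value additively, together with the reindexing $\epsilon\mapsto\epsilon\gamma$ of products over units used in the proof of Lemma~\ref{normu1lemma}; (ii) the split $\mathcal{O}_\mathfrak{p}=\mathbb{O}\sqcup\pi\mathcal{O}_\mathfrak{p}$, giving $\nu(\cdot,\cdot,\mathbb{O}) = \nu(\cdot,\cdot,\mathcal{O}_\mathfrak{p}) - \nu(\cdot,\cdot,\pi\mathcal{O}_\mathfrak{p})$; (iii) the change-of-variable Lemma~\ref{changeofvariable} in the normalized form $\nu_{\{\pi^{-1}\}}(\mathfrak{b},\mathcal{D},U)=\nu_{\{1\}}(\mathfrak{b},\pi\mathcal{D},\pi U)$; and (iv) the additivity $\nu_B = \sum_{i=0}^{\alpha-1}\nu_{\langle E,\{\pi^{-i}\}\rangle}$ already exploited in the proof of Lemma~\ref{normforu1lemma2}.

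Concretely I would refine, via move~(i), every term on the right-hand side of Lemma~\ref{normu1lemma}. Refining the factor $\prod_{\epsilon}\epsilon^{\nu_E(\mathfrak{b}(\beta),\,\epsilon\beta^{-1}\mathcal{D}_\mathfrak{f}\cap\pi^{-\alpha}\beta^{-1}\mathcal{D}_\mathfrak{f},\,\mathcal{O}_\mathfrak{p})}$ and reindexing the resulting double product as in the proof of Lemma~\ref{normu1lemma} should peel off the factor $\prod_{\delta}\delta^{\nu_E(\mathfrak{b}(\beta),\,\delta\beta^{-1}\pi^{-1}\mathcal{D}_\mathfrak{f}\cap\pi^{-\alpha}\beta^{-1}\mathcal{D}_\mathfrak{f},\,\mathcal{O}_\mathfrak{p})}$ appearing in the claimed identity and leave an $\epsilon$-indexed remainder supported on $\epsilon\beta^{-1}\mathcal{D}_\mathfrak{f}\cap\pi^{-1}\beta^{-1}\mathcal{D}_\mathfrak{f}$, still at subscript $E$ and on $\mathcal{O}_\mathfrak{p}$. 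Refining each $\mathbb{O}$-term $\prod_{\epsilon}\epsilon^{\nu_{\langle E,\{\pi^{-i}\}\rangle}(\mathfrak{b}(\beta),\,\epsilon\beta^{-1}\mathcal{D}_\mathfrak{f}\cap\pi^{-i}\beta^{-1}\mathcal{D}_\mathfrak{f},\,\mathbb{O})}$ should split it into a $\delta$-indexed part supported on $\delta\beta^{-1}\pi^{-1}\mathcal{D}_\mathfrak{f}\cap\pi^{-i}\beta^{-1}\mathcal{D}_\mathfrak{f}$, which for $i\ge 2$ is precisely the fourth factor of the claimed identity while for $i=1$ it is empty (as $\pi^{-1}\mathcal{D}_\mathfrak{f}\cap\pi^{-1}\mathcal{D}_\mathfrak{f}=\pi^{-1}\mathcal{D}_\mathfrak{f}$), and an $\epsilon$-indexed part supported on $\epsilon\beta^{-1}\mathcal{D}_\mathfrak{f}\cap\pi^{-1}\beta^{-1}\mathcal{D}_\mathfrak{f}$ at subscript $\langle E,\{\pi^{-i}\}\rangle$ and on $\mathbb{O}$. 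Finally, applying move~(ii) to each of the latter rewrites $\nu_{\langle E,\{\pi^{-i}\}\rangle}(\dots,\mathbb{O})$ as $\nu_{\langle E,\{\pi^{-i}\}\rangle}(\dots,\mathcal{O}_\mathfrak{p}) - \nu_{\langle E,\{\pi^{-i}\}\rangle}(\dots,\pi\mathcal{O}_\mathfrak{p})$: the $\pi\mathcal{O}_\mathfrak{p}$-halves, with their minus signs and $i=1,\dots,\alpha-1$, assemble into the third factor of the claimed identity, while the $\mathcal{O}_\mathfrak{p}$-halves together with the subscript-$E$ remainder from the first step (the $i=0$ case), being all supported on the same set, combine via (iv) into $\prod_{\epsilon}\epsilon^{\nu_B(\mathfrak{b}(\beta),\,\epsilon\beta^{-1}\mathcal{D}_\mathfrak{f}\cap\pi^{-1}\beta^{-1}\mathcal{D}_\mathfrak{f},\,\mathcal{O}_\mathfrak{p})}$, the first factor. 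Collecting the four pieces gives the statement.

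I expect the main obstacle to be purely the combinatorial bookkeeping of the reindexing. Each refinement creates triple intersections $\epsilon\beta^{-1}\mathcal{D}_\mathfrak{f}\cap\delta\beta^{-1}\pi^{-1}\mathcal{D}_\mathfrak{f}\cap\pi^{-i}\beta^{-1}\mathcal{D}_\mathfrak{f}$, and separating the $\epsilon$- from the $\delta$-dependence requires iterating the $\epsilon\mapsto\epsilon\gamma$ substitution and the fundamental-domain collapsing identities from the proof of Lemma~\ref{normu1lemma} with care: one must check that at each collapse the correct pair of the three intersectands survives, that the base $\epsilon$ versus $\delta$ of each surviving power is attributed to the right factor, and that any auxiliary use of move~(iii) transforms the subscript sets compatibly, so that the $i=1$ and $i=\alpha-1$ boundary contributions land exactly in the stated second and third factors with nothing left over. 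Once these indexing conventions are pinned down the argument is mechanical, of the same flavour as Lemmas~\ref{normu1multint}--\ref{normu1lemma}; and, as the pairing of the second and fourth factors of Lemma~\ref{normu1lemma3} already indicates, this lemma is the inductive step in $\alpha$ that will be used to complete the proof of Proposition~\ref{normu1lastprop}.
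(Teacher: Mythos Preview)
Your proposal is correct and matches the paper's own proof: both refine each factor on the right-hand side of Lemma~\ref{normu1lemma} by the partition $\pi^{-i}\beta^{-1}\mathcal{D}_\mathfrak{f} = \bigsqcup_{\delta\in E_+(\mathfrak{f})} \delta\beta^{-1}\pi^{-1}\mathcal{D}_\mathfrak{f}\cap\pi^{-i}\beta^{-1}\mathcal{D}_\mathfrak{f}$ (for $i=2,\dots,\alpha$), reindex the resulting double products to separate an $\epsilon$-part on $\epsilon\beta^{-1}\mathcal{D}_\mathfrak{f}\cap\pi^{-1}\beta^{-1}\mathcal{D}_\mathfrak{f}$ from a $\delta$-part on $\delta\beta^{-1}\pi^{-1}\mathcal{D}_\mathfrak{f}\cap\pi^{-i}\beta^{-1}\mathcal{D}_\mathfrak{f}$, and then discard the trivial $i=1$ $\delta$-contribution. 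The paper compresses your final step (splitting $\mathbb{O}=\mathcal{O}_\mathfrak{p}\setminus\pi\mathcal{O}_\mathfrak{p}$ on the $\epsilon$-parts and summing the $\mathcal{O}_\mathfrak{p}$-halves over $i$ via $\nu_B=\sum_{i=0}^{\alpha-1}\nu_{\langle E,\{\pi^{-i}\}\rangle}$) into the phrase ``gives the result'', but this is exactly what you spell out.
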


\begin{proof}
For $i=2, \dots , \alpha$ we have
\[ \pi^i \mathcal{D}_\mathfrak{f} = \bigcup_{\delta \in E_+(\mathfrak{f})} \pi^{-1} \delta \mathcal{D}_\mathfrak{f} \cap \pi^{-i} \mathcal{D}_\mathfrak{f} . \]
Thus, applying this to the result of Lemma \ref{normu1lemma}, we have 
\begin{multline*}
    \Err(\beta)\\ = 
    \left( \prod_{\epsilon \in E_+(\mathfrak{f})} \epsilon^{\nu^E_T( \mathfrak{b}(\beta), \epsilon \beta^{-1} \mathcal{D}_{\mathfrak{f}} \cap \pi^{-1} \beta^{-1} \mathcal{D}_{\mathfrak{f}} ,  \mathcal{O}_\mathfrak{p} )} \right) \left( \prod_{\delta \in E_+(\mathfrak{f})} \delta^{\nu^E_T( \mathfrak{b}(\beta), \delta \beta^{-1} \pi^{-1} \mathcal{D}_{\mathfrak{f}} \cap \pi^{-\alpha} \beta^{-1} \mathcal{D}_{\mathfrak{f}} ,  \mathcal{O}_\mathfrak{p} )} \right) \\
    \prod_{i=1}^{\alpha-1} \left(  \prod_{\epsilon \in E_+(\mathfrak{f})} \epsilon^{\nu^{ \langle E, \{\pi^{-i} \} \rangle }_T ( \mathfrak{b}(\beta), \epsilon \beta^{-1} \mathcal{D}_{\mathfrak{f}} \cap \pi^{-1} \beta^{-1} \mathcal{D}_{\mathfrak{f}} ,  \mathbb{O} )} \right. \\ \left. \prod_{\delta \in E_+(\mathfrak{f})} \delta^{\nu^{ \langle E, \{\pi^{-i} \} \rangle }_T ( \mathfrak{b}(\beta), \epsilon \beta^{-1} \pi^{-1} \mathcal{D}_{\mathfrak{f}} \cap \pi^{-i} \beta^{-1} \mathcal{D}_{\mathfrak{f}} ,  \mathbb{O} )} \right) .
\end{multline*}
Remarking that $\prod_{\delta \in E_+(\mathfrak{f})} \delta^{\nu^{ \langle E, \{\pi^{-1} \} \rangle }_T ( \mathfrak{b}(\beta), \epsilon \beta^{-1} \pi^{-1} \mathcal{D}_{\mathfrak{f}} \cap \pi^{-1} \beta^{-1} \mathcal{D}_{\mathfrak{f}} ,  \mathbb{O} )} =1 $, since $\epsilon \beta^{-1} \pi^{-1} \mathcal{D}_{\mathfrak{f}} \cap \pi^{-1} \beta^{-1} \mathcal{D}_{\mathfrak{f}} = \emptyset $, gives the result.
\end{proof}

If $\alpha =2$ it is straightforward to see that Lemma \ref{normu1lemma3} is equivalent to Proposition \ref{normu1lastprop} and thus we are also finished in the case $\alpha =2$. From this point on we assume that $\alpha > 2$. From Lemma \ref{normu1lemma3} one can see that to prove Proposition \ref{normu1lastprop} it is enough for us to show 
\begin{multline}
    1= \prod_{\delta \in E_+(\mathfrak{f})} \delta^{\nu^E_T( \mathfrak{b}(\beta), \delta \beta^{-1} \pi^{-1} \mathcal{D}_{\mathfrak{f}} \cap \pi^{-\alpha} \beta^{-1} \mathcal{D}_{\mathfrak{f}} ,  \mathcal{O}_\mathfrak{p} )}  \\
    \prod_{i=1}^{\alpha-1}   \prod_{\epsilon \in E_+(\mathfrak{f})} \epsilon^{- \nu^{ \langle E, \{\pi^{-i} \} \rangle }_T ( \mathfrak{b}(\beta), \epsilon \beta^{-1} \mathcal{D}_{\mathfrak{f}} \cap \pi^{-1} \beta^{-1} \mathcal{D}_{\mathfrak{f}} , \pi \mathcal{O}_\mathfrak{p}  )} \\
    \prod_{i=2}^{\alpha-1} \prod_{\delta \in E_+(\mathfrak{f})} \delta^{\nu^{ \langle E, \{\pi^{-i} \} \rangle }_T ( \mathfrak{b}(\beta), \epsilon \beta^{-1} \pi^{-1} \mathcal{D}_{\mathfrak{f}} \cap \pi^{-i} \beta^{-1} \mathcal{D}_{\mathfrak{f}} , \mathbb{O}  )} .
    \label{leftover}
\end{multline}
To do this we first show the following lemma.
\begin{lemma} \label{normu1induction}
We have that for $j=1, \dots , \alpha -1$ the right hand side of (\ref{leftover}) is equal to
\begin{multline*}
    e(j)=\left( \prod_{\delta \in E_+(\mathfrak{f})} \delta^{\nu^E_T( \mathfrak{b}(\beta), \delta \beta^{-1} \pi^{-j} \mathcal{D}_{\mathfrak{f}} \cap \pi^{-\alpha} \beta^{-1} \mathcal{D}_{\mathfrak{f}} ,  \mathcal{O}_\mathfrak{p} )} \right) \\
    \prod_{i=j}^{\alpha-1}   \prod_{\epsilon \in E_+(\mathfrak{f})} \epsilon^{- \nu^{ \langle E, \{\pi^{-i} \} \rangle }_T ( \mathfrak{b}(\beta), \epsilon \beta^{-1} \pi^{-(j-1)} \mathcal{D}_{\mathfrak{f}} \cap \pi^{-j} \beta^{-1} \mathcal{D}_{\mathfrak{f}} , \pi \mathcal{O}_\mathfrak{p}  )} \\
    \prod_{i=j+1}^{\alpha-1} \prod_{\delta \in E_+(\mathfrak{f})} \delta^{\nu^{ \langle E, \{\pi^{-i} \} \rangle }_T ( \mathfrak{b}(\beta), \delta \beta^{-1} \pi^{-j} \mathcal{D}_{\mathfrak{f}} \cap \pi^{-i} \beta^{-1} \mathcal{D}_{\mathfrak{f}} , \mathbb{O}  )} .
\end{multline*}
Note that for $j=\alpha -1$ the last product is empty. We also remark that it is implicit in the statement of this lemma that $e(1)=\dots =e(\alpha-1)$.
\end{lemma}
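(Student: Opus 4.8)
The plan is to establish the lemma by proving the single identity $e(j)=e(j+1)$ for $1\le j\le\alpha-2$, together with the observation that $e(1)$ is \emph{literally} the right-hand side of (\ref{leftover}): substituting $j=1$ into the definition of $e(j)$ and using $\pi^{0}=1$ (and reading the stray $\epsilon$ in the last product of (\ref{leftover}) as $\delta$) reproduces the three lines of (\ref{leftover}) verbatim. Thus the whole content of the lemma — including the implicit assertion $e(1)=\cdots=e(\alpha-1)$ — is exactly the step $e(j)=e(j+1)$, which I would prove by a routine but lengthy induction on $j$.

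For the inductive step I would write $e(j)=P_1(j)\,P_2(j)\,P_3(j)$ for its three lines and transform $P_1(j)$ into the pieces that make up $e(j+1)$. First I would split the compact open argument of $P_1(j)$ via $\mathcal{O}_\mathfrak{p}=\mathbb{O}\sqcup\pi\mathcal{O}_\mathfrak{p}$, using additivity of $\nu$. In each resulting term I would refine the Shintani set $\delta\beta^{-1}\pi^{-j}\mathcal{D}_\mathfrak{f}\cap(\cdots)$ by intersecting with the tiling $\mathbb{R}_+^n=\bigsqcup_{\gamma\in E_+(\mathfrak{f})}\gamma\beta^{-1}\pi^{-(j+1)}\mathcal{D}_\mathfrak{f}$, which is valid because $\mathcal{D}_\mathfrak{f}$ is a fundamental domain for $E_+(\mathfrak{f})$ and the scalars $\beta^{-1},\pi$ commute with the $E_+(\mathfrak{f})$-action. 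Then I would apply Lemma \ref{changeofvariable} in two forms: (i) for $\gamma\in E_+(\mathfrak{f})$ one has $(\gamma)=\mathcal{O}_F$ and $\gamma\mathcal{O}_\mathfrak{p}=\mathcal{O}_\mathfrak{p}$, so translating a Shintani set by $\gamma$ does not change $\nu_\bullet(\mathfrak{b}(\beta),-,\mathcal{O}_\mathfrak{p})$; this lets the outer product $\prod_\delta$ be re-indexed after the refinement, interchanging the roles of $\delta$ and $\gamma$ and producing sets of the form $\delta\beta^{-1}\pi^{-(j+1)}\mathcal{D}_\mathfrak{f}\cap(\cdots)$; (ii) translating by $\pi$ turns a $\pi\mathcal{O}_\mathfrak{p}$-argument carrying the set $\delta\beta^{-1}\pi^{-j}\mathcal{D}_\mathfrak{f}$ into an $\mathcal{O}_\mathfrak{p}$-argument carrying $\delta\beta^{-1}\pi^{-(j+1)}\mathcal{D}_\mathfrak{f}$, at the cost of shifting the congruence-class subscript by $\pi^{-1}$ — exactly the bookkeeping already used in the proof of Lemma \ref{normforu1lemma2}. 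The $\mathbb{O}$-pieces of $P_1(j)$, together with $P_3(j)$, assemble (after re-indexing $i$) into $P_3(j+1)$ and part of $P_2(j+1)$, while the remaining pieces telescope; empty intersections such as $\delta\beta^{-1}\pi^{-j}\mathcal{D}_\mathfrak{f}\cap\gamma\beta^{-1}\pi^{-j}\mathcal{D}_\mathfrak{f}$ with $\delta\ne\gamma$ drop out, just as in the proof of Lemma \ref{normu1lemma3}. Reassembling the transformed products — noting in particular that the index-$\alpha$ contribution produced by the shift carries congruence-class subscript $\{\pi^{-\alpha}\}=\{1\}$ and hence merges into $P_1(j+1)$ — gives $e(j+1)$.

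The only real difficulty is the combinatorial bookkeeping: keeping straight (a) the re-indexing of the $\prod_\delta$-products each time a $\gamma$-translate is inserted and a change of variables applied; (b) the shift of the class subscripts $\nu_E\rightsquigarrow\nu_{E\pi^{-i}}$ under $\pi$-translation and the corresponding shift of the range $i\in\{j,\dots,\alpha-1\}$ to $\{j+1,\dots,\alpha\}$; and (c) which telescoping cancellations pair up the boundary terms. No idea beyond the fundamental-domain decomposition and Lemma \ref{changeofvariable} is needed — the same tools as in Lemmas \ref{normforu1lemma2}–\ref{normu1lemma3} — so the work is entirely in the careful matching of terms. Once Lemma \ref{normu1induction} is in hand, Proposition \ref{normu1lastprop} will follow by evaluating $e(\alpha-1)$, whose third product is empty, and checking directly that it equals $1$.
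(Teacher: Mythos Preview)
Your overall plan is right and is the paper's plan too: prove $e(j)=e(j+1)$ by induction, with the base case $j=1$ trivial, using only the fundamental-domain tiling and Lemma~\ref{changeofvariable}. Where your sketch diverges from the paper is in the order of operations and in \emph{which} of the three factors $P_1,P_2,P_3$ gets transformed.

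The paper does \emph{not} begin by splitting $\mathcal{O}_\mathfrak{p}=\mathbb{O}\sqcup\pi\mathcal{O}_\mathfrak{p}$ in $P_1(j)$. Instead it inserts the Shintani-set decomposition
\[
\pi^{-i}\mathcal{D}_\mathfrak{f}\;=\;\bigsqcup_{\kappa\in E_+(\mathfrak{f})}\kappa\,\pi^{-(j+1)}\mathcal{D}_\mathfrak{f}\cap\pi^{-i}\mathcal{D}_\mathfrak{f}
\]
into the \emph{second} factor of the intersection, simultaneously in $P_1(j)$ (at $i=\alpha$) and in $P_3(j)$ (at each $i\ge j+2$). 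After the usual re-indexing trick this writes $e(j)$ as a product of four lines, two of which are literally $P_1(j+1)$ and $P_3(j+1)$. Only the remaining two lines require further work: a single $\pi$-shift on one of them (turning an $\mathbb{O}$-exponent at level $(j,j+1)$ into a $\pi\mathcal{O}_\mathfrak{p}$-exponent at level $(j-1,j)$) followed by a telescoping cancellation against $P_2(j)$ produces $P_2(j+1)$, and the single leftover boundary term cancels via $\{\pi\}=\{\pi^{-(\alpha-1)}\}$.

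Two concrete places where your sketch needs adjustment. First, the tiling must be applied to $P_3(j)$ as well as to $P_1(j)$: the first factor in $P_3(j)$ carries $\pi^{-j}$ and must become $\pi^{-(j+1)}$ in $P_3(j+1)$, and ``re-indexing $i$'' alone does not effect that change. Second, if you $\pi$-shift the $\pi\mathcal{O}_\mathfrak{p}$-piece of $P_1(j)$ as you describe, the change of variable acts on the whole Shintani set, so the second factor $\pi^{-\alpha}\beta^{-1}\mathcal{D}_\mathfrak{f}$ becomes $\pi^{-(\alpha+1)}\beta^{-1}\mathcal{D}_\mathfrak{f}$, an index outside the range you want; the paper sidesteps this by never $\pi$-shifting $P_1$ at all. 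Neither point is fatal --- your ingredients are the right ones --- but following the paper's order keeps the bookkeeping considerably shorter.
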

\begin{proof}
We prove this by induction. The case $j=1$ holds trivially. We now assume it holds for $j$ and prove the result for $j+1$, i.e., we show $e(j)=e(j+1)$. To do this we note that for $i=j+2, \dots , \alpha$, we have
\[ \pi^{-i} \mathcal{D}_\mathfrak{f} = \bigcup_{\kappa \in E_+(\mathfrak{f})} \pi^{-(j+1)} \kappa \mathcal{D}_\mathfrak{f} \cap \pi^{-i} \mathcal{D}_\mathfrak{f}. \]
Thus, $e(j)$ is equal to the product of the following elements:
\begin{align}
    \label{line1} &\left( \prod_{\delta \in E_+(\mathfrak{f})} \delta^{\nu^E_T( \mathfrak{b}(\beta), \delta \beta^{-1} \pi^{-j} \mathcal{D}_{\mathfrak{f}} \cap \pi^{-(j+1)} \beta^{-1} \mathcal{D}_{\mathfrak{f}} ,  \mathcal{O}_\mathfrak{p} )} \right) \left( \prod_{\kappa \in E_+(\mathfrak{f})} \kappa^{\nu^E_T( \mathfrak{b}(\beta), \kappa \beta^{-1} \pi^{-(j+1)} \mathcal{D}_{\mathfrak{f}} \cap \pi^{-\alpha} \beta^{-1} \mathcal{D}_{\mathfrak{f}} ,  \mathcal{O}_\mathfrak{p} )} \right) \\
    \label{line2} & \prod_{i=j}^{\alpha-1}   \prod_{\epsilon \in E_+(\mathfrak{f})} \epsilon^{- \nu^{ \langle E, \{\pi^{-i} \} \rangle }_T ( \mathfrak{b}(\beta), \epsilon \beta^{-1} \pi^{-(j-1)} \mathcal{D}_{\mathfrak{f}} \cap \pi^{-j} \beta^{-1} \mathcal{D}_{\mathfrak{f}} , \pi \mathcal{O}_\mathfrak{p}  )}  \\
    \label{line3} & \prod_{i=j+1}^{\alpha-1} \prod_{\delta \in E_+(\mathfrak{f})} \delta^{\nu^{ \langle E, \{\pi^{-i} \} \rangle }_T ( \mathfrak{b}(\beta), \delta \beta^{-1} \pi^{-j} \mathcal{D}_{\mathfrak{f}} \cap \pi^{-(j+1)} \beta^{-1} \mathcal{D}_{\mathfrak{f}} , \mathbb{O}  )}  \\
    \label{line4} &\prod_{i=j+2}^{\alpha-1} \prod_{\kappa \in E_+(\mathfrak{f})} \kappa^{\nu^{ \langle E, \{\pi^{-i} \} \rangle }_T ( \mathfrak{b}(\beta), \kappa \beta^{-1} \pi^{-(j+1)} \mathcal{D}_{\mathfrak{f}} \cap \pi^{-i} \beta^{-1} \mathcal{D}_{\mathfrak{f}} , \mathbb{O}  )} .
\end{align}
We remark that the first bracketed term in (\ref{line1}), and (\ref{line4}) are already products in $e(j+1)$. We now consider (\ref{line3}) and calculate that it is equal to 
\begin{multline}\label{newline3}
    \left( \prod_{i=j+1}^{\alpha-1} \prod_{\delta \in E_+(\mathfrak{f})} \delta^{\nu^{ \langle E, \{\pi^{-(i-1)} \} \rangle }_T ( \mathfrak{b}(\beta), \delta \beta^{-1} \pi^{-(j-1)} \mathcal{D}_{\mathfrak{f}} \cap \pi^{-j} \beta^{-1} \mathcal{D}_{\mathfrak{f}} , \pi \mathcal{O}_\mathfrak{p}   ) } \right) \\
    \left( \prod_{i=j+1}^{\alpha-1} \prod_{\delta \in E_+(\mathfrak{f})} \delta^{ - \nu^{ \langle E, \{\pi^{-i} \} \rangle }_T ( \mathfrak{b}(\beta), \delta \beta^{-1} \pi^{-j} \mathcal{D}_{\mathfrak{f}} \cap \pi^{-(j+1)} \beta^{-1} \mathcal{D}_{\mathfrak{f}} , \pi \mathcal{O}_\mathfrak{p}   )} \right) .
\end{multline}
We now consider the way the terms in (\ref{newline3}) interact with (\ref{line2}). Multiplying (\ref{newline3}) by (\ref{line2}) gives
\begin{multline}\label{line2and3}
    \left(\prod_{i=j+1}^{\alpha-1}   \prod_{\epsilon \in E_+(\mathfrak{f})} \epsilon^{- \nu^{ \langle E, \{\pi^{-i} \} \rangle }_T ( \mathfrak{b}(\beta), \epsilon \beta^{-1} \pi^{-j} \mathcal{D}_{\mathfrak{f}} \cap \pi^{-(j+1)} \beta^{-1} \mathcal{D}_{\mathfrak{f}} , \pi \mathcal{O}_\mathfrak{p}  )} \right) \\
    \left( \prod_{\epsilon \in E_+(\mathfrak{f})} \epsilon^{- \nu^{ \langle E, \{\pi^{-(\alpha-1)} \} \rangle }_T ( \mathfrak{b}(\beta), \epsilon \beta^{-1} \pi^{-(j-1)} \mathcal{D}_{\mathfrak{f}} \cap \pi^{-j} \beta^{-1} \mathcal{D}_{\mathfrak{f}} , \pi \mathcal{O}_\mathfrak{p}  )} \right) .
\end{multline}
The first term in (\ref{line2and3}) is the term we were missing from $e(j+1)$. Thus it only remains to show that the second bracketed term in (\ref{line1}) multiplied by the second bracketed term in (\ref{line2and3}) is equal to $1$. This is shown by the following calculation,
\begin{align*}
    & \prod_{\delta \in E_+(\mathfrak{f})} \delta^{\nu^E_T( \mathfrak{b}(\beta), \delta \beta^{-1} \pi^{-j} \mathcal{D}_{\mathfrak{f}} \cap \pi^{-(j+1)} \beta^{-1} \mathcal{D}_{\mathfrak{f}} ,  \mathcal{O}_\mathfrak{p} )} \\
    &= \prod_{\delta \in E_+(\mathfrak{f})} \delta^{\nu^{ \langle  E, \{ \pi \} \rangle }_T( \mathfrak{b}(\beta), \delta \beta^{-1} \pi^{-(j-1)} \mathcal{D}_{\mathfrak{f}} \cap \pi^{-j} \beta^{-1} \mathcal{D}_{\mathfrak{f}} , \pi  \mathcal{O}_\mathfrak{p} )} \\
    &= \prod_{\delta \in E_+(\mathfrak{f})} \delta^{\nu^{ \langle  E, \{ \pi^{-(\alpha -1)} \} \rangle }_T( \mathfrak{b}(\beta), \delta \beta^{-1} \pi^{-(j-1)} \mathcal{D}_{\mathfrak{f}} \cap \pi^{-j} \beta^{-1} \mathcal{D}_{\mathfrak{f}} , \pi  \mathcal{O}_\mathfrak{p} )}.
\end{align*}
We therefore deduce that 
\[ e(j)=e(j+1) \]
as claimed. This completes the proof of the lemma.
\end{proof}

We are now ready to prove Proposition \ref{normu1lastprop}.
\begin{proof}[Proof of Proposition \ref{normu1lastprop}]
We consider $e(\alpha-1)$. From Lemma \ref{normu1induction}, we have that $e(\alpha -1)$ is equal to the right hand side of (\ref{leftover}). Then
\begin{multline*}
    e(\alpha-1)=\left( \prod_{\delta \in E_+(\mathfrak{f})} \delta^{\nu^E_T( \mathfrak{b}(\beta), \delta \beta^{-1} \pi^{-(\alpha-1)} \mathcal{D}_{\mathfrak{f}} \cap \pi^{-\alpha} \beta^{-1} \mathcal{D}_{\mathfrak{f}} ,  \mathcal{O}_\mathfrak{p} )} \right) \\
    \prod_{\epsilon \in E_+(\mathfrak{f})} \epsilon^{- \nu^{ \langle E, \{\pi^{-(\alpha-1)} \} \rangle }_T ( \mathfrak{b}(\beta), \epsilon \beta^{-1} \pi^{-(\alpha-2)} \mathcal{D}_{\mathfrak{f}} \cap \pi^{-(\alpha -1)} \beta^{-1} \mathcal{D}_{\mathfrak{f}} , \pi \mathcal{O}_\mathfrak{p}  )} .
\end{multline*}
Since $ \pi \equiv \pi^{-(\alpha-1)} \pmod{\mathfrak{f}\mathfrak{f}'}$, it is clear that
\[ e(\alpha-1) =1 . \]
This completes the proof of Proposition \ref{normu1lastprop} and thus proves Theorem \ref{normforu1}.
\end{proof}

\subsection{Norm compatibility for $u_3$}

We prove norm compatibility for $u_2$ and then obtain the result for $u_3$ by Theorem \ref{u2equ3}.

We recall the definition
\[ u_2= \sum_{\sigma \in G} u_{2}(\sigma) \otimes [\sigma^{-1}]=  \Eis_F^0 \cap \Delta_\ast(c_\id \cap \rho_{H/F}). \]
%Write $u_2(\sigma)= u_{{S},T,\sigma}$.
\begin{theorem} \label{normcompatforu2}
We have for any $\sigma \in G$,
\[ u_2(\sigma , H)  = \prod_{\substack{\tau \in G^\prime \\ \tau \mid_H = \sigma } } u_2(\tau , H^\prime )  . \]
\end{theorem}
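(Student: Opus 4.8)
The plan is to mirror the structure of the $u_1$ norm‑compatibility proof (Theorem~\ref{normforu1}), but working directly with the cohomological formula and exploiting the functoriality of the cap‑product constructions under the reciprocity map $\Gal(H'/F)\twoheadrightarrow\Gal(H/F)$. First I would recall the description of $\Gal(H'/H)$ in \eqref{e:setofcong} as $\{\beta\in(\mathcal O_F/\mathfrak{ff}')^\ast\mid \beta\equiv 1\pmod{\mathfrak f}\}/E_+(\mathfrak f)_\mathfrak p$, and note that the right‑hand side of the claimed identity decomposes, via Proposition~\ref{prop6.3}~b), as a product over the fibre of $\Gal(H'/F)\to\Gal(H/F)$ above $\sigma$. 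The key point is that the Eisenstein cocycle $\Eis_F^0$, the class $c_\id$, and the homology class $\rho_{H/F}$ are all attached to data (the unit group $E_{\mathfrak p,+}$, the fundamental domain $\mathcal F$, and the reciprocity map) that behave compatibly when $H$ is enlarged to $H'$: only $\rec_{H/F}^\mathfrak p$ changes, being replaced by $\rec_{H'/F}^\mathfrak p$, which surjects onto $\mathbb Z[G']$ and reduces to $\rec_{H/F}^\mathfrak p$ modulo the augmentation map $\mathbb Z[G']\to\mathbb Z[G]$.

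The cleanest route is to compare everything at the level of a common finite‑index subgroup $V\subseteq E_+$ chosen via Lemma~\ref{lemmatomakembig} (so that the sign conditions hold), and to use the explicit multiplicative‑integral expression \eqref{fullu3eqn3} for $u_2(V)$ derived in \S\ref{s:eeu2}. Writing $u_2(\sigma,H)$ through that formula, the $\sigma$‑component is a product of $\varepsilon_i^{L_{R,\lambda}(\mathfrak b,\mathcal B_i,\pi\mathcal O_\mathfrak p;0)}$, $\pi^{L_{R,\lambda}(\mathfrak b,\mathcal B,\pi\mathcal O_\mathfrak p;0)}$ and a multiplicative integral over $\mathbb O$, where $\mathfrak b$ is an ideal with $\sigma_\mathfrak b=\sigma$ in $G$. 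For $H'$ the analogous expression has $\mathfrak b$ running over ideals whose class in $G'=G_{\mathfrak{ff}'}/\langle\mathfrak p\rangle$ restricts to $\sigma$; these are exactly $\mathfrak b(\beta)$ for $\beta\in B$ in the notation preceding Theorem~\ref{normforu1}. Thus the product $\prod_{\tau\mid_H=\sigma}u_2(\tau,H')$ becomes $\prod_{\beta\in B}$ of the $H'$‑expression, and the matching of the two sides reduces to: (i) the partition identity $\mathcal O_\mathfrak p=\bigsqcup$ of translates that relates the $H'$‑Shintani data over $E_+(\mathfrak{ff}')_\mathfrak p$ to the $H$‑data over $E_+(\mathfrak f)_\mathfrak p$, exactly as in Lemma~\ref{normu1multint} and Lemma~\ref{normforu1lemma2}; and (ii) the elementary distribution relation $\sum_{\beta\in B}L_{R,\lambda}(\mathfrak b(\beta),\mathcal B_i,U;0)$ collapsing to a sum indexed by a fundamental domain for the larger unit group, which is where the defining relation \eqref{Lfnforeis} of the Dirichlet series — being insensitive to $S$ and summing over $W\cap D$ with a $\Phi$ that only sees congruences mod $\mathfrak f$ (resp.\ $\mathfrak{ff}'$) — does the work.

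Alternatively, and perhaps more conceptually, one can prove the statement abstractly: the map $\rec_{H'/F}^\mathfrak p\mapsto\rec_{H/F}^\mathfrak p$ is induced by the ring map $\mathrm{aug}\colon\mathbb Z[G']\to\mathbb Z[G]$, and all three of $c_\id$, $\Delta_\ast$, $\Eis_F^0$ are natural in the coefficient module; hence $\mathrm{aug}_\ast(u_2(H'))=u_2(H)$ in $F_\mathfrak p^\ast\otimes\mathbb Z[G]$. Unwinding $\mathrm{aug}_\ast$ on $F_\mathfrak p^\ast\otimes\mathbb Z[G']=\bigoplus_{\tau\in G'}F_\mathfrak p^\ast$ precisely gives $u_2(\sigma,H)=\sum_{\tau\mid_H=\sigma}u_2(\tau,H')$ additively, i.e.\ the multiplicative identity claimed. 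The subtle point preventing an immediate three‑line proof is that $u_2(H')$ is defined using a fundamental domain $\mathcal F'$ for $F^\ast/E_{\mathfrak p,+}$ that is \emph{not} directly the one used for $H$ (the relevant class‑group data changes with the conductor), so one must check that $\vartheta^\mathfrak p$ for $H$ and the pushforward of $\vartheta^\mathfrak p$ for $H'$ agree — this is the analogue of the bookkeeping carried out across Lemmas~\ref{normu1lemma}–\ref{normu1induction} in the $u_1$ case, and I expect it to be the main obstacle. My plan is to isolate this as a single lemma identifying $\mathrm{aug}_\ast(\rho_{H'/F})$ with $\rho_{H/F}$ inside $H_n(F^\ast,\mathcal C_c(R',\mathbb Z[G])^\mathfrak p)$ — using Proposition~\ref{propctsmaps}/Corollary~\ref{isomcor} to compare the induced modules and Shapiro's lemma to reduce to a statement about $\mathbbm 1_{\mathcal F'}\cap\eta_\mathfrak p$ versus $\mathbbm 1_{\mathcal F}\cap\eta_\mathfrak p$ after pushing forward — and then the theorem follows formally by applying $c_\id\cap$, $\Delta_\ast$ and $\Eis_F^0\cap$ and reading off components.
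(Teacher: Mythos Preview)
Your second (abstract) approach via the augmentation map $\psi:\mathbb Z[G']\to\mathbb Z[G]$ is exactly the paper's proof, and it really is a three-line argument. The ``subtle point'' you identify is not an obstacle: the class $\vartheta^\mathfrak{p}$ depends only on $E_{\mathfrak{p},+}$ and a fundamental domain $\mathcal F$ for $F^\ast/E_{\mathfrak{p},+}$ acting on $(\mathbb A_F^\mathfrak{p})^\ast/U^\mathfrak{p}$, neither of which sees the extension $H$ or its conductor at all. So $\vartheta^\mathfrak{p}$ is \emph{literally the same object} whether one is building $u_2(H)$ or $u_2(H')$; there is no $\mathcal F'$ to compare against $\mathcal F$. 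The only $H$-dependence in $\rho_{H/F}=\rec_{H/F}^\mathfrak{p}\cap\vartheta^\mathfrak{p}$ is through the reciprocity map, and $\psi\circ\rec_{H'/F}^\mathfrak{p}=\rec_{H/F}^\mathfrak{p}$ is immediate. Hence $\psi_\ast\rho_{H'/F}=\rho_{H/F}$ without any of the bookkeeping from Lemmas~\ref{normu1lemma}--\ref{normu1induction}, and the paper in fact remarks that this simplicity relative to the $u_1$ case is precisely the payoff of the cohomological formulation.

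Your first (computational) approach via the explicit expression~\eqref{fullu3eqn3} would presumably also work, but it re-imports into the $u_2$ setting all the Shintani-domain juggling that the cohomological packaging was designed to avoid. You should drop it and the proposed lemma on $\mathrm{aug}_\ast(\rho_{H'/F})$, and simply check the one commutation $\psi_\ast\Delta_\ast=\Delta_\ast\psi_\ast$ (which the paper attributes to the calculations in \S\ref{s:delta} and \S\ref{s:eeu2}).
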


\begin{remark}
This theorem has been stated without proof by the first and third authors in \cite[Proposition 6.3]{MR3861805}. We include the proof for completeness. We note also that the proof of the norm compatibility for $u_2$ is much simpler than that for $u_1$. This is a result of the additional structure we have due to the cohomological nature of the construction.
\end{remark}

\begin{proof}[Proof of Theorem \ref{normcompatforu2}]
We consider the natural map
\begin{align*}
    \psi : F_\mathfrak{p}^\ast \otimes \mathbb{Z}[G^\prime] & \rightarrow F_\mathfrak{p}^\ast \otimes \mathbb{Z}[G] \\
    \sum_{\tau \in G^\prime} n_\tau \otimes [\tau] & \mapsto \sum_{\sigma \in G} (\prod_{\substack{\tau \in G^\prime \\ \tau \mid_H = \sigma } } n_\tau ) \otimes [\sigma ] .
\end{align*}
Then, on the one hand,
\[ \psi( u_2(H^\prime) ) = \sum_{\sigma \in G} ( \prod_{\substack{\tau \in G^\prime \\ \tau \mid_H = \sigma } } u_2(\tau , H^\prime)  ) \otimes [ \sigma ] . \]
On the other hand
\begin{align*}
    \psi( u_2(H^\prime) ) &= \psi ( \Eis_F^0 \cap \Delta_\ast(c_\id \cap \rho_{H^\prime/F})  ) \\
    &= \Eis_F^0 \cap \psi_\ast \Delta_\ast(c_\id \cap \rho_{H^\prime/F}) \\
    &= \Eis_F^0 \cap \Delta_\ast(c_\id \cap \psi_\ast \rho_{H^\prime/F}) .
\end{align*}
The only equality of note here is the final one. This follows since we can commute $\psi_\ast$ with $\Delta_\ast$, which is a consequence of the definitions in \S\ref{s:delta}. Then since $\psi_\ast \rho_{H^\prime/F} = \rho_{H / F} $, the desired result follows.
\end{proof}

\addcontentsline{toc}{section}{References}

\bibliography{bib}

@article {MR3351752,
    AUTHOR = {Charollois, Pierre and Dasgupta, Samit and Greenberg, Matthew},
     TITLE = {Integral {E}isenstein cocycles on {$\bold{GL}_n$}, {II}:
              {S}hintani's method},
   JOURNAL = {Comment. Math. Helv.},
  FJOURNAL = {Commentarii Mathematici Helvetici. A Journal of the Swiss
              Mathematical Society},
    VOLUME = {90},
      YEAR = {2015},
    NUMBER = {2},
     PAGES = {435--477},
      ISSN = {0010-2571},
   MRCLASS = {11R42 (11F41 11F67 11F70)},
  MRNUMBER = {3351752},
MRREVIEWER = {Volker Ziegler},
       DOI = {10.4171/CMH/360},
       URL = {https://doi.org/10.4171/CMH/360},
}

@article {MR3861805,
    AUTHOR = {Dasgupta, Samit and Spie\ss , Michael},
     TITLE = {Partial zeta values, {G}ross's tower of fields conjecture, and
              {G}ross-{S}tark units},
   JOURNAL = {J. Eur. Math. Soc. (JEMS)},
  FJOURNAL = {Journal of the European Mathematical Society (JEMS)},
    VOLUME = {20},
      YEAR = {2018},
    NUMBER = {11},
     PAGES = {2643--2683},
      ISSN = {1435-9855},
   MRCLASS = {11R42 (11R27)},
  MRNUMBER = {3861805},
MRREVIEWER = {St\'{e}phane R. Louboutin},
       DOI = {10.4171/JEMS/821},
       URL = {https://doi.org/10.4171/JEMS/821},
}

@article {MR2420508,
    AUTHOR = {Dasgupta, Samit},
     TITLE = {Shintani zeta functions and {G}ross-{S}tark units for totally
              real fields},
   JOURNAL = {Duke Math. J.},
  FJOURNAL = {Duke Mathematical Journal},
    VOLUME = {143},
      YEAR = {2008},
    NUMBER = {2},
     PAGES = {225--279},
      ISSN = {0012-7094},
   MRCLASS = {11R42 (11R37 11R80)},
  MRNUMBER = {2420508},
MRREVIEWER = {Benjamin V. Howard},
       DOI = {10.1215/00127094-2008-019},
       URL = {https://doi.org/10.1215/00127094-2008-019},
}

@article {MR0427231,
    AUTHOR = {Shintani, Takuro},
     TITLE = {On evaluation of zeta functions of totally real algebraic
              number fields at non-positive integers},
   JOURNAL = {J. Fac. Sci. Univ. Tokyo Sect. IA Math.},
  FJOURNAL = {Journal of the Faculty of Science. University of Tokyo.
              Section IA. Mathematics},
    VOLUME = {23},
      YEAR = {1976},
    NUMBER = {2},
     PAGES = {393--417},
      ISSN = {0040-8980},
   MRCLASS = {10C15 (12A95 10D20 12A70)},
  MRNUMBER = {0427231},
MRREVIEWER = {Larry J. Goldstein},
}

@article {MR3201900,
    AUTHOR = {Spie\ss, Michael},
     TITLE = {Shintani cocycles and the order of vanishing of {$p$}-adic
              {H}ecke {$L$}-series at {$s=0$}},
   JOURNAL = {Math. Ann.},
  FJOURNAL = {Mathematische Annalen},
    VOLUME = {359},
      YEAR = {2014},
    NUMBER = {1-2},
     PAGES = {239--265},
      ISSN = {0025-5831},
   MRCLASS = {11R42 (11R23 11R80)},
  MRNUMBER = {3201900},
MRREVIEWER = {Volker Ziegler},
       DOI = {10.1007/s00208-013-0983-5},
       URL = {https://doi.org/10.1007/s00208-013-0983-5},
}

@article {MR922806,
    AUTHOR = {Colmez, Pierre},
     TITLE = {R\'{e}sidu en {$s=1$} des fonctions z\^{e}ta {$p$}-adiques},
   JOURNAL = {Invent. Math.},
  FJOURNAL = {Inventiones Mathematicae},
    VOLUME = {91},
      YEAR = {1988},
    NUMBER = {2},
     PAGES = {371--389},
      ISSN = {0020-9910},
   MRCLASS = {11R42 (11R80)},
  MRNUMBER = {922806},
MRREVIEWER = {Leslie Jane Federer},
       DOI = {10.1007/BF01389373},
       URL = {https://doi.org/10.1007/BF01389373},
}

@article {MR3968788,
    AUTHOR = {Dasgupta, Samit and Spiess, Michael},
     TITLE = {On the characteristic polynomial of the {G}ross regulator
              matrix},
   JOURNAL = {Trans. Amer. Math. Soc.},
  FJOURNAL = {Transactions of the American Mathematical Society},
    VOLUME = {372},
      YEAR = {2019},
    NUMBER = {2},
     PAGES = {803--827},
      ISSN = {0002-9947},
   MRCLASS = {11R42},
  MRNUMBER = {3968788},
       DOI = {10.1090/tran/7393},
       URL = {https://doi.org/10.1090/tran/7393},
}

@article {MR3179573,
    AUTHOR = {Spie\ss , Michael},
     TITLE = {On special zeros of {$p$}-adic {$L$}-functions of {H}ilbert
              modular forms},
   JOURNAL = {Invent. Math.},
  FJOURNAL = {Inventiones Mathematicae},
    VOLUME = {196},
      YEAR = {2014},
    NUMBER = {1},
     PAGES = {69--138},
      ISSN = {0020-9910},
   MRCLASS = {11F41 (11F67 11F70 11G40)},
  MRNUMBER = {3179573},
MRREVIEWER = {Lei Yang},
       DOI = {10.1007/s00222-013-0465-0},
       URL = {https://doi.org/10.1007/s00222-013-0465-0},
}

@article {MR931448,
    AUTHOR = {Gross, Benedict H.},
     TITLE = {On the values of abelian {$L$}-functions at {$s=0$}},
   JOURNAL = {J. Fac. Sci. Univ. Tokyo Sect. IA Math.},
  FJOURNAL = {Journal of the Faculty of Science. University of Tokyo.
              Section IA. Mathematics},
    VOLUME = {35},
      YEAR = {1988},
    NUMBER = {1},
     PAGES = {177--197},
      ISSN = {0040-8980},
   MRCLASS = {11R42 (11G40)},
  MRNUMBER = {931448},
MRREVIEWER = {Krystyna Bartz},
}

@article {MR656067,
    AUTHOR = {Tate, John},
     TITLE = {On {S}tark's conjectures on the behavior of {$L(s,\,\chi )$}
              at {$s=0$}},
   JOURNAL = {J. Fac. Sci. Univ. Tokyo Sect. IA Math.},
  FJOURNAL = {Journal of the Faculty of Science. University of Tokyo.
              Section IA. Mathematics},
    VOLUME = {28},
      YEAR = {1981},
    NUMBER = {3},
     PAGES = {963--978 (1982)},
      ISSN = {0040-8980},
   MRCLASS = {12A70},
  MRNUMBER = {656067},
MRREVIEWER = {Lindsay N. Childs},
}

@article {MR579702,
    AUTHOR = {Deligne, Pierre and Ribet, Kenneth A.},
     TITLE = {Values of abelian {$L$}-functions at negative integers over
              totally real fields},
   JOURNAL = {Invent. Math.},
  FJOURNAL = {Inventiones Mathematicae},
    VOLUME = {59},
      YEAR = {1980},
    NUMBER = {3},
     PAGES = {227--286},
      ISSN = {0020-9910},
   MRCLASS = {12A70 (10D21)},
  MRNUMBER = {579702},
MRREVIEWER = {Neal Koblitz},
       DOI = {10.1007/BF01453237},
       URL = {https://doi.org/10.1007/BF01453237},
}

@article {MR524276,
    AUTHOR = {Cassou-Nogu\`es, Pierrette},
     TITLE = {Valeurs aux entiers n\'{e}gatifs des fonctions z\^{e}ta et fonctions
              z\^{e}ta {$p$}-adiques},
   JOURNAL = {Invent. Math.},
  FJOURNAL = {Inventiones Mathematicae},
    VOLUME = {51},
      YEAR = {1979},
    NUMBER = {1},
     PAGES = {29--59},
      ISSN = {0020-9910},
   MRCLASS = {12A70 (12B40)},
  MRNUMBER = {524276},
MRREVIEWER = {Lawrence Washington},
       DOI = {10.1007/BF01389911},
       URL = {https://doi.org/10.1007/BF01389911},
}

@article{Brumerstark,
author = {Samit Dasgupta and Mahesh Kakde},
title = {{On the Brumer--Stark conjecture}},
volume = {197},
journal = {Annals of Mathematics},
number = {1},
publisher = {Department of Mathematics of Princeton University},
pages = {289 -- 388},
keywords = {class groups, Fitting ideals, Hilbert modular forms, Stark conjectures},
year = {2023},
doi = {10.4007/annals.2023.197.1.5},
URL = {https://doi.org/10.4007/annals.2023.197.1.5}
}

@article{intgrossstark,
author = {Dasgupta, Samit and Kakde, Mahesh},
year = {2024},
month = {06},
pages = {},
title = {Brumer–{S}tark units and explicit class field theory},
volume = {173},
journal = {Duke Mathematical Journal},
doi = {10.1215/00127094-2023-0039}
}

@article{comparingformulas,
author = {Honnor, Matthew},
year = {2023},
month = {03},
pages = {},
title = {Comparing Two Formulas for the {G}ross-–{S}tark Units},
volume = {248},
journal = {Journal of Number Theory},
doi = {10.1016/j.jnt.2023.02.002}
}

@Misc{AdelicEisensteinClasses,
    AUTHOR = {Galanakis, Alexandros and Spie\ss, Michael},
    TITLE = {Adelic {E}isenstein classes and divisibility properties of {S}tickelberger elements}
}

@Misc{BrumerstarkoverZ,
    AUTHOR = {Dasgupta, Samit and Kakde, Mahesh and Silliman, Jesse and Wang, Jiuya},
    TITLE = {The {B}rumer--{S}tark Conjecture over $\mathbb{Z}$},
    howpublished = {\url{https://arxiv.org/pdf/2310.16399.pdf}}
}

@article {MR3198753,
    AUTHOR = {Diaz y Diaz, Francisco and Friedman, Eduardo},
     TITLE = {Signed fundamental domains for totally real number fields},
   JOURNAL = {Proc. Lond. Math. Soc. (3)},
  FJOURNAL = {Proceedings of the London Mathematical Society. Third Series},
    VOLUME = {108},
      YEAR = {2014},
    NUMBER = {4},
     PAGES = {965--988},
      ISSN = {0024-6115,1460-244X},
   MRCLASS = {11R27 (11R42 11R80 11Y40)},
  MRNUMBER = {3198753},
MRREVIEWER = {Daniel\ C.\ Mayer},
       DOI = {10.1112/plms/pdt025},
       URL = {https://doi.org/10.1112/plms/pdt025},
}
\bibliographystyle{plain}

\end{document}